\numberwithin{theorem}{section}
\numberwithin{equation}{section}
\newcommand{\Rcore}{{R_{\rm def}}}
\newcommand{\burg}{{\sf b}}
\newcommand\up{u_0}
\newcommand{\Adm}{\mathscr{A}}
\newcommand{\Del}{\widetilde{D}}
\newcommand{\nuc}{{\rm nuc}}
\newcommand{\TFW}{{\rm TFW}}
\renewcommand\L{\Lambda}
\newcommand\Fs{\mathcal{F}}
\newcommand\Us{\dot{\mathscr{W}}}
\newcommand\Usz{\Us^{\rm c}}
\newcommand\UsH{\Us^{1,2}}
\newcommand\UsHd{\Us^{-1,2}}
\newcommand\E{\mathscr{E}}
\renewcommand\b{\big}
\newcommand\B{\Big}
\newcommand{\<}{\langle}
\renewcommand{\>}{\rangle}
\newcommand\sep{\,|\,}
\newcommand\bsep{\,\b|\,}
\newcommand\dr{\,{\rm d}r}
\newcommand\dt{\,{\rm d}t}
\newcommand\ds{\,{\rm d}s}
\newcommand\dd{{\rm d}}
\newcommand\R{\mathbb{R}}
\newcommand\N{\mathbb{N}}
\newcommand\Z{\mathbb{Z}}
\newcommand{\wt}[1]{\widetilde#1}
\newcommand{\Dom}{\textnormal{Dom}}
\newcommand\vsig{\varsigma}
\newcommand\ua{\bar{u}}
\newcommand\lin{{\rm lin}}
\newcommand\ulin{u^\lin}
\newcommand\bfO{{\bm 0}}
\newcommand\D{\nabla}
\newcommand\del{\delta}
\newcommand\ddel{\delta^2}
\newcommand\Br{\mathcal{B}}
\newcommand\Gr{\mathcal{G}}
\newcommand\Ftd{\mathcal{F}_{\rm d}}
\newcommand\Ft{\mathcal{F}}
\newcommand{\smfrac}[2]{{\textstyle \frac{#1}{#2}}}
\newcommand\mA{{\sf A}}
\definecolor{cocol}{rgb}{0.7, 0, 0}
\definecolor{hccol}{rgb}{0, 0, 0.7}
\definecolor{fncol}{rgb}{0, 0.5, 0}
\definecolor{alertcol}{rgb}{0.6, 0.0, 0.8}
\newcommand{\hc}[1]{{\color{black} #1}}
\newcommand{\fn}[1]{{\color{black} #1}}
\begin{document}

\title[Geometry Equilibration of Crystalline Defects]{Geometry Equilibration of Crystalline Defects
	in quantum and atomistic descriptions}

\author{Huajie Chen}
\address{School of Mathematical Sciences, Beijing Normal University, Beijing 100190, China}
\email{chen.huajie@bnu.edu.cn}

\author{Faizan Q. Nazar}
\address{CNRS and CEREMADE (UMR CNRS 7534), University of Paris-Dauphine,
	Place de Lattre de Tassigny, 75775 Paris Cedex 16, France}
\email{nazar@ceremade.dauphine.fr}

\author{Christoph Ortner}
\address{Mathematics Institute, Zeeman Building, University of Warwick, Coventry CV4 7AL, UK}
\email{christoph.ortner@warwick.ac.uk}

% \date{\today}

\subjclass[2000]{65L20, 65L70, 70C20, 74G40, 74G65}
% 41A25  	Rate of convergence, degree of approximation
% 65L20  	Stability and convergence of numerical methods
% 65L70  	Error bounds
% 74G65  	Energy minimization (in solid mechanics)
% 74G40  	Regularity of solutions  (in solid mechanics)
% 70C20  	Statics  (Mechanics of particles and systems)

\keywords{Crystal lattices; defects; dislocations, regularity.}

\begin{abstract}
	We develop a rigorous framework for modelling the geometry equilibration of
	crystalline defects. We formulate the equilibration of crystal defects as
	a variational problem on a discrete energy space and establish
	qualitatively sharp far-field decay estimates for the equilibrium configuration.

	This work extends \cite{2013-defects-v3} by admitting infinite-range
	interaction which in particular includes some quantum chemistry based
	interatomic interactions.
\end{abstract}

\maketitle

\section{Introduction}
\label{sec:intro}

\def\Hw{\mathscr{L}}
\def\Wdecay{\mathfrak{W}}
\def\wf{\mathfrak{w}}
\def\n{\mathfrak{n}}
\def\hom{{\rm h}}

The study of crystalline defects is one of the primary \hc{tasks} in material modelling.
Even small defect concentrations can have a major influence on
key physical and chemical properties of materials. Two of the most common and
important classes are point defects and dislocations.

In computational simulations of defects, a small finite domain is employed,
while the far-field behaviour is described via an artificial boundary condition.
Different approaches (see, e.g. \cite[Ch. 6]{HandbookMaterialsModelling})
include cluster calculations, where a cluster containing the defect is defined
with clamped boundary conditions; supercell methods, where the system is
confined to a large box with periodic boundary conditions; and embedded cluster
methods, which attempt to remedy some of the deficiencies of the other two
methods by embedding the cluster in a simpler representation of the surrounding
lattice (for example, if the cluster is described quantum mechanically, the
embedding region may use interatomic potentials; if the cluster is simulated by
interatomic potentials, the embedding region may use continuum models).

The literature devoted to assessing the accuracy and
in particular the cell size effects of such simulations is relatively sparse;
see e.g. \cite{Balluffi,CaiBulChaLiYip2003,Hine2009,MakovPayne1995}
and references therein for a representative sample. In particular, we refer to
\cite{2013-defects-v3} for an extensive study of this problem, which develops a
rigorous framework within which the accuracy of different boundary conditions
can be precisely assessed.
A key restriction of \cite{2013-defects-v3} is the assumption
that each atom only interacts with a finite range neighbourhood. This assumption
is in particular not satisfied for electronic structure models. The aim of the
present work is to extend the rigorous analysis to such cases where interaction
has infinite-range, though we will require some control on the rate of decay
of interaction at infinity. Important models satisfying our assumptions
include Lennard--Jones, Thomas--Fermi--von Weizs\"{a}cker and tight binding.

In the mathematics literature, considerable progress has been made on studying
electronic ground states corresponding to local defects in crystals e.g.
\cite{CancesLeBris:preprint,CancesDeleurenceLewin:2008,CattoLeBrisLions}.
Much less is understood
about the related geometry optimisation (or, lattice relaxation) problem,
\cite{HudsonOrtner:disloc,2013-defects-v3,2016-multipt} in particular the
coupling betweeen electronic and geometry relaxation is essentially open. The
present work addresses this coupling and establishes some key results: we
give general conditions under which the \fn{lattice} relaxation problem can be
formulated as a variational problem on a Hilbert space, and we establish
sharp rates of decay of the discrete equilibrium configurations.

The main results of this paper are presented in Sections \ref{sec:pointD} and
\ref{sec:dislocation}, including the formulation of equilibration as a
variational problem and the generic decay estimate:
\begin{eqnarray}\label{result_intro}
|D\bar{u}(\ell)| \leq C(1+|\ell|)^{-d}
\log^t(2+|\ell|),
\end{eqnarray}
where $\bar{u}$ is the equilibrium
corrector displacement,  $Du(\ell)$ is the finite-difference stencil defined in Section
\ref{sec:intro}, $d$ is the dimension, $t=0$ for point defects and $t=1$ for
dislocations.

The results have a range of consequences, for example:
(1) present a mathematical model for a defect in an atomistic description of a crystalline
solid, which is an ideal benchmark problem for computational multi-scale methods,
but can also be used as a building block for the analytical coarse-graining of material models;
(2) provide a foundation for the analysis of boundary conditions for defect core
simulations;
and (3) allow us to perform rigorous error estimates for QM/MM coupling methods \cite{chen15b,csanyi05}.
We remark that the QM/MM coupling constructions and analysis
described in \cite{chen15b} can be applied verbatim to electronic structure satisfying
the locality properties in Section \ref{sec:siteE},
such as tight binding and TFW (see Section \ref{sec:examples}).

\subsection{Outline}

In Section \ref{sec:generalities} we formulate lattice relaxation as a variational problem:
we specify the reference configuration of a defective system
and introduce the displacement and site energy formulations,
construct suitable discrete function spaces that will be used in our analysis; and define the
energy difference functional on which the variational formulation is based.
In Section \ref{sec:results} we state our main results on the decay of discrete
equilibrium displacement fields for point defects and dislocations.
In Section \ref{sec:examples}, we show that our \fn{lattice} relaxation framework can be applied to
several practical models, that are not covered by previous works.
In Section \ref{sec:conclusions}, we make concluding remarks and discuss future perspectives.
All proofs and technical details are gathered in the appendixes.

\subsection{Notation}
\label{sec:notations}

We define the following spaces with dimension $\fn{d \in \mathbb{N}}$ and index $\fn{k > 0}$
\begin{align}
\Hw_{k,d}:=\Big\{ &\fn{\wf: [0,\infty) \to (0,\infty), ~\| \wf \|_{\Hw_{k,d}} := \| \wf \|_{L^{\infty}([0,\infty))} +} \int_0^{\infty} r^{k+d-1} \wf(r)\dr <\infty,
\nonumber
\\
& \qquad \qquad \qquad \fn{\wf} \text{ is a monotonically decreasing function} \Big\} \quad{\rm and}
\label{space:Hwk}
\\
\Hw_{k,d}^{\log}:=\Big\{ &\wf\in \Hw_{k,d},
~\| \wf \|_{\Hw_{k,d}^{\log}} := \fn{\| \wf \|_{L^{\infty}([0,\infty))} +} \int_0^{\infty} r^{k+d-1} \log^{2}(1+r) \wf(x)\dr <\infty  \Big\}.
\label{space:Hwk_dislocation}
\end{align}
For example, we have
$(1+x)^{-k-d-\varepsilon}\in\Hw_{k,d} \cap \Hw_{k,d}^{\log}$ for all $\varepsilon>0$;
%\hc{$(1+x)^{-k-d-2-\varepsilon}\in\Hw_{k,d}^{\rm D}$ for any $\varepsilon>0$;}
and $e^{-\alpha x}\in\Hw_{k,d} \cap \Hw_{k,d}^{\log}$ for all $\alpha>0$ and index $k > 0$.
Whenever it is clear, we will omit the dimension $d$ and write $\Hw_{k}=\Hw_{k,d}$, $\Hw_{k}^{\log}=\Hw_{k,d}^{\log}$.

The symbol $\langle \cdot, \cdot \rangle$ denotes an abstract duality pairing between a Banach space and its dual.
The symbol $|\cdot|$ normally denotes the Euclidean or Frobenius norm, while $\| \cdot \| $ denotes an operator norm. \fn{For a countable set $I$ and $p \in [1,\infty]$, we use $|\cdot|_{\hc{l}^{p}(I)}$ to denote the $p$-norm over $I$. Additionally, for any finite set $A$, we use $|A|$ to denote its cardinality, though this will be clear from context.}

For a differentiable function $f$, $\nabla f$ denotes the Jacobi matrix and
$\nabla_r f = \nabla f \cdot r$ defines the directional derivative. For \fn{an infinite} \hc{dimensional Banach space $X$ and} $E\in C^2(X)$, the first and second variations are denoted by
$\<\delta E(u), v\>$ and $\<\delta^2 E(u) w, v\>$ for $u,v,w\in X$.
For higher variations of $E\in C^j(X)$, we will use the notation
$\<\delta^j E(u),{\pmb v}\>$ with ${\pmb v}=(v_1,\cdots,v_j)$. \fn{Additionally, for $x,x' \in \R^d$, we use $\delta_{xx'}$ to denote the Kronecker delta function.}

Throughout the article we use the following notation, that for $b \in A$, we denote ${A\setminus b = A \setminus\{b\}}$ and $A-b = \{a-b \sep a\in A\setminus b \}$. 

The symbol $C$ denotes a generic positive constant that may change from one line to the next. When estimating rates of decay or convergence, $C$
will always remain independent of the system size, lattice position or the choice of test functions. The dependence of $C$ on model parameters will normally be clear from the context
or stated explicitly.

\subsection{List of assumptions and symbols}

\def\Lhom{\Lambda^{\rm h}}
\def\Ihom{I^{\rm h}}
\def\Iback{I^{\rm d}}
\def\asRC{\textnormal{\bf (RC)}}
\def\asSE{\textnormal{\bf (S)}}
\def\asSER{\textnormal{\bf (S.R)}}
\def\asSEL{\textnormal{\bf (S.L)}}
\def\asSEH{\textnormal{\bf (S.H)}}
\def\asSEP{\textnormal{\bf (S.P)}}
\def\asSEI{\textnormal{\bf (S.I)}}
\def\asSEPS{\textnormal{\bf (S.PS)}}
\def\asLS{\textnormal{\bf (LS)}}
\def\dl{\mathsf{d}_{\rm l}}
\def\ds{\mathsf{d}_{\rm s}}
\def\Winf{\mathscr{H}^{1,\infty}}
\def\DasSEL{{\bf (DSE.L)}}
\def\idx{{\rm idx}}
\def\asH{\textnormal{\bf (H)}}
\def\asP{\textnormal{\bf (P)}}
\def\asD{\textnormal{\bf (D)}}

Our analysis requires a number of assumptions on the model or the
underlying atomistic geometry.  We list these with page references and brief summaries.
We also list the symbols that are most frequently used in the paper.

\begin{center}
	\begin{minipage}{0.9\textwidth}
		\begin{tabular}{rrl}
			% Label & Page & Description \\[1mm  \hline \\[-3mm]
			\asRC & p. \pageref{as:asRC} & assumptions for reference configurations \\
			\asSE & p. \pageref{as:SE:pr} &  assumptions for site strain potential	\\
			\asH & p. \pageref{as:H} & assumptions for homogeneous lattices  \\
			\asLS & p. \pageref{as:LS} & assumptions for lattice stability \\
			\asP & p. \pageref{as:P} & assumptions for point defects \\
			\asD & p. \pageref{as:D} & assumptions for dislocations \\[1ex]
			$\Hw_{k,d}, \Hw_{k,d}^{\log}$ & p. \pageref{space:Hwk} &  spaces for weight functions
			\\
			$\L, \Lhom, \L^{\hom}_{*}, \L_0$ & p. \pageref{ref-config}, \pageref{ref-config-L0} &  reference configuration \\
			$D, \widetilde{D}$& p. \pageref{finite-difference}, \pageref{eq:defn_Dprime} & finite difference and elastic strain (permuted finite-diff.)  \\
			$\|D\cdot\|_{\hc{l^2_{\mathcal{N}}}}$ & p. \pageref{eq: nn norm} & norm over nearest neighbour bonds \\
			$\UsH, \Usz$ & p. \pageref{space:UsH} & energy space, compact displacements  \\
			$\Adm^0, \Adm$ & p. \pageref{space:adm} %, \pageref{eq:AL def}
			&  spaces of admissible configurations \\
			$\mathscr{H}, \mathscr{H}^{\rm c}, \hc{\mathscr{H}_{\frak{m},\lambda}}$ & p. \pageref{space:HL} & spaces for displacements  \\
			$V_{\ell}, \hc{V^{\hom}}, \Phi_{\ell}, \fn{\Phi^{\hom}_{\ell}}$ & p. \pageref{site-strain-potential}, \pageref{siteE-sietV} & site strain potential and site energy \\
			$V_{\hc{\ell},{\bm \rho}}$ & p. \pageref{eq:dV} & derivative of site potential \\
			$\| D \cdot \|_{\hc{l^p_{\wf,k}}}$   & p. \pageref{eq:norm-def-1} & norm with weighted stencils \\
			$\E, \E^{\hom}$ & p. \pageref{eq:Ediff}, \pageref{energy-diff-hom} & energy difference functionals \\
			$I^{\hom}_{1}, I^{\hom}_{2}, I^{\rm d}$& p. \pageref{sec:proof_interpolation} & interpolation operators \\
			$S, S_0, S^*$& p. \pageref{S:op} & slip operators \\
			$e$& p. \pageref{eq:defn_elastic_strain} & elastic strain, predictor configuration
			% $\Gr$& p. \pageref{} & Green's function \\
		\end{tabular}
	\end{minipage}
\end{center}

\section{Preliminaries}
\label{sec:generalities}

\subsection{Reference and deformed configuration}
\label{sec:refConfig}

We consider a single defect embedded in a homogeneous crystalline bulk.
Let $d \in\{2,3\}$ denote the dimension of the reference configuration, $A\in\R^{d \times d}$ be a nonsingular matrix, 
then we define a homogeneous crystal reference configuration by the Bravais lattice $\Lhom:=A\Z^{d}$.
\label{ref-config}
It is  convenient to define $\Lhom_{*} = \Lhom \setminus 0 = \Lhom - 0$, 
which satisfies $\Lhom_{*} = \Lhom - \ell$ for all $\ell \in \Lhom$. 
\fn{Also, for $R > 0$, $B_R \subset \R^d$ denotes the ball of radius $R$ centred at the origin.}

The reference configuration for the defect is a set $\Lambda \subset \R^{d}$ satisfying
\begin{flushleft}\label{as:asRC}
	\asRC \quad
	$\exists ~\Rcore>0$, such that
	$\Lambda\backslash B_{\Rcore} = \Lhom\backslash B_{\Rcore}$ and
	$\L \cap B_{\Rcore}$ is finite.
\end{flushleft}
Clearly, \fn{given $\Lhom$,} choosing $\L = \Lhom$ satisfies \asRC.

For any $\ell\in\L$, we define the set of its \fn{neighbouring sites:}
\begin{align}\label{def1:Nl}
\mathcal{N}(\ell) :=& \left\{ \, m \in \L \setminus \ell \, \Big| \, \exists \,
\fn{a} \in \mathbb{R}^{d} \text{ s.t. }
|a - \ell| = |a - m| \leq |\fn{a} - k| \quad \forall \, k \in \L \, \right\}. \,\,
\end{align}
\fn{The set $\mathcal{N}(\ell)$ contains the nearest neighbours of the site $\ell$. 
	We also remark that for $\L = \Lhom =A\Z^{d}$, there exists a nonsingular matrix $A'\in\R^{d \times d}$ 
	satisfying $A\Z^{d} = A'\Z^{d}$,} such that for all $\ell \in \L$,
\begin{eqnarray}\label{ass:A:nn} 
\fn{
\mathcal{N}(\ell) \supseteq \{\ell\pm A'e_i | 1 \leq i \leq d  \}.
}
\end{eqnarray}
%
% Throughout this paper, we will assume that the matrix $A$ is chosen such that \eqref{ass:A:nn} is satisfied.} 
\fn{For the sake of convenience, we identify the matrices $A$ and $A'$, 
so we assume that \eqref{ass:A:nn} is satisfied with $A$ replacing $A'$.}

Let $\ds \in \{2,3\}$ denote the physical dimension of the system, then 
\fn{
%Now let $\L_{0}\subset\R^{\ds}$	\label{ref-config-L0} satisfy \asRC~with respect to a Bravais lattice $\L_{0}^{\hom}$ (for dislocations we consider $d \neq\ds$, i.e. $\L \neq \L_{0}$), then 
we introduce ${u_0: \L \rightarrow\R^{\ds}}$ as a fixed predictor prescribing the far-field boundary conditions and let ${y_0(\ell):=x(\ell) +u_0(\ell)}$ 
	denote the corresponding deformation predictor of the atomic or nuclear configuration, where $x:\L \to \R^{\ds}$ is a linear map that will be specified in the next subsection.
	In general, we decompose a deformation $y$ into the predictor $y_0$ 
	and a displacement corrector $u$ satisfying $y = y_{0} + u$.
	For systems with point defects
	or straight line dislocations, the appropriate predictors $u_0$ will be, respectively, specified
	in Sections~\ref{sec:pointD} and \ref{sec:dislocation}.}

\subsection{Displacement spaces}
\label{sec:space}

\fn{We now introduce the various spaces for displacement functions that will be used throughout the article. First, for $u:\L\rightarrow\mathbb{R}^{\ds}$ and $\ell\in\L$, $\rho\in\L-\ell$,
	we define the finite-difference}
\begin{eqnarray}\label{finite-difference}
D_{\rho}u(\ell) := u(\ell+\rho)-u(\ell)
\end{eqnarray}
and $Du(\ell):=\{D_{\rho}u(\ell)\}_{\rho\in\L-\ell}$.
We consider $Du(\ell) \in (\mathbb{R}^{\ds})^{\L-\ell}$ to be a finite-difference stencil with infinite range.
For a stencil $Du(\ell)$, we define the norms
\begin{align}\label{eq: nn norm}
\big|Du(\ell)\big|_{\mathcal{N}} := \bigg( \sum_{\rho\in \mathcal{N}(\ell) - \ell}
\big|D_\rho u(\ell)\big|^2 \bigg)^{1/2} , \,\,
\|Du\|_{\hc{l}^{2}_{\mathcal{N}}(\L)} := \bigg( \sum_{\ell \in \L} |Du(\ell)|_{\mathcal{N}}^2 \bigg)^{1/2},
\end{align}
and the corresponding functional space of finite-energy displacements
\begin{align}\label{space:UsH}
\UsH(\L;\R^{\ds}) := \big\{u:\L\rightarrow\mathbb{R}^{\ds} ~\big\lvert~
\|Du\|_{\hc{l}^{2}_{\mathcal{N}}(\L)}<\infty \big\}.
\end{align}
We also \hc{define} the following subspace of compact displacements
\begin{align}\label{space:Usz}
\Usz(\L;\R^{\ds}) := \big\{u:\L\rightarrow\mathbb{R}^{\ds} ~\big\lvert~
\exists~R>0 \text{ s.t. } u={\rm const} \text{ in } \L\setminus \hc{B_R} \big\}.
\end{align}
As we consider $\ds \in \{2,3\}$ to be fixed, we simply denote
$\UsH(\L;\R^{\ds})$ and $\Usz(\L;\R^{\ds})$ by $\UsH(\L)$ and $\Usz(\L)$, respectively, from this point onwards.
The next lemma follows directly from \cite[Proposition 9]{ortner_shapeev12}.

\begin{lemma}\label{lemma-WcW12dense}
	If {\asRC}~is satisfied, then
	$\UsH(\L)$ is the closure of $\Usz(\L)$ with respect to the norm  $\|D\cdot\|_{\hc{l}^{2}_{\mathcal{N}}(\L)}$.
\end{lemma}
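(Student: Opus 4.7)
The plan is to reduce the claim to the density statement for a homogeneous Bravais lattice, which is exactly Proposition 9 of \cite{ortner_shapeev12}, and then transfer the approximating sequence back to the defective lattice $\L$ using \asRC.

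First, I would exploit \asRC to compare $\UsH(\L)$ and $\UsH(\Lhom)$. Since $\L$ and $\Lhom$ coincide outside $B_{\Rcore}$, the nearest-neighbour sets $\mathcal{N}(\ell)$ entering $|Du(\ell)|_{\mathcal{N}}$ agree for all $\ell$ outside a slightly larger ball containing both finite sets $\L \cap B_{\Rcore}$ and $\Lhom \cap B_{\Rcore}$. Consequently any $u \in \UsH(\L)$ can be extended to $\tilde u \in \UsH(\Lhom)$ by assigning arbitrary values on the finite set $(\Lhom\setminus\L)\cap B_{\Rcore}$, and the restriction of any $\tilde v \in \Usz(\Lhom)$ to $\L$, completed arbitrarily on $(\L\setminus\Lhom)\cap B_{\Rcore}$, lies in $\Usz(\L)$. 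These are finite-core corrections; they change the seminorm only by a bounded additive constant independent of the approximation parameter, and they do not affect the "eventually constant" property. So it suffices to approximate elements of $\UsH(\Lhom)$ by elements of $\Usz(\Lhom)$.

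For the homogeneous case, I would use the standard truncation scheme: given $u\in \UsH(\Lhom)$, choose a smooth cutoff $\eta_R \in C^\infty(\R^d;[0,1])$ with $\eta_R\equiv 1$ on $B_R$, $\eta_R\equiv 0$ outside $B_{2R}$, and $|\nabla\eta_R|\leq C/R$. Setting $c_R \in \R^{\ds}$ to be the mean of $u$ over the annular site set $(B_{2R}\setminus B_R)\cap\Lhom$ and defining $u_R(\ell) := \eta_R(\ell)(u(\ell)-c_R)+c_R$, one checks that $u_R$ agrees with $u$ inside $B_R$ and equals the constant $c_R$ outside $B_{2R}$, so $u_R \in \Usz(\Lhom)$. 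A discrete product-rule expansion bounds $|D(u-u_R)(\ell)|_{\mathcal{N}}$ pointwise by a combination of $(C/R)|u(\ell)-c_R|$ and $(1-\eta_R(\ell))|Du(\ell)|_{\mathcal{N}}$, both supported in an annulus of width $\sim R$.

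The main obstacle is the discrete Poincaré estimate
\[ R^{-2}\sum_{\ell\in(B_{2R}\setminus B_R)\cap\Lhom}|u(\ell)-c_R|^2 \;\leq\; C\sum_{\ell\in(B_{3R}\setminus B_{R/2})\cap\Lhom}|Du(\ell)|_{\mathcal{N}}^2,\]
which holds because the centred annular mean kills the zero mode and the Poincaré constant on an annulus of width $R$ scales as $R^2$, cancelling the $R^{-2}$ prefactor; this is precisely the content of \cite[Proposition 9]{ortner_shapeev12}. Combined with tail decay of the convergent sum $\|Du\|_{l^2_{\mathcal{N}}(\Lhom\setminus B_R)}\to 0$ as $R\to\infty$ (possibly after a dyadic-radius selection to ensure both contributions vanish along the same subsequence), this gives $\|D(u-u_R)\|_{l^2_{\mathcal{N}}(\Lhom)}\to 0$. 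Pulling the resulting sequence back to $\L$ via the extension--restriction bookkeeping above yields a sequence in $\Usz(\L)$ approximating the original $u\in \UsH(\L)$ in the seminorm $\|D\cdot\|_{l^2_{\mathcal{N}}(\L)}$, completing the proof.
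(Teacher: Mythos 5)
Your argument is correct and is essentially the paper's: the paper disposes of this lemma by citing \cite[Proposition 9]{ortner_shapeev12}, and the truncation you describe --- a cutoff $\eta_R$ applied to $u$ minus its annular mean, controlled by a scaled discrete Poincar\'e inequality on the annulus together with the vanishing tail of $\|Du\|_{l^2_{\mathcal{N}}(\L\setminus B_R)}$ --- is exactly the construction underlying that citation, and it reappears almost verbatim in the paper's own proof of Lemma \ref{Lemma-Int-hom} in Appendix D. The only cosmetic point is that in your extension/restriction bookkeeping the restricted approximants should be completed on $(\L\setminus\Lhom)\cap B_{\Rcore}$ with the original values of $u$ (not arbitrarily), so that they coincide with $u$ on all of $\L\cap B_R$; this is clearly what you intend, and no subsequence selection is needed since both error terms vanish as $R\to\infty$.
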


In order to state our assumptions on the model,
we require spaces of admissible atomic or nuclear arrangements. 
\fn{As a technicality arising from the dislocations case, it is necessary to consider an auxiliary lattice $\L_{0} \subset \R^{\ds}$ 
satisfying {\asRC} with respect to a Bravais lattice $\L_{0}^{\hom} \subset \R^{\ds}$. 
In particular, we have that $\L \neq \L_{0}$ for dislocations and $\L = \L_{0}$ for point defects.}\label{ref-config-L0}
For $\frak{m}, \lambda > 0$, we \fn{introduce}
\begin{align}\label{space:adm}
\Adm^{0}_{\frak{m},\lambda}(\L_{0}) &:= \big\{ \, y:\L_{0} \rightarrow \R^{\ds} \, \big| \,\,\, 
B_{\lambda}(x) \cap y(\L_{0}) \neq \emptyset \quad \forall \, x \in \R^{\ds}, \nonumber
\\ \nonumber
&\qquad \qquad \quad \,\, |y(\ell) - y(m)| \geq \frak{m} |\ell-m|
\quad\forall~ \ell, m \in \L_{0} \, \big\}, \quad \text{and } \\[1ex]
\Adm^{0}(\L_{0}) &:= \bigcup_{\frak{m}>0} \bigcup_{\lambda>0} \Adm^{0}_{\frak{m},\lambda}(\L_{0}).
\end{align}

The parameter $\lambda > 0$ ensures that there are no large regions that are devoid of nuclei. 
We require the constraint $\lambda < \infty$ for the definition of the TFW site energy, see Section \ref{sec:TFW}.
The condition $\mathfrak{m}>0$ prevents the accumulation of atoms which is necessary
for many interatomic interactions, including Lennard--Jones, TFW and tight binding,
to define the site energies; see Sections \ref{sec:LJ}, \ref{sec:tb} and \ref{sec:TFW}.

Using $\Adm^{0}(\L_{0})$, we define the space of admissible configurations $\Adm(\L)$ for point defects and dislocations \fn{as follows}.

For point defects, we consider $d = \ds \in \{2,3\}$, $\L = \L_{0}$, $\L^{\hom} = \L^{\hom}_{0}$ and define the admissible space of arrangements to be
$\Adm_{m,\lambda}(\L) := \Adm^{0}_{m,\lambda}(\L)$ and $\Adm(\L) := \Adm^{0}(\L)$. 
\fn{We also define ${x, u_{0}: \L \rightarrow \R^{\ds}}$, $x^{\hom}: \Lhom \rightarrow \R^{\ds}$ by $x(\ell) = \ell$, $u_{0}(\ell) = 0$ for $\ell \in \L$ and $x^{\hom}(\ell') = \ell'$ for  $\ell' \in \Lhom$. We remark that for a finite number of substitution defects in an otherwise perfect lattice, we have that $\L = \Lhom$, otherwise $\L \neq \Lhom$ for all other point defect types.}

\fn{We model dislocations by following the setting used in \cite{2013-defects-v3}.
We consider a model for straight line dislocations obtained by projecting a 3D
crystal into 2D, hence we consider $d=2$, $\ds = 3$. Given a Bravais lattice $\L_{0} = B\Z^3$, oriented and rescaled such that the dislocation direction is $e_3 = (0,0,1) \in B\Z^3$
and with a Burgers vector of the form $\burg=(\burg_1,0,\burg_3) \in B\Z^3$. Additionally, we suppose that if $(\ell_{1}, \ell_{2}, \ell_{3}), (\ell_{1}, \ell_{2}, \ell'_{3})\in \L_{0}$, then $\ell_{3} - \ell'_{3} \in \Z$ and we consider displacements from $B\Z^3$ to $\R^3$ that are periodic in the dislocation
direction $e_{3}$. Thus, we choose a projected reference lattice
	\begin{align}
	\Lambda:=A\Z^2 = \Lhom =\{(\ell_1,\ell_2) ~|~\ell=(\ell_1,\ell_2,\ell_3)\in B\Z^3\}, \label{eq:L-Lhom-dis-def}
	\end{align}
which is again a Bravais lattice as $A$ is non-singular. 
We can construct a linear map $P_{0}: \L \to \L_{0}$, such that 
\begin{align} \label{eq:P0-def}
P_{0}(\ell_{1},\ell_{2}) = (\ell_{1},\ell_{2},P_{0}(\ell_{1},\ell_{2})_{3}), 
\end{align} 
where the third component is linear in $(\ell_{1},\ell_{2})$. 
We remark that the choice of the map $P_{0}$ is arbitrary.
Using this, given a configuration $\wt y: \L_{0} \to \R^{3}$, define the}
projected configuration 
${y: \L \to \mathbb{R}^{3}}$ by:
\begin{align}
\fn{
{ y}(\ell_{1},\ell_{2}) := \wt y(P_{0}(\ell_{1},\ell_{2})) - (0,0,P_{0}(\ell_{1},\ell_{2})_{3}). }
\label{eq:y def}
\end{align}
\fn{Similarly, we can express $\wt y$ in terms of $y$ by the expression 
\begin{align} 
	\wt y(\ell_{1},\ell_{2},\ell_{3}) = y(\ell_{1},\ell_{2}) + (0,0,\ell_{3}). \label{eq:wt y from y def}
\end{align} }
\fn{
%	It is convenient to identify $\wt y \in \Adm^{0}(\L_{0})$ with ${ y}$, hence 
We define the admissible space
\begin{align}
\Adm(\L) := \left\{ \, { y} : \L \to \mathbb{R}^{3} \,\, | \,\, {\wt y} \text{ given by } \eqref{eq:wt y from y def}
\text{ satisfies } {\wt y} \in \Adm^{0}(\L_{0})  \, \right\}, 
\label{eq:AL def}
\end{align}
and let $\Adm_{m,\lambda}(\L)$ denote the subspace corresponding to $\Adm^{0}_{m,\lambda}(\L_{0})$.
As $\L = \Lhom$, we define $x: \L \to \R^3$ by $x(\ell_{1},\ell_{2}) = (\ell_{1},\ell_{2},0)$ and let $x^{\hom} \equiv x$.}
%	, then for $\wt y_{0} \in \Adm(\L)$ corresponding to $y_{0} \in \Adm^{0}(\L_{0})$, we define $\wt u_{0} := \wt y_{0} - x$.
%Let ${{\wt y_{0}} \in \Adm(\L)}$ correspond to $y_{0} \in \Adm^{0}(\L_{0})$ and define ${\wt u_{0}} : \L \to \R^{3}$ by ${{\wt u_{0}}(\ell) = {\wt y_{0}}(\ell) - \ell}$.
%
%We commit a minor abuse of notation by subsequently denoting ${\wt y_{0}}$ and ${\wt u_{0}}$ by $y_{0}$ and $u_{0}$, unless specified otherwise. 
The modelling of dislocations is discussed further in Section \ref{sec:dislocation}.

\fn{Our} analysis requires that the predictor $y_{0} \in \mathscr{A}(\L)$, 
and we ensure this for the predictors constructed in Sections \ref{sec:homogeneous}--\ref{sec:dislocation}; 
see in particular Lemma~\ref{Lemma - y0 Adm}.

It is also convenient to introduce the following spaces, 
which represent displacements corresponding to a subset of $\mathscr{A}(\L)$
\hc{or $\mathscr{A}_{\frak{m},\lambda}(\L)$}:
\begin{align}\label{space:HL}
\nonumber
\mathscr{H}(\L) &:= \big\{ \, u \in \UsH(\L) \, \big| \,\,\, y_{0} + u \in \Adm(\L) \big\}, \\
\mathscr{H}^{\rm c}(\L) &:= \big\{ \, u \in \Usz(\L) \, \big| \,\,\, y_{0} + u \in \Adm(\L) \big\} \hc{, \qquad \hc{\text{and}}} \\
\nonumber
\hc{\mathscr{H}_{\frak{m},\lambda}(\L)} &:= \hc{\big\{ \, u \in \UsH(\L) \, \big| \,\,\, 
	y_{0} + u \in \Adm_{\frak{m},\lambda}(\L) \big\}.}
\end{align}

The following result is a consequence of Lemma \ref{lemma-WcW12dense} 
\fn{and its proof is given in Appendix~\ref{Subsection-Proof of Lemma HcHdense}.

Recall that for $y : \L \to \R^{\ds}$, we define $\wt{y}: \L_{0} \to \R^{\ds}$ using \eqref{eq:wt y from y def} in the dislocations setting. For point defects, it is also convenient to define $\wt y := y$, since $\L = \L_{0}$. In either case, we have that $y \in \Adm(\L)$ if and only if $\wt y \in \Adm^{0}(\L_{0})$.

\begin{lemma}\label{lemma-HcHdense}
	If {\asRC} holds and 
%	$y_{0} \in \Adm(\L)$ satisfies:
%	\begin{align} \label{eq:D-u0-ass}
%	\lim_{\ell \in \L, |\ell| \to \infty}\max_{\rho \in \mathscr{N}(\ell) - \ell} |D_{\rho}u_{0}(\ell)| = 0,
%	\end{align}
there exist $\frak{m}, M > 0$ and $\Omega \subset \R^{\ds}$ and an interpolation $I\wt y_{0}: \Omega \to \R^{\ds}$ such that for all $x_{1}, x_{2} \in \Omega$
\begin{align} \label{eq:Iy0-ass}
\frak{m}|x_{1} - x_{2}| \leq |I\wt{y}_{0}(x_{1}) - I\wt{y}_{0}(x_{2})| \leq M|x_{1} - x_{2}|,
\end{align}
%hence the restriction $I\wt y_{1}:(\R^{2} \setminus D_{R_{0}} ) \times \R \to \R^{3}$ is smooth, injective and is also a homeomorphism onto its image. Moreover there exists open, bounded $\Omega \subset \R^{2}$ such that $I \wt y_{1}( (\R^{2} \setminus D_{R_{0}}) \times \R ) = (\R^{2} \setminus \Omega ) \times \R$.
where $\Omega = \R^{\ds}$ for point defects and $\Omega = ( \R^{2} \setminus B_{R_{0}} ) \times \R$, with $R_{0} > 0$, for dislocations. Then \fn{$\mathscr{H}(\L)$ is open and}
	$\mathscr{H}^{\rm c}(\L)$ is dense in~$\mathscr{H}(\L)$ with respect to the
	$\|D\cdot\|_{\hc{l}^2_{\mathcal{N}}(\L)}$ norm. Moreover, 
%	\begin{align} \label{eq:H-Lambda-equiv-def}
%	\mathscr{H}(\L) =  \big\{ \, u \in \UsH(\L) \, \big| \,\,\,
%	|(y_{0}+u)(\ell) - (y_{0}+u)(m)| \neq 0 \quad\forall~ \ell, m \in \L\fn{, \text{ s.t. } \ell \neq m} \big\}.
%	\end{align}
	\begin{align} \label{eq:H-Lambda-equiv-def}
	\mathscr{H}(\L) =  \big\{ \, u = y - y_{0} \in \UsH(\L) \, \big| \,\,\,
	|\wt{y}(\ell) - \wt{y}(m)| \neq 0 \quad\forall~ \ell, m \in \L_{0}\fn{, \text{ s.t. } \ell \neq m} \big\}.
	\end{align}
	Therefore, if $\wt y_{0}(\ell) \neq \wt y_{0}(m)$ for all $\ell, m \in \L_{0}$ such that $\ell \neq m$, then $y_{0} \in \Adm(\L)$.
\end{lemma}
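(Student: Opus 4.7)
The plan is to first establish the characterisation \eqref{eq:H-Lambda-equiv-def}, which is the main content of the lemma, and then to deduce openness of $\mathscr{H}(\L)$, density of $\mathscr{H}^{\rm c}(\L)$ in $\mathscr{H}(\L)$, and the final claim about $y_{0}$ as short corollaries.

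For the characterisation, the inclusion $\mathscr{H}(\L) \subseteq \{u \in \UsH(\L) : \wt{y} \text{ injective on } \L_{0}\}$ is immediate from the lower bound in the definition of $\Adm^{0}_{\frak{m},\lambda}$. I focus on the reverse inclusion, which requires producing $\frak{m}', \lambda' > 0$ such that $|\wt y(\ell) - \wt y(m)| \geq \frak{m}'|\ell - m|$ for all distinct $\ell, m \in \L_{0}$ and $B_{\lambda'}(x) \cap \wt y(\L_{0}) \neq \emptyset$ for every $x \in \R^{\ds}$. For the separation bound I would combine the path-integration estimate $|u(\ell) - u(m)| \leq C|\ell - m|^{1/2} \|Du\|_{l^{2}_{\mathcal{N}}(\L)}$, obtained by writing $u(\ell) - u(m)$ as a telescoping sum along a nearest-neighbour path of length $O(|\ell - m|)$ and applying Cauchy--Schwarz, with the stencil decay $|Du(\ell)|_{\mathcal{N}} \to 0$ as $|\ell| \to \infty$, which is a consequence of $\|Du\|_{l^{2}_{\mathcal{N}}(\L)} < \infty$. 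Splitting the pairs into three regimes then yields a uniform bound: for $|\ell - m| \geq R_{1}$ large, the bi-Lipschitz lower bound on $I\wt y_{0}$ dominates the path-integration error; for $|\ell - m| < R_{1}$ with $\max(|\ell|, |m|)$ large, the stencil decay makes $|u(\ell) - u(m)|$ arbitrarily small while $|\wt y_{0}(\ell) - \wt y_{0}(m)|$ stays bounded below by a positive constant; and only finitely many pairs remain in a bounded region, on which injectivity of $\wt y$ forces a strictly positive minimum.

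The hard part will be the covering condition. My approach is to extend $u$ to a continuous piecewise affine interpolation $U: \R^{d} \to \R^{\ds}$ and analyse the map $Y := I\wt y_{0} + U$ on $\Omega$. The estimate $|U(x)| \leq C |x|^{1/2} \|Du\|_{l^{2}_{\mathcal{N}}(\L)}$ from path integration makes $Y$ proper, and combined with the bi-Lipschitz homeomorphism property of $I\wt y_{0}$ a degree argument via the homotopy $I\wt y_{0} + tU$ ($t \in [0,1]$) yields $Y(\R^{d}) \supseteq \R^{\ds} \setminus K$ for some compact $K$; passing from $Y$-values to nearby lattice values using the local Lipschitz constant of $Y$ (which is controlled at infinity by the stencil decay of $Du$) produces $\lambda'$-density on $\R^{\ds} \setminus K$, and enlarging $\lambda'$ by $\mathrm{diam}(K)$ handles the bounded region. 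For dislocations I would additionally exploit that $\wt y$ is shift-covariant in $e_{3}$, so the $\ell_{3}$-coordinate trivially sweeps all of $\R$ once $(\ell_{1},\ell_{2})$ is chosen, reducing the $3$D covering problem to its $2$D projection outside the dislocation core, where $I\wt y_{0}$ is again bi-Lipschitz and the same degree argument applies.

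Once the characterisation is in place, the remaining claims are short. \emph{Openness}: given $u \in \mathscr{H}(\L)$ with separation constant $\frak{m}'$, any $v \in \UsH(\L)$ with $\|Dv\|_{l^{2}_{\mathcal{N}}(\L)} < \frak{m}' c_{0}^{1/2}/C$ (where $c_{0} > 0$ is the minimum lattice spacing in $\L_{0}$) satisfies $|v(\ell) - v(m)| < |\wt y(\ell) - \wt y(m)|$ for all $\ell \neq m$ by path integration, so $\wt{y + v}$ remains injective and the characterisation gives $u + v \in \mathscr{H}(\L)$. \emph{Density of $\mathscr{H}^{\rm c}(\L)$}: combining openness with Lemma~\ref{lemma-WcW12dense}, any sequence $u_{k} \in \Usz(\L)$ with $u_{k} \to u$ in $\UsH(\L)$ eventually lies in the open set $\mathscr{H}(\L)$, and hence in $\Usz(\L) \cap \mathscr{H}(\L) = \mathscr{H}^{\rm c}(\L)$. \emph{Final claim}: applying the characterisation with $u \equiv 0 \in \UsH(\L)$ shows that $y_{0} \in \Adm(\L)$ whenever $\wt y_{0}$ is injective on $\L_{0}$.
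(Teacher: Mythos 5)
Your proposal is correct and follows essentially the same route as the paper's proof: the telescoping path estimate $|u(\ell)-u(m)|\leq C|\ell-m|^{1/2}\|Du\|_{l^{2}_{\mathcal{N}}(\L)}$ together with the far-field decay of the stencil norm yields the separation bound in the same three regimes, a continuous interpolant of $u$ added to $I\wt{y}_{0}$ yields the covering condition, and openness combined with Lemma~\ref{lemma-WcW12dense} yields density. The only notable (and harmless) variation is that you obtain surjectivity onto the complement of a compact set via a degree-theoretic homotopy $I\wt{y}_{0}+tU$, whereas the paper argues that the interpolated configuration is bi-Lipschitz in the far field, hence a homeomorphism onto its image mapping the exterior of a bounded set onto the exterior of a bounded set --- two realisations of the same topological step.
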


The equivalent formulation of $\mathscr{H}(\L)$ given in \eqref{eq:H-Lambda-equiv-def} shows that
it is a suitably rich space to pose our variational problem.

We remark that the assumption \eqref{eq:Iy0-ass} is shown for dislocations in 
Lemmas~\ref{Lemma - y0 Adm} and \ref{Lemma - y0 no holes}.
% , where the proof of Lemma~\ref{Lemma - y0 no holes} is given in the preprint \cite{chen17a}. 
For point defects, as $u_{0} \equiv 0$, we can simply define $I\wt y_{0}(x) = x$, 
which trivially satisfies \eqref{eq:Iy0-ass} with $\Omega = \R^{\ds}$.}

\subsection{Site strain potential} 
\label{sec:siteE}
We first recall the linear maps $x: \L \rightarrow \R^{\ds}$, $x^{\hom}: \Lhom \rightarrow \R^{\ds}$ 
introduced in the previous subsection. 
We have $x(\ell) = \ell$, $x^{\hom}(\ell') = \ell'$, for point defects and 
$x(\ell_{1},\ell_{2}) = x^{\hom}(\ell_{1},\ell_{2}) = (\ell_{1},\ell_{2},0)$, for dislocations.

Let $\ell \in \L$ with $\L$ satisfying \asRC, then we consider a collection of mappings 
$V_{\ell}: \Dom(V_{\ell}) \rightarrow\R$ and $V^{\rm h}: \Dom(V^{\hom}) \rightarrow\R$  
with \label{site-strain-potential}
\begin{align*}
\Dom(V_{\ell}) &:= \{ \, Dw(\ell) \, | \, w:\L \rightarrow \R^{\ds} \text{ s.t. } x + w \in \mathscr{A}(\L) \, \}, \\
\Dom(V^{\hom}) &:= \{ \, Dw^{\hom}(\ell) \, | \, w^{\hom}:\Lhom \rightarrow \R^{\ds} \text{ s.t. } x^{\hom} 
+ w^{\hom} \in \Adm(\Lhom), ~\ell \in \Lhom \, \}.
\end{align*}
%where $x: \L \rightarrow \R^{\ds}$, $x^{\hom}: \Lhom \rightarrow \R^{\ds}$ are given by $x(\ell) = \ell$ for $\ell \in \L$ and $x^{\hom}(\ell') = \ell'$ for  $\ell' \in \Lhom$.

\begin{remark}
When $\L = \Lhom$, the periodicity of the lattice implies that $\Dom(V_{\ell})$ 
is independent of $\ell \in \L$ and in fact $\Dom(V_{\ell}) = \Dom(V^{\hom})$.
Otherwise, if $\L \neq \Lhom$, the stencils $Dw(\ell)$ and thus $\Dom(V_{\ell})$ 
are dependent on $\ell \in \L$.

Suppose $a \in \R^{\ds}$, $k\in\L$ and $w:\L \rightarrow \R^{\ds}$ satisfies $x + w \in \mathscr{A}(\L)$, 
then for $h \in (0,1]$, define $w_{h}(\ell) := w(\ell) + ha \delta_{\ell k}$. 
It is straightforward to see that $x + w_{h} \in \mathscr{A}(\L)$ for $h$ sufficiently small. 
This condition is sufficient for us to consider partial derivatives of $V_{\ell}$ on $\Dom(V_{\ell})$.
\end{remark}

We \fn{refer to $\{V_{\ell}\}_{\ell \in \L}$ as {\it site strain potentials} and $V^{\hom}$ 
as the {\it homogeneous} site strain potential} if the following assumptions \asSE~are satisfied. 
\fn{Below}, we consider $w$ such that $\fn{y = x + w} \in \mathscr{A}_{\frak{m},\lambda}(\L)$ 
with $\frak{m}, \lambda > 0$. \fn{We introduce the following notation: given $\ell,m \in \L$, 
we denote $r_{\ell m}(y) = |y(\ell) - y(m)|$.
When it is clear from the context, we simply denote $r_{\ell m}(y)$ by $r_{\ell m}$.}

\vskip 0.3cm

\begin{itemize}
	\label{as:SE:pr}
	\item[\asSER]
	{\it Regularity:}
	For all $\ell \in \L$, $V_{\ell}(\fn{Dw(\ell)})$ \fn{possesses} partial derivatives up to $\mathfrak{n}$-th order,
	where $\n\geq 3$ for point defects and $\n\geq 4$ for dislocations.
	We denote the partial derivatives by $V_{\ell,{\pmb \rho}}(\fn{Dw(\ell)})$ with
	$\pmb{\rho}=(\rho_1,\fn{\ldots},\rho_j)\in\b(\L-\ell\b)^{j}$,  $j=1,\fn{\ldots},\mathfrak{n}$
	and
	\begin{eqnarray}\label{eq:dV}
	V_{\hc{\ell},{\bm \rho}}\big({\bm g}\big) :=
	\frac{\partial^j V_{\hc{\ell}}\big({\bm g}\big)}
	{\partial {\bm g}_{\rho_1}\cdots\partial{\bm g}_{\rho_j}}
	\qquad{\rm for}\quad{\bm g}\in(\R^d)^{\L-\ell} .
	% ~~{\bm \rho}=(\rho_1,\fn{\ldots},\rho_j)\in A^j.
	\end{eqnarray}

	\item[\asSEL]
	{\it Locality:}
	For $j=1,\fn{\ldots},\n$,
	\fn{define 
	\begin{align}
	\mathcal{P}(j) := \big\{ &\mathcal{A}=\{A_{1},\ldots,A_{k}\} \nonumber \\
	& \quad \fn{|}~
	\mathcal{A} \textnormal{ is a partition of } \{1, \ldots,j\} \fn{ \textnormal{ into non-empty sets}} \big\} \label{eq:partition-def}
	\end{align}
 	and denote the cardinality of the subset $A_i$ by $|A_i|$ and the smallest element in $A_i$ by $i'$. Then, there exist constants
 	${C_j = C_{j}(\frak{m},\lambda) > 0}$
 	% , ${\gamma_j = \gamma_j(\mathfrak{m},\lambda) > 0}$
 	independent of~$w$,
 	such that:
 	
	for \emph{point defects} and the \emph{homogeneous lattice} - with $\wf_i \in \mathscr{L}_{i}$ for $i = 1, \ldots, j$
	\begin{align}
	&|V_{\ell,{\bm \rho}}\big(\fn{Dw(\ell)}\big)|  \leq
	C_j \sum_{\{A_{1},\ldots,A_{k}\} \in \mathcal{P}(j)}
	\bigg( \prod_{1\leq i\leq k} \wf_{|A_i|}\big(|\rho_{i'}|\big)
	\prod_{m \in A_{i}} \delta_{\rho_{i'}\rho_{m}} \bigg),
	 \label{eq:Vloc}
	\end{align} }
		\fn{for \emph{dislocations} - with $\wf^{\log}_i \in \mathscr{L}^{\log}_{i}$ for $i = 3, \ldots, j + 2$
	\begin{align}
	&|V_{\ell,{\bm \rho}}\big(\fn{Dw(\ell)}\big)|  \leq
	C_j \sum_{\{A_{1},\ldots,A_{k}\} \in \mathcal{P}(j)}
	\bigg( \prod_{1\leq i\leq k} \wf^{\log}_{|A_i|+2}\big(|\rho_{i'}|\big)
	\prod_{m \in A_{i}} \delta_{\rho_{i'}\rho_{m}} \bigg),
	\label{eq:Vloc-dis}
	\end{align}
	for all $\ell \in \L$ and ${\bm \rho} \in (\L - \ell)^{j}$. The functions $\wf_{i}, \wf^{\log}_{i}$ are independent of~$w$. }
	
	\fn{
	\item[\asSEH]
	{\it Homogeneity:}
	The homogeneous site strain potential $V^{\hom}$ satisfies the assumptions {\asSER} and {\asSEL} with the following substitutions: $\Lhom$ for $\L$, $\Lhom_{*}$ for $\L - \ell$, $V^{\hom}$ for $V_{\ell}$ and where $\ell \in \L$ is omitted.
	
	\fn{\emph{For dislocations: } $V_{\ell} \equiv V^{\hom}$ for all $\ell \in \L = \Lhom$.}
	
	\fn{\emph{For point defects: }}
	There exists $s > d/2$ such that
	%  \cancel{and $\wf_1 \in\Hw_{1}$} such that
	if ${y_{1} = x + w_{1} \in \mathscr{A}_{\frak{m},\lambda}(\L)}$, ${y_{2} = x^{\hom} + w_{2} \in \mathscr{A}_{\frak{m},\lambda}(\Lhom)}$ satisfy
	\begin{align}\label{eq: y12 gen hom conditions V}
	&\left\{y_{1}(n_{1})-y_{1}(\ell_{1}) ~\lvert~n_{1} \in \L,~r_{\ell_{1} n_{1}}(y_{1})\leq r \right\} \nonumber \\
	= &\left\{y_{2}(n_{2})-y_{2}(\ell_{2}) ~\lvert~n_{2} \in \Lhom,~r_{\ell_{2} n_{2}}(y_{2}) \leq r \right\}
	\end{align}
	for some $\ell_{1}\in\L$, $\ell_{2}\in\Lhom$ and $r>0$ (see Figure \ref{fig:hompic}),
	then there exist $C_{\rm h} = C_{\rm h}(\frak{m},\lambda)$ and $\wf_1 \in\Hw_{1}$
	such that
	\begin{align}\label{eq:Vhomogeneity}
	\left| V_{\ell_{1},n_{1}-\ell_{1}}(Dw_{1}(\ell_{1})) - V^{\hom}_{,n_{2}-\ell_{2}}(Dw_{2}(\ell_{2})) \right| \leq
	C_{\rm h} (1+r)^{-s} \wf_{1}\big( |\ell_{1} - n_{1}| \big),
	\end{align}
	where $n_{1} \in \L \setminus \ell_{1}$, $n_{2} \in \Lhom \setminus \ell_2$
	satisfy ${y_{1}(n_{1}) - y_{1}(\ell_{1}) = y_{2}(n_{2}) - y_{2}(\ell_{2})}$ and $r_{\ell_{1}n_{1}}(y_{1}) = r_{\ell_{2}n_{2}}(y_{2}) \leq r$.
	}

	\item[\asSEPS]	{\it Point symmetry:}
	The homogeneous site strain potential \fn{$V^{\hom}$} given in~{\asSEH} satisfies
	\begin{eqnarray}\label{point-symmetry}
	\fn{V^{\hom}\b((-g_{-\rho})_{\rho\in\Lhom_{*}}\b) = V^{\hom}(\pmb{g})}
	\qquad\forall~\pmb{g}\in\b(\R^{\ds}\b)^{\Lhom_{*}} .
	\end{eqnarray}
\end{itemize}

\begin{figure}[pb]
	\centerline{\psfig{file=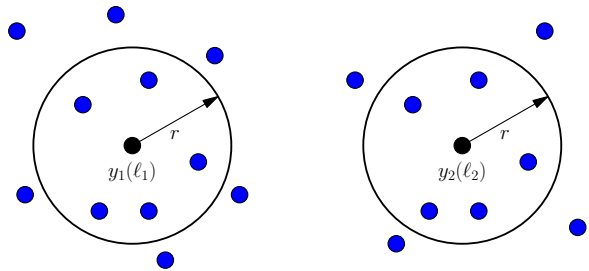,width=3.0in}}
	\caption{A diagram representing the assumption \eqref{eq: y12 gen hom conditions V} appearing in \asSEH.}
	\label{fig:hompic}
\end{figure}

\begin{remark}
	We require stronger locality estimates for
	the analysis of dislocations\fn{, this is justified in} detail in Sections~\ref{sec:homogeneous}--\ref{sec:dislocation}.

	We now give examples of \eqref{eq:Vloc} \fn{for point defects,} when $j \in \{1,2\}$ \fn{and $\wf_j\in\Hw_j$}:
\fn{	
	\begin{align*}
	\left|V_{\ell,\rho}\b(Dw(\ell)\b)\right| &\leq C_1\wf_1\b(|\rho|\b),  \\[1ex]
	\left|V_{\ell,\rho\vsig}\b(Dw(\ell)\b)\right| &\leq C_2\wf_1\b(|\rho|\b)\wf_1\b(|\vsig|\b) \quad {\rm with}~\rho\neq\vsig, \quad {\rm and} \\[1ex]
	\left|V_{\ell,\rho\rho}\b(Dw(\ell)\b)\right| &\leq C_2 \big( \wf_1\b(|\rho|\b)^2 +  \wf_2\b(|\rho|\b) \big).
	\end{align*} 
}
\end{remark}

\begin{remark}\label{remark:SEH}
		\fn{	
%The homogeneity assumption \asSEH~will only be necessary in the analysis of point defects, since $\L\neq\Lhom$ in this case. In the dislocation case, we have $\L = \Lhom$, hence {\asSEH} is automatically satisfied.
	For general point defect systems, we have that $\{ V_{\ell} \}_{\ell \in \L} \neq \{ V^{\hom} \}$. However, in addition to showing \eqref{eq:Vhomogeneity} for the interaction models discussed in Section~\ref{sec:examples}, it is also possible to derive an additional homogeneity estimate:
	% suppose that ${y_{1} \in \Adm(\L), y_{2} \in \Adm(\Lhom)}$ satisfy $y_{1}(\ell)=y_{2}(\ell)$ for all $\ell\in \L\backslash B_{\Rcore}$, then for ${w_i=y_i-x~(i=1,2)}$, we have
	suppose that ${y_{1} = x + w_{1} \in \Adm(\L), ~y_{2} = x^{\hom} + w_{2} \in \Adm(\Lhom)}$ satisfy $y_{1}(\ell)=y_{2}(\ell)$ for all $\ell\in \L\backslash B_{\Rcore}$, we have
	\begin{align}\label{eq:extra-hom-est}
	\left| V_{\ell}\b(Dw_1(\ell)\b)-V^{\hom}\b(Dw_2(\ell)\b) \right| \leq C \, \wt{\wf}_{\rm h}(|\ell| - \Rcore)
	\qquad \forall~ \ell \in \L\backslash B_{\Rcore} ,
	% \text{ and } 0 < r < |\ell| - \Rcore, \qquad
	\end{align}
	where $\wt{\wf}_{\rm h} : \mathbb{R}_{\geq 0} \to \mathbb{R}_{> 0}$ decays at infinity.
	This estimate is not required \fn{in our analysis}, however, it allows us to interpret $V^{\hom}$ as
	the far-field limit of~$V_{\ell}$, in the following sense: as {\asRC} ensures that
	$\L\backslash B_{\Rcore} = \Lhom\backslash B_{\Rcore}$,  from \eqref{eq:extra-hom-est} we deduce  that
	\begin{align*}
	V_{\ell}\b(Du_1(\ell)\b) - V^{\hom}\b(Du_2(\ell)\b) \to 0 \quad \text{as} \quad |\ell|\rightarrow\infty. 
	\end{align*}

As we do not require the estimate \eqref{eq:extra-hom-est} in our analysis, we do not prove it in this paper. 
However, it has been shown for specific models, see for example,
}
\hc{\cite[Proof of Theorem 10]{chen15a} for the tight binding model and \cite[Lemma 6.12]{Nazar-thesis} for the TFW model.}
\end{remark}

\begin{remark}
	By writing the site strain potential as a function of $\fn{Dw(\ell)}$, we have already assumed that the model is invariant under translations.
	The assumption~{\asSEPS} can also be derived from the isometric and permutational symmetries of the model (see \asSEI~and \asSEP~in Section \ref{sec:examples}),  when all atoms in the system are of the same species.
	We can extend the assumptions to admit
	multiple species of atoms, e.g. \cite{2016-multipt}.
	However, the generalisation will introduce notation that is significantly more complex,
	so we will not pursue this here.
\end{remark}

\subsection{The energy difference functional}
\label{sec:energy}

Let $\L$ satisfy \asRC.
With a given predictor $y_0 = x + u_{0}$, we first formally define the energy-difference functional for
a displacement $u$ $\in \mathscr{H}(\L)$:
\begin{eqnarray}\label{eq:Ediff}
% \nonumber
\E(u)  &=& \sum_{\ell\in\Lambda}\Big(V_{\ell}\big(Du_0(\ell)+Du(\ell)\big)-V_{\ell}\big(Du_0(\ell)\big)\Big).
\end{eqnarray}
The following \fn{key} \hc{theorem} states that the energy difference functional is defined on
$\mathscr{H}(\L)$, which we prove in Appendix \ref{sec:proof_theo_EW}.

\begin{theorem}\label{theorem:E-Wc}
	Suppose that \asSER~and \asSEL~hold, $y_{0} \in \Adm(\L)$ and also that \newline
	$F: \Usz(\L) \to \mathbb{R}$,
	given by
	\begin{align} \label{eq:T-ass}
	\b\<F,u\b\> := \sum_{\ell \in \L} \big\<\delta V_{\ell}(Du_0(\ell)),Du(\ell)\big\>
	= \sum_{\ell \in \L} \sum_{\rho \in \L - \ell} V_{\ell, \rho}(Du_0(\ell))^{\rm T} D_{\rho}u(\ell),
	\end{align}
	is a well-defined bounded linear functional on $(\Usz(\L) ,\|D\cdot\|_{\hc{l}^2_{\mathcal{N}}})$.
	Then
	\begin{itemize}
		\item[(i)]
		$\E:\mathscr{H}^{\rm c}(\L) \rightarrow\R$ is well-defined and $\delta \E(0) = F$.
		\item[(ii)]
		$\E:\mathscr{H}^{\rm c}(\L) \rightarrow\R$ is continuous with respect to $\|D\cdot\|_{\hc{l}^2_{\mathcal{N}}(\L)}$;
		hence, there exists a unique continuous extension to $\mathscr{H}(\L)$,
		which we still denote by $\E$.
		\item[(iii)]
		$\E:\mathscr{H}(\L) \rightarrow\R$ is $(\mathfrak{n}-1)$-times
		continuously differentiable with respect to the $\| D \cdot \|_{\hc{l}^2_{\mathcal{N}}(\L)}$ norm.
	\end{itemize}
\end{theorem}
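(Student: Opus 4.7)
The plan is to combine a site-by-site Taylor expansion of $V_\ell$ around the predictor stencil $Du_0(\ell)$ with the partition-based locality estimate~\eqref{eq:Vloc} of~\asSEL, and then to transfer the resulting bounds from $\mathscr{H}^{\rm c}(\L)$ to $\mathscr{H}(\L)$ via the density statement of Lemma~\ref{lemma-HcHdense}. For part~(i), I would fix $u\in\mathscr{H}^{\rm c}(\L)$ and, for each $\ell\in\L$, expand to order $\mathfrak{n}-1$ with integral remainder:
\[
V_\ell\b(Du_0(\ell)+Du(\ell)\b)-V_\ell\b(Du_0(\ell)\b) = \sum_{k=1}^{\mathfrak{n}-1}\frac{1}{k!}\b\<\delta^k V_\ell(Du_0(\ell)),(Du(\ell))^{\otimes k}\b\> + R_{\mathfrak{n}}(\ell).
\]
The $k=1$ contribution summed over $\ell$ is exactly $\<F,u\>$, which is finite by hypothesis~\eqref{eq:T-ass}. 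For $k\geq 2$, the Kronecker-delta structure of~\eqref{eq:Vloc} factorises the inner $\bm\rho$-sum for each partition $\mathcal{A}\in\mathcal{P}(k)$ as $\prod_{A\in\mathcal{A}}\sum_{\rho\in\L-\ell}\wf_{|A|}(|\rho|)\,|D_\rho u(\ell)|^{|A|}$; the remainder $R_{\mathfrak{n}}$ is handled in the same way after noting that since $u\in\mathscr{H}^{\rm c}(\L)$ differs from $u_0$ only on a compact region, the admissibility constants $\mathfrak{m}$ and $\lambda$ stay uniformly bounded on the segment $\{Du_0(\ell)+tDu(\ell)\}_{t\in[0,1]}$, so the locality estimate for $\delta^{\mathfrak{n}} V_\ell$ applies along it. Summation in $\ell$ then produces a polynomial bound in $\|Du\|_{\hc{l}^2_{\mathcal{N}}(\L)}$, so the sum defining $\E(u)$ converges absolutely, and the $k=1$ term immediately gives $\delta\E(0)=F$.

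For part~(ii), the same estimates applied to $\E(u)-\E(u')$ with $u,u'\in\mathscr{H}^{\rm c}(\L)$, through the fundamental theorem of calculus along the segment joining them, yield a local Lipschitz bound
\[
|\E(u)-\E(u')|\leq C\b(\|Du\|_{\hc{l}^2_{\mathcal{N}}}+\|Du'\|_{\hc{l}^2_{\mathcal{N}}}\b)\,\|D(u-u')\|_{\hc{l}^2_{\mathcal{N}}},
\]
from which the density of $\mathscr{H}^{\rm c}(\L)$ in $\mathscr{H}(\L)$ from Lemma~\ref{lemma-HcHdense} supplies the unique continuous extension to $\mathscr{H}(\L)$. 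For part~(iii), I proceed inductively: for $j\leq\mathfrak{n}-1$, $u\in\mathscr{H}(\L)$ and $\bm v=(v_1,\dots,v_j)\in\UsH(\L)^j$ I define the candidate $j$-th variation
\[
\b\<\delta^j\E(u),\bm v\b\> := \sum_{\ell\in\L}\sum_{\bm\rho\in(\L-\ell)^j} V_{\ell,\bm\rho}\b(D(u_0+u)(\ell)\b)\prod_{i=1}^{j}D_{\rho_i}v_i(\ell),
\]
and use the partition-based estimate to show absolute convergence and boundedness as a $j$-multilinear form. A first-order Taylor step on $V_{\ell,\bm\rho}$, controlled by the $(j+1)$-st order locality bound, gives continuity of $\delta^j\E$ in $u$, and a standard difference-quotient argument upgrades the candidate to the genuine Fréchet derivative. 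The loss from $V_\ell\in C^{\mathfrak{n}}$ to $\E\in C^{\mathfrak{n}-1}$ is the standard sacrifice of one order for continuity of the top variation.

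The main technical obstacle is the recurring reduction of the weighted stencil sums $\sum_{\ell\in\L}\sum_{\rho\in\L-\ell}\wf_k(|\rho|)\,|D_\rho u(\ell)|^{m}$ to a power of $\|Du\|_{\hc{l}^2_{\mathcal{N}}(\L)}$. This is effected by writing $D_\rho u(\ell)$ as a telescoping sum of nearest-neighbour increments along a discrete path from $\ell$ to $\ell+\rho$ whose length is proportional to $|\rho|$; the extra $|\rho|$-factors produced are then absorbed by the decay of $\wf_k$, and it is precisely the integrability $\int_0^{\infty} r^{k+d-1}\wf_k(r)\,\dr<\infty$ built into $\Hw_{k,d}$ that closes the estimate. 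The dislocation case requires the stronger $\Hw^{\log}_{k,d}$-class weights because the long-range structure of the dislocation predictor $u_0$ contributes an additional logarithmic factor along the same telescoping path, which must be absorbed by the extra $\log^{2}(1+r)$ weight in the $\Hw^{\log}_{k,d}$-norm.
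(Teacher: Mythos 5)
Your proposal follows essentially the same route as the paper's proof: a site-wise Taylor expansion about $Du_0(\ell)$, the partition/Kronecker-delta factorisation supplied by \asSEL, reduction of the weighted stencil sums $\sum_\ell\sum_\rho \wf_k(|\rho|)|D_\rho u(\ell)|^m$ to powers of $\|Du\|_{\hc{l}^2_{\mathcal{N}}}$ by telescoping along nearest-neighbour paths of length $O(|\rho|)$ (this is precisely the content of Lemmas~\ref{lemma:pathcount}, \ref{lemma:ballcount} and \ref{lemma:normEquiv}), and the density statement of Lemma~\ref{lemma-HcHdense} to extend from $\mathscr{H}^{\rm c}(\L)$ to $\mathscr{H}(\L)$. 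The paper expands only to second order in part (i) (enough, since $\n\geq 3$) and treats higher variations directly via the generalised H\"older inequality in part (iii); your order-$(\n-1)$ expansion and your explicit local Lipschitz bound in (ii) are harmless variants.

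There is, however, one step whose justification is wrong as written. You assert that because $u\in\mathscr{H}^{\rm c}(\L)$ has compact support, the admissibility constants $\frak{m},\lambda$ stay uniformly bounded along the straight segment $\{Du_0(\ell)+tDu(\ell)\}_{t\in[0,1]}$, so that the locality bound for $\delta^{\n}V_\ell$ (whose constants $C_j=C_j(\frak{m},\lambda)$ depend on these parameters, and whose very domain requires admissibility) applies along it. Compact support does not prevent two nuclei from colliding at an intermediate $t$ even though both endpoints $y_0$ and $y_0+u$ are admissible --- the paper's own example is $y_0+u$ being a permutation of $y_0$. The paper handles this by explicitly \emph{assuming} $y_0+tu\in\Adm_{\frak{m}/2,\lambda}(\L)$ for all $t\in[0,1]$ and remarking that otherwise one must replace the segment by a piecewise-linear admissible path (which complicates the remainder estimate, since then $\frac{\rm d}{{\rm d}t}Dy'_t\not\equiv Du$). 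Your argument needs one of these two devices; compact support alone does not close it. A minor secondary point: within this theorem the dislocation case does not actually exploit the $\log^2$ weights --- the paper simply uses $\Hw^{\log}_{k+2}\subset\Hw_k$ and runs the point-defect argument verbatim; the logarithmic gain is spent later, in the residual estimates for the dislocation predictor, not here.
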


\begin{remark}
	\fn{As we state Theorem \ref{theorem:E-Wc} generally, we then justify the condition that} $F$ is a bounded linear functional for the defect-free homogeneous system $\Lhom$ in Section~\ref{sec:homogeneous}, for point defects in Section~\ref{sec:pointD} and for dislocations in Section~\ref{sec:dislocation}.
\end{remark}

\begin{remark}
	For $u\in\mathscr{H}(\L)$ and ${\pmb v}=(v_1,\fn{\ldots},v_j)\in\big(\Usz(\L)\big)^j$,
	the $j$-th variation of $\E(u)$ is given by
	\begin{eqnarray*}
		\<\delta^j\E(u),{\pmb v}\> = \sum_{\ell\in\L} \sum_{{\pmb \rho}\in(\L-\ell)^j}
		\left\<V_{\ell,{\pmb \rho}}\b(\fn{Du_{0}(\ell)+}Du(\ell)\b), D_{\pmb \rho}\otimes {\pmb v}(\ell) \right\>,
	\end{eqnarray*}
	where ${\pmb \rho}=(\rho_1,\fn{\ldots},\rho_j)\in(\L-\ell)^j$ and
	$D_{\pmb \rho}\otimes {\pmb v}(\ell) := \b( D_{\rho_1}v_1(\ell),\fn{\ldots},D_{\rho_j}v_j(\ell) \b)$.
\end{remark}

\fn{
\begin{remark}
	The proof of Theorem \ref{theorem:E-Wc} also justifies that for all $\ell \in \L$ and {$u \in \mathscr{H}(\L)$}, that $V_{\ell}(\fn{Du_{0}(\ell)+}Du(\ell))$ is $(\mathfrak{n}-1)$-times
	continuously differentiable with respect to $\| D \cdot \|_{\hc{l}^2_{\mathcal{N}}}$.
\end{remark}
}

Under the conditions of Theorem \ref{theorem:E-Wc} we can formulate the
variational problem
\begin{equation}
\label{eq:results:problem}
\text{Find } \ua \in \arg\min \b\{\E(u) \bsep u\in \mathscr{H}(\L)\b\},
\end{equation}
where ``$\arg\min$" is understood in the sense of local minimality with respect
to the $\| D \cdot \|_{\hc{l}^2_{\mathcal{N}}(\L)}$ norm. \fn{ We refer to \eqref{eq:results:problem} as the \emph{lattice relaxation problem}.}
If $\ua$ is a minimiser, then $\ua$ is a first-order critical point satisfying
\begin{eqnarray}\label{eq:1st_order_pd}
\<\delta\E(\ua), v\> = 0	\qquad\forall~v\in\Usz(\L) .
\end{eqnarray}
We shall only be concerned with the structure of solutions to
\eqref{eq:results:problem} or \eqref{eq:1st_order_pd} in our analysis,
assuming their existence.

\section{Main Results}
\label{sec:results}
We will first discuss the homogeneous lattice as a preparation for defective systems,
and then present our results on point defects and dislocations respectively.

\subsection{Homogeneous lattice}
\label{sec:homogeneous}

For homogeneous systems the setting is as follows:
\begin{flushleft}\label{as:H}
	\asH \quad
	$\L=\Lhom=A\Z^d$, $d=\ds \in \{ 2, 3 \}$,
	$x(\ell)=\ell$ and $u_0(\ell)=0$ for all $\ell\in\Lhom$.
\end{flushleft}

In this setting, let $\E^{\rm h}(u):=\E(u)$ be the energy difference functional
defined by~\eqref{eq:Ediff}, which has the form
\begin{align}\label{energy-diff-hom}
\E^{\rm h}(u) = \sum_{\ell\in\Lhom}\Big(V\fn{^{\hom}}\big(Du(\ell)\big)-V\fn{^{\hom}}\big( 0 \big)\Big),
\end{align}
where $V\fn{^{\hom}}$ is the homogeneous site strain potential introduced in Section \ref{sec:siteE}.

Once we establish the following result, it will immediately follow from Theorem~\ref{theorem:E-Wc} that $\E^{\rm h}$ is defined.  We will readily make use of this fact in the analysis of the \fn{lattice} relaxation
problem for point defects.
The proof of the following result is given in Appendix \ref{sec:proof_lattice_symm}.

\begin{lemma}\label{lemma_lattice_symm}
	If the assumptions \asSER, \asSEL, \asSEPS~and \asH~are satisfied, then
	for all $u \in \Usz(\Lhom)$, $(\ell, \rho) \mapsto V_{,\rho}^{\rm h}(0) \cdot D_\rho u(\ell) \in \hc{l}^1(\Lhom \times \Lhom_*)$.
	Moreover,
	\begin{align} \label{eq:Thom-ass}
	\sum_{\ell \in \Lhom} \big\<\delta V^{\hom}(0),Du(\ell)\big\>
	= \sum_{\ell \in \Lhom} \sum_{\rho \in \Lhom_{*}} V^{\hom}_{, \rho}(0)^{\rm T} D_{\rho}u(\ell)  = 0
	\qquad\forall~u\in \Usz(\Lhom).
	\end{align}
\end{lemma}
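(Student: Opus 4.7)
The strategy is to bound the double sum in absolute value using \asSEL, then rearrange and apply a telescoping identity available on the homogeneous lattice. By \asH\ we have $\L=\Lhom=A\Z^d$ and $V_\ell \equiv V^{\hom}$. Since $u \in \Usz(\Lhom)$, there exist $R>0$ and $c\in\R^d$ with $u(\ell)=c$ for all $\ell\in\Lhom\setminus B_R$. Set $\tilde u := u-c$; then $\tilde u$ has finite support contained in $B_R \cap \Lhom$ and $D_\rho u(\ell) = D_\rho \tilde u(\ell)$, so the uniform bound $|D_\rho \tilde u(\ell)|\leq \|\tilde u\|_{\infty}\b(\mathbf{1}_{B_R}(\ell)+\mathbf{1}_{B_R}(\ell+\rho)\b)$ holds.

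Applying \asSEL\ with $j=1$ and $w=0$ to $V^{\hom}$ yields $\wf_1 \in \Hw_{1,d}$ and $C_1 > 0$ such that $|V^{\hom}_{,\rho}(0)|\leq C_1 \wf_1(|\rho|)$. Boundedness and monotonicity of $\wf_1$ together with $\int_0^\infty r^d \wf_1(r)\dr<\infty$ give $\int_0^\infty r^{d-1}\wf_1(r)\dr<\infty$ (split into $[0,1]$ and $[1,\infty)$), which by comparison with the integral yields the lattice summability $\sum_{\rho\in\Lhom_{*}}\wf_1(|\rho|)<\infty$. Combining these bounds and summing first over $\ell$ via the two indicator contributions,
\begin{align*}
\sum_{\ell\in\Lhom}\sum_{\rho\in\Lhom_{*}}|V^{\hom}_{,\rho}(0)|\,|D_\rho u(\ell)| \leq 2 C_1 \|\tilde u\|_{\infty}\, |B_R \cap \Lhom| \sum_{\rho\in\Lhom_{*}}\wf_1(|\rho|) < \infty,
\end{align*}
which establishes the required $l^1$ claim.

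By absolute convergence, Fubini's theorem for series permits the reordering
\begin{align*}
\sum_{\ell\in\Lhom}\sum_{\rho\in\Lhom_{*}} V^{\hom}_{,\rho}(0)^{\rm T} D_\rho u(\ell) = \sum_{\rho\in\Lhom_{*}} V^{\hom}_{,\rho}(0)^{\rm T} \sum_{\ell\in\Lhom} D_\rho \tilde u(\ell).
\end{align*}
For each fixed $\rho\in\Lhom_{*}$, the map $\ell\mapsto\ell+\rho$ is a bijection of $\Lhom$, so $\sum_{\ell}\tilde u(\ell+\rho)=\sum_{\ell}\tilde u(\ell)$ as finite sums, giving $\sum_{\ell} D_\rho \tilde u(\ell)=0$ for every $\rho$, and hence the total sum vanishes. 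An alternative path exploits \asSEPS: differentiating \eqref{point-symmetry} at ${\bm g}=0$ yields the antisymmetry $V^{\hom}_{,-\rho}(0)=-V^{\hom}_{,\rho}(0)$, which gives $\sum_{\rho}V^{\hom}_{,\rho}(0)=0$ and produces the same conclusion via the pairing $\rho\leftrightarrow-\rho$. The only delicate step is the absolute summability estimate, where one must exploit the polynomial integrability encoded in $\Hw_{1,d}$ to control the lattice sum; the telescoping (or pairing) identity is then immediate.
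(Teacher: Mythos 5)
Your proof is correct and follows the same overall architecture as the paper's: establish absolute summability of the double sum, then interchange the order of summation and conclude via the telescoping identity $\sum_{\ell} D_\rho u(\ell)=0$ for each fixed $\rho$. The one place where you genuinely diverge is the absolute-convergence estimate. The paper bounds $\sum_{\ell}|D_\rho u(\ell)|$ by path-counting (its Lemma on lattice paths) plus two applications of Cauchy--Schwarz, obtaining a bound of the form $C|\rho|\,\|Du\|_{l^2_{\mathcal N}}$, and then absorbs the extra factor $|\rho|$ using $\sum_{\rho}\wf_1(|\rho|)\,|\rho|<\infty$, which is exactly the $\Hw_{1,d}$ integrability. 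You instead subtract the far-field constant, exploit the finite support of $\tilde u=u-c$ to get the cruder but simpler bound $|D_\rho \tilde u(\ell)|\leq \|\tilde u\|_\infty(\mathbf{1}_{B_R}(\ell)+\mathbf{1}_{B_R}(\ell+\rho))$, and only need $\sum_\rho \wf_1(|\rho|)<\infty$ (which indeed follows from boundedness plus $\int_0^\infty r^d\wf_1(r)\,\dr<\infty$ as you argue). Your route is more elementary and demands marginally less of the weight; the paper's route is the one that scales to estimates in terms of $\|Du\|_{l^2_{\mathcal N}}$ alone, which is the form reused elsewhere (e.g.\ when such functionals are extended by density to $\UsH$). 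Your closing remark about \asSEPS\ is a legitimate alternative, though, as you implicitly note, the telescoping argument already closes the proof without it.
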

%For a defective system with $\L$ satisfying \asRC~with site strain potentials $\{V_{\ell}\}_{\ell \in \L}$, we denote by $V$ the far-field limit site strain potential on the host homogeneous lattice $\Lhom=A\Z^d$ and
\fn{ We} define the homogeneous difference operator $H:\UsH(\Lhom)\rightarrow\UsHd(\Lhom)$
by
\begin{eqnarray}\label{eq:def-Huv}
\nonumber
\b\< Hu,v \b\> &:=&
\sum_{\ell\in\Lhom}\b\< \delta^2 V\fn{^{\hom}}(\pmb{0}) Du(\ell), Dv(\ell)\b\>
\\[1ex]
&=&
\sum_{\ell\in\Lhom}\sum_{\rho,\zeta\in\Lhom_{*}}
D_{\zeta}v(\ell)^{\rm T} \cdot \fn{V^{\hom}_{,\rho\zeta}}(\pmb{0}) \cdot D_{\rho}u(\ell)
\qquad \forall~v\in\UsH(\Lhom).
\end{eqnarray}
In our analysis of \fn{defective} systems, we will require the following strong stability of the host homogeneous lattice.

\begin{flushleft}\label{as:LS}
	\asLS \quad {\it Lattice stability:}
	There exists a constant $c_{\rm L} > 0$ depending only on $\Lhom$, such that
	\begin{eqnarray*}
		\b\< Hv,v \b\> \geq c_{\rm L} \|Dv\|^2_{\hc{l}^2_{\mathcal{N}}\hc{(\Lhom)}}
		\qquad\forall~v\in\Usz(\Lhom).
	\end{eqnarray*}
\end{flushleft}

The analysis of the homogeneous difference operator will be presented in \ref{sec:lattice_green}, which will be heavily used in our decay estimate proofs.

\subsection{Point defects}
\label{sec:pointD}

For systems with point defects,
we consider the following setting:
\begin{flushleft}\label{as:P}
	\asP \quad
	$\L$ satisfying {\asRC} with respect to $\Lhom = A\Z^{d}$, $d=\ds \in \{2, 3\}$, $x(\ell)=\ell$ and $u_0(\ell)=0$ for any $\ell\in\Lhom$.
\end{flushleft}

With this setting, let $\E(u)$ be the energy difference functional
defined by \eqref{eq:Ediff}. The following result together with
Theorem \ref{theorem:E-Wc} implies that $\E$ is defined on $\mathscr{H}(\L)$ and ${\E \in C^{\mathfrak{n}-1}(\mathscr{H}(\L),\|D\cdot\|_{\hc{l}^2_{\mathcal{N}}(\L)})}$.

\begin{theorem} \label{corr:deltaE0-pd-est}
	Suppose that the assumptions \asSE~and \asP~are satisfied with ${s>d/2}$ in \asSEH. Then
	$F: \Usz(\L) \to \mathbb{R}$, given by
	\begin{align}\label{eq:T-def-ass}
	\b\<F,u\b\> := \sum_{\ell \in \L} \big\<\delta V_{\ell}(0),Du(\ell)\big\>
	= \sum_{\ell \in \L} \sum_{\rho \in \L - \ell} V_{\ell, \rho}(0)^{\rm T} D_{\rho}u(\ell)
	\end{align}
	defines a bounded linear functional on $(\Usz(\L),\|D\cdot\|_{\hc{l}^2_{\mathcal{N}}(\L)})$.
	That is, there exists $C > 0$ such that $\b| \b\<F,u\b\> \b| \leq C \| Du \|_{\hc{l}^2\hc{(\L)}} < \infty$, for all $u \in \Usz(\L)$.
\end{theorem}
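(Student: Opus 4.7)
My plan is to split $\langle F, u\rangle$ at the defect core radius $\Rcore$, writing
\[
\langle F, u\rangle = T_{\rm n}(u) + T_{\rm f}(u),
\]
where $T_{\rm n}$ sums over $\ell \in \L \cap B_{\Rcore}$ and $T_{\rm f}$ over $\ell \in \L \setminus B_{\Rcore} = \Lhom \setminus B_{\Rcore}$, and to estimate each using \asSEL, \asSEH, \asSEPS, and Lemma~\ref{lemma_lattice_symm}. The main technical device is the nearest-neighbour path estimate: writing $D_\rho u(\ell)$ as a telescoping sum along a shortest $\L$-path from $\ell$ to $\ell+\rho$ and applying Cauchy--Schwarz yields the pointwise bound $|D_\rho u(\ell)| \leq C|\rho|^{1/2}\|Du\|_{l^2_{\mathcal{N}}(\L)}$ and the aggregate bound $\|D_\rho u\|_{l^2(\L)} \leq C|\rho|\,\|Du\|_{l^2_{\mathcal{N}}(\L)}$.

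For $T_{\rm n}$, the outer sum is finite; \asSEL~gives $|V_{\ell,\rho}(0)| \leq C\wf_1(|\rho|)$ with $\wf_1 \in \Hw_{1,d}$, and combining this with the pointwise $D_\rho u$ estimate and the elementary inequality $\sum_\rho \wf_1(|\rho|)|\rho|^{1/2} \lesssim \|\wf_1\|_{\Hw_{1,d}}$ (which follows by splitting $\int_0^1 + \int_1^\infty$ and using $r^{d-1/2} \leq r^d$ for $r \geq 1$) yields $|T_{\rm n}| \leq C\|Du\|_{l^2_{\mathcal{N}}(\L)}$. For $T_{\rm f}$, I add and subtract the homogeneous reference: on the compatible index set $\{(\ell,\rho) : \ell \in \Lhom\setminus B_{\Rcore},\, \rho \in (\L-\ell)\cap\Lhom_*\}$ write $V_{\ell,\rho}(0) = V^{\hom}_{,\rho}(0) + E_{\ell,\rho}$. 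For pairs with $|\rho| \leq |\ell|-\Rcore$, the lattice matching $\L \cap B_{|\rho|}(\ell) = \Lhom \cap B_{|\rho|}(\ell)$ holds, so \asSEH~applied with $w_1 = w_2 = 0$, $\ell_1 = \ell_2 = \ell$, and radius $r = |\ell|-\Rcore$ gives $|E_{\ell,\rho}| \leq C(1+|\ell|)^{-s}\wf_1(|\rho|)$. Cauchy--Schwarz in $\ell$ (with $s > d/2$ ensuring $\sum_{\ell\in\Lhom}(1+|\ell|)^{-2s} < \infty$), combined with the aggregate $\|D_\rho u\|_{l^2(\L)} \leq C|\rho|\|Du\|_{l^2_{\mathcal{N}}(\L)}$ and $\sum_\rho \wf_1(|\rho|)|\rho| \lesssim \|\wf_1\|_{\Hw_{1,d}}$, controls this contribution by $C\|Du\|_{l^2_{\mathcal{N}}(\L)}$. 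The residual pure-homogeneous piece is handled by extending $u \in \Usz(\L)$ to $\bar u \in \Usz(\Lhom)$ by setting $\bar u = 0$ on $\Lhom \cap B_{\Rcore}$: Lemma~\ref{lemma_lattice_symm} then gives $\sum_{\ell\in\Lhom}\sum_{\rho\in\Lhom_*} V^{\hom}_{,\rho}(0)^{\rm T} D_\rho \bar u(\ell) = 0$, and the discrepancy with $T_{\rm f}$ is a collection of correction terms either localised in $B_{\Rcore}$ or of the same near-defect form as $T_{\rm n}$.

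The main obstacle will be the bookkeeping of these corrections and, in particular, the treatment of ``bad'' stencils. Because $\L$ and $\Lhom$ differ inside $B_{\Rcore}$, the stencil sets $\L - \ell$ and $\Lhom_*$ fail to agree in three kinds of situations: $\ell \in \L \cap B_{\Rcore}$, $\ell+\rho \in B_{\Rcore}$ (so the partner site crosses into the defect region), and the regime $|\rho| > |\ell|-\Rcore$ where the matching condition in \asSEH~is unavailable. In the last regime one must exploit the restricted $\ell$-range $|\ell| \leq |\rho|+\Rcore$ together with the direct bound $|V_{\ell,\rho}(0)| \leq C\wf_1(|\rho|)$, possibly after the antisymmetric pairing of $(\ell,\rho)$ with $(\ell+\rho,-\rho)$ that uses \asSEPS~to eliminate the leading homogeneous contribution; organising this so that the decay $\wf_1 \in \Hw_{1,d}$ suffices to produce an $O(\|Du\|_{l^2_{\mathcal{N}}(\L)})$ bound is the principal technical step.
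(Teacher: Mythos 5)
Your overall architecture --- split at the defect core, compare $V_{\ell,\rho}(0)$ against $V^{\hom}_{,\rho}(0)$ via \asSEH{} with Cauchy--Schwarz in $\ell$ using $s>d/2$, kill the pure homogeneous part with Lemma~\ref{lemma_lattice_symm}, and absorb the core by finiteness --- is the same as the paper's (which packages it through the interpolation operators of Appendix~\ref{sec:proof_interpolation} and Lemma~\ref{lemmaforceinterpest}). But there are two genuine gaps, both sitting exactly where you say the ``principal technical step'' lies. First, the zero extension is the wrong extension. For $\ell\in\Lhom\cap B_{\Rcore}$ with $\ell+\rho\notin B_{\Rcore}$ you get $D_\rho\bar u(\ell)=u(\ell+\rho)-0=u(\ell+\rho)$: an absolute value of $u$, not a difference of $u$-values at two points of $\L$. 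Such terms are not controlled by $\|Du\|_{\hc{l}^2_{\mathcal{N}}(\L)}$ (adding a constant to $u$ leaves the seminorm unchanged but moves them arbitrarily), so your correction terms are \emph{not} ``of the same near-defect form as $T_{\rm n}$'' and cannot be estimated one by one. The paper's $\Ihom_{2}$ (Lemma~\ref{lemma-Ihom-norm-est}) avoids this by assigning to each $\ell\in\Lhom\cap B_{\Rcore}$ the value $u(\ell')$ of a nearby $\ell'\in\L$, so every finite difference of the extension is a genuine finite difference of $u$. (A rescue of the zero extension would require noting that the total discrepancy is a linear functional vanishing on constants with decaying coefficients and putting it in divergence form as in Lemma~\ref{lemma:divergence}; you do not supply that argument.)

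Second, the regime $|\rho|>|\ell|-\Rcore$ does not close with the crude bound $|V_{\ell,\rho}(0)|\le C\wf_1(|\rho|)$. Even using your aggregate estimate $\|D_\rho u\|_{\hc{l}^2(\L)}\le C|\rho|\,\|Du\|_{\hc{l}^2_{\mathcal{N}}(\L)}$ together with Cauchy--Schwarz over the $O(|\rho|^{d})$ admissible sites $\ell$ with $|\ell|\le|\rho|+\Rcore$, you are left needing $\sum_\rho\wf_1(|\rho|)\,|\rho|^{1+d/2}<\infty$, which $\wf_1\in\Hw_{1,d}$ does not give. The antisymmetric pairing does not help: since $D_{-\rho}u(\ell+\rho)=-D_\rho u(\ell)$ and \asSEPS{} gives $V^{\hom}_{,-\rho}(\pmb 0)=-V^{\hom}_{,\rho}(\pmb 0)$, the two members of a pair contribute with the \emph{same} sign, so nothing cancels. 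The paper instead splits this regime: when $\ell+\rho$ lies in the core it substitutes $\ell'=\ell+\rho$, $\rho'=-\rho$, reducing everything to a finite sum over $\ell'\in\L\cap B_{\Rcore}$ of $|Du(\ell')|_{\wf_1,1}$ (see \eqref{eq:Vdiffp3estp1}); when both $\ell$ and $\ell+\rho$ are far it still invokes the homogeneity estimate \eqref{eq:Vhomogeneity} to gain the factor $(1+|\ell|)^{-s}$. You would need both devices to complete your argument.
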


The proof of Theorem \ref{corr:deltaE0-pd-est} is given in Appendix \ref{sec:proof_deltaE0_pd-est}.

Next, we consider the variational problem \eqref{eq:results:problem}.
We establish the rate of decay for a minimising displacement $\ua$
in the following result, whose proof is given in Appendix \ref{sec:proof_decay_pd}.

\begin{theorem}\label{theorem:point-defect-decay}
	Suppose that the conditions in Theorem \ref{corr:deltaE0-pd-est} and  \asLS~hold,
	then for any $\bar{u}$ solving \eqref{eq:1st_order_pd}, there exists $C > 0$ depending on $\bar{u}$ and $s$,
	such that
	\begin{eqnarray} \label{eq:pd-ubar-decay}
	\b|D_{\rho}\bar{u}(\ell)\b| \leq C |\rho|
	\left\{ \begin{array}{ll}
	\b(1+|\ell|\b)^{-d} & {\rm if}~s> d ,
	\\[1ex]
	\b(1+|\ell|\b)^{-s}  \log\b(2+|\ell|\b)  & {\rm if}~\frac{d}{2}<s\leq d ,
	\end{array}\right.
	\end{eqnarray}
	for all $\ell \in \L$ and $\rho \in \L - \ell$, where $s$ is the parameter in \asSEH.
\end{theorem}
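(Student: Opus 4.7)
The proof plan is to convert the nonlinear Euler--Lagrange equation \eqref{eq:1st_order_pd} into a convolution representation via the lattice Green's function of $H$, and then bootstrap the pointwise decay of $D\bar u$. Using \asSER to Taylor expand $V_{\ell,\rho}(D\bar u(\ell))$ around zero and \asSEH to separate out the homogeneous Hessian $V^{\hom}_{,\rho\sigma}(0)$ from the site-dependent $V_{\ell,\rho\sigma}(0)$, the first-order condition rewrites as
\begin{equation*}
\b\<H\bar u, v\b\> \,=\, -\b\<F, v\b\> \,-\, \b\<E\bar u, v\b\> \,-\, \b\<N(\bar u), v\b\> \qquad \forall\, v\in \Usz(\L),
\end{equation*}
where $F$ is the bounded defect force from Theorem~\ref{corr:deltaE0-pd-est}, $E\bar u$ is a linear operator whose kernel satisfies the pointwise bound $|E_{\ell,\rho\sigma}|\leq C(1+|\ell|)^{-s}\wf_1(|\rho|)\wf_1(|\sigma|)$ by \asSEH, and $N(\bar u)$ is the quadratic (and higher) remainder in $D\bar u$ controlled by \asSEL with $j=3$.

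By \asLS the operator $H$ is a bounded isomorphism $\UsH(\Lhom)\to\UsHd(\Lhom)$, and the lattice Green's function analysis promised in Section~\ref{sec:lattice_green} supplies pointwise decay estimates of the form $|D^j G(\ell)|\leq C(1+|\ell|)^{-(d+j-2)}$ for $j\geq 1$. Representing $D\bar u$ as a discrete convolution of $DG$ against the right-hand side above reduces Theorem~\ref{theorem:point-defect-decay} to estimating sums like $\sum_{m\in\L}(1+|\ell-m|)^{-d}(1+|m|)^{-\beta}$. The bootstrap proceeds by starting from $D\bar u \in l^2(\L)$ (which follows from $\bar u\in\UsH(\L)$), producing a first crude pointwise bound $|D\bar u(\ell)|\leq C(1+|\ell|)^{-\alpha_0}$, and then feeding any current rate back through the convolution to produce an improved rate $\alpha' = \min\{d,\, s,\, 2\alpha\}$ modulo logarithmic corrections. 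After finitely many iterations, $\alpha$ saturates at $\min\{d,s\}$. In the borderline regime $d/2<s\leq d$, the convolution $\sum_m (1+|\ell-m|)^{-d}(1+|m|)^{-s}$ scales exactly like $(1+|\ell|)^{-s}\log(2+|\ell|)$, accounting for the second branch of \eqref{eq:pd-ubar-decay}; when $s>d$, the vanishing total force of $F$ (a dipole by lattice symmetries, \asSEPS) upgrades this to the generic rate $(1+|\ell|)^{-d}$.

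The principal obstacle, and the main novelty over \cite{2013-defects-v3}, lies in accommodating the infinite interaction range. Each step of the bootstrap involves stencil sums $\sum_{\rho\in\L-\ell}$ that are unbounded in $|\rho|$, so a naive triangle-inequality loses the weight decay. Resolution requires exploiting the moment condition $\int_0^\infty r^{j+d-1}\wf_j(r)\dr <\infty$ built into $\Hw_{j,d}$, together with the partition structure $\mathcal{P}(j)$ in \eqref{eq:Vloc}, to absorb the $|\rho|$-factors against $\wf_j(|\rho|)$ while rearranging the iterated stencil sums so that each weight moment can be matched against a $(1+|\ell-m|)^{-k}$-factor from the Green's function estimate. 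Keeping track of this weighted book-keeping uniformly over the bootstrap and aligning it with the $(1+|\ell|)^{-s}$ tail of $E$ supplied by \asSEH is the technical heart of the proof.
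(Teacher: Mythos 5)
Your overall strategy --- linearise about the homogeneous Hessian $H$, represent $D\bar u$ by convolution with the lattice Green's function, and bootstrap the pointwise rate using the quadratic structure of the remainder --- is the same as the paper's. However, your decomposition of the residual relies on a step that the stated assumptions do not support. You bound the linear ``defect-inhomogeneity'' operator $E$ by $|E_{\ell,\rho\sigma}|\leq C(1+|\ell|)^{-s}\wf_1(|\rho|)\wf_1(|\sigma|)$ ``by {\asSEH}'', but {\asSEH} (see \eqref{eq:Vhomogeneity}) only controls differences of \emph{first} partial derivatives $V_{\ell_1,n_1-\ell_1}-V^{\hom}_{,n_2-\ell_2}$; no homogeneity estimate for the second derivatives $V_{\ell,\rho\sigma}(0)-V^{\hom}_{,\rho\sigma}(0)$ is assumed anywhere (cf.\ Remark \ref{remark:SEH}, which emphasises that even the zeroth-order analogue is deliberately not required). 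The paper's proof is organised precisely to avoid this: in Lemma \ref{lemma-g-ubar-decay} the Taylor expansion from $0$ to the displacement is carried out entirely within the \emph{homogeneous} energy $\E^{\hom}$ (needing only {\asSEL} for $V^{\hom}$), and the defect enters solely through the difference $\b\<\delta\E^{\hom}(\Ihom_1\bar u),v^{\hom}\b\>-\b\<\delta\E(\bar u),\Iback v^{\hom}\b\>$, which is estimated using only the first-derivative homogeneity bound (Lemma \ref{lemmaforceinterpest} and Remark \ref{Remark-homogeneity-est}). You would need either to adopt this rearrangement or to add a second-derivative homogeneity hypothesis to the model.

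A second omission: $H$, its Green's function $\Gr$ and the convolution representation all live on $\Lhom$, whereas $\bar u$ lives on $\L$, which differs from $\Lhom$ inside $B_{\Rcore}$. For a general point defect one cannot test the Euler--Lagrange equation on $\L$ against $D_\rho\Gr(\ell-\cdot)$; the paper first transplants $\bar u$ to $\Lhom$ via the interpolation operators of Lemmas \ref{Lemma-Int-hom}--\ref{lemma-Iback-norm-est}, where $\Ihom_1$ must in addition preserve admissibility so that the site potentials remain defined along the Taylor path. Your proposal is silent on this. Two smaller points: the upgrade to $(1+|\ell|)^{-d}$ for $s>d$ does not come from a dipole cancellation via {\asSEPS}, but simply from the convolution bound $\sum_m(1+|m|)^{-s}(1+|m-\ell|)^{-d}\leq C(1+|\ell|)^{-d}$ together with the fact that the forcing is already in divergence form; and the paper's bootstrap is not an iteration of $\alpha'=\min\{d,s,2\alpha\}$ to a fixed point but a single absorption $w(2r)\leq C(1+r)^{-t}+\eta(r)w(r)$ with $\eta(r)\to0$, followed by one resubstitution --- though your iterative variant would also close if the preceding estimates were in place.
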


In the case $s>d$, an immediately conclusion of this theorem is \fn{that}
\begin{align}
	|D\bar{u}(\ell)|\fn{_{\mathcal{N}}} \leq C\b(1+|\ell|\b)^{-d}, \label{eq:Dbaru-stencil-decay}
\end{align}
\fn{using the stencil norm defined in \eqref{eq: nn norm}. 
	Additionally, in equation \eqref{eq:wnormdef} of Appendix \ref{sec:proof_equiv_norms}, given $k \in \mathbb{N}$ and $\wf \in \Hw_{k}$, 
	we will define weighted finite-difference stencil norms $|D\bar{u}(\ell)|_{\wf,k}$, which can also be shown to satisfy \eqref{eq:Dbaru-stencil-decay}.}
Similar to \cite[Theorem 2.3]{2013-defects-v3}, we also have decay
estimates for higher order derivatives:
for $j \leq \mathfrak{n}-1$, there exist constants $C_j>0$  such that
\begin{eqnarray}\label{decay_Dku_pd}
\b|D_{\pmb{\rho}}\bar{u}(\ell)\b| \leq C_j \bigg(\prod_{i=1}^j|\rho_i| \bigg)
\b(1+|\ell|\b)^{1-d-j}
\end{eqnarray}
for $\pmb{\rho}=(\rho_{1},\ldots,\rho_{j}) \in (\Lhom_{*} \cap \hc{B_{R_{\rm c}}})^{j}$ with some cutoff $R_{\rm c}>0$
and $\ell \in \L \setminus B_{\Rcore}$ with $|\ell|$ sufficiently large.
The proof is a straightforward combination of the arguments used in \cite{2013-defects-v3}
and the techniques developed in the present paper, hence we omit it. It is not clear whether the higher-order decay generalises to arbitrarily large $\rho_i$.

\subsection{Dislocations}
\label{sec:dislocation}
\fn{Throughout this subsection, we assume the existence of site strain potentials $\{ V_{\ell} \}_{\ell \in \Lhom}$ satisfying {\asSEH}, hence $V_{\ell} \equiv V^{\hom}$ for all $\ell \in \L = \Lhom$, where the lattice $\L$ is defined in \eqref{eq:L-Lhom-dis-def}.}

\fn{Recall the model of straight line dislocations introduced in Page~\pageref{eq:L-Lhom-dis-def} of Section~\ref{sec:space}, where $d = 2, \ds = 3$, $\L_{0} \subset \R^{3}$ is a Bravais lattice and $\L \subset \R^2$ is the Bravais lattice defined in \eqref{eq:L-Lhom-dis-def} by projecting $\L_{0}$ onto the plane $\{x_{3} = 0 \}$.}
%It is convenient to identify ${\ell = (\ell_{1}, \ell_{2}) \in A\mathbb{Z}^{2}}$ with corresponding
%$(\ell_{1},\ell_{2},\ell_{3}) \in B\mathbb{Z}^{3}$. 
This projection gives rise to a projected 2D
site strain potential, for $\fn{y = x + w} \in \Adm(\L)$, \fn{denoted by $V^{\hom}(Dw(\ell))$ for $\ell \in \L$.}
%with the additional invariance
%%
%\fn{\begin{equation}\label{eq:projected-site-E-invariance}
%V^{\hom}\b(Dw(\ell)\b) = V^{\hom}\b(D(w+ze_3)(\ell)\b)
%\qquad \forall z : \Lambda \to \burg_3 \mathbb{Z}.
%\end{equation} }

Let $\hat{x}\in\R^2$ be the position of the dislocation core and
\begin{align*}
\Gamma := \{x \in \R^2~|~x_2=\hat{x}_2,~x_1\geq\hat{x}_1\}
\end{align*}
be the ``branch
cut'', with $\hat{x}$ chosen such that $\Gamma\cap\Lambda=\emptyset$.  Following
\cite{2013-defects-v3}, we define the far-field predictor $u_0$ by
\begin{eqnarray}\label{predictor-u_0-dislocation}
u_0(\ell):=\ulin(\xi^{-1}(\ell)) + u^{\rm c}(\ell), \quad \text{for all } \ell \in \L,
\end{eqnarray}
where $\ulin \in C^\infty(\R^2 \setminus \Gamma; \R^3)$ and ${u^{\rm c} \in C^{\infty}_{\rm c}(\R^2; \R^{3})}$.
We introduce the function $u^{\rm c}$ to ensure that $y_{0} \in \Adm(\L)$,
see Lemma \ref{Lemma - y0 Adm} for more details. The displacement $\ulin$ is the continuum linear
elasticity solution solving
$\sum_{\rho,\vsig \in \Lhom_{*}} \fn{V^{\hom}_{,\rho\vsig}}(\bfO)\D_{\rho}\D_{\vsig} \ulin(x) = 0$
(see \cite[Appendix A]{2013-defects-v3} for more details). In particular, under
the lattice stability assumption {\bf (LS)} there exists a solution of the form
\begin{align}\label{eq_ulin}
\ulin_{i}(\hat{x}+x)&= \textnormal{Re} \left(\sum_{n=1}^3 B_{i,n}\log(x_1+p_n x_2) \right) \\
\notag
\,\,\,{\rm with} \qquad &B_{i,n},~p_n\in\mathbb{C},~i,n=1,2,3~~ \\
\label{eq_xi}
\text{and} \qquad &\xi(x)=x-\burg_{12}\frac{1}{2\pi}
\eta\left(\frac{|x-\hat{x}|}{\hat{r}}\right)
\arg(x-\hat{x}),
\end{align}
where $\arg(x)$ denotes the angle in $(0,2\pi)$ between $x$ and
$\burg_{12} = (\burg_1, \burg_2) = (\burg_1, 0)$,
$\eta\in C^{\infty}(\R)$ with $\eta=0$ in $(-\infty,0]$, $\eta=1$ in $[1,\infty)$
% removes the singularity
and $\hat{r} > 0$ is sufficiently large to ensure that $\xi: \mathbb{R}^{2} \setminus \Gamma \to \mathbb{R}^{2} \setminus \Gamma$ is a bijection.
Then we have \cite[Lemma 3.1]{2013-defects-v3}
\begin{eqnarray}\label{ac_def_Lemma3.1}
\big|\nabla^n u_0(x)\big| \leq C|x|^{-n} \qquad\forall~x\in \R^2\backslash(\Gamma\cup B_{\hat{r}}),
\qquad{\rm for~all~}n\in\N .
\end{eqnarray}

\begin{remark}
	It is necessary that $y_{0} \in \Adm(\L)$ and while it is straightforward to observe that $y_{0}$ satisfies the required properties in the far-field, some care is needed to ensure no collisions occur around the defect core.
\end{remark}

The predictor $y_0 = x + u_0$ is constructed in such a way that $y_0$
jumps across~$\Gamma$, which encodes the presence of the dislocation.
Alternatively, constructing the jump over the left-hand plane $\{ x_1 \leq
\hat{x}_1 \}$ also achieves this; see
Appendix~\ref{sec:proof_deltaE0_dislocation} for details.

\begin{remark}\label{remark:anti-in-plane}
	One can treat anti-plane models of pure screw dislocations by admitting
	displacements of the form $u_0 = (0, 0, u_{0,3})$ and $u = (0, 0, u_3)$.
	Similarly, one can treat the in-plane models of pure edge dislocations by
	admitting displacements of the form $u_0 = (u_{0,1}, u_{0,2}, 0)$ and
	$u = (u_1, u_2, 0)$ (see \cite{2013-defects-v3}).
\end{remark}

In summary, we treat dislocations in the following setting {\asD}:

\begin{flushleft}\label{as:D}
	\asD \,\,
	$\L=\Lhom=A\Z^2$, $d=2$, $\ds = 3$, \fn{$x(\ell_{1},\ell_{2}) = (\ell_{1},\ell_{2},0)$} and $u_0$ is given by \eqref{predictor-u_0-dislocation}.
\end{flushleft}

With this setting, let $\E(u)$ be the energy difference functional
defined by \eqref{eq:Ediff}. The following lemma together with
Theorem \ref{theorem:E-Wc} implies that $\E$ is well-defined and that ${\E \in C^{\mathfrak{n}-1}(\mathscr{H},\| D\cdot \|_{\hc{l}^2_{\mathcal{N}}(\L)})}$.
The proof is given in Appendix \ref{sec:proof_deltaE0_dislocation}.

\begin{theorem} \label{corr:deltaE0-dislocation}
	Suppose the assumptions \fn{{\asSE}} and \asD~are satisfied,
	then ${F: \Usz(\L) \to \mathbb{R}}$, given by
	\begin{align}
	\b\<F,u\b\> := \sum_{\ell \in \Lhom} \big\<\delta \fn{V^{\hom}}(Du_{0}(\ell)),Du(\ell)\big\>
	= \sum_{\ell \in \Lhom} \sum_{\rho \in \Lhom_{*}} \fn{V^{\hom}_{, \rho}}(Du_{0}(\ell))^{\rm T} D_{\rho}u(\ell), \label{eq:T-dis-ass}
	\end{align}
	defines a bounded linear functional on $(\Usz(\L),\|D\cdot\|_{\hc{l}^2_{\mathcal{N}}(\L)})$.
	That is, there exists $C > 0$ such that $\b| \b\<F,u\b\> \b| \leq C \| Du \|_{\hc{l}^2(\Lhom)} < \infty$, for all $u \in \Usz(\L)$.
\end{theorem}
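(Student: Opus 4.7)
The plan is to Taylor-expand $V^{\hom}_{,\rho}(Du_0(\ell))$ about the reference stencil $\pmb{0}$, so that the zeroth-order term integrates to zero by the point symmetry of the homogeneous lattice, while the first-order remainder inherits the far-field decay of the predictor. Concretely, I will write
\[
V^{\hom}_{,\rho}\bigl(Du_0(\ell)\bigr)
 = V^{\hom}_{,\rho}(\pmb{0})
 + \int_0^1 \sum_{\sigma\in\Lhom_*} V^{\hom}_{,\rho\sigma}\bigl(tDu_0(\ell)\bigr)\,D_\sigma u_0(\ell)\,dt
\]
and substitute into \eqref{eq:T-dis-ass}. To keep the segment $\{tDu_0(\ell):t\in[0,1]\}$ inside $\Dom(V^{\hom})$ for every stencil, I will use an alternative representation of $u_0$ built with a left-hand branch cut $\Gamma'$ whenever the bond $(\ell,\ell+\rho)$ crosses $\Gamma$; since the jump across $\Gamma$ is a lattice Burgers vector, the projected differences $D_\rho u_0(\ell)$ are unchanged by this swap. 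Because $\L=\Lhom$ and \asSEPS{} is assumed, Lemma~\ref{lemma_lattice_symm} disposes of the constant term:
\[
\sum_{\ell\in\Lhom}\sum_{\rho\in\Lhom_*} V^{\hom}_{,\rho}(\pmb{0})^{\rm T} D_\rho u(\ell)=0
\qquad\forall\,u\in\Usz(\L),
\]
so $\<F,u\>$ reduces to the first-order remainder contribution.

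For this remainder I will invoke the locality bound \eqref{eq:Vloc-dis} with $j=2$, which yields
\[
\bigl|V^{\hom}_{,\rho\sigma}(tDu_0(\ell))\bigr|
\leq C_2\bigl(\wf^{\log}_4(|\rho|)\delta_{\rho\sigma}+\wf^{\log}_3(|\rho|)\wf^{\log}_3(|\sigma|)\bigr)
\]
for some $\wf^{\log}_3\in\Hw^{\log}_3,\;\wf^{\log}_4\in\Hw^{\log}_4$. The remainder contribution to $\<F,u\>$ is therefore dominated by
\[
I_1:=C\sum_{\ell,\rho}\wf^{\log}_4(|\rho|)\,|D_\rho u_0(\ell)|\,|D_\rho u(\ell)|,\quad
I_2:=C\sum_{\ell,\rho}\wf^{\log}_3(|\rho|)\,T(\ell)\,|D_\rho u(\ell)|,
\]
with $T(\ell):=\sum_{\sigma\in\Lhom_*}\wf^{\log}_3(|\sigma|)|D_\sigma u_0(\ell)|$. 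Combining \eqref{ac_def_Lemma3.1} with the alternative-cut construction, I will show the uniform pointwise bound $|D_\sigma u_0(\ell)|\leq C\min(1,|\sigma|)(1+|\ell|)^{-1}$, so that the moment conditions built into $\Hw^{\log}_3$ and $\Hw^{\log}_4$ extract a factor $(1+|\ell|)^{-1}$ from $T(\ell)$ and from $\sum_\rho\wf^{\log}_4(|\rho|)|D_\rho u_0(\ell)|$, up to at worst a logarithmic correction.

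A Cauchy--Schwarz in $(\ell,\rho)$ carrying the weight $\wf^{\log}_k(|\rho|)$ then factorises $I_1+I_2$ into the product of a $\wf^{\log}_k$-weighted stencil norm of $u$, equivalent to $\|Du\|_{\hc{l}^2_{\mathcal{N}}(\L)}$ by the norm-equivalence developed in Appendix~\ref{sec:proof_equiv_norms}, and an $\ell$-summable tail governed by $(1+|\ell|)^{-2}$ with a logarithmic enhancement. Summing yields $|\<F,u\>|\leq C\|Du\|_{\hc{l}^2_{\mathcal{N}}(\L)}$, which is the desired bound.

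The hard part is ensuring that the $\ell$-tail is actually finite: the two-dimensional decay $|Du_0|\sim|\ell|^{-1}$ sits exactly on the borderline of non-summability, since $\sum_\ell(1+|\ell|)^{-2}$ diverges logarithmically in dimension two. This is the structural reason why the dislocation version of \asSEL{} carries two extra index shifts ($\wf^{\log}_{|A|+2}$ in place of $\wf_{|A|}$) and replaces $\Hw$ by the logarithmic-moment space $\Hw^{\log}$: these extra moments are precisely what absorbs one logarithm from $Du_0$ and turns the borderline $|\ell|^{-2}$ tail into a genuinely summable one. Balancing these weights against the $\log$-corrected decay of $Du_0$ near the core, while keeping the alternative-cut bookkeeping uniform in the relative position of each bond to~$\Gamma$, is the technical heart of the argument.
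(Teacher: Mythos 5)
There is a genuine gap, and you have in fact located it yourself in your last paragraph without resolving it. After your first-order Taylor expansion and the cancellation of the constant term via Lemma~\ref{lemma_lattice_symm}, the best pointwise information you have is $|D_\sigma u_0(\ell)|\lesssim |\sigma|(1+|\ell|)^{-1}$ (up to logarithms), so Cauchy--Schwarz in $\ell$ produces the factor $\bigl(\sum_{\ell\in\Lhom}(1+|\ell|)^{-2}\bigr)^{1/2}$, which diverges in $d=2$. Your proposed rescue --- that the extra index shifts and logarithmic moments in the dislocation version of \asSEL~absorb this divergence --- cannot work: those weights $\wf^{\log}_{|A|+2}$ control the sums over the interaction ranges $\rho,\sigma$ (where $|D_\rho u_0(\ell)|$ can grow like $|\rho|^j\log|\rho|$, cf.\ Lemma~\ref{lemma-decay-u0}), and no amount of decay in $\rho$ can restore summability of $(1+|\ell|)^{-2}$ over the two-dimensional lattice. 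A logarithmic \emph{enhancement} of the tail only makes matters worse.

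The missing idea is a cancellation in $\ell$, not in $\rho$. The paper first passes to force--displacement form $\<F,u\>=\sum_\ell f(\ell)\cdot u(\ell)$ by summation by parts, expands $V_{,\rho}(e(\ell))$ to \emph{second} order (not first), and then observes that the first-order term of the force, $\sum_{\rho,\sigma}V_{,\rho\sigma}(\bfO)\,D_{-\rho}D_\sigma u_0(\ell)$, agrees to $O(|\ell|^{-3})$ with $\sum_{\rho,\sigma}V_{,\rho\sigma}(\bfO)\nabla_\rho\nabla_\sigma u^{\rm lin}$, which vanishes identically because $u^{\rm lin}$ is constructed precisely as a solution of this continuum equation (see \eqref{eq_ulin} and the discussion preceding it). This upgrades the naive $|f(\ell)|\lesssim|\ell|^{-2}$ to $|f(\ell)|\lesssim|\ell|^{-3}$ (Lemma~\ref{lemma:decay_fl_dislocation}); Lemma~\ref{lemma:divergence} then converts $f$ back into divergence form with a coefficient $g$ satisfying $|g(\ell)|\lesssim|\ell|^{-2}$, which \emph{is} square-summable in $d=2$, and boundedness follows by Cauchy--Schwarz. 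Your treatment of the branch cut via the alternative left-hand cut and the Burgers-vector relabelling is consistent in spirit with the paper's slip-operator and reflection arguments, but without the summation by parts and the cancellation coming from the continuum Euler--Lagrange equation for $u^{\rm lin}$, the estimate you aim for is false at the level of the bounds you use.
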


We now consider the variational problem \eqref{eq:results:problem}.
We establish the rate of decay for a minimising displacement $\ua$
in the following result, whose proof is given in Appendix \ref{sec:proof_decay_pd}.

\begin{theorem}\label{theorem:dislocation-decay}
	Suppose that the conditions in Theorem \ref{corr:deltaE0-dislocation} and \asLS~hold,
	then for any $\bar{u}$ solving \eqref{eq:1st_order_pd}, there exists $C > 0$ depending on $\bar{u}$,
	such that
	\begin{eqnarray}
	\label{eq:dislocation-ubar-decay}
	\b|D_{\rho}\bar{u}(\ell)\b| \leq C |\rho|
	\b(1+|\ell|\b)^{-2} \log(2+|\ell|)
	\end{eqnarray}
	for all $\ell\in\L$ and $\rho \in \L - \ell$.
\end{theorem}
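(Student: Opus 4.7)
The plan is to linearize the Euler--Lagrange equation \eqref{eq:1st_order_pd} around the dislocation predictor $u_0$ from \eqref{predictor-u_0-dislocation} and invert the homogeneous difference operator $H$ of \eqref{eq:def-Huv} by means of the lattice Green's function developed in Section \ref{sec:lattice_green}. Taylor-expanding $\delta V^{\hom}(Du_0+D\bar u)$ in $D\bar u$ and testing against $v \in \Usz(\L)$, I would rewrite the criticality condition in the schematic form
\begin{equation*}
  \langle H\bar u, v\rangle = -\langle F, v\rangle + \langle R(\bar u), v\rangle,
\end{equation*}
where $F$ is the bounded linear residual functional of Theorem \ref{corr:deltaE0-dislocation}, and $R(\bar u)$ absorbs both the lower-order perturbation $\langle[\delta^2 V^{\hom}(Du_0) - \delta^2 V^{\hom}(\bfO)]D\bar u, Dv\rangle$ and the genuinely quadratic-in-$D\bar u$ remainders. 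Combining {\asLS} with the boundedness of $F$ produces the baseline energy estimate $\|D\bar u\|_{l^2_{\mathcal N}(\Lhom)} \leq C$.

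To promote this to a pointwise rate, I would represent $\bar u$ via discrete convolution with the lattice Green's function $G$ attached to $H$. The analytic input I would draw from Section \ref{sec:lattice_green} is that in the relevant two-dimensional setting the second discrete differences of $G$ obey $|D^2 G(\ell)| \leq C(1+|\ell|)^{-2}$. Through this convolution representation, the pointwise decay of $D\bar u$ is then reduced to pointwise decay of the right-hand side, namely of $F_{\rm pt}(\ell) := \delta V^{\hom}(Du_0(\ell))$ and of the pointwise version of $R(\bar u)$.

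The core technical step will be the pointwise analysis of $F_{\rm pt}$. Using {\asSEPS} to kill $V^{\hom}_{,\rho}(\bfO)$, together with the fact that $\ulin$ solves the Cauchy--Born continuum elasticity equation $\sum_{\rho,\vsig} V^{\hom}_{,\rho\vsig}(\bfO) \nabla_\rho\nabla_\vsig \ulin = 0$, the leading continuum contribution to $F_{\rm pt}$ also vanishes, leaving only the discrete-to-continuum mismatch---controlled by second-order finite-difference remainders---together with the contribution of the compactly supported corrector $u^{\rm c}$ appearing in \eqref{predictor-u_0-dislocation}. Feeding the predictor bounds $|\nabla^n u_0(x)| \leq C|x|^{-n}$ from \eqref{ac_def_Lemma3.1} into the strengthened dislocation locality \eqref{eq:Vloc-dis} with weights $\wf^{\log} \in \Hw^{\log}$ should yield a pointwise estimate on $F_{\rm pt}$ that, when convolved against $D^2 G$, returns precisely $C(1+|\ell|)^{-2}\log(2+|\ell|)$.

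The nonlinear remainder will then be absorbed by a bootstrap: the initial $\ell^2$ control on $D\bar u$ seeds a first pointwise decay which can be substituted back into the Green's function representation and iteratively sharpened. The main obstacle I expect is the bookkeeping of the logarithmic factor. In the critical dimension $d=2$ the discrete convolutions driving the bootstrap live at the borderline of integrability, and replacing finite-range interactions by infinite-range ones forces $\wf^{\log}$ in place of $\wf$ in the locality assumption; this translates into exactly one additional logarithm relative to the point-defect estimate of Theorem \ref{theorem:point-defect-decay}, and the iteration must be set up so that this single factor is preserved rather than amplified.
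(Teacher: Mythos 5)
Your high-level skeleton --- lattice Green's function representation, pointwise decay of the residual force, and a bootstrap to absorb the quadratic remainder --- is indeed the architecture of the paper's argument (Lemmas \ref{lemma:decay_fl_dislocation}, \ref{th:regdisl:residuals}, \ref{th:regdisl:subopt} and the final proof in Appendix \ref{sec:proof_decay_dislocation}). However, there is a genuine gap: you work throughout with the ordinary stencil $D$ and the operator $H$, and your pointwise force analysis rests on Taylor-expanding $\delta V^{\hom}(Du_0(\ell))$ around $\bfO$ using the smoothness bounds \eqref{ac_def_Lemma3.1}. This fails for every site $\ell$ near the branch cut $\Gamma$ but far from the core: the predictor $u_0$ jumps by the Burgers vector across $\Gamma$, so the stencil $Du_0(\ell)$ contains $O(|\burg|)$ entries for bonds crossing $\Gamma$ no matter how large $|\ell|$ is, and the ``discrete-to-continuum mismatch'' is not a small second-order remainder there. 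The paper's resolution is the slip machinery of Appendix \ref{sec:appendix:dislocation:general}: the permutation invariance {\asSEP} yields the slip invariance \eqref{slip-invariance}, which replaces $Du_0(\ell)$ by the elastic strain $e(\ell)$ (built from $S_0u_0$, which \emph{is} smooth across $\Gamma$ inside $\Omega_\Gamma$) and replaces $D\bar u$ by the permuted stencil $\widetilde{D}\bar u$, with a reflection argument handling the right half-plane. This forces the equation for $\bar u$ to involve $\widetilde{H}$ rather than $H$, and since the Green's function $\Gr$ inverts $H$, one must additionally estimate the commutator-type terms $\langle H\bar u,\cdot\rangle-\langle\widetilde{H}\bar u,\cdot\rangle$ (the terms $h^{(1)}$ and $T_2$ in the paper), which is where the strengthened locality \eqref{eq:Vloc-dis} is really consumed. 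None of this appears in your proposal, and without it the residual decay $|f(\ell)|\leq C|\ell|^{-3}$ --- the engine of the whole estimate --- cannot be established on the half-plane containing the cut.

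A secondary inaccuracy: the logarithm in \eqref{eq:dislocation-ubar-decay} is not produced by passing from finite-range to infinite-range interactions. It is already present in the finite-range result of \cite{2013-defects-v3} and arises from the borderline convolution $\sum_k(1+|k|)^{-2}(1+|\ell-k|)^{-2}\sim|\ell|^{-2}\log|\ell|$ in $d=2$, i.e.\ from the quadratic residual meeting the Green's function at criticality. The $\log$-weighted classes $\Hw_k^{\log}$ serve a different purpose: the entries $D_\rho u_0(\ell)$ grow like $\log|\rho|$ for long bonds (Lemma \ref{lemma-decay-u0}), and the extra logarithmic integrability is needed to make the resulting lattice sums finite.
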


An immediately conclusion of this theorem is
\begin{align}
	|D\bar{u}(\ell)|\fn{_{\mathcal{N}}} \leq C\b(1+|\ell|\b)^{-2}\log(2+|\ell|)\fn{.} \label{eq:Dubar-decay-est-dis}
\end{align}
\fn{Additionally, given $k \in \mathbb{N}$ and $\wf \in \Hw_{k}$, the stencil norm $|D\bar{u}(\ell)|_{\wf,k}$, 
	introduced in equation \eqref{eq:wnormdef} of Appendix \ref{sec:proof_equiv_norms}, can also be shown to satisfy \eqref{eq:Dubar-decay-est-dis}.}

Similar to \cite[Theorem 3.5]{2013-defects-v3}, we also have a decay estimate
for higher order derivatives: For $j \leq \mathfrak{n}-2$ 
%\delco{$j<\mathfrak{n}-1$},
there exist constants $C_j>0$ such that
\begin{eqnarray}\label{decay_Dku_dislocation}
\b|\wt D_{\pmb{\rho}}\bar{u}(\ell)\b| \leq C_j \bigg(\prod_{i=1}^j|\rho_i| \bigg)
\b(1+|\ell|\b)^{-1-j}\log\b(2+|\ell|\b)
\end{eqnarray}
for ${\pmb \rho} = (\rho_{1},\ldots,\rho_j)  \in (\Lhom_{*} \cap \hc{B_{R_{\rm c}}})^{j}$ with some cutoff $R_{\rm c}>0$
and $\ell\in\Lhom$ with $|\ell|$ sufficiently large.
The finite-difference operator $\wt D$ appearing in \eqref{decay_Dku_dislocation} is a permutation
of $D$, accounting for plastic slip, and is described in more detail in  Appendix~\ref{sec:appendix:dislocation:general}.
Similarly as for \eqref{decay_Dku_pd}, the proof of \eqref{decay_Dku_dislocation}
follows from a straightforward combination of  the arguments used in \cite{2013-defects-v3}
and the techniques developed in the present paper, hence we omit it.

\section{Discussions of practical models}
\label{sec:examples}
In the previous sections, we  developed an abstract formulation for the relaxation of crystalline defects. We now demonstrate that this framework applies to several physical models from molecular mechanics to quantum mechanics, that were previously untreated.

\fn{Instead of the site strain potentials $\{V_{\ell}\}_{\ell \in \L}$, we discuss the
	physical models through {\it site energies} $\{ \Phi_{\ell'} \}_{\ell' \in \L_{0}}$, where $\Phi_{\ell'}:\Adm^{0}(\L_{0})\rightarrow\R$ \fn{will be constructed} in this section. \fn{Then, for $y=\fn{x + w \in \Adm(\L)}$ and $\ell \in \L$, we define the} site strain potential $V_{\ell}\b(D\fn{w}(\ell)\b)$ \fn{as}:
	%}
	\begin{equation} \label{siteE-sietV}
	V_{\ell}\big(D\fn{w}(\ell)\big) :=
	\left\{ \begin{array}{ll}
	\Phi_{\ell}(y) & \quad \text{for point defects},
	\\[1ex]
	\Phi_{P_{0}(\ell)}(\wt y) & \quad \text{for dislocations},
	\end{array}\right.
	\end{equation}
%	\begin{align}
%	\begin{split}
%	V_{\ell}\big(D\fn{w}(\ell)\big) &:= \Phi_{\ell}(y) \qquad \,\,\, \text{for point defects}, \\ V_{\ell}\big(D\fn{w}(\ell)\big) &:= \Phi_{P_{0}(\ell)}(\wt y) \quad \text{for dislocations},
%	\end{split}
%	\end{align}
using that $\L = \L_{0}$, $\Adm(\L) = \Adm^{0}(\L_0)$ for point defects and $\wt y \in \Adm^{0}(\L_{0})$, $P_{0}:\L \to \L_{0}$ given by \eqref{eq:wt y from y def}, \eqref{eq:P0-def} respectively for dislocations.}
%Instead of the site strain potential $V_{\ell}\b(D\fn{w}(\ell)\b)$, we discuss the
%physical models through the {\it site energy} $\Phi_{\ell}:\Adm(\L)\rightarrow\R$\fn{, which we construct} in this section. \fn{Then, given $\Phi_{\ell}(y)$ and $y=\fn{x + w \in \Adm(\L)}$, we define the} site strain potential $V_{\ell}\b(D\fn{w}(\ell)\b)$ \fn{as}:
%%
%\begin{eqnarray}\label{siteE-sietV}
%\Phi_{\ell}(y) = \Phi_{\ell}(x+w) \fn{=:} V_{\ell}\big(D\fn{w}(\ell)\big).
%\end{eqnarray}

\fn{Using \eqref{siteE-sietV}, we can rewrite the assumptions {\asSER} and {\asSEL} in terms of the site energy $\Phi_\ell$ (we skip the details for simplicity of presentations).
	In addition, we require that the site energy is invariant under permutations and isometries
	as in the following assumptions, from which \asSEPS~can be derived.
	\begin{itemize}
		\item[\asSEP]
		{\it Symmetry under permutations:}
		If $\pi:\L_{0}\rightarrow\L_{0}$ is a bijection,
		then for any $\ell' \in \L_{0}$ and $y' \in \Adm^{0}(\L_{0})$, we have $ y'\circ\pi\in\Adm^{0}(\L_{0})$ and $\Phi_{\pi(\ell')}(y'\circ\pi) = \Phi_{\ell'}(y')$.
		
		\item[\asSEI]
		{\it Symmetry under isometries:}
		If $\varphi:\R^{\ds}\rightarrow\R^{\ds}$ is an isometry,
		then for any $\ell' \in \L_{0}$ and $y' \in \Adm^{0}(\L_{0})$, we have $\varphi\circ y'\in\Adm^{0}(\L_{0})$ and $\Phi_{\ell'}(\varphi\circ y') = \Phi_{\ell'}(y')$.
	\end{itemize}
}

\fn{Recall that {\asSEH} requires the homogeneous site strain potential $V^{\hom}$, so we also construct $\{\Phi^{\hom}_{\ell'} \}_{\ell' \in \L_{0}}$ that are used to define $V^{\hom}$. For $y^{\hom} = x^{\hom} + w^{\hom} \in \Adm(\Lhom)$ and $\ell \in \Lhom$, we first introduce:
\begin{equation*}
V^{\hom}_{\ell}\big(D\fn{w}^{\hom}(\ell)\big) :=
\left\{ \begin{array}{ll}
\Phi^{\hom}_{\ell}(y^{\hom}) & \quad \text{for point defects},
\\[1ex]
\Phi_{P_{0}(\ell)}(\wt y^{\hom}) & \quad \text{for dislocations},
\end{array}\right.
\end{equation*}
%\begin{align*}
%\begin{split}
%V^{\hom}_{\ell}\big(D\fn{w}^{\hom}(\ell)\big) &:= \Phi^{\hom}_{\ell}(y^{\hom}) \qquad \,\,\, \text{for point defects}, \\ V^{\hom}_{\ell}\big(D\fn{w}^{\hom}(\ell)\big) &:= \Phi_{P_{0}(\ell)}(\wt y^{\hom}) \quad \text{for dislocations},
%\end{split}
%\end{align*}
using that $\Lhom = \Lhom_{0}$, $\Adm(\Lhom) = \Adm^{0}(\Lhom_0)$ for point defects and that $\L = \Lhom$ for dislocations. We also assume $\Phi^{\hom}_{\ell}$ satisfies the same properties as $\Phi_{\ell}$, hence it follows from {\asSEP} that $V^{\hom}_{\ell} = V^{\hom}_{m}$ for all $\ell, m \in \Lhom$, therefore there exists $V^{\hom}$ satisfying: for all $\ell \in \Lhom$ and $y^{\hom} = x^{\hom} + w^{\hom} \in \Adm(\Lhom)$
\begin{equation} \label{siteE-sietVhom}
V^{\hom}\big(D\fn{w}^{\hom}(\ell)\big) =
\left\{ \begin{array}{ll}
\Phi^{\hom}_{\ell}(y^{\hom}) & \quad \text{for point defects},
\\[1ex]
\Phi_{P_{0}(\ell)}(\wt y^{\hom}) & \quad \text{for dislocations}.
\end{array}\right.
\end{equation}
%\begin{align}
%\begin{split} 
%V^{\hom}\big(D\fn{w}^{\hom}(\ell)\big) &= \Phi^{\hom}_{\ell}(y^{\hom}) \qquad \qquad \qquad \qquad \,\,\,\hspace{-0.5pt} \text{for point defects}, \\ V^{\hom}\big(D\fn{w}^{\hom}(\ell)\big) &= V_{\ell} \big(D\fn{w}^{\hom}(\ell)\big) = \Phi_{P_{0}(\ell)}(\wt y^{\hom}) \quad \text{for dislocations}.
%\end{split}
%\end{align}
For the dislocations case, observe that {\asSEH} follows directly from \eqref{siteE-sietV}--\eqref{siteE-sietVhom}.}
\fn{We remark} that \fn{as} $\Phi_{\ell}$ is a function of \fn{the} atomic configuration, \fn{it} is more convenient to describe practical models. We will discuss \fn{the construction of site energies for different models} in the following subsections.

\fn{Subsequently,} we \fn{only} consider models for three-dimensional systems, hence ${\ds = 3}$, $d \in \{2,3\}$ and we suppose $\L \subset \R^{d}$, $\L_{0} \subset \R^{3}$ satisfy {\asRC} with respect to Bravais lattices $\Lhom \subset \R^{d}$, $\Lhom_{0} \subset \R^{3}$, respectively. \fn{Specifically, we choose $d = 3$ for point defects and $d = 2$ for dislocations.} In the following subsections, we define \fn{the site energies $\Phi_{\ell}, \Phi^{\hom}_{\ell'}$ for each model.} 
%$\Phi_{\ell'}(\wt y)$, for each \fn{$\ell' \in \L_{0}$ and $\wt y \in \Adm^{0}(\L_{0})$}.
%
%In the case of point defects, we have $d = \ds = 3$, $\L = \L_{0}$ and $\Adm(\L) = \Adm^{0}(\L)$.
%
%For dislocations $d = 2$, $\ds = 3$, $\L_{0} = B \Z^{3}$, $\L = A\Z^{2}$ are Bravais lattices, where $\L$ is the projection of $\L_{0}$ onto the plane $\{ x_{3} = 0 \}$. 
%As $\L \neq \L_{0}$, we use \eqref{eq:wt y from y def} to identify $y \in \Adm(\L)$ with $\wt y \in \Adm^{0}(\L_{0})$. 
%Then for $\ell = (\ell_{1},\ell_{2},\ell_{3}) \in \L_{0}$, we define $\Phi_{\ell_{12}}(y) := \Phi_{\ell}(\wt y)$ to be the corresponding 
%site energy for $y \in \Adm(\L)$ and $\ell_{12} \in \L$ and remark that this is well-defined.

\subsection{Lennard--Jones}
\label{sec:LJ}
The Lennard--Jones potential\cite{LennardJones:1924a} is a simple model that
approximates the interaction between
a pair of neutral atoms or molecules, in three dimensions.
The most common expression for the Lennard--Jones potential is
\begin{eqnarray}\label{V_LJ}
\phi_{\rm LJ}(r) := 4\varepsilon\left( \left(\frac{\sigma}{r}\right)^{12}
- \left(\frac{\sigma}{r}\right)^{6} \right) ,
\end{eqnarray}
where $\varepsilon$ is the depth of the potential well, $\sigma$ is the finite
distance at which the inter-particle potential is zero. We consider the more general model
\begin{eqnarray}
\phi_{\rm LJ}^{(p,q)}(r) := C_{1}r^{-p} - C_{2}r^{-q} ,
\end{eqnarray}
with $p > q > 0$ and $C_{1},C_{2} > 0$.
Then the Lennard--Jones site energy is given by
\fn{\begin{eqnarray}\label{E_LJ}
\Phi_{\ell}(y) :=\frac{1}{2}\sum_{ k \in \L_{0} \setminus \ell}\phi_{\rm LJ}^{(p,q)}(r_{\ell k}(y)),\quad \fn{\Phi^{\hom}_{\ell'}(\fn{y^{\hom}}) :=\frac{1}{2}\sum_{ k' \in \L^{\hom}_{0} \setminus \ell'}\phi_{\rm LJ}^{(p,q)}(r_{\ell' k'}(\fn{y^{\hom}})),}
\end{eqnarray}
where $r_{\ell k}(y)=|y(\ell)-y(k)|$.}
Note that by fixing the parameters $C_1,C_2$ in the potential,
we have assumed that all atoms of the system belong to the same species.

We obtain from a simple calculation that
if $q>3$, then the assumptions \asSE~are satisfied by the Lennard--Jones site energy \eqref{E_LJ} for point defects,
and if $q>5$, \asSE~are satisfied for dislocations as well.

We briefly justify that \asSEH~holds with arbitrary $s>0$.
%
%Let $\Phi^{\hom}_{\ell}$ be the site energy on $\Lhom$ satisfying \asSE,
\fn{Suppose} $y\in \Adm(\L_{0})$ and ${\fn{y^{\hom}}\in \Adm(\Lhom_{0})}$ \fn{and there exists} $\ell\in \L_{0}$ and $\ell' \in \Lhom_{0}$ satisfying
\begin{eqnarray}\label{y-yprim-models}
\!\!\!\!\!\!\!\!\!\!\left\{y(n)-y(\ell) ~\lvert~n \in \L_{0},~r_{\ell n}(y)\leq r \right\}
= \left\{\fn{y^{\hom}}(n')-\fn{y^{\hom}}(\ell') ~\lvert~n' \in \Lhom_{0},~r_{\ell' n'}(\fn{y^{\hom}}) \leq r \right\},
\end{eqnarray}
with $r \geq 0$, we have that there exists $n\in \L_{0}$ and $n'\in\Lhom_{0}$
such that \newline $y(\ell) - y(n)  = \fn{y^{\hom}}(\ell') - \fn{y^{\hom}}(n')$ and
\begin{eqnarray*}
	\Phi_{\ell, n} (y)
	=  \phi_{\rm{LJ}}'( r_{\ell n} (y) ) \frac{y(n) - y(\ell)}{ r_{\ell n} (y) }
	= \phi_{\rm{LJ}}'( r_{\ell' n'} (\fn{y^{\hom}}) ) \frac{ \fn{y^{\hom}}(n') - \fn{y^{\hom}}(\ell')}{ r_{\ell' n'} (\fn{y^{\hom}}) }
	=  \Phi^{\hom}_{\ell', n'} (\fn{y^{\hom}}),
\end{eqnarray*}
hence as $|\Phi_{\ell,n}(y) - \Phi^{\hom}_{\ell',n'}(\fn{y^{\hom}})|=0$, \asSEH~is satisfied with arbitrary $s>0$.

Similarly, we can construct the site energy for general pair potential models
$\Phi_\ell(y) = \frac12 \sum_{\fn{k \in \L_{0} \setminus \ell}} \phi(r_{\ell k}\fn{(y)})$
and show that the assumptions \asSE~are satisfied for most of the practical models,
such as Morse potential \cite{Morse:1929a}, as long as $|\phi^{(j)}(r)| \lesssim r^{-q-j}$ with $q > 3$
for point defects and $q > 5$ for dislocations.

\subsection{Embedded atom model}
\label{sec:EAM}

The site energy of the embedded atom model (EAM)\cite{Daw:1984a}
can be written as
(a pair potential contribution is omitted from the standard EAM formula since we have discussed them in the previous subsection)
\fn{\begin{eqnarray}\label{E_EAM}
\Phi_\ell(y) := \fn{J}_{\alpha} \bigg( \sum_{k\in\L_0 \setminus \ell} \fn{\varrho}_{\beta}(r_{\ell k}(y))\bigg), \,\, \Phi^{\hom}_{\ell'}(\fn{y^{\hom}}) := \fn{J}_{\alpha} \bigg( \sum_{k'\in\L^{\hom}_0 \setminus \ell} \fn{\varrho}_{\beta}(r_{\ell' k'}(\fn{y^{\hom}}))\bigg),
\end{eqnarray}
where $r_{\ell k}(y)=|y(\ell)-y(k)|$},
$\fn{\varrho}_{\beta}$ is the contribution to the electron charge density from atom $k$ of type
$\beta$ at the location of atom $\ell$, and $\fn{J}_\alpha$ is an embedding
function that represents the energy required to place atom $\ell$ of
type $\alpha$ into the electron cloud.

In most of the practical EAM models, there is a cutoff $R_{\rm c}$ established
and interactions between atoms separated by more than $R_{\rm c}$ are ignored,
i.e. the sum in~\eqref{E_EAM} is taken over $k\in\L_0$ satisfying $r_{\ell k}\leq R_{\rm c}$.
We consider here a more general model with infinite range of interaction. This serves
as a simple study of the kind of decay required for many-body interaction.

Let $y\in\Adm^{0}_{\frak{m},\lambda}(\L_{0})$.
We assume a smooth and positive density contribution:
$\fn{\varrho}_{\beta}\in C^{\mathfrak{n}}((0,\infty))$ and $\fn{\varrho}_{\beta}(r)>0,~\forall~r>0$.
We also require a decay property
\begin{eqnarray}\label{condition-rho}
\fn{\varrho}_{\beta}^{(j)}(r)\leq C(1+r)^{-q-j} \quad \forall~r>0,~0\leq j\leq\mathfrak{n}
\qquad{\rm with~some~} q>0.
\end{eqnarray}
Since $y\in\Adm^{0}_{\frak{m},\lambda}(\L_{0})$ and $\fn{\varrho}_{\beta}$ is bounded,
we have that the range of electron density $\sum_{\fn{k \in \L_{0} \setminus \ell}} \fn{\varrho}_{\beta}(r_{\ell k})$
lies in a bounded interval $\big[\underline{\sigma}_{\frak{m},\lambda},\overline{\sigma}_{\frak{m},\lambda}\big]$,
with $0<\underline{\sigma}_{\frak{m},\lambda}<\overline{\sigma}_{\frak{m},\lambda}<\infty$
depending on $\mathfrak{m}$ and $\lambda$.

Further, we assume that the embedding function $\fn{J}_{\alpha}$ is smooth and bounded for admissible electron densities:
$\fn{J}_{\alpha}\in C^{\mathfrak{n}}((0,\infty))$ and
\begin{eqnarray}\label{condition-F}
\big|\fn{J}^{(i)}_{\alpha}(x)\big| \leq C_{\frak{m},\lambda}
\quad\forall~x\in\big[\underline{\sigma}_{\frak{m},\lambda},\overline{\sigma}_{\frak{m},\lambda}\big],
~1\leq i\leq\mathfrak{n}
\end{eqnarray}
where $C_{\frak{m},\lambda}>0$ depends on $\mathfrak{m}$ and $\lambda$.
This assumption is satisfied for most practical models
as $\fn{J}_{\alpha}(\cdot)$ is usually a polynomial or $\sqrt{\cdot}$
(\fn{e.g. }Gupta and Finnis--Sinclair models).

Assume that all atoms of the system belong to the same species (i.e. $\fn{J}_{\alpha}\equiv \fn{J}$ and $\fn{\varrho}_{\beta}\equiv\fn{\varrho}$).
We have from a direct calculation that if $q>3$, then the assumptions \asSEL~are satisfied by the EAM site energy \eqref{E_EAM} for point defects;
and if $q>5$, than \asSEL~are satisfied for dislocations.

To see the conditions for \asSEH, we compare two configurations $y$ and $\fn{y^{\hom}}$ satisfying \eqref{y-yprim-models}, for some $r > 0$.
Define
\begin{align*}
&\fn{\tilde{\fn{\varrho}}_{1}(y)}=\sum_{\substack{\fn{k \in \L_{0} \setminus \ell} \\r_{\ell k(y)}\leq r}} \fn{\varrho}(r_{\ell k}(y)), \quad \fn{\tilde{\fn{\varrho}}_{2}(y)} = \sum_{\substack{\fn{k \in \L_{0} \setminus \ell} \\r_{\ell k(y)} > r}} \fn{\varrho}(r_{\ell k}(y)),
%&\fn{\varrho}_{2}(y) = \sum_{\substack{k\neq\ell \\r_{\ell k(y)} > r}} \fn{\varrho}(r_{\ell k}(y)), \quad \fn{\varrho}_{2}(\fn{y^{\hom}}) = \sum_{\substack{k'\neq\ell' \\r_{\ell' k'(\fn{y^{\hom}})} > r}} \fn{\varrho}(r_{\ell' k'}(\fn{y^{\hom}})).
\end{align*}
\fn{similarly} we define $\fn{\tilde{\fn{\varrho}}_{1}(\fn{y^{\hom}})}$ \fn{and} $\fn{\tilde{\fn{\varrho}}_{2}(\fn{y^{\hom}})}$ analogously. The condition \eqref{y-yprim-models} ensures that $\fn{\tilde{\fn{\varrho}}_{1}(y)} = \fn{\tilde{\fn{\varrho}}_{1}(\fn{y^{\hom}})}$.
With the site energy \eqref{E_EAM}, we have for some $\theta \in (0,1)$
\fn{\begin{align*}
%	\hskip -0.3cm
\big| \Phi_{\ell, n} (y) - \Phi_{\ell, n} (\fn{y^{\hom}}) \big| &= \left| \fn{J}'\big(\tilde{\fn{\varrho}}_1(y) + \tilde{\fn{\varrho}}_{2}(y) \big) - \fn{J}'\big(\tilde{\fn{\varrho}}_1(y) + \tilde{\fn{\varrho}}_{2}(\fn{y^{\hom}})\big) \right| \, \big| \tilde{\fn{\varrho}}'( r_{\ell n} (y) ) \big| \\
&\leq \left| \fn{J}''\big(\tilde{\fn{\varrho}}(y) + \theta( \tilde{\fn{\varrho}}_{2}(y) - \tilde{\fn{\varrho}}_{2}(\fn{y^{\hom}}) ) \big) \right| \big| \tilde{\fn{\varrho}}_{2}(y) - \tilde{\fn{\varrho}}_{2}(\fn{y^{\hom}}) \big| \big| \tilde{\fn{\varrho}}'( r_{\ell n} (y) ) \big| \\
& \leq  C \left( \big|\tilde{\fn{\varrho}}_{2}(y)\big| + \big|\tilde{\fn{\varrho}}_{2}(\fn{y^{\hom}})\big| \right) \big|\tilde{\fn{\varrho}}'( r_{\ell n} (y) ) \big| ,
\end{align*} }
where we have used the assumption that $\fn{J}''$ is bounded is used in the last estimate.
Now a necessary condition for \asSEH~is
$\fn{\big|\tilde{\fn{\varrho}}_{2}(y)\big| + \big|\tilde{\fn{\varrho}}_{2}(\fn{y^{\hom}})}\big| \leq C(1+r)^{-s}$ with $s>3/2$.
Using the form of $\fn{\varrho}$ in \eqref{condition-rho}, we therefore require $q>9/2$.

In summary, if $q>9/2$, then the assumptions \asSE~are satisfied by the EAM site energy \eqref{E_EAM} for point defects; and if $q>5$, than \asSE~are satisfied for dislocations.

In particular, for all interatomic potentials with similar form as \eqref{E_EAM},
such as second moment tight binding model \cite{finnis03}, Gupta potential \cite{Gupta1981}, Finnis-Sinclair potential \cite{Finnis1984,Sinclair:1971},
the assumptions \asSE~are satisfied.

\subsection{Tight binding}
\label{sec:tb}
\def\Rc{R_{\rm c}}
Whereas the Lennard--Jones and EAM model are classical interatomic potentials, we
now give examples of three quantum mechnical (QM) models that can be treated
within the assumptions of our work. The first QM model that we consider is the
tight binding model. For simplicity of presentation, we consider an orthogonal
two-centre tight binding model\cite{goringe97} with a single orbital per atom.
All results can be extended directly to general non-self-consistent
non-orthogonal tight binding models with multiple orbitals, as described in
\cite{chen15a}.

For a finite system with reference configuration $\Omega\subset\L_{0}$
and $\fn{|\Omega|} = N$,
the two-centre tight binding model is formulated in terms of a discrete
Hamiltonian, with matrix elements
\begin{eqnarray}\label{tb-H-elements}
\Big(\mathcal{H}^{\Omega}(y)\Big)_{\ell k}
=\left\{ \begin{array}{ll} % \displaystyle
h_{\rm ons}\left(\sum_{j \in \Omega \setminus \ell}
\varrho\big(|y({\ell})-y(j)|\big)\right)
& {\rm if}~\ell=k \\[1ex]
h_{\rm hop}\big(|y(\ell)-y(k)|\big) & {\rm if}~\ell \in \Omega \setminus k,
\end{array} \right.
\end{eqnarray}
where $\Rc$ is a cut-off radius,
$h_{\rm ons} \in C^{\mathfrak{n}}([0, \infty))$ is the on-site term, with
${\varrho \in C^{\mathfrak{n}}([0, \infty))}$, $\varrho = 0$ in $[\Rc,\infty)$,
and $h_{\rm hop} \in C^{\mathfrak{n}}([0, \infty))$ is the hopping term with
$h_{\rm hop}(r)=0$ for all $r\in[\Rc,\infty)$.

Note that $h_{\rm ons}$ and $h_{\rm hop}$ are independent of $\ell$ and $k$,
which indicates that all atoms of the system belong to the same species.
We observe that the formulation~\eqref{tb-H-elements} satisfies all the
assumptions on Hamiltonian matrix elements in \cite[Assumptions H.tb, H.loc, H.sym, H.emb]{chen15a}.

With the above tight binding Hamiltonian $\mathcal{H}^{\Omega}$,
we can obtain the band energy of the system
\begin{eqnarray}\label{e-band}
& E^\Omega(y)=\sum_{s=1}^N \mathfrak{f}(\varepsilon_s), % f(\varepsilon_s)\varepsilon_s,
& \text{where} \quad
\mathcal{H}^{\Omega}(y)\psi_s = \varepsilon_s\psi_s\quad s=1,2,\fn{\ldots},N.
\end{eqnarray}
A canonical choice of $\mathfrak{}$ for modelling solids is the grand-canonical
potential,
\begin{equation*}
\mathfrak{f}(\varepsilon) = 2 k_{\rm B}T \log(1 - f(\varepsilon)),
\quad \text{where} \quad
f(\varepsilon) = \left( 1+e^{(\varepsilon-\mu)/(k_{\rm B}T)} \right)^{-1},
\end{equation*}
$f$ is the Fermi-Dirac distribution function and $\mu$ the chemical potential
of the crystal (see \cite{chen16a} for more details of its derivation),
$k_{\rm B}$ Boltzmann's constant, and $T>0$
the temperature of the system. Other choices of $\mathfrak{f}$ are also
possible \cite{chen15a}.

Tight binding models typically also include a pairwise repulsive potential,
which can be treated purely classically \cite{chen15a}, hence we do
not consider this.

A crucial assumption justified in \cite{chen16a} is that the chemical potential
$\mu$ is independent of the configuration $y$. Under this assumption, following
\cite{chen15a,finnis03}, we can distribute the energy to each atomic site
\begin{eqnarray}\label{E-El_TB}
E^\Omega(y)=\sum_{\ell\in\Omega} \Phi_{\ell}^{\Omega}(y)
\qquad{\rm with}\qquad
\Phi_{\ell}^\Omega(y) := \sum_{s}\mathfrak{f}(\varepsilon_s)
\left|[\psi_s]_{\ell}\right|^2.
\end{eqnarray}
It is then shown in \cite[Theorem 3.1]{chen15a} that
the site energy on an infinite lattice can be defined via the thermodynamic limit
\fn{\begin{eqnarray}\label{E_TB}
\Phi_{\ell}(y) := \lim_{R\rightarrow\infty} \Phi_{\ell}^{\L_{0} \cap B_R(\ell)}(y), \quad \Phi^{\hom}_{\ell'}(\fn{y^{\hom}}) := \lim_{R\rightarrow\infty} \Phi_{\ell'}^{\Lhom_{0} \cap B_R(\ell')}(\fn{y^{\hom}}),
\end{eqnarray} }
and that the resulting site energy is {\em local}: if $y\in\Adm^{0}_{\frak{m},\lambda}(\L_{0})$ then
\begin{equation} \label{eq:Vlocality_TB}
\big|\partial_{y(n_1)}\cdots \partial_{y(n_j)} \Phi_\ell(y)\big|
\leq C \exp\big(-\gamma \sum_{i = 1}^j |y_{n_i} - y_\ell|\big),
\end{equation}
where $C, \gamma$ depends only on $\mathfrak{m}, \lambda$ but is independent of $y$.
This already justifies \asSEL~with weight functions
$\mathfrak{w}_j(r) = e^{-\gamma/2 r}$ (the factor $1/2$ is due to a subtle
technicality).

Along similar lines, under assumptions of \asSEH, \cite[Lemma B.1]{chen15b} establishes
\begin{equation} \label{eq:Vhomogeneity_TB}
\left| \Phi_{\ell_{1},n_{1}}(Dy_{1}(\ell_{1}))
- \Phi^{\hom}_{\ell_{2},n_{2}}(Dy_{2}(\ell_{2})) \right| \leq
C \exp\big( - \gamma (r + |\ell_1-n_1|) \big),
\end{equation}
where $C, \gamma$ may be chosen the same as in \eqref{eq:Vlocality_TB}
without loss of generality. This establishes \asSEH~with arbitrary $s>0$.

Thus, the tight binding model \eqref{E-El_TB}
fulfills all assumptions \asSE~on the site energy.

\subsection{The Thomas--Fermi--von Weizs\"{a}cker (TFW) Model}
\label{sec:TFW}
The second QM model for which we verify that it fits within
our general framework is the {\em orbital-free} TFW model.

Let $\nu \in C^{\infty}_{\rm c}(\mathbb{R}^{3})$ satisfying $\nu \geq 0$ and $\int_{\mathbb{R}^{3}} \nu = 1$ describe the \emph{smeared} distribution of a single nucleus with unit charge and let $Z(\ell) \in \mathbb{N}$ denote the charge of the nucleus at $\ell \in \L$. We assume that $Z(\ell)$ is constant for all $\ell \in \L_{0} \setminus B_{\Rcore}$, where $\L_{0} \setminus B_{\Rcore} = \Lhom_{0} \setminus B_{\Rcore}$. For $y \in \Adm(\L_{0})$, we define the total nuclear distribution corresponding to $y$ as
\begin{align*}
\varrho^{\nuc}(y;x) = \sum_{\ell \in \L_{0}} Z(\ell) \, \nu(x - y(\ell)).
\end{align*}
As $y \in \Adm(\L)$, it follows immediately that $\varrho^{\nuc} = \varrho^{\nuc}(y; \cdot)$ satisfies
%\fn{provided that $0 < C_{1} \leq Z(\ell) \leq C_{2}$ for all $\ell \in \L$}
\begin{align*}
\sup_{x \in \mathbb{R}^{3}} \int_{B_{1}(x)} \varrho^{\nuc}(z) \text{ d} z < \infty \quad \text{and}
\quad \lim_{R \to \infty} \inf_{x \in \mathbb{R}^{3}} \int_{B_{R}(x)} \varrho^{\nuc}(z) \text{ d} z = + \infty,
\end{align*}
hence \cite[Theorem 6.10]{CattoLeBrisLions} guarantees the existence
and uniqueness of the corresponding TFW ground state $(\varrho,\phi) \in (C^{\infty}(\mathbb{R}^{3}))^{2}$, $\varrho \geq 0$,
solving the following coupled elliptic system pointwise
\begin{subequations}
	\label{eq:u-phi-eq-pair}
	\begin{align}
	&\left( - \Delta  + \frac{5}{3} \varrho^{2/3} - \phi \right) \sqrt{\varrho} = 0, \\
	& \,\, - \Delta \phi = 4\pi (\varrho^{\nuc} - \varrho).
	\end{align}
\end{subequations}
Here $\varrho$ represents the ground state electron density and $\phi$ the electrostatic potential.
Using $(\varrho,\phi)$, we define the ground state energy density
%for the infinite system
corresponding to $y$,
\begin{align*}
E^{\TFW}(y;\cdot) = |\nabla \sqrt{\varrho}|^{2} + \varrho^{5/3} + \frac{1}{2} \phi \left( \varrho^{\nuc} - \varrho \right).
\end{align*}
One can view \eqref{eq:u-phi-eq-pair} as the Euler--Lagrange equations corresponding to the
\emph{formal} energy given by $\int_{\mathbb{R}^{3}} E^{\TFW}(y;x) \text{ d}x$.
We then introduce a family of partition functions $\{\chi_\ell(y;\cdot)\}_{\ell \in \L_{0}}$
satisfying $\chi_{\ell}(y;\cdot) \in C^\infty(\mathbb{R}^{3}), \chi_{\ell}(y;\cdot) \geq 0$ and
\begin{subequations}
	\label{eq: site-E-partition-conditions}
	\begin{align}
	\sum_{\ell \in \L_{0}} \chi_{\ell}(y; x) &= 1, \quad \text{and} \label{eq: sum 1 eq} \\
	\left| \frac{\partial^{k} \chi_{\ell}(y;x)}{\partial y(j_{1})\cdots \partial y(j_{k})} \right|
	&\leq C e^{- \gamma|x - y(\ell)|} \prod_{1 \leq i \leq k} e^{- \gamma|x - y(j_{i})|}, \label{eq: partition condition 2}
	\end{align}
\end{subequations}
for some $C, \gamma > 0$ and for all $k \in \N$, $\ell, i_{1}, \ldots, i_{k} \in \L_{0}$ and $x \in \mathbb{R}^{3}$.
The construction of such partition functions is given in \cite[Remark 12, Theorem 5.11]{Nazar-thesis} and \cite[Remark 9]{nazar14}.
One such example is the following: for $\alpha > 0$, let $\wt{\chi}(x) = e^{-\alpha |x|^2}$, then for $\ell \in \L_{0}$, define
\begin{align*}
\chi_\ell(y; x) = \frac{ \wt{\chi}(x-y(\ell))}{ \sum_{\ell' \in \L_{0}} \hat{\chi}(x - y(\ell'))}.
\end{align*}
This construction satisfies all of the required conditions.

We then define the TFW site energy as
\fn{\begin{align}
\Phi_{\ell}(y) := \int_{\mathbb{R}^{3}} E^{\TFW}(y;x) \, \chi_{\ell}(y;x) \text{ d}x, \quad \Phi^{\hom}_{\ell'}(\fn{y^{\hom}}) := \int_{\mathbb{R}^{3}} E^{\TFW}(\fn{y^{\hom}};x) \, \chi_{\ell'}(\fn{y^{\hom}};x) \text{ d}x. \label{E_TFW}
\end{align}}
It is shown in \cite{nazar14} and \cite[Proposition 6.3, Theorem 5.11]{Nazar-thesis}
that this site energy is well-defined and that it satisfies the same locality and homogeneity
estimates \eqref{eq:Vlocality_TB} and \eqref{eq:Vhomogeneity_TB} as the tight
binding model. Thus, all assumptions \asSE~are again satisfied (with {\asSEH}
holding for arbitrary $s>0$) by the TFW site energy \eqref{E_TFW}.

\subsection{Reduced Hartree--Fock with Yukawa potential}
\label{sec:rHF}

\def\pot{c}
\def\yp{m}  % Yukawa parameter
\def\ErHF{E^{\rm rHF}}
\def\IrHF{I^{\rm rHF}}
\def\Vext{V_{\rm ext}}
\def\VH{V_{\rm H}}
\def\Ne{N_{\rm e}}

The third QM model we consider is the {\it reduced Hartree--Fock} model \cite{solovej91}, also called the Hartree model in the physics literature.
Instead of the long-range Coulomb interaction, we assume that all the particles interact through a short-range Yukawa potential \cite{yukawa35}.
Similar to the TFW model in Section \ref{sec:TFW}, we can define the ground state energy density,
and then construct the site energy from a family of partition functions.
Assuming that all atoms of the system belong to the same species,
we have from \cite{chen16b} that the assumptions \asSE~are satisfied (with {\asSEH} holding for arbitrary $s>0$) by the rHF site energy.

\section{Conclusions}
\label{sec:conclusions}
We have developed a general framework to study the geometry relaxation
of crystalline defects embedded in a homogeneous host crystal.
Specifically, we formulated energy difference functionals for
solid systems with point defects and dislocations, and justified the  far-field
decay of equilibrium states under a mild far-field stability assumption.
The novel aspect of our work is that we can incorporate an
infinite interaction neighbourhood
in our framework, in particular we showed that we include a wide range of (simplified)
quantum chemistry models for interatomic interaction.

Our results are interesting in their own right in classifying properties of
crystalline defects, but in addition they provide a foundation for the analysis
of different boundary conditions and atomistic multiscale simulation methods,
see e.g. \cite{chen15b,2013-defects-v3,2013-PRE-bqcfcomp,LiOrtnerShapeevEtAl-blended,OrtnerZhang2014}.

The main restriction of present work is that the locality assumptions in \asSEL~are not satisfied for some {\it long-range} potentials. Note that for \asSEL~in
the dislocations case, our assumption requires that the interaction between two
particles should decay faster than $r^{-2-d}$ with respect to their distance
$r$. This is in particular not satisfied by Coulomb or even di-pole
interactions. One would therefore need to justify (or assume) that such
long-range interactions are screened when we apply the framework and results in
this paper. Even in this setting, however, our results help in that we only
require mild algebraic decay of the screened interaction, which may be easier to
justify than the more common and much more stringent assumption of exponential decay.

\appendix

\section{Equivalent norms}
\label{sec:proof_equiv_norms}
\setcounter{equation}{0}

In addition to the norm $\| D \cdot \|_{\hc{l}^2_{\mathcal{N}}}$ introduced in \eqref{eq: nn norm},
we construct a family of norms that use weighted finite-difference
stencils with infinite interaction range. For $k \in \mathbb{N}$,
$\wf\in\Hw_{k}$, $\L$ satisfying \asRC, $\ell \in \L$ and $u \in \UsH(\L)$,
define
\begin{align}\label{eq:wnormdef}
\big|Du(\ell)\big|_{\wf,k} &:= \bigg( \sum_{\rho\in\L-\ell}
\wf(|\rho|) \big|D_\rho u(\ell)\big|^k \bigg)^{1/k}, \\
\label{eq:norm-def-1}
\|Du\|_{\hc{l}^2_{\wf,k}} &:= \bigg( \sum_{\ell \in \L} |Du(\ell)|_{\wf,k}^2 \bigg)^{1/2} \quad \text{for } k = 1, \\
\label{eq:norm-def-2}
\text{and} \quad \|Du\|_{\hc{l}^k_{\wf,k}} &:= \bigg( \sum_{\ell \in \L} |Du(\ell)|_{\wf,k}^k \bigg)^{1/k} \quad \text{ for } k \geq 2.
\end{align}

To the best of our knowledge, the norms given by \eqref{eq:norm-def-1}--\eqref{eq:norm-def-2} are new constructions, and are vital for our analysis.
The following lemma states the relationships between the norms
$\|D\cdot\|_{\hc{l}^2_{\wf},k}$, $\|D\cdot\|_{\hc{l}^k_{\wf},k}$ and $\|D\cdot\|_{\hc{l}^{2}_{\mathcal{N}}}$.

\begin{lemma}\label{lemma:normEquiv}
	Let $k \in \mathbb{N}$, $\wf \in \Hw_{k}$ and $\L$ satisfy \asRC.
	\begin{enumerate}
		\item Suppose $k = 1,2$ and
		$\inf_{\ell \in \L} \inf_{\rho \in \mathcal{N}(\ell) - \ell} \wf(|\rho|) = c_{0} > 0,$
		then there exist
		constants $c_{k} , C_{k} > 0$ such that
		\begin{eqnarray}\label{eq:normest12}
		c_{k} \|Du\|_{\hc{l}^2_{\mathcal{N}}} \leq \|Du\|_{\hc{l}^2_{\wf,k}} \leq C_{k} \|Du\|_{\hc{l}^2_{\mathcal{N}}}
		\qquad\forall~u\in\UsH(\L).
		\end{eqnarray}

		\item Suppose $k>2$, then there exists $C_{k} > 0$ such that
		\begin{eqnarray}\label{eq:normestk}
		\|Du\|_{\hc{l}^k_{\wf,k}} \leq C_{k} \|Du\|_{\hc{l}^2_{\mathcal{N}}}
		\qquad\forall~u\in\UsH(\L).
		\end{eqnarray}
	\end{enumerate}
\end{lemma}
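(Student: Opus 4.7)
The plan is to handle the lower and upper bounds in part (1) separately, and to treat part (2) by a variant of the upper-bound argument. The common thread throughout is a path argument that converts arbitrary finite differences into sums of nearest-neighbour differences.

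For the lower bounds in (1), I would restrict the sum defining $|Du(\ell)|_{\wf,k}^{k}$ to $\rho\in\mathcal{N}(\ell)-\ell$ and use the hypothesis $\wf(|\rho|)\geq c_{0}$ on that finite set to get
\[
|Du(\ell)|_{\wf,k}^{k} \;\geq\; c_{0} \sum_{\rho \in \mathcal{N}(\ell)-\ell} |D_{\rho}u(\ell)|^{k}.
\]
For $k=2$ the right-hand side equals $c_{0}|Du(\ell)|_{\mathcal{N}}^{2}$; for $k=1$, the $l^{1}$-$l^{2}$ comparison on the uniformly finite set $\mathcal{N}(\ell)-\ell$ yields $\sum_{\rho}|D_{\rho}u(\ell)|\geq|Du(\ell)|_{\mathcal{N}}$. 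Squaring and summing over $\ell$ then gives the desired estimate in both cases.

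The tool for the upper bounds is the following path argument. For any $\ell\in\L$ and $\rho\in\L-\ell$, a canonical coordinate-by-coordinate path $\ell=p_{0},p_{1},\ldots,p_{N_{\rho}}=\ell+\rho$ of nearest-neighbour steps with length $N_{\rho}\leq C|\rho|$ exists, where the constant depends on $\L$ only through $\Rcore$ since outside the core $\L$ agrees with $\Lhom$. Telescoping together with the power-mean inequality gives, for every $k\geq 1$,
\[
|D_{\rho}u(\ell)|^{k} \;\leq\; C|\rho|^{k-1} \sum_{j=0}^{N_{\rho}-1} \bigl|D_{p_{j+1}-p_{j}}u(p_{j})\bigr|^{k}.
\]
A key combinatorial fact, secured by the canonical construction, is that for fixed $\rho$ each nearest-neighbour bond lies on at most $C|\rho|$ of these paths as $\ell$ varies. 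Combining yields the master estimate
\[
\sum_{\ell\in\L}|D_{\rho}u(\ell)|^{k} \;\leq\; C|\rho|^{k} \sum_{m\in\L}\sum_{\sigma\in\mathcal{N}(m)-m}|D_{\sigma}u(m)|^{k}, \qquad k \geq 1 .
\]
For $k=2$ in (1), swapping sums in $\|Du\|_{l^{2}_{\wf,2}}^{2}$ and applying this estimate reduces matters to $\sum_{\rho}\wf(|\rho|)|\rho|^{2}\lesssim\int_{0}^{\infty}r^{d+1}\wf(r)\dr<\infty$, which is exactly $\wf\in\Hw_{2,d}$. Part (2) with $k>2$ follows the same scheme: since $k\geq 2$, on the finite stencil set we have $\sum_{\sigma}|D_{\sigma}u(m)|^{k}\leq|Du(m)|_{\mathcal{N}}^{k}$, and the sequence inequality $\sum_{m}a_{m}^{k}\leq\bigl(\sum_{m}a_{m}^{2}\bigr)^{k/2}$ for non-negative sequences closes the outer sum; summability of $\sum_{\rho}\wf(|\rho|)|\rho|^{k}$ is then precisely the definition of $\wf\in\Hw_{k,d}$.

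The hard part is the $k=1$ case of (1): naively squaring $|Du(\ell)|_{\wf,1}$ via Cauchy--Schwarz introduces an extra factor of $|\rho|$ and would demand the stronger decay $\wf\in\Hw_{2,d}$. The trick, central to the argument, is to apply Minkowski's inequality in the outer $l^{2}_{\ell}$ norm \emph{before} invoking the path estimate:
\[
\|Du\|_{l^{2}_{\wf,1}} \;=\; \bigg\|\sum_{\rho}\wf(|\rho|)|D_{\rho}u(\cdot)|\bigg\|_{l^{2}_{\ell}} \;\leq\; \sum_{\rho}\wf(|\rho|)\, \bigl\| |D_{\rho}u| \bigr\|_{l^{2}_{\ell}}.
\]
The master estimate with $k=2$ then gives $\||D_{\rho}u|\|_{l^{2}_{\ell}}\leq C|\rho|\,\|Du\|_{l^{2}_{\mathcal{N}}}$, reducing the proof to convergence of $\sum_{\rho}\wf(|\rho|)|\rho|\lesssim\int_{0}^{\infty}r^{d}\wf(r)\dr$, which holds precisely when $\wf\in\Hw_{1,d}$. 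This reordering of operations is what keeps the decay requirement consistent with the stated index $k$.
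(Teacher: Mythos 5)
Your proposal is correct and, for the lower bounds and the $k\geq 2$ upper bounds, follows essentially the same route as the paper: restrict to the nearest-neighbour stencil and use $\wf\geq c_0$ together with the $\hc{l}^k\subseteq\hc{l}^2$ comparison for the lower bound, and for the upper bound telescope along nearest-neighbour paths of length $N_\rho\leq C|\rho|$, apply H\"older, and count how often a given bond is used. Your per-$\rho$ combinatorial claim (each bond lies on at most $C|\rho|$ of the paths $\mathscr{P}(\ell,\ell+\rho)$ as $\ell$ varies) is a reorganisation of the paper's shell-based count $|\mathscr{B}_n(\ell')|\leq Cn^d$ from Lemma \ref{lemma:ballcount}; be aware that verifying it near the defect core, where the paths are not translates of one another, is exactly the content of the four-case decomposition in that lemma, so ``secured by the canonical construction'' is hiding real work rather than a triviality. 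The one step where you genuinely diverge is the $k=1$ upper bound: you apply Minkowski's inequality in the outer $\hc{l}^2_\ell$ norm to pull the $\rho$-sum outside and then use the fixed-$\rho$ bound $\||D_\rho u|\|_{\hc{l}^2_\ell}\leq C|\rho|\|Du\|_{\hc{l}^2_{\mathcal{N}}}$, closing with $\sum_\rho\wf(|\rho|)|\rho|<\infty$, whereas the paper applies a pointwise Cauchy--Schwarz with the tilted weight $\widetilde{\wf}(r)=\wf(r)(1+r)^{-1}\in\Hw_2$ and then invokes its own $k=2$ estimate. Both reductions consume exactly $\wf\in\Hw_{1,d}$ and both lean on the $k=2$ machinery; yours avoids introducing the auxiliary weight and is arguably more direct, while the paper's keeps every intermediate quantity expressed in terms of the weighted stencil norms $|Du(\ell)|_{\wf,k}$ that are reused throughout the rest of the analysis.
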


To prove this lemma, we require the following results.

\fn{ \begin{lemma} \label{lemma:omega-sum-int-est}
Let $k \in \mathbb{N}$, $\wf_{1} \in \Hw_{k}$, $\wf_{2} \in \Hw_{k}^{\log}$ and $\L$ satisfy \asRC. Then there exists $C_{k} > 0$, such that for all $\ell \in \L$
\begin{align*}
\sum_{\rho \in \L - \ell} \wf_{1}(|\rho|) |\rho|^{k} &\leq C_{k} \| \wf_{1} \|_{\Hw_{k}}, \qquad 
\sum_{\rho \in \L - \ell} \wf_{2}(|\rho|) |\rho|^{k} \log^{2}(1+|\rho|) \leq C_{k} \| \wf_{2} \|_{\Hw_{k}^{\log}}.
\end{align*} 
\end{lemma}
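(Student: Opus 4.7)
\medskip

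\noindent\textbf{Proof plan for Lemma \ref{lemma:omega-sum-int-est}.}

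The plan is to reduce both lattice sums to one-dimensional integrals by a standard annular decomposition combined with the monotonicity of the weight functions. The key geometric input is that \asRC\ furnishes a uniform bound on the local density of $\L$: since $\L\setminus B_{\Rcore}=\Lhom\setminus B_{\Rcore}$ and $\L\cap B_{\Rcore}$ is finite, there is a constant $M=M(A,\Rcore)$ such that $\#(\L\cap B_1(x))\leq M$ for every $x\in\R^{d}$. Consequently, for every $\ell\in\L$ and every $j\in\N\cup\{0\}$, the spherical shell counting estimate
\begin{align*}
N_j(\ell) := \#\bigl\{\rho\in\L-\ell \,:\, j\leq |\rho|< j+1\bigr\}
\leq C\,(1+j)^{d-1}
\end{align*}
holds with $C$ independent of $\ell$ (covering the annulus by $O((1+j)^{d-1})$ unit balls).

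For the first inequality, I would decompose the sum by these shells and exploit that $\wf_{1}$ is monotonically decreasing. On the shell $\{j\leq|\rho|<j+1\}$ one has $\wf_{1}(|\rho|)\leq \wf_{1}(j)$ and $|\rho|^{k}\leq(j+1)^{k}$, whence
\begin{align*}
\sum_{\rho\in\L-\ell}\wf_{1}(|\rho|)\,|\rho|^{k}
\leq C\sum_{j=0}^{\infty}\wf_{1}(j)\,(1+j)^{k+d-1}.
\end{align*}
The terms $j=0,1$ are controlled by $\|\wf_{1}\|_{L^{\infty}([0,\infty))}$. For $j\geq 2$, monotonicity gives $\wf_{1}(j)\leq \wf_{1}(r)$ for $r\in[j-1,j]$, and since $j\leq 2r$ on this interval, one obtains
\begin{align*}
\wf_{1}(j)\,(1+j)^{k+d-1} \leq C\int_{j-1}^{j}\wf_{1}(r)\,r^{k+d-1}\dr.
\end{align*}
Summing telescopically yields the bound $C\int_{0}^{\infty}r^{k+d-1}\wf_{1}(r)\dr \leq C\|\wf_{1}\|_{\Hw_{k}}$, completing the first estimate.

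For the second inequality, I would run the identical argument, additionally noting that $\log^{2}(1+|\rho|)\leq\log^{2}(2+j)$ on the shell $\{j\leq|\rho|<j+1\}$, and that on $r\in[j-1,j]$ with $j\geq 2$ one has $\log(2+j)\leq 2\log(1+r)$. This produces
\begin{align*}
\sum_{\rho\in\L-\ell}\wf_{2}(|\rho|)\,|\rho|^{k}\log^{2}(1+|\rho|)
\leq C\|\wf_{2}\|_{L^{\infty}([0,\infty))} + C\int_{0}^{\infty}r^{k+d-1}\log^{2}(1+r)\,\wf_{2}(r)\dr,
\end{align*}
which is bounded by $C\|\wf_{2}\|_{\Hw_{k}^{\log}}$.

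The only subtle point is ensuring that the constant in the shell counting estimate does not depend on $\ell\in\L$; this is precisely where \asRC\ is used, since inside $B_{\Rcore}$ one uses the finiteness of $\L\cap B_{\Rcore}$ and outside one uses the periodic structure of $\Lhom$. Apart from this bookkeeping, the proof is a routine sum-to-integral comparison, so I do not anticipate any significant obstacle.
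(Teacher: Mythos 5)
Your proof is correct and rests on the same two inputs as the paper's: the uniform local finiteness of $\L$ furnished by \asRC, and the monotonicity of the weight, combined into a routine sum-to-integral comparison. The only difference is in the bookkeeping: the paper packs disjoint balls $B_{r_{0}/2}(\rho)$ around the lattice points (with $r_{0}$ the minimal interpoint distance) and bounds each summand by an average of the integrand over its own ball, whereas you count points in unit annular shells and compare the resulting series with the integral; both devices yield the same estimate with the same uniformity in $\ell$, and neither has any real advantage here. One minor wording quibble: the sum over $j\geq 2$ of $\int_{j-1}^{j}\wf_{1}(r)\,r^{k+d-1}\dr$ is a sum over disjoint intervals rather than a telescoping sum, but the bound is of course valid.
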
 }

\fn{ \begin{proof}
	Fix $\ell \in \L$ and observe that 
	\begin{align}
	r_{0} = r_{0}(\L) := \min_{\ell_{1} \in \L, \ell_{2} \in \L \setminus \ell_{1}}|\ell_{1} - \ell_{2}| > 0. \label{eq:r0-def}
	\end{align}
	It follows that $\{ B_{r_{0}/2}(\ell') \}_{\ell' \in \L}$ forms a collection of disjoint balls and for $\rho \in \L - \ell$, 
	that $|\rho| \geq r_{0}$, so for $x \in B_{r_{0}/2}(\rho)$, we have $\smfrac{|\rho|}{2} \leq |x| \leq \smfrac{3|\rho|}{2}$ 
	by the triangle inequality. Therefore, using that $\wf_{2} \in \Hw_{k}^{\log}$ is decreasing, we deduce
\begin{align*}
&\sum_{\rho \in \L - \ell} \wf_{2}(|\rho|) |\rho|^{k} \log^{2}(1+|\rho|) \\ 
\leq & \frac{2^{k}}{|B_{r_{0}/2}|} \sum_{\rho \in \L - \ell}  \int_{B_{r_{0}/2}(\rho)} \wf_{2}\left(\frac{2|x|}{3}\right) |x|^{k} \log^{2}(1+2|x|) \textrm{ d}x \\
\leq & \frac{2^{k}}{|B_{r_{0}/2}|} \int_{\R^d} \wf_{2}\left(\frac{2|x|}{3}\right) |x|^{k} \log^{2}(1+2|x|) \textrm{ d}x \\ 
= & C_{k} \int_{0}^{\infty} \wf_{2}(r)r^{k+d-1} \log^{2}(1+r) \textrm{ d}r \leq C_{k} \| \wf_{2} \|_{\Hw_{k}^{\log}}.
\end{align*}
An identical argument shows the estimate for $\wf_{1} \in \Hw_{k}$, completing the proof. 
\end{proof} }

\begin{lemma} \label{lemma:pathcount}
	Let $\L$ satisfy {\asRC}, then for all $\ell \in \L$ and $\rho \in \L - \ell$,
	there exists a finite path of lattice points
	$\mathscr{P}(\ell,\ell+\rho) := \{ \ell_{i} \}_{1\leq i \leq N_{\rho}+1} \subset \L$
	from $\ell$ to $\ell + \rho$, such that for each $1 \leq i \leq N_{\rho}$,
	$\ell_{i+1} \in \mathcal{N}(\ell_{i})$. Moreover, there exists $C > 0$,
	independent of $\ell$ and $\rho$, such that $N_{\rho} \leq C|\rho| $.
\end{lemma}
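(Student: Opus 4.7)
The plan is to prove the result first for the homogeneous Bravais lattice $\Lhom$, and then transfer it to the defective lattice $\L$ by exploiting the finiteness of the defect region guaranteed by \asRC. The crucial ingredient is the coordinate-neighbour property \eqref{ass:A:nn}, which states that for $\ell \in \Lhom$, $\mathcal{N}(\ell) \supseteq \{\ell \pm A e_i : 1 \leq i \leq d\}$.

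For the homogeneous case $\L = \Lhom = A\Z^d$, any $\rho \in \Lhom_{*}$ can be written as $\rho = Ak$ with $k = (k_1, \ldots, k_d) \in \Z^d \setminus 0$. I would construct an explicit path by first taking $|k_1|$ steps of length $\mathrm{sgn}(k_1)\,Ae_1$, then $|k_2|$ steps of length $\mathrm{sgn}(k_2)\,Ae_2$, and so on. By \eqref{ass:A:nn} each such step is a legitimate nearest-neighbour move, and the total length is
\begin{equation*}
N_\rho \;=\; |k|_1 \;\leq\; \sqrt{d}\,|k|_2 \;=\; \sqrt{d}\,|A^{-1}\rho| \;\leq\; \sqrt{d}\,\|A^{-1}\|\,|\rho|,
\end{equation*}
which establishes the claim with an explicit constant depending only on $d$ and $A$.

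To pass to general $\L$ satisfying \asRC, choose $R_1 \geq \Rcore + \max_{1\leq i\leq d}|Ae_i|$. Then for every $\ell \in \Lhom \setminus B_{R_1}$, the coordinate neighbours $\ell \pm Ae_i$ lie in $\Lhom \setminus B_{\Rcore} = \L \setminus B_{\Rcore}$, and inspection of the definition \eqref{def1:Nl} shows that $\{\ell \pm Ae_i\} \subseteq \mathcal{N}_{\L}(\ell)$ as well. Thus the homogeneous construction of Step~1 applies verbatim to endpoints $\ell, \ell + \rho \in \L \setminus B_{R_1}$, with one caveat: the straight coordinate-wise path may traverse $B_{R_1}$. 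In that case I would augment the construction by first moving radially outward until the defect ball can be circumvented, executing the coordinate path, and returning. This detour adds at most $O(R_1)$ steps, i.e.\ a constant $C_1 = C_1(R_1, A)$, giving $N_\rho \leq C_0|\rho| + C_1$.

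Finally, for endpoints inside $B_{R_1}$, I would use that $\L \cap B_{R_1}$ is finite by \asRC: there is a uniform bound $M$ on the nearest-neighbour graph distance from any point of $\L \cap B_{R_1}$ to the exterior region $\L \setminus B_{R_1}$. Prepending and/or appending such a connecting subpath yields $N_\rho \leq C_0|\rho| + C_1 + 2M$. Since $|\rho| \geq r_0(\L) > 0$ by \eqref{eq:r0-def}, the additive constants can be absorbed by enlarging the prefactor, producing the desired bound $N_\rho \leq C|\rho|$ with $C$ independent of $\ell$ and $\rho$. The main obstacle is justifying the bound $M$, i.e.\ that every lattice point in the defect core is connected to the homogeneous exterior through a uniformly bounded number of nearest-neighbour hops. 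This connectedness is a natural structural property of physically meaningful defective lattices and is implicit in the setup; once granted, the finiteness of $\L \cap B_{R_1}$ makes the bound automatic.
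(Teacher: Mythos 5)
Your overall strategy (explicit coordinate paths in the homogeneous exterior plus a separate treatment of the finite defect region) matches the paper's, and your Steps 1 and 2 are essentially the paper's Case 1. The genuine gap is exactly the point you flag at the end: the existence of the uniform bound $M$, i.e.\ the connectivity, with uniformly bounded hop-length, of the nearest-neighbour graph on and around $\L\cap B_{R_1}$. This is not ``implicit in the setup'' and cannot be granted for free: \asRC\ allows $\L\cap B_{\Rcore}$ to be an arbitrary finite point set (interstitials, vacancies, displaced atoms), and $\mathcal{N}(\ell)$ is defined in \eqref{def1:Nl} purely through Voronoi adjacency of that set, so connectivity of the resulting graph is a claim that must be proved, not a modelling assumption. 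Since the lemma is invoked throughout the paper (e.g.\ in Lemma \ref{lemma:normEquiv} and Lemma \ref{lemma-HcHdense}), leaving this step as an appeal to ``physically meaningful'' lattices leaves the subsequent chain unsupported.

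The paper closes precisely this gap with a Voronoi-cell argument (its Case 2). Given $\ell'$ and a target $\ell'+\rho'$, the segment $t\mapsto \ell'+t\rho'$ starts inside the Voronoi cell $\mathcal{V}(\ell')$ and ends outside it, hence crosses $\partial\mathcal{V}(\ell')$ at some $\ell'+t_0\rho'$; by \eqref{def1:Nl} this boundary point witnesses some $m\in\mathcal{N}(\ell')$ with $|\ell'+t_0\rho'-\ell'|=|\ell'+t_0\rho'-m|$, and the triangle inequality (made strict by a line--bisector intersection argument) gives $|\ell'+\rho'-m|<|\rho'|$. Iterating produces a nearest-neighbour path strictly decreasing in distance to the target, which therefore terminates inside the finite set $\L\cap B_{|\rho'|+1}(\ell'+\rho')$; finiteness of the core region then yields a uniform decrement $c_0>0$ per step and hence the linear bound $N_\rho\leq c_0^{-1}|\rho|+1$. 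If you add this argument (or any other proof that the Voronoi-adjacency graph of $\L$ is connected across the core with uniformly bounded path length), your proof is complete; without it, your Step 3 does not stand.
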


\begin{lemma} \label{lemma:ballcount}
	Let $\L$ satisfy \asRC.
	For $\ell \in \L$ and $n \in \mathbb{N}$, define
	\begin{align*}
	\mathscr{B}_{n}(\ell) = \left\{ \, (\ell_{1},\ell_{2}) \in \L  \times \L \, \bigg| \, n-1 < |\ell_{1} - \ell_{2}| \leq n, \ell \in \mathscr{P}(\ell_{1},\ell_{2}) \, \right\}.
	\end{align*}
	The paths $\mathscr{P}(\ell_{1},\ell_{2})$
	in Lemma \ref{lemma:pathcount} may be chosen such that the following
	statement is true: there exists $C > 0$ such that
	\begin{align} \label{eq:ballcount}
	|\mathscr{B}_{n}(\ell)| \leq C n^{d} \quad \text{for all } \, \ell \in \L.
	\end{align}
\end{lemma}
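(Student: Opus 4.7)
The plan is to refine the path construction of Lemma \ref{lemma:pathcount} so that each path $\mathscr{P}(\ell_1, \ell_2)$ stays within a bounded perpendicular distance of the Euclidean segment $[\ell_1, \ell_2]$, and then to count pairs geometrically.

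\emph{Step 1 (quasi-straight paths).} For the homogeneous lattice $\Lhom = A\Z^d$, I would construct $\mathscr{P}(\ell_1, \ell_2)$ by a Bresenham-type greedy rule: at each step, choose the nearest neighbour of the current point that minimises the perpendicular distance to the line through $\ell_1$ and $\ell_2$. A standard induction yields a constant $C_{\rm dev} = C_{\rm dev}(A)$ such that $\mathrm{dist}(p, [\ell_1, \ell_2]) \leq C_{\rm dev}$ for every $p \in \mathscr{P}(\ell_1, \ell_2)$. For a defective lattice satisfying {\asRC}, the same construction is used in $\L \setminus B_{\Rcore}$, with a bounded detour whenever the segment enters the core; the deviation bound persists with a larger constant, independent of $\ell_1, \ell_2$.

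\emph{Step 2 (geometric counting).} Fix $\ell \in \L$ and $(\ell_1, \ell_2) \in \mathscr{B}_n(\ell)$. Setting $\rho_1 := \ell_1 - \ell$ and $\mu := \ell_2 - \ell_1$, Step~1 gives $\mathrm{dist}(0, [\rho_1, \rho_1 + \mu]) \leq C_{\rm dev}$ and $|\mu| \in (n-1, n]$. The boundary cases $|\rho_1| \leq 2 C_{\rm dev}$ or $|\rho_1 + \mu| \leq 2 C_{\rm dev}$ contribute at most $O(n^{d-1})$ pairs, since one endpoint lies in a fixed ball while the other ranges over a thin spherical shell of radius $\sim n$. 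Otherwise, the foot of the perpendicular from $0$ lies in the interior of the segment, and a short calculation shows that $\mu$ is forced into a cone about $-\rho_1$ of angular radius $\arcsin(C_{\rm dev}/|\rho_1|)$, together with $|\rho_1| \leq n + C_{\rm dev}$. The number of lattice points in such a cone at radii in $(n-1, n]$ is $\lesssim (n/|\rho_1|)^{d-1}$, so using the standard bound $|\{ \rho_1 \in \L - \ell : r \leq |\rho_1| < r+1 \}| \lesssim r^{d-1}$,
\begin{align*}
|\mathscr{B}_n(\ell)| \;\lesssim\; n^{d-1} + \sum_{r=1}^{n + C_{\rm dev}} r^{d-1} \left( \frac{n}{r} \right)^{d-1} \;\lesssim\; n^d.
\end{align*}

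The main obstacle is Step~1: generalising the Bresenham construction from $\Z^2$ to arbitrary Bravais lattices $A\Z^d$ with nearest-neighbour connectivity, and verifying a uniform deviation bound, is routine but demands careful case analysis; the detour near the defect core also requires brief verification that nearest-neighbour steps suffice. Once the quasi-straight path property is in hand, the counting in Step~2 reduces to the elementary summation shown above.
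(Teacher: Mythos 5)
Your argument is correct in outline, but it takes a genuinely different route from the paper's. The paper does not use any quasi-straightness of the paths. Instead it splits $\L\times\L$ according to whether the endpoints lie in a fixed finite core set $\bar\L$ and whether the path meets $\bar\L$; in the main case (both endpoints in the homogeneous region, path avoiding the core) it exploits the fact that the Case-1 construction in Lemma~\ref{lemma:pathcount} is translation invariant, so that for a fixed displacement $\rho$ the condition $\ell\in\mathscr{P}(\ell_1,\ell_1+\rho)$ pins $\ell_1$ down to at most $N_\rho+1\leq Cn$ points; summing over the $O(n^{d-1})$ displacements in the annulus gives $Cn^d$. The residual cases are handled by crude finiteness bounds and, for pairs whose path crosses the core, by a ``tube'' count of size $O(n^2)\leq O(n^d)$. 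Your scheme instead fixes $\ell_1$ and uses the bounded-deviation property to confine $\ell_2$ to a cone of aperture $\sim C_{\rm dev}/|\rho_1|$, and the summation $\sum_r r^{d-1}(n/r)^{d-1}\sim n^d$ closes the count; this is a clean geometric argument that does not need translation invariance at all, and it absorbs the defect core into the deviation constant rather than requiring a separate decomposition. The price is that your Step~1 is the load-bearing part and is only sketched: you must actually exhibit a Bresenham/staircase path in $A\Z^d$ with uniformly bounded deviation from the segment, verify that its steps $\pm Ae_i$ are nearest-neighbour steps (which \eqref{ass:A:nn} guarantees) and that its length is still $\leq C|\rho|$ (so that Lemma~\ref{lemma:pathcount}, which is used elsewhere, remains valid for the re-chosen paths), and check that the detour around $B_{\Rcore}$ only enlarges $C_{\rm dev}$ by an $O(\Rcore)$ amount. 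All of this is doable, but it is more work than the paper's translation-invariance observation, which comes for free from the construction already in hand.
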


\begin{figure}[!htb]
	\centering 
	\subfigure[Construction of $\fn{\bar{\L}}$.]{
		\label{fig:gull}
		\includegraphics[height=4cm]{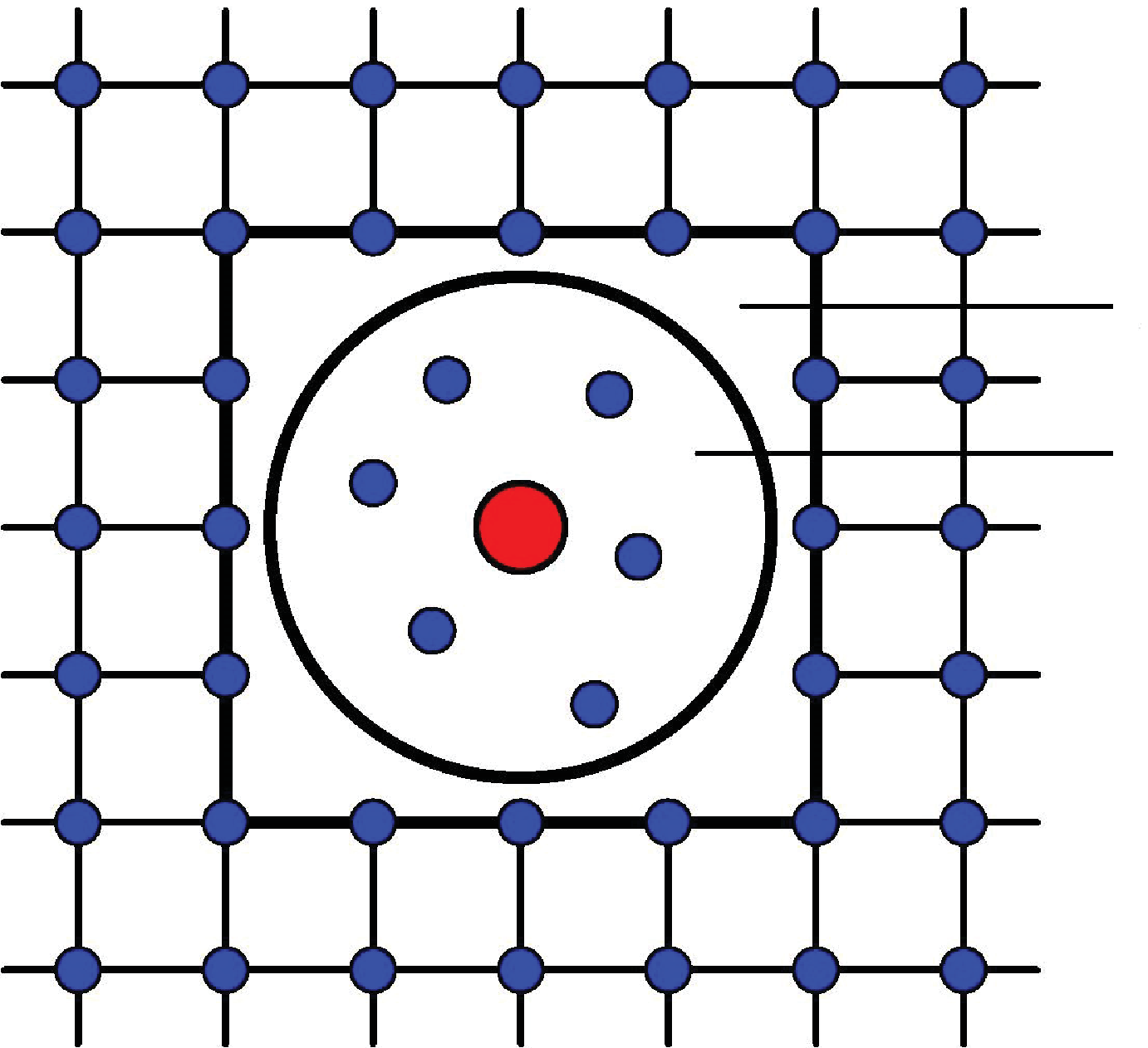}
		\put(-22,80){\makebox(2,2){\footnotesize$\bar{\L}$}}
		\put(-14,62){\makebox(2,2){\footnotesize$B_{\Rcore}$}}
	}
	%\hspace{0.1cm} 
	\subfigure[Examples of paths constructed outside of $\fn{\bar{\L}}$.]{
		\label{fig:tiger} 
		\includegraphics[height=4cm]{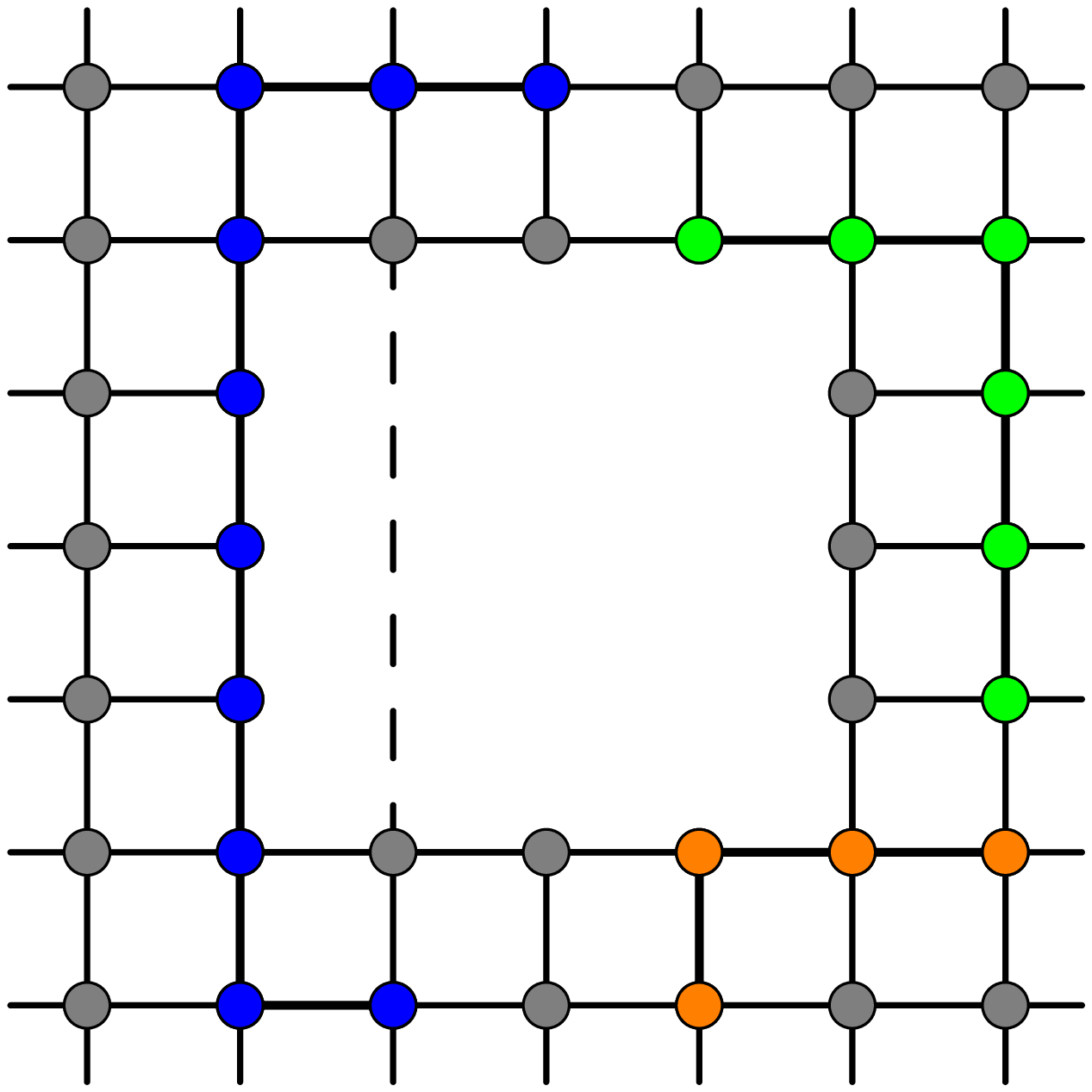}
	}
	%\hspace{0.1cm} 
	\subfigure[Construction of a path inside $\fn{\bar{\L}}$, using a Voronoi cell decomposition.]{
	\label{fig:mouse} 
	\includegraphics[height=4cm]{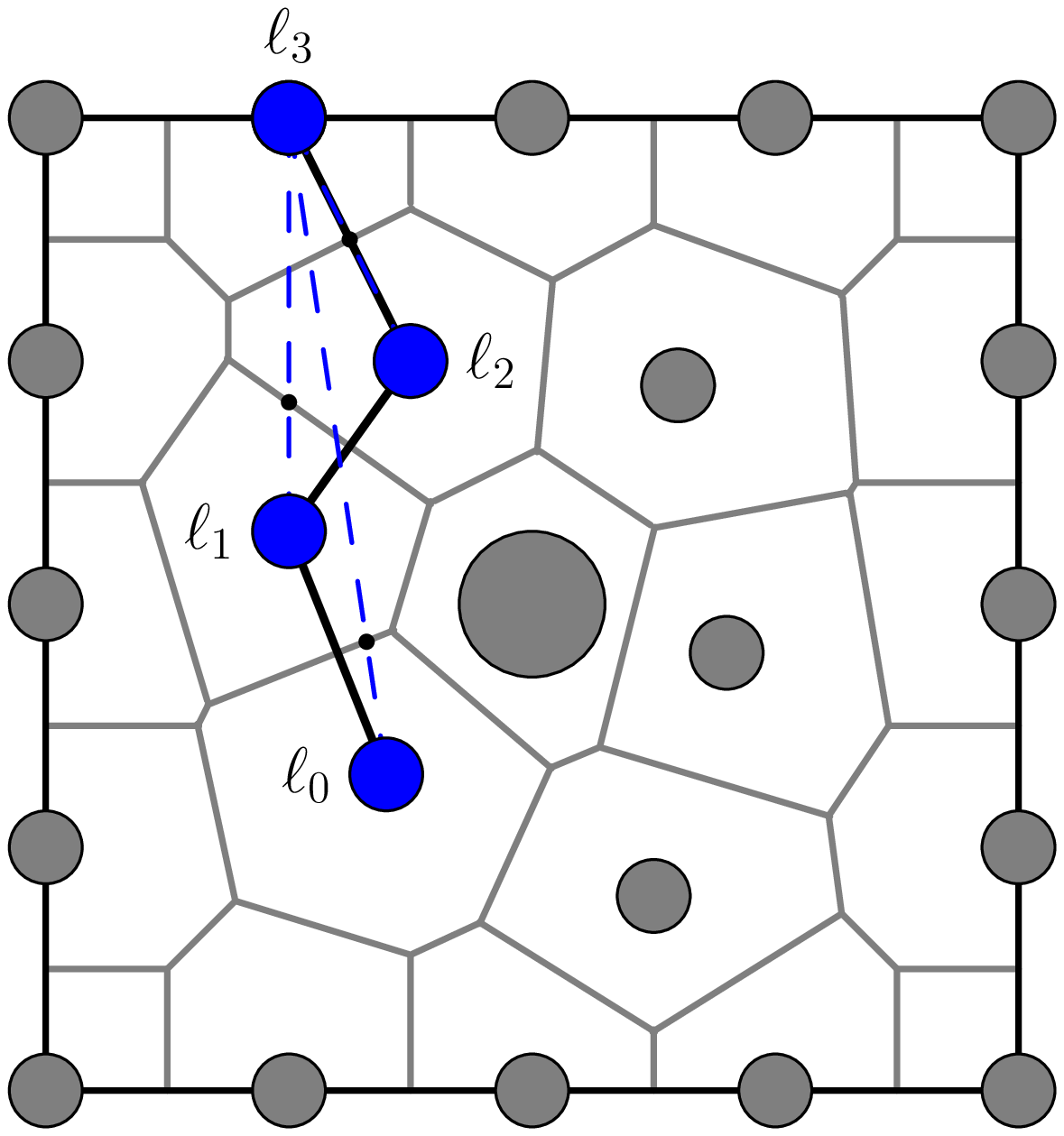}
}
	\caption{Constructions used to prove Lemma \ref{lemma:pathcount}.}
	\label{fig:animals}
\end{figure}

%\begin{figure}
%    \centering
%    \begin{minipage}[t]{0.3\linewidth}
%    	\includegraphics[height=\linewidth]{figs/lambdabar.eps}
%    	\put(-25,97){\makebox(2,2){\footnotesize$\bar{\L}$}}
%    	\put(-18,76){\makebox(2,2){\footnotesize$B_{\Rcore}$}}
%    	\caption{Construction of $\fn{\bar{\L}}$.}
%    	\label{fig:gull}
%    \end{minipage}
%\hspace{20pt}
%    ~ 
%    \begin{minipage}[t]{0.3\linewidth}
%\includegraphics[height=\linewidth]{figs/pathcounting-outside.eps}
%        \caption{Examples of paths constructed outside of $\fn{\bar{\L}}$.}
%        \label{fig:tiger}
%    \end{minipage}
%\hspace{5pt}
%    ~ 
%    \begin{minipage}[t]{0.3\linewidth}
%\includegraphics[height=\linewidth]{figs/pathcounting-inside.eps}
%        \caption{Construction of a path inside $\fn{\bar{\L}}$, using a Voronoi cell decomposition.}
%        \label{fig:mouse}
%    \end{minipage}
%    \vspace{-10pt}
%    %
%    % \caption{Constructions used to prove Lemma \ref{lemma:pathcount}.}
%    \label{fig:animals}
%    % \vspace{-10pt}
%\end{figure}

\begin{proof}
	[Proof of Lemma \ref{lemma:pathcount}]
	Let $\Gamma$ denote the unit cell of the lattice \fn{$\Lhom$}, centred at~$0$. 
	Then consider fixed $k \in \mathbb{N}$ large enough such that $k \bar{\Gamma} \supset \hc{B_{\Rcore}}$ (see Figure~2) 
	and define $\fn{\bar{\L}} = \L \cap k \bar{\Gamma}$, which satisfies: $\fn{\bar{\L}}$ is finite, 
	$\fn{\bar{\L}}^{c} := \L \setminus \fn{\bar{\L}} = \L^{\hom} \setminus \fn{\bar{\L}}$ 
	\fn{and $\partial \fn{\bar{\L}} \subset \fn{\bar{\L}} \cap \L^{\hom}$}.

\fn{
	\emph{Case 1} First consider $\ell \in \fn{\bar{\L}}^{c}, \rho \in \fn{\bar{\L}}^{c} - \ell$, then $\rho \in \L^{\hom}$ and can be expressed as $\rho = \sum_{j=1}^{d} n_{j} A e_{j},$ where $(A e_{j})_{1\leq j \leq d}$ are the lattice vectors and $(n_{j}) \in \mathbb{Z}^{d}$. As the lattice vectors are independent, one can define a norm $|\rho|_{1} = \sum_{j=1}^{d} |n_{j}|$, which is equivalent to the standard Euclidean norm $|\rho|$, hence $|\rho|_{1} \leq C |\rho|$, where $C > 0$ is independent of $\rho \in \Lhom$. It is straightforward to construct a lattice path $(\ell_{i})_{1\leq i \leq N_{\rho}} \subset ( \fn{\bar{\L}}^{c} \cup \partial \fn{\bar{\L}} ),$ from $\ell$ to $\ell + \rho$ that avoids the defect core $\L \cap B_{\Rcore}$, such that $\ell_{i+1} \in \{ \ell_{i} \pm A e_{j} \}_{1 \leq j \leq d} \subseteq \mathcal{N}(\ell_{i})$, such that $N_{\rho} \leq 2|\rho|_{1} \leq C |\rho|$.
	
	Moreover, for $1 \leq i \leq d$, define the sets
	\begin{align}
	Q_{1,i} &= \bigg\{ \, \sum_{j=1}^{d} n_{j} Ae_{j} \, \bigg| \, (n_{j}) \in \Z^{d}, n_{i} < - k \text{ and } n_{j} \in [-k,k] \text{ for all } j \neq i \, \bigg\}, \label{eq:A1i-def} \\
	Q_{2,i} &= \bigg\{ \, \sum_{j=1}^{d} n_{j} Ae_{j} \, \bigg| \, (n_{j}) \in \Z^{d}, n_{i} > k \text{ and } n_{j} \in [-k,k] \text{ for all } j \neq i \, \bigg\}. \label{eq:A2i-def}
	\end{align}
	It follows that $\fn{\bar{\L}}^{\rm c} = \bigcup_{i = 1}^{d} (Q_{1,i} \cup Q_{2,i})$. The paths can be chosen such that for $\ell_{1}, \ell_{2} \in \L^{\rm c}_{0}$, then the path $(\ell_{i})_{1\leq i \leq N_{\rho}} \subset ( \fn{\bar{\L}}^{c} \cup \partial \fn{\bar{\L}} )$ from $\ell_{1}$ to $\ell_{2}$ satisfies $(\ell_{i})_{1\leq i \leq N_{\rho}} \cap \fn{\bar{\L}} \neq \emptyset$ if and only if
	\begin{align}
	(\ell_{1},\ell_{2}) \in \bigcup_{i = 1}^{d} \big( (Q_{1,i} \times Q_{2,i}) \cup (Q_{2,i} \times Q_{1,i}) \big). \label{eq:Q12est}
	\end{align} }

	\emph{Case 2} Now consider $\ell \in \fn{\bar{\L}}, \rho \in \fn{\bar{\L}} - \ell$.
	For $\ell' \in \L, \rho' \in \L - \ell'$, then define the Voronoi cell $\mathcal{V}(\ell')$
	\begin{align}
	\mathcal{V}(\ell') &= \left\{ \, \hc{a} \in \mathbb{R}^{d} \, \Big | \, |\hc{a} - \ell'| \leq |\hc{a} - k| \quad \forall \, k \in \L \,  \right\}, \label{eq:Voronoi-def} 
	\end{align}
	and let $\partial \mathcal{V}(\ell')$ denote its boundary. 	Also, define the continuous function ${d_{\ell'}: \mathbb{R}^{d} \to \mathbb{R}_{\geq0}}$ by
   $d_{\ell'}(x) := \max_{k \in \L \setminus \{\ell' \} } \left( |x - \ell'| - |x - k| \right).$
	\fn{ Recall \eqref{eq:r0-def}, that
	\begin{align*}
	r_{0} = \min_{\ell_{1} \in \L, \ell_{2} \in \L \setminus \{\ell_{1} \}}|\ell_{1} - \ell_{2}| > 0,
	\end{align*}
	hence we deduce $d_{\ell'}(\ell') = -r_{0} < 0$ and $d_{\ell'}(k) \geq r_{0} > 0$} for $k \in \L \setminus \{\ell'\}$. 
Also, \fn{we have that} $\mathcal{V}(\ell') = \{ x | d_{\ell'}(x) \leq 0 \}$ and $\partial \mathcal{V}(\ell') = \{ x | d_{\ell'}(x) = 0 \}$. 
For $t \in [0,1]$ define $f(t) = d_{\ell'}(\ell' + t \rho')$, which is continuous and satisfies $f(0) < 0, f(1) > 0$, 
so there exists $t_{0} \in (0,1)$ such that $f(t_{0}) = d_{\ell'}(\ell' + t_{0} \rho') = 0$, i.e. $\ell' + t_{0} \rho' \in \partial \mathcal{V}(\ell')$, 
hence there exists $m = m(\ell',\ell'+\rho') \in \mathcal{N}(\ell')$ such that $t_{0}|\rho'| = |\ell' + t_{0} \rho' - \ell'| = |\ell' + t_{0} \rho' - m|$. 
Applying the triangle inequality gives
	\begin{align}
	|\ell' + \rho' - m| \leq |\ell' + t_{0}\rho' - m| + |(1-t_{0})\rho'| = t_{0}|\rho'| + (1 - t_{0})|\rho'| = |\rho'|. \label{eq:mlesseql}
	\end{align}
	We now show this inequality is actually strict. As the line $L = \{ \ell' + t \rho' | t \in \mathbb{R} \}$ 
	and surface $S = \{ \hc{a} \in \mathbb{R}^{d} ~\big|~ |\hc{a} - \ell'| = |\hc{a} - m| \}$ can only intersect once at 
	$\ell' + t_{0} \rho' \in L \cap S$, then as $\ell' + \rho' \in L \setminus S$, the estimate \eqref{eq:mlesseql} implies
	\begin{align}
	|\ell' + \rho' - m| < |\rho'|. \label{eq:mstrictlessl}
	\end{align}
	We now use \eqref{eq:mstrictlessl} to construct a finite path of neighbouring lattice points from $\ell'$ to $\ell' + \rho'$. Let $\ell_{1}(\ell',\rho') = \ell'$, then for $i \in \mathbb{N}$, given $\ell_{i}(\ell',\rho')$, define $\ell_{i+1}(\ell',\rho') = m(\ell_{i},\ell'+\rho') \in \mathcal{N}(\ell_{i})$, satisfying
	\begin{align}
	|\ell' + \rho' - \ell_{i+1}| < |\ell' + \rho' - \ell_{i}|. \label{eq:liest}
	\end{align}
	We now show that this path reaches $\ell' + \rho'$ after finitely many steps. Observe that $\ell_{1} = \ell' \in \L \cap B_{|\rho'|+1}(\ell'+\rho')$, which is a finite set. From \eqref{eq:liest}, we deduce that $\ell_{i+1} \in \L \cap B_{|\rho'|+1}(\ell'+\rho') \setminus \{\ell_{1},\ldots,\ell_{i}\}$. It follows that the path reaches $\ell' + \rho'$ in $N_{\ell',\rho'}$ steps, where $N_{\ell',\rho'} \leq |\L \cap B_{|\rho'|+1}(\ell'+\rho')|$.

	Now consider the set
	$\left \{ \ell_{i}(\ell,\rho) | \ell \in \fn{\bar{\L}}, \rho \in \fn{\bar{\L}} - \ell, 1 \leq i \leq N_{\ell,\rho} - 1  \right \},$
	which is finite, so $c_{0} > 0$ such that: for all $\ell \in \fn{\bar{\L}}, \rho \in \fn{\bar{\L}} - \ell, 1 \leq i \leq N_{\ell,\rho} -1$, $\ell_{i} = \ell_{i}(\ell,\rho)$ and $\ell_{i+1} = \ell_{i+1}(\ell,\rho)$ satisfy
	$|\ell + \rho - \ell_{i+1}| \leq |\ell + \rho - \ell_{i}| - c_{0}.$
	As $\ell_{1} = \ell$ satisfies $|\ell + \rho - \ell_{1} | = |\rho|$, arguing inductively gives
	$|\ell + \rho - \ell_{i+1}| \leq |\rho| - c_{0} i.$
	Observe that for $i \geq N_{\rho}:= \lceil c_{0}^{-1} |\rho| \rceil$, $|\ell + \rho - \ell_{i+1}| \leq |\rho| - c_{0} i \leq 0$, hence the path reaches $\ell+\rho$ within $N_{\rho} \leq c_{0}^{-1}|\rho| + 1 \leq (c_{0}^{-1} + \mu^{-1})|\rho| = C |\rho|$ steps.

	\emph{Case 3} It remains to consider the case $\ell \in  \fn{\bar{\L}}, \rho \in \fn{\bar{\L}}^{c} \setminus \{ \ell \}$, as the case $\ell \in  \fn{\bar{\L}}^{c}, \rho \in \fn{\bar{\L}} \setminus \{ \ell \}$ is identical. We follow the procedure of Case 2, starting from $\ell_{1} = \ell$ and moving along neighbouring lattice points in $\fn{\bar{\L}}$ until the boundary $\partial \fn{\bar{\L}}$ is reached, hence there exist neighbouring lattice points $\ell_{1},\ldots \ell_{i-1} \in \fn{\bar{\L}} \setminus \partial \fn{\bar{\L}}$ and $\ell_{i} \in \partial \fn{\bar{\L}}$ satisfying $|\ell + \rho - \ell_{i}| \leq |\rho| - c_{0}(i -1)$. As $\ell_{i}, \ell + \rho \in \fn{\bar{\L}}^{c} \cup \partial \fn{\bar{\L}}$, by Case 1, there exists a lattice path $(\ell'_{j})_{1 \leq j \leq N_{i,\rho}} + 1$ along neighbouring lattice points, from $\ell_{i}$ to $\ell+\rho$, satisfying
	$N_{i,\rho} \leq C |\ell+\rho - \ell_{i}| \leq C \left( |\rho| - c_{0}(i-1) \right)$, hence joining these paths creates a lattice path of neighbouring points from $\ell$ to $\ell + \rho$ of length $N_{\rho} + 1$, where $N_{\rho} = i + N_{i,\rho} \leq C( c_{0} i + N_{i,\rho} ) \leq C |\rho|$, where $C$ is independent of $\rho$.
\end{proof}

\fn{ \begin{proof}
		[Proof of Lemma \ref{lemma:ballcount}]
		
		We first recall the subset $\fn{\bar{\L}} \subset \L$ defined in the proof of Lemma \ref{lemma:pathcount}, then decompose $\L^{2}$ into the following sets
		\begin{align*}
		&A_{1} := \fn{\bar{\L}} \times \fn{\bar{\L}},
		\qquad
		A_{2} := \left( \fn{\bar{\L}^{\rm c}} \times \fn{\bar{\L}} \right) \cup \left( \fn{\bar{\L}} \times \fn{\bar{\L}^{\rm c}} \right), \\
		&A_{3} := \big\{ (\ell_{1}, \ell_{2}) \in (\fn{\bar{\L}^{\rm c}})^{2} \, \big| \, \mathscr{P}(\ell_{1},\ell_{2}) \cap \fn{\bar{\L}} = \emptyset \, \big\}, \\
		&A_{4} := \big\{ (\ell_{1}, \ell_{2}) \in (\fn{\bar{\L}^{\rm c}})^{2} \, \big| \, \mathscr{P}(\ell_{1},\ell_{2}) \cap \fn{\bar{\L}} \neq \emptyset \, \big\}, \\
		\end{align*}
		It is clear to see that $\L^{2} = \bigcup_{i =1}^{4} A_i$, hence for all $\ell \in \L$ and $n \in \mathbb{N}$, $|\mathscr{B}_{n}(\ell)| = \sum_{i=1}^{4}|\mathscr{B}_{n}(\ell) \cap A_{i}|$.
		
		We first estimate $|\mathscr{B}_{n}(\ell) \cap A_{1}|$. As $\fn{\bar{\L}}$ is finite, it follows that
		\begin{align}
		|\mathscr{B}_{n}(\ell) \cap A_{1}| \leq |A_{1}| = |\fn{\bar{\L}} \times \fn{\bar{\L}}| \leq |\fn{\bar{\L}}|^{2} n^{d}. \label{eq:bc1}
		\end{align}
		As $\fn{\bar{\L}}^{c} \subset \L^{\hom}$, it follows that
		\begin{align*}
		\mathscr{B}_{n}(\ell) \cap A_{3} \subset \left\{ (\ell_{1},\ell_{1} + \rho) \, \bigg| \ell_{1} \in \fn{\bar{\L}}^{c}, \rho \in \L^{\hom}_{*}, n-1 \leq |\rho| < n, \, \ell \in \mathscr{P}(\ell_{1},\ell_{1} + \rho) \right\}.
		\end{align*}
		For each $\rho \in \L^{\hom}_{*} \cap \hc{B_{n}}$, there exist at most $N_{\rho} + 1 \leq C|\rho| \leq C n$ 
		choices for $\ell_{1} \in \fn{\bar{\L}}^{c}$ such that $\ell \in \mathscr{P}(\ell_{1},\ell_{1} + \rho)$. 
		Consequently,
		\begin{align}
		|\mathscr{B}_{n}(\ell) \cap A_{3}| &\leq C n |\Lhom \cap \left( \hc{B_{n}} \setminus \hc{B_{n-1}} \right) | \nonumber \\
		&\leq C n | \hc{B_{n}} \setminus \hc{B_{n-1}} | \leq C n \left(n^{d} - (n-1)^{d}\right) \leq Cn^{d}. \label{eq:bc3}
		\end{align}
		In order to show the remaining estimates, we require the following estimate. Suppose that $(\ell_{1},\ell_{2}) \in \mathscr{B}_{n}(\ell)$, then using Lemma \ref{lemma:pathcount}, we deduce that
		\begin{align}
		|\ell - \ell_{1}| \leq C |\ell_{1} - \ell_{2}| \max_{\ell' \in \L} \max_{m' \in \mathcal{N}(\ell') } |\ell' - m'| \leq C_{0}n. \label{eq:ell1est}
		\end{align}
		An identical argument also shows that $|\ell - \ell_{2}| \leq C_{0}n$.
		
		Applying \eqref{eq:ell1est}, we now estimate
		\begin{align}
		|\mathscr{B}_{n}(\ell) \cap A_{2} | &\leq 2|(\L^{\hom} \cap B_{C_{0}n}(\ell) ) \times \fn{\bar{\L}} | = | \fn{\bar{\L}} || \L^{\hom} \cap B_{C_{0}n}(\ell) | \nonumber \\ &\leq C | \fn{\bar{\L}}| |B_{C_{0}n}(\ell) | \leq C C_{0}^{d} n^{d}. \label{eq:bc2}
		\end{align}
		For the final estimate, recall the sets $Q_{1,i}, Q_{2,i} \subset \Lhom$, for $1 \leq i \leq d$, defined by \eqref{eq:A1i-def}--\eqref{eq:A2i-def} in the proof of Lemma \ref{lemma:pathcount}. It follows from \eqref{eq:Q12est} that $A_{4} = \bigcup_{i =1}^{d} \big( ( Q_{1,i} \times Q_{2,i} ) \cup ( Q_{2,i} \times Q_{1,i} ) \big )$.
		Now suppose $(\ell_{1},\ell_{2}) \in \mathscr{B}_{n}(\ell) \cap A_{4}$, then without loss of generality suppose that $\ell_{1} \in Q_{1,i}, \ell_{2} \in Q_{2,i}$, for some $1 \leq i \leq d$. By definition, for $i' = 1,2 $, $\ell_{i'} = \sum_{j = 1}^{d} n_{j,i'}Ae_{j}$, where $n_{i,1} < -k, n_{i,2} > k$ and $n_{j,1}, n_{j,2} \in \Z \cap [-k,k]$ for $j \neq i$. Let $c_{0} = \min_{1 \leq i \leq d} |Ae_{i}| > 0$ and observe that $\text{d}(\ell_{1},\fn{\bar{\L}}) = n_{i,1} + k \leq |\ell_{1} - \ell_{2}| \leq n$. Arguing by contradiction, it follows that $n_{i,1} \geq -c_{0}^{-1}n - k$. An identical argument shows that $n_{1,2} \leq c_{0}^{-1}n + k$, so define
		\begin{align*}
		Q_{1,n,i} &= \bigg\{ \, \sum_{j=1}^{d} n_{j} Ae_{j} \, \bigg| \, (n_{j}) \in \Z^{d}, -c_{0}^{-1}n - k \leq n_{i} < - k \text{ and } n_{j} \in [-k,k] \text{ for all } j \neq i \, \bigg\}, \\
		Q_{2,n,i} &= \bigg\{ \, \sum_{j=1}^{d} n_{j} Ae_{j} \, \bigg| \, (n_{j}) \in \Z^{d}, k < n_{i} \leq c_{0}^{-1}n + k \text{ and } n_{j} \in [-k,k] \text{ for all } j \neq i \, \bigg\}.
		\end{align*}
		We then deduce that
		\begin{align}
		|\mathscr{B}_{n}(\ell) \cap A_{4}| &= \sum_{i = 1}^{d} |\mathscr{B}_{n}(\ell) \cap \big( ( Q_{1,i} \times Q_{2,i} ) \cup ( Q_{2,i} \times Q_{1,i} ) \big )| \nonumber \\ \nonumber &= 2 \sum_{i = 1}^{d} |\mathscr{B}_{n}(\ell) \cap ( Q_{1,i} \times Q_{2,i} )| \leq 2 \sum_{i = 1}^{d} |Q_{1,n,i} \times Q_{2,n,i} | \\ & \leq 2 \sum_{i = 1}^{d} |Q_{1,n,i} || Q_{2,n,i} | \leq C d k^{2d -2} n^{2} \leq C n^{d}, \label{eq:bc4}
		\end{align}
		where we have used that $d \in \{2,3\}$. Collecting the estimates \eqref{eq:bc1}, \eqref{eq:bc3}, \eqref{eq:bc2} 
		and \eqref{eq:bc4} gives the desired estimate \eqref{eq:ballcount}
		\begin{align*}
		|\mathscr{B}_{n}(\ell)| = \sum_{i =1 }^{4} |\mathscr{B}_{n}(\ell) \cap A_{i}| \leq Cn^{d}. 
%\qedhere
		\end{align*}
	\end{proof}

We now prove Lemma \ref{lemma:normEquiv}.}

\begin{proof}
	[Proof of Lemma \ref{lemma:normEquiv}]
	We show that for $k = 1,2$
	\begin{align}
	c_{k}\|Du\|_{\hc{l}^2_{\mathcal{N}}} \leq \|Du\|_{\hc{l}^2_{\wf,k}}. \label{eq:eqnormlb}
	\end{align}
	From the uniform lower bound on $\wf$,
	for any $\ell \in \L$, $\rho \in \mathcal{N}(\ell) - \ell$, $\wf(|\rho|)^{-1} \leq c_{0}^{-1}$. As $k \leq 2$, using the embedding $\hc{l}^k \subseteq \hc{l}^2$
	\begin{align*}
	\big|Du(\ell)\big|_{\mathcal{N}} &= \bigg( \sum_{\rho\in \mathcal{N}(\ell) - \ell}
	\big|D_\rho u(\ell)\big|^2 \bigg)^{1/2} \leq \bigg( \sum_{\rho\in \mathcal{N}(\ell) - \ell}
	\big|D_\rho u(\ell)\big|^k \bigg)^{1/k} \\
	&\leq \bigg( c_{0}^{-1} \sum_{\rho\in \mathcal{N}(\ell) - \ell} \wf(|\rho|) \big|D_\rho u(\ell)\big|^k \bigg)^{1/k} \leq c_{0}^{-1/k} \big|Du(\ell)\big|_{\wf,k}.
	\end{align*}
	This implies \eqref{eq:eqnormlb} as
	\begin{align*}
	\|Du\|_{\hc{l}^2_{\mathcal{N}}} = \bigg( \sum_{\ell \in \L} |Du(\ell)|_{\mathcal{N}}^2 \bigg)^{1/2} \leq c_{0}^{-1/k} \bigg( \sum_{\ell \in \L} |Du(\ell)|_{\wf, k}^2 \bigg)^{1/2} = c_{0}^{-1/k} \|Du\|_{\hc{l}^2_{\wf,k}}.
	\end{align*}
	
	We now show \eqref{eq:normestk} \fn{for $k \in \N, k > 2$ and the upper bound of \eqref{eq:normest12} for $k = 2$}. By Lemma \ref{lemma:pathcount}, for each $\ell \in \L$ and $\rho \in \L - \ell$, there exists a path $\mathscr{P}(\ell,\ell+\rho) = \{\ell_{i} \in \L |1\leq i \leq N_{\rho}+1\}$ of neighbouring lattice points, such that $N_{\rho} \leq C |\rho|$ and $\rho_{i} := \ell_{i+1} - \ell_{i} \in \mathcal{N}(\ell_{i}) - \ell_{i}$ for all $1 \leq i \leq N_{\rho}$, satisfying $|D_{\rho} u(\ell)| \leq \sum_{i = 1}^{N_{\rho}} |D_{\rho_{i}} u(\ell_{i})|.$
	Applying H{\"o}lder's inequality gives
	\begin{align*}
	|D_{\rho} u(\ell)|^{k} \leq \bigg( \sum_{i = 1}^{N_{\rho}} |D_{\rho_{i}} u(\ell_{i})| \bigg)^{k} \leq |N_{\rho}|^{k-1} \sum_{i = 1}^{N_{\rho}} |D_{\rho_{i}} u(\ell_{i})|^{k} \leq C |\rho|^{k-1} \sum_{i = 1}^{N_{\rho}} |D_{\rho_{i}} u(\ell_{i})|^{k},
	\end{align*}
    so we deduce
	\begin{align}
	\|Du\|_{\hc{l}^k_{\wf,k}} &= \bigg( \sum_{\ell \in \L} |Du(\ell)|_{\wf,k}^k \bigg)^{1/k} = \bigg( \sum_{\ell \in \L} \sum_{\rho \in \L - \ell} \wf(|\rho|) |D_{\rho}u(\ell)|^k \bigg)^{1/k} \nonumber \\
	&\leq C \bigg( \sum_{\ell \in \L} \sum_{\rho \in \L - \ell} \wf(|\rho|) |\rho|^{k-1} \sum_{i = 1}^{N_{\rho}} |D_{\rho_{i}} u(\ell_{i})|^{k} \bigg)^{1/k} \nonumber \\
	&\leq C \bigg( \sum_{\ell \in \L} \sum_{\rho \in \L - \ell} \wf(|\rho|) |\rho|^{k-1} \sum_{i = 1}^{N_{\rho}} \sum_{\rho' \in \mathcal{N}(\ell_{i}) - \ell_{i}} |D_{\rho'} u(\ell_{i})|^{k} \bigg)^{1/k} \nonumber \\
	&= C \bigg( \sum_{\ell' \in \L} \sum_{\rho' \in \mathcal{N}(\ell') - \ell'} |D_{\rho'} u(\ell')|^{k} \sum_{\ell \in \L} \sum_{\substack{\rho \in \L - \ell\\ \ell' \in \mathscr{P}(\ell,\ell+\rho)}} \wf(|\rho|) |\rho|^{k-1} \bigg)^{1/k} \label{eq:normest1}
	\end{align}
	By Lemma \ref{lemma:ballcount}, for all $\ell' \in \L$ and $n \in \mathbb{N}$, the set
	\begin{align*}
	\mathscr{B}_{n}(\ell') = \big\{ (\ell_{1},\ell_{2}) \in \L^{2} \,\big|\, n-1 < |\ell_{1} - \ell_{2}| \leq n, \ell' \in \mathscr{P}(\ell_{1},\ell_{2}) \big\}
	\end{align*}
	satisfies $|\mathscr{B}_{n}(\ell')| \leq C n^{d}.$
	As $\wf$ is a decreasing function, it follows that
	\begin{align}
	&\sum_{\ell \in \L} \sum_{\substack{\rho \in \L - \ell\\ \ell' \in \mathscr{P}(\ell,\ell+\rho)}} \wf(|\rho|) |\rho|^{k-1}
	= \sum_{n=1}^{\infty} \sum_{\ell_{1}, \ell_{2} \in \mathscr{B}_{n}(\ell')} \wf(|\ell_{1} - \ell_{2}|) |\ell_{1} - \ell_{2}|^{k-1} \nonumber \\
	&\fn{\leq C \sum_{n=1}^{\infty} \wf\left( n-1 \right) n^{k+d-1} \leq (1 + 2^{k+d-1}) \|\wf\|_{L^{\infty}([0,\infty))} + C \sum_{n=3}^{\infty} \wf\left( n-1 \right) (n-2)^{k+d-1}} \nonumber \\
	&\leq \fn{C \bigg( \| \wf \|_{\Hw_{k}} +  \sum_{n=1}^{\infty} \wf\left( n+1 \right) n^{k+d-1} \bigg) \leq C \bigg( \| \wf \|_{\Hw_{k}} + \int_{1}^{\infty} \wf(x) x^{k+d-1} \text{d} x \bigg) \leq C \| \wf \|_{\Hw_{k}}.} \label{eq:pathwfest}
	\end{align}
	Combining \eqref{eq:normest1}--\eqref{eq:pathwfest} and applying the embedding \fn{$\hc{l}^2 \subseteq \hc{l}^k,$ as $k \geq 2$}, yields
	\begin{align*}
	\|Du\|_{\hc{l}^k_{\wf,k}} &\leq C \bigg( \sum_{\ell' \in \L} \sum_{\rho' \in \mathcal{N}(\ell') - \ell'} |D_{\rho'} u(\ell')|^{k} \sum_{\ell \in \L} \sum_{\substack{\rho \in \L - \ell\\ \ell' \in \mathscr{P}(\ell,\ell+\rho)}} \wf(|\rho|) |\rho|^{k-1} \bigg)^{1/k} \\
	&\leq C \| \wf \|_{\Hw_{k}}^{1/k} \bigg( \sum_{\ell' \in \L} \sum_{\rho' \in \mathcal{N}(\ell') - \ell'} |D_{\rho'} u(\ell')|^{k} \bigg)^{1/k} \\
	&\leq C \bigg( \sum_{\ell' \in \L} \sum_{\rho' \in \mathcal{N}(\ell') - \ell'} |D_{\rho'} u(\ell')|^{2} \bigg)^{1/2} = C \| Du \|_{\hc{l}^2_{\mathcal{N}}}.
	\end{align*}
	
	It remains to show that for $\wf \in \Hw_{1}$, $\|Du\|_{\hc{l}^2_{\wf,1}} \leq C \|Du\|_{\hc{l}^2_{\mathcal{N}}}$. Define for \fn{$r \geq 0$, $\widetilde{\wf}(r) = \wf(r)(1+r)^{-1} \in \Hw_{2}$}, which satisfies \fn{$\| \widetilde{\wf} \|_{\Hw_{2}} \leq \| \wf \|_{\Hw_{1}}$}. Applying Cauchy--Schwarz \fn{and Lemma \ref{lemma:omega-sum-int-est}} gives
\fn{	\begin{align*}
	|Du(\ell)|_{\wf,1} &= \sum_{\rho\in\L-\ell} \wf(|\rho|) \big|D_\rho u(\ell)\big|
\\ &= \sum_{\rho\in\L-\ell} \left( \wf(|\rho|)^{1/2} (1+ |\rho|)^{1/2} \right) \left( \wf(|\rho|)^{1/2} (1+ |\rho|)^{-1/2} \big|D_\rho u(\ell)\big| \right) \\
	& \leq \bigg( \sum_{\rho\in\L-\ell} \wf(|\rho|) (1+ |\rho|) \bigg)^{1/2} \bigg( \sum_{\rho\in\L-\ell} \wf(|\rho|) (1 + |\rho|)^{-1} \big|D_\rho u(\ell)\big|^{2} \bigg)^{1/2} \\
	&\leq C \| \wf \|_{\Hw_{1}}^{1/2} |Du(\ell)|_{\widetilde{\wf},2}.
	\end{align*} }
	The desired estimate follows from applying \fn{\eqref{eq:normest12} with $k = 2$}
	\begin{align*}
	\|Du\|_{\hc{l}^2_{\wf,1}} 
	&= \bigg( \sum_{\ell \in \L} |Du(\ell)|_{\wf,1}^2 \bigg)^{1/2} \leq \fn{C} \| \wf \|_{\Hw_{1}}^{1/2} \bigg( \sum_{\ell \in \L} |Du(\ell)|_{\widetilde{\wf},2}^2 \bigg)^{1/2} \\ 
	&= \fn{C} \| \wf \|_{\Hw_{1}}^{1/2} \|Du\|_{\hc{l}^2_{\widetilde{\wf},2}} \leq C \| \wf \|_{\Hw_{1}}^{1/2} \| \widetilde{\wf} \|_{\Hw_{2}}^{1/2} \|Du\|_{\hc{l}^2_{\mathcal{N}}} = C \| \wf \|_{\Hw_{1}} \|Du\|_{\hc{l}^2_{\mathcal{N}}}.
	\end{align*}
	This completes the proof.
\end{proof}

\section{Proofs: Energy difference functionals}
\label{sec:proof_energy_diff}
\setcounter{equation}{0}

\subsection{Proof of Lemma \ref{lemma-HcHdense}}
\label{Subsection-Proof of Lemma HcHdense}

The main aim of this Appendix is to prove Theorem \ref{theorem:E-Wc}. 
We first prove Lemma \ref{lemma-HcHdense} to justify that 
$\mathscr{H}^{\rm c}(\L)$ is dense in $\mathscr{H}(\L)$ with respect to the $\|D\cdot\|_{\hc{l}^2_{\mathcal{N}}(\L)}$ norm.

\begin{proof}[Proof of Lemma \ref{lemma-HcHdense}]
We first prove the desired result in the point defects setting, in which case \eqref{eq:H-Lambda-equiv-def} can be stated as
\begin{align}\label{eq:H-Lambda-pd-def}
\mathscr{H}(\L) =  \big\{ \, u \in \UsH(\L) \, \big| \,\,\,
|(y_{0}+u)(\ell) - (y_{0}+u)(m)| \neq 0 \quad\forall~ \ell, m \in \L\fn{, \text{ s.t. } \ell \neq m} \big\}.
\end{align}
Also, we have that $y_{0}(\ell) = \ell$ for all $\ell \in \L$ and that \eqref{eq:Iy0-ass} holds with $I\wt y_{0}(x) = x$ for $x \in \R^{\ds}$. Assuming that \eqref{eq:H-Lambda-pd-def} holds, we now justify that $\mathscr{H}(\L)$ is open with respect to the~{$\|D\cdot\|_{\hc{l}^2_{\mathcal{N}}(\L)}$}~norm. 

Without loss of generality, let $u \in \mathscr{H}(\L)$, $v \in \UsH(\L)$ with ${\| Dv \|_{\hc{l}^2_{\mathcal{N}}(\L)} = 1}$ and let $t > 0$, then there exists $\frak{m} > 0$ such that $y = y_{0} + u$ satisfies ${|y(\ell) - y(m)| \geq \frak{m}|\ell - m|}$ for all $\ell, m \in \L$.
%It will immediately follow from Lemma \ref{lemma-WcW12dense} that $\mathscr{H}^{\rm c}(\L)$ is dense in $\mathscr{H}(\L)$ with respect to the norm $\|D\cdot\|_{\hc{l}^2_{\mathcal{N}}(\L)}$.
%Suppose that $u \in \mathscr{H}(\L)$, then as $y := y_{0} + u \in \Adm$, there exists $\mathfrak{m} > 0$ 
%	such that $|y(\ell) - y(m)| \geq \mathfrak{m}|\ell - m|$ for all $\ell, m \in \L$. Moreover, as
%	$u \in \mathscr{H}(\L) \subset \UsH(\L)$, Lemma \ref{lemma-WcW12dense} implies that 
%	there exists a sequence $u_{n} \in \Usz(\L)$ such that $\| D(u_{n} - u) \|_{\hc{l}^2_{\mathcal{N}}(\L)} \to 0$ 
%	as $n \to \infty$. We now show that for sufficiently large $n$, we can ensure that $u_{n} \in \mathscr{H}^{\rm c}(\L)$.
Consider $\ell, m \in \L$ and let $\rho = m - \ell$. By Lemma \ref{lemma:pathcount}, there exists a path $\mathscr{P}(\ell,\ell+\rho) = \{\ell_{i} \in \L |1\leq i \leq N_{\rho}+1\}$ of neighbouring lattice points, such that $N_{\rho} \leq C |\rho|$ and $\rho_{i} := \ell_{i+1} - \ell_{i} \in \mathcal{N}(\ell_{i}) - \ell_{i}$ for all $1 \leq i \leq N_{\rho}$, satisfying
	\begin{align}
	|D_{\rho} v(\ell)| 
	&\leq \sum_{i = 1}^{N_{\rho}} |D_{\rho_{i}} v(\ell_{i})| \leq N_{\rho}^{1/2} \bigg( \sum_{i = 1}^{N_{\rho}} |D_{\rho_{i}} v(\ell_{i})| \bigg)^{1/2} \leq C|\rho|^{1/2} =: C_{0}|\ell - m|^{1/2}, \label{eq:Dv-CS-est}
	\end{align}
	where we have applied Cauchy--Schwarz. Recall \eqref{eq:r0-def}, that $r_{0} = r_{0}(\L) := \min_{\ell_{1} \in \L, \ell_{2} \in \L \setminus \ell_{1}}|\ell_{1} - \ell_{2}| > 0,$ then for $t \leq t_{0}(\frak{m}) := \frac{r_{0}^{1/2} \mathfrak{m}}{2C_{0}}$, applying the triangle inequality implies
	\begin{align}
	&|y(\ell) + tv(\ell) - y(m) - tv(m) | \nonumber \\ &\geq |y(\ell) - y(m)| - t|D_{\rho}v(\ell)| \geq \mathfrak{m}|\ell - m| - C_{0} t |\ell - m|^{1/2} \geq \frac{\mathfrak{m}}{2}|\ell - m|, \label{eq:u-v-open-calc}
%	|y_{n}(\ell) - y_{n}(m)| \geq \mathfrak{m}|\ell - m| - C_{0} \| D(u_{n} - u) \|_{\hc{l}^2_{\mathcal{N}}(\L)} |\ell - m|^{1/2} \\
%	&\geq \mathfrak{m}|\ell - m| - \frac{\mathfrak{m}}{2}|\ell - m| \geq \frac{\mathfrak{m}}{2}|\ell - m|.
	\end{align}
	for all $\ell, m \in \L$, hence \eqref{eq:H-Lambda-pd-def} implies $u + tv \in \mathscr{H}(\L)$ for all $t \in (0,t_{0}]$. It follows that $\mathscr{H}(\L)$ is open with respect to the $\|D\cdot\|_{\hc{l}^2_{\mathcal{N}}(\L)}$ norm. Consequently, by applying Lemma~\ref{lemma-WcW12dense} we also deduce that $\mathscr{H}^{\rm c}(\L)$ is dense in $\mathscr{H}(\L)$ with respect to the $\|D\cdot\|_{\hc{l}^2_{\mathcal{N}}(\L)}$ norm.
	
	It remains to show that \eqref{eq:H-Lambda-pd-def} holds. Clearly, if $u \in \mathscr{H}(\L)$, then $u \in \UsH(\L)$ and $(y_{0}+u)(\ell) \neq (y_{0}+u)(m)$ for any $\ell \in \L, m \in \L \setminus \ell$. We now prove the converse.
%	We deduce that for all $n \geq N$, $y_{n} = y_{0} + u_{n} \in \Adm(\L)$ and thus $u_{n} \in \mathscr{H}^{\rm c}(\L)$, which completes the proof.
	
Recall that $y_{0}(\ell) = \ell$ for $\ell \in \L$ and since $u\in \UsH(\L)$, there exists $R_{1} > 0$ such that $\left( \sum_{\ell \in \L \cap B_{R_{1}}^{\rm c}} \sum_{\rho \in \mathscr{N}(\ell) - \ell} |D_{\rho}u(\ell)|^{2} \right)^{1/2} \leq t_{0}(1)$. Therefore, for $\ell, m \in \L \cap B_{R_{1}}^{\rm c}$, we apply Lemma~\ref{lemma:pathcount} and \eqref{eq:Dv-CS-est} to obtain $|u(\ell) - u(m)| \leq C_{0} t_{0} |\ell - m|^{1/2} \leq \smfrac{|\ell - m|}{2},$
hence
\begin{align}
|y_{0}(\ell) + u(\ell) - y_{0}(m) - u(m) | &\geq |\ell - m| - |u(\ell) - u(m)|
% \nonumber \\ &\geq \mathfrak{m}|\ell - m| - C_{0} \varepsilon |\ell - m|^{1/2} 
\geq \frac{|\ell - m|}{2}. \label{eq:u-v-open-calc-2}
\end{align}
Then for $\ell, m \in \L$ satisfying $|\ell - m| \geq R_{2} := (2C_{0}\|Du\|_{l^{2}(\L)})^{2}$, a similar argument gives
\begin{align}
&|y_{0}(\ell) + u(\ell) - y_{0}(m) - u(m) | 
%\nonumber \\ & \qquad \qquad 
\geq |\ell - m| - C_{0} \|Du\|_{l^{2}(\L)} |\ell - m|^{1/2} \geq \frac{|\ell - m|}{2}. \label{eq:u-v-open-calc-3}
\end{align}
Now let $R_{3} = 2\max\{R_{1},R_{2}\} > 0$, so the estimates \eqref{eq:u-v-open-calc-2}--\eqref{eq:u-v-open-calc-3} hold when either $\ell, m \in \L \cap B_{R_{3}/2}^{\rm c}$ or when $\ell \in \L \cap B_{R_{3}/2}, m \in \Lambda \cap B_{R_{3}}^{\rm c}$. We finally consider the case $\ell, m \in \L \cap B_{R_{3}}$. Recall the initial assumption that $(y_{0}+u)(\ell) \neq (y_{0}+u)(m)$ for all $\ell, m \in \L$, then as $\L \cap B_{R_{3}}$ is finite, we have ${\min_{\ell, m \in \L \cap B_{R_{3}}, \ell \neq m} |(y_{0}+u)(\ell) - (y_{0}+u)(m)| =: c_{1} > 0}$, so
\begin{align}
|(y_{0}+u)(\ell) - (y_{0}+u)(m)| \geq c_{1} \geq \frac{c_{1}}{2R_{3}} |\ell - m|. \label{eq:u-v-open-calc-4}
\end{align}
Collecting \eqref{eq:u-v-open-calc-2}--\eqref{eq:u-v-open-calc-4} gives the desired inequality for $\frak{m} = \min\{\frac{1}{2},\frac{c_{1}}{2R_{3}}\} > 0$.

We now show that there exists $\lambda > 0$ such that $\cup_{\ell \in \L} B_{\lambda}(y(\ell)) = \R^{d}$. 
%To show this, we first introduce a continuous interpolation of $y$ in the far field.

Recall that $u_{0} \equiv 0$, so we have $y(\ell) = \ell + u(\ell)$, for $\ell \in \L$. As $u \in \UsH(\L)$ 
%and our initial assumption \eqref{eq:D-u0-ass}, 
it follows that for any $\varepsilon > 0$ there exists $R = R(\varepsilon) \geq \Rcore > 0$ such that for all $\ell \in \L \cap B_{R}^{\rm c}$ , we have $\max_{\ell' \in \mathscr{N}(\ell) \setminus \ell, \rho \in \mathscr{N}(\ell') - \ell'}|D_{\rho}u(\ell')| \leq \varepsilon$, where the value of $\varepsilon$ will be specified shortly. Let $\Gamma$ denote the unit cell of the lattice \fn{$\Lhom$}, centred at~$0$. 
Then consider fixed $k \in \mathbb{N}$, odd and sufficiently large such that $\Omega_{1} := k \bar{\Gamma} \supset \hc{B_{R}}$, which satisfies $\L \cap \Omega_{1}^{\rm c} = \Lhom \cap \Omega_{1}^{\rm c}$. Additionally, observe that $\Omega_{1}^{\rm c} = \cup_{\ell \in \L \cap \Omega_{1}^{\rm c}} (\Gamma + \ell)$. 
Using \cite{2013-defects-v3,ortner_shapeev12}, we construct an interpolant $Iu: \Omega_{1}^{\rm c} \to \R^{d}$ satisfying ${Iu \in W^{1,2}_{\rm loc}(\Omega_{1}^{\rm c})}$, $Iu(\ell) = u(\ell)$ for all ${\ell \in \L \cap \Omega_{1}^{\rm c}}$ and 
\begin{align*}
%\| \nabla Iu_{0} \|_{L^{\infty}(\Gamma + \ell)} &\leq C_{1} \max_{\ell' \in \mathscr{N}(\ell) \setminus \ell, \rho \in \mathscr{N}(\ell') - \ell'} |D_{\rho}Iu_{0}(\ell')|, \\
\| \nabla Iu \|_{L^{\infty}(\Gamma + \ell)} &\leq C_{1} \max_{\ell' \in \mathscr{N}(\ell) \setminus \ell, \rho \in \mathscr{N}(\ell') - \ell'} |D_{\rho}u(\ell')|,
\end{align*}
for all $\ell \in \L \cap \Omega_{1}^{\rm c}$, where $C_{1} > 0$ is independent of $\ell$ and $\varepsilon$. We choose ${\varepsilon = (4C_{1})^{-1} > 0}$ which ensures that $\| \nabla Iu \|_{L^{\infty}(\Gamma + \ell)} \leq \smfrac{1}{4}$ for all $\ell \in \L \cap \Omega_{1}^{\rm c}$. Now suppose $x_{1}, x_{2} \in \Gamma + \ell$ for some $\ell \in \L \cap \Omega_{1}^{\rm c}$, we have
\begin{align}
|Iu(x_{1}) - Iu(x_{2})| \leq \frac{|x_{1} - x_{2}|}{4}, \label{eq:Iu-est-1}
\end{align}
For general $x_{1}, x_{2} \in \Omega_{1}^{\rm c}$, we can construct a path from $x_{1}, x_{2}$ lying in $\Omega_{1}^{\rm c}$ of length at most $2|x_{1} - x_{2}|$, then applying \eqref{eq:Iu-est-1} yields
%$x_{i}' \in \Gamma + \ell_{i}$, where $x_{0}' = x_{1}, x_{n}' = x_{2}$, $\ell_{i} \in \L \cap \Omega^{\rm c}$, $\ell_{i+1} \in \mathscr{N}(\ell_{i}) \setminus \ell_{i}$ for $i = 0,\ldots, n-1$ such that $x_{i}$ we have
\begin{align}
|Iu(x_{1}) - Iu(x_{2})| \leq \frac{|x_{1} - x_{2}|}{2}. \label{eq:Iu-est-2}
\end{align}
We also define an interpolant for $y$ by $Iy(x) := x + Iu(x)$, for $x \in \Omega_{1}^{\rm c}$, with \eqref{eq:Iu-est-2} implying that
\begin{align*}
\frac{|x_{1} - x_{2}|}{2} \leq |Iy(x_{1}) - Iy(x_{2})| \leq \frac{3|x_{1} - x_{2}|}{2},
\end{align*}
for all $x_{1}, x_{2} \in \Omega_{1}^{\rm c}$.
%Suppose there exists bounded $\Omega \subset \R^d$ and $Iy: \Omega^{\rm c} \to \R^{d}$ satisfying $Iy(\ell) = y(\ell)$ for all $\ell \in \L \cap \Omega^{\rm c}$ and for all $x_{1}, x_{2} \in \Omega^{\rm c}$
%\begin{align*}
%C_{1} |x_{1} - x_{2}| \leq |Iy(x_{1}) - Iy(x_{2})| \leq C_{2}|x_{1} - x_{2}|,
%\end{align*}
%where $C_{2} \geq C_{1} > 0$. 
Therefore, the restriction of $Iy$ onto it's image is a homeomorphism. It follows that there exists bounded $\Omega_{2} \subset \R^{d}$ such that $Iy(\Omega_{1}^{\rm c}) = \Omega_{2}^{\rm c}$. So for any $x \in \Omega_{2}^{\rm c}$, there exists a unique $x' \in \Omega_{1}^{\rm c}$ such that $Iy(x') = x$. Additionally, there exists $\ell' \in \L \cap \Omega_{1}^{\rm c}$ such that $|\ell' - x'| \leq C_{2}$, where the constant $C_{2}$ is independent of $x' \in \Omega_{1}^{\rm c}$. Consequently, we deduce that
\begin{align*}
|y(\ell') - x | = |Iy(\ell') - Iy(x')| \leq \frac{3|\ell' - x'|}{2} \leq \frac{3C_{2}}{2}.
\end{align*}
Then, let $\ell \in \L$ minimise $\min_{\ell' \in \L} \textrm{dist}(y(\ell'),\Omega_{2}) =: C_{3} \geq 0$ and observe that the diameter $\textrm{diam}(\Omega_{2})$ is finite as $\Omega_{2}$ is bounded. Therefore, for any $x \in \Omega_{2}$, we have
\begin{align*}
|y(\ell) - x | \leq C_{3} + \textrm{diam}(\Omega_{2}),
\end{align*}
therefore choosing $\lambda := \max\{ \smfrac{3C_{2}}{2}, C_{3} + \textrm{diam}(\Omega_{2}) \} > 0$ ensures ${\cup_{\ell \in \L} B_{\lambda}(y(\ell)) = \R^{d}}$, completing the proof of \eqref{eq:H-Lambda-pd-def} for point defects.

We now consider the dislocations setting, in which case \eqref{eq:H-Lambda-equiv-def} can be stated as
\begin{align}\label{eq:H-Lambda-dis-def}
\mathscr{H}(\L) =  \big\{ \, u = y - y_{0} \in \UsH(\L) \, \big| \,\,\,
|\wt y(\ell) - \wt y(m)| \neq 0 \quad\forall~ \ell, m \in \L_{0} \fn{, \text{ s.t. } \ell \neq m} \big\},
\end{align}
where for $y = y_{0} + u$, $\wt y: \L_{0} \to \R^{3}$ is defined in \eqref{eq:wt y from y def} by $\wt y(\ell) = \ell + (u_{0} + u)(\ell_{12})$, for $\ell = (\ell_{1},\ell_{2},\ell_{3}) \in \L_{0}$ and $\ell_{12} := (\ell_{1}, \ell_{2}) \in \L$. We follow the argument presented in the point defects case, with the following modifications. We begin by assuming \eqref{eq:H-Lambda-dis-def} holds and justify that $\mathscr{H}(\L)$ is open with respect to the~{$\|D\cdot\|_{\hc{l}^2_{\mathcal{N}}(\L)}$}~norm.

Without loss of generality, let $u \in \mathscr{H}(\L)$, $v \in \UsH(\L)$ satisfying $\| Dv \|_{\hc{l}^2_{\mathcal{N}}(\L)} = 1$ and let $t > 0$, then $\wt y(\ell) = \wt y_{0}(\ell) + u(\ell_{12})$ for $\ell \in \L_{0}$ satisfies $\wt y \in \Adm^{0}(\L_{0})$.
%there exists $\frak{m} > 0$ such that $\wt y(\ell) = \wt y_{0}(\ell) + u(\ell_{12})$ for $\ell \in \L_{0}$ satisfies $|\wt y(\ell) - \wt y(m)| \geq \frak{m}|\ell - m|$ for all $\ell, m \in \L_{0}$. 
For $\ell, m \in \L_{0}$, we apply Lemma~\ref{lemma:pathcount} and follow the argument~\eqref{eq:Dv-CS-est} to obtain
\begin{align}
|v(\ell_{12}) - v(m_{12})| 
&\leq C_{0}|(\ell - m)_{12}|^{1/2} \leq C_{0}|\ell - m|^{1/2}. \label{eq:Dv-CS-dis-est}
\end{align}
Following the argument \eqref{eq:u-v-open-calc} verbatim, it follows that $u + tv \in \mathscr{H}(\L)$, for sufficiently small $t > 0$, hence $\mathscr{H}(\L)$ is open. Consequently, applying Lemma~\ref{lemma-WcW12dense} implies that $\mathscr{H}^{\rm c}(\L)$ is dense in $\mathscr{H}(\L)$ with respect to the $\|D\cdot\|_{\hc{l}^2_{\mathcal{N}}(\L)}$ norm.

It remains to show that \eqref{eq:H-Lambda-dis-def} holds, so we prove the non-trivial inclusion: if $u = y - y_{0} \in \UsH(\L)$ and $\wt y(\ell) \neq \wt y(m)$ for any $\ell \in \L_{0}, m \in \L_{0} \setminus \ell$, then $u \in \mathscr{H}(\L)$ or equivalently $\wt y \in \Adm^{0}(\L_{0})$.

First, for $R > 0$, define the two-dimensional disc
\begin{align*}
D_{R} := \{ (x_{1},x_{2}) \in \mathbb{R}^{2}\, | \, (x_{1}^{2} + x_{2}^{2})^{1/2} < R \, \}.
\end{align*}
Recall our initial assumption \eqref{eq:Iy0-ass}, that there exist $\frak{m}, M, R_{0} > 0$ and an interpolation $I\wt y_{0}: D_{R_{0}}^{\rm c} \times \R \to \R^{\ds}$ satisfying $I\wt y_{0}(\ell) = \wt y_{0}(\ell)$ for $\ell \in \L_{0} \cap (D_{R_{0}}^{\rm c} \times \R)$ such that for all $x_{1}, x_{2} \in D_{R_{0}}^{\rm c} \times \R$
\begin{align} \label{eq:Iy0-ass-rep}
\frak{m}|x_{1} - x_{2}| \leq |I\wt{y}_{0}(x_{1}) - I\wt{y}_{0}(x_{2})| \leq M|x_{1} - x_{2}|.
\end{align}
Without loss of generality, we suppose $\frak{m} \in (0,1)$. As $u\in \UsH(\L)$, there exists $R_{1} \geq R_{0} > 0$ such that 
\begin{align*}
\bigg( \sum_{\ell' \in \L \cap D_{R_{1}}^{\rm c}} \sum_{\rho \in \mathscr{N}(\ell') - \ell'} |D_{\rho}u(\ell')|^{2} \bigg)^{1/2} \leq t_{0}(\frak{m}),
\end{align*} 
therefore, for $\ell, m \in \L_{0} \cap ( D_{R_{1}}^{\rm c} \times \R)$, arguing as in \eqref{eq:u-v-open-calc-2} gives
\begin{align}
|\wt y_{0}(\ell) + u(\ell_{12}) - \wt y_{0}(m) - u(m_{12})| \geq \frac{\mathfrak{m}}{2}|\ell - m|. \label{eq:u-v-open-calc-dis-2}
\end{align}
We now consider the case ${\ell \in D_{R_{1} + R_{2}}^{\rm c} \times \R}$, $m \in D_{R_{1}} \times \R$, where the value of $R_{2} > 0$ will be specified shortly. Let $C'_{1} := \min \{ \, |n_{12}| \, | \, n \in \L_{0} \cap ( D_{R_{1}}^{\rm c} \times \R ) \} > R_{1}$ and fix $\ell' \in \L_{0}$ to be one such minimiser. As $\L_{0}$ is periodic in the ${e_{3}\text{-direction}}$, for all $j \in \Z$, we have ${\ell'_{j} := \ell' + je_{3} \in \L_{0}}$, hence we may choose $k \in \Z$ such that $|(\wt y_{0}(\ell'_{k}) - \wt y_{0}(m))_{3}| \leq 1$. Now define $C'_{2} := \max \{ \, |\wt y_{0}(n)_{12}| \, | \, n \in \L_{0} \cap ( \bar{D}_{C'_{1}} \times \R) \}$ and observe that due to the periodicity of $\L_{0}$, there exists a constant $C'_{3} > 0$ such that $|(\ell'_{k} - m)_{3}| \leq C'_{3}$, for all $m \in \L_{0} \cap ( D_{R_{1}} \times \R)$. We now define ${R_{2} := \max\{ C'_{1} - R_{1} + \smfrac{8(2C'_{2} +1)}{\frak{m}}, 2(6R_{1} + 7C'_{1}), 14C'_{3}, \Big(\frac{4C_{0}\|Du\|_{l^{2}(\L)}}{\frak{m}}\Big)^{2} \}}$. We first estimate
\begin{align*}
|\wt y_{0}(\ell'_{k}) - \wt y_{0}(m)| \leq |(\wt y_{0}(\ell'_{k}) - \wt y_{0}(m))_{12}| +|(\wt y_{0}(\ell'_{k}) - \wt y_{0}(m))_{3}| \leq 2C'_{2} + 1,
\end{align*}
then using the choice of $R_{2}$, \eqref{eq:Iy0-ass-rep} and the triangle inequality implies
\begin{align}
|\wt y_{0}(\ell) - \wt y_{0}(m)| &\geq |\wt y_{0}(\ell) - \wt y_{0}(\ell'_{k})| - | \fn{\wt y}_{0}(\ell'_{k}) - \fn{\wt y}_{0}(m) | \nonumber \\ &\geq \frak{m}|\ell - \ell'_{k}| - (2C'_{2} + 1) \geq \frac{7\frak{m}}{8} |\ell - \ell'_{k}| \geq \frac{3\frak{m}}{4} |\ell - m|. \label{eq:y0-3m4-est}
\end{align}
Following the argument \eqref{eq:u-v-open-calc-3} then gives
\begin{align}
&|\wt y_{0}(\ell) + u(\ell_{12}) - \wt y_{0}(m) - u(m_{12}) | \nonumber \\ & \qquad \qquad \geq \frac{3\frak{m}}{4}|\ell - m| - C_{0} \|Du\|_{l^{2}(\L)} |(\ell - m)_{12}|^{1/2} \geq \frac{\mathfrak{m}}{2}|\ell - m|. \label{eq:u-v-open-calc-dis-3}
\end{align}
Next, we consider ${\ell \in D_{R_{1} + R_{2}} \times \R}$, $m \in D_{R_{1}} \times \R$, then define
\begin{align*}
C'_{4} &:= \max \{ \, |y_{0}(n_{12})|, |u(n_{12})| \, | \, n_{12} \in \L \cap  D_{R_{1} + R_{2}} \} 
%\quad \text{and} \\ R_{3} &:= \max\{16C'_{4},6(2R_{1} + R_{2})\}.
\end{align*}
and $R_{3} := \max\{16C'_{4},6(2R_{1} + R_{2})\}$.
If $|(\ell - m)_{3}| \geq R_{3}$, we deduce
\begin{align} \label{eq:y0-3m4-est-2}
|\wt y_{0}(\ell) - \wt y_{0}(m)| &= |y_{0}(\ell_{12}) - y_{0}(m_{12}) + (\ell - m)_{3}| \nonumber \\ &\geq |(\ell - m)_{3}| - 2C'_{4}  \nonumber \geq \frac{7\frak{m}}{8}|(\ell - m)_{3}| \\ &\geq \frac{3\frak{m}}{4}(|(\ell - m)_{12}| +  |(\ell - m)_{3}|) \geq \frac{3\frak{m}}{4}|\ell - m|,
\end{align}
therefore
\begin{align} \label{eq:u-v-open-calc-dis-4}
|\wt y(\ell) - \wt y(m)| &= |\wt y_{0}(\ell) - \wt y_{0}(m)| - |u(\ell_{12}) - u(m_{12})| \nonumber \\ &\geq \frac{3\frak{m}}{4}|\ell - m| - 2C'_{4} \geq \frac{\frak{m}}{2}|\ell - m|.
\end{align}
The remaining case to consider is ${\ell \in \L_{0} \cap( D_{R_{1} + R_{2}} \times \R)}$, ${m \in \L_{0} \cap( D_{R_{1}} \times \R)}$ satisfying ${|(\ell - m)_{3}| \leq R_{3}}$. As $\L_{0}$ is periodic in the $e_{3}\text{-direction}$, it suffices to consider $m \in D_{R_{1}} \times [0,1)$ and ${\ell \in D_{R_{1} + R_{2}} \times [-R_{3},R_{3}+1)}$. Due to our initial assumption on $\wt y$, we have 
\begin{align*}
c_{1} := \min \left\{ \, |\wt y(\ell') - \wt y(m')| \, | \, {m' \in D_{R_{1}} \times [0,1), \ell' \in D_{R_{1} + R_{2}} \times [-R_{3},R_{3}+1)} \right\} > 0,
\end{align*}
then as ${|\ell - m| \leq 2R_{1} + R_{2} + R_{3}}$, we deduce
\begin{align} \label{eq:u-v-open-calc-dis-5}
|\wt y(\ell) - \wt y(m)| \geq c_{1} \geq \frac{c_{1}}{2R_{1} + R_{2} + R_{3}}|\ell - m| =: c_{2}|\ell - m|.
\end{align}
Collecting \eqref{eq:u-v-open-calc-dis-2}--\eqref{eq:u-v-open-calc-dis-5} gives the desired inequality for $\frak{m}' = \min\{\frac{\frak{m}}{2},c_{2}\} > 0$.

We now show that there exists $\lambda > 0$ such that $\cup_{\ell \in \L_{0}} B_{\lambda}(\wt y(\ell)) = \R^{3}$ by making use of our assumption \eqref{eq:Iy0-ass-rep}.

As $u \in \UsH(\L)$ 
%and our initial assumption \eqref{eq:D-u0-ass}, 
it follows that for any $\varepsilon > 0$ there exists $R = R(\varepsilon) \geq R_{0} > 0$ such that for all $\ell' \in \L \cap D_{R}^{\rm c}$ , we have $\max_{\ell'' \in \mathscr{N}(\ell') \setminus \ell', \rho \in \mathscr{N}(\ell'') - \ell''}|D_{\rho}u(\ell'')| \leq \varepsilon$, where the value of $\varepsilon$ will be specified shortly. Let $\Gamma$ denote the unit cell of the lattice $\L$, centred at~$0$. 
Then consider fixed $k \in \mathbb{N}$, odd and sufficiently large such that $\Omega_{1} := k \bar{\Gamma} \supset D_{R}$. Additionally, observe that $\Omega_{1}^{\rm c} = \cup_{\ell' \in \L \cap \Omega_{1}^{\rm c}} (\Gamma + \ell') \subset \R^{2}$. 
Using \cite{2013-defects-v3,ortner_shapeev12}, we construct an interpolant $Iu: \Omega_{1}^{\rm c} \to \R^{3}$ satisfying ${Iu \in W^{1,2}_{\rm loc}(\Omega_{1}^{\rm c})}$, $Iu(\ell') = u(\ell')$ for all ${\ell' \in \L \cap \Omega_{1}^{\rm c}}$ and 
\begin{align*}
%\| \nabla Iu_{0} \|_{L^{\infty}(\Gamma + \ell)} &\leq C_{1} \max_{\ell' \in \mathscr{N}(\ell) \setminus \ell, \rho \in \mathscr{N}(\ell') - \ell'} |D_{\rho}Iu_{0}(\ell')|, \\
\| \nabla Iu \|_{L^{\infty}(\Gamma + \ell'
	)} &\leq C_{1} \max_{\ell''
	 \in \mathscr{N}(\ell'
	) \setminus \ell', \rho \in \mathscr{N}(\ell'') - \ell''} |D_{\rho}u(\ell'')|,
\end{align*}
for all $\ell' \in \L \cap \Omega_{1}^{\rm c}$, where $C_{1} > 0$ is independent of $\ell'$ and $\varepsilon$. We choose $\varepsilon = \frac{\frak{m}}{4C_{1}} > 0$ which ensures that $\| \nabla Iu \|_{L^{\infty}(\Gamma + \ell')} \leq \smfrac{\frak{m}}{4}$ for all $\ell' \in \L \cap \Omega_{1}^{\rm c}$. Following the argument \eqref{eq:Iu-est-1}--\eqref{eq:Iu-est-2} yields for $z_{1}, z_{2} \in \Omega_{1}^{\rm c}$
%
%
%Now suppose $x_{1}, x_{2} \in \Gamma + \ell$ for some $\ell \in \L \cap \Omega_{1}^{\rm c}$, we have
%\begin{align}
%|Iu(x_{1}) - Iu(x_{2})| \leq \frac{|x_{1} - x_{2}|}{4}, \label{eq:Iu-est-1}
%\end{align}
%For general $x_{1}, x_{2} \in \Omega_{1}^{\rm c}$, we can construct a path from $x_{1}, x_{2}$ lying in $\Omega_{1}^{\rm c}$ of length at most $2|x_{1} - x_{2}|$, then applying \eqref{eq:Iu-est-1} yields
%%$x_{i}' \in \Gamma + \ell_{i}$, where $x_{0}' = x_{1}, x_{n}' = x_{2}$, $\ell_{i} \in \L \cap \Omega^{\rm c}$, $\ell_{i+1} \in \mathscr{N}(\ell_{i}) \setminus \ell_{i}$ for $i = 0,\ldots, n-1$ such that $x_{i}$ we have
\begin{align}
|Iu(z_{1}) - Iu(z_{2})| \leq \frac{\frak{m}}{2}|z_{1} - z_{2}|. \label{eq:Iu-est-dis}
\end{align}
Now we define $I\wt y(x) := I\wt y_{0}(x) + Iu(x_{12})$, for $x \in \Omega_{1}^{\rm c} \times \R$, then combining \eqref{eq:Iy0-ass-rep} and \eqref{eq:Iu-est-2} implies
\begin{align*}
\frac{\frak{m}}{2}|x_{1} - x_{2}|\leq |I\wt y(x_{1}) - I\wt y(x_{2})| \leq \frac{\frak{m} + 2M}{2}|x_{1} - x_{2}|,
\end{align*}
for all $x_{1}, x_{2} \in \Omega_{1}^{\rm c} \times \R$.
%Suppose there exists bounded $\Omega \subset \R^d$ and $Iy: \Omega^{\rm c} \to \R^{d}$ satisfying $Iy(\ell) = y(\ell)$ for all $\ell \in \L \cap \Omega^{\rm c}$ and for all $x_{1}, x_{2} \in \Omega^{\rm c}$
%\begin{align*}
%C_{1} |x_{1} - x_{2}| \leq |Iy(x_{1}) - Iy(x_{2})| \leq C_{2}|x_{1} - x_{2}|,
%\end{align*}
%where $C_{2} \geq C_{1} > 0$. 
Therefore, the restriction of $I\wt y$ onto it's image is a homeomorphism and there exists bounded $\Omega_{2} \subset \R^{2}$ such that $I\wt y(\Omega_{1}^{\rm c}\times \R) = \Omega_{2}^{\rm c} \times \R$. So for any $x \in \Omega_{2}^{\rm c} \times \R$, there exists a unique $x' \in \Omega_{1}^{\rm c} \times \R$ such that $I\wt y(x') = x$. Additionally, there exists $\ell \in \L_{0} \cap ( \Omega_{1}^{\rm c} \times \R)$ such that $|\ell - x'| \leq C_{2}$, where the constant $C_{2}$ is independent of $x' \in \Omega_{1}^{\rm c} \times \R$. Consequently, we deduce that
\begin{align*}
|\wt y(\ell) - x | = |I\wt y(\ell) - I\wt y(x')| \leq \frac{\frak{m} + 2M}{2}|x_{1} - x_{2}| \leq \frac{C_{2}(\frak{m} + 2M)}{2} =: \lambda_{1}.
\end{align*}
Then, let $\ell \in \L_{0}$ minimise $\min_{\ell' \in \L} \textrm{dist}(y(\ell'),\Omega_{2}) =: C_{3} \geq 0$ and observe that the diameter $\textrm{diam}(\Omega_{2})$ is finite as $\Omega_{2}$ is bounded. Let $x \in \Omega_{2} \times \R$, then due to the periodicity of $\L_{0}$ in the $e_{3}$-direction, we have that $\ell + je_{3} \in \L_{0}$ for all $j \in \Z$. We can then find $\ell' = \ell + j'e_{3} \in \L_{0}$ satisfying $|(\wt y(\ell') - x)_{3}| \leq 1$, which implies
\begin{align*}
|\wt y(\ell') - x | \leq C_{3} + \textrm{diam}(\Omega_{2}) + 1 =: \lambda_{2}.
\end{align*}
As this holds independently of $x \in \Omega_{2} \times \R$, choosing ${\lambda := \max\{ \lambda_{1}, \lambda_{2}\} > 0}$ ensures ${\cup_{\ell \in \L_{0}} B_{\lambda}(\wt y(\ell)) = \R^{3}}$, showing \eqref{eq:H-Lambda-dis-def} for dislocations and completing the proof.

\end{proof}
%}

\subsection{Proof of Theorem \ref{theorem:E-Wc}}
\label{sec:proof_theo_EW}
\fn{Recall the space $\mathscr{H}_{m,\lambda}$ for some $\frak{m}, \lambda > 0$, given in \eqref{space:HL}, that $u \in \mathscr{H}_{\frak{m},\lambda}(\L)$ if and only if $u \in \UsH(\L)$ and $y_{0} + u \in \Adm_{\frak{m},\lambda}(\L).$ }
In the following proof, we consider $u \in \mathscr{H}_{m,\lambda}$ for some $\frak{m}, \lambda > 0$. For convenience, we assume that for all $t \in [0,1]$, $y_{0} + tu \in \Adm_{\frak{m}/2,\lambda}$.

We remark that this condition does not hold in general, for example when ${y_{0} + u}$ is a permutation of the initial configuration $y_{0}$. In this case, it is possible to construct a piecewise-linear path $y'_{t} \in \Adm_{\frak{m}/2,\lambda}$ for $t \in [0,1]$, satisfying $y'_{0} = y_{0}$ and ${y'_{1} = y_{0} + u}$. Treating this case requires adapting the estimate \eqref{eq: quad est} due to the fact that $\frac{\text{d}}{\text{d} t} Dy'_{t}(\ell) \not \equiv Du(\ell)$ for some $\ell \in \L$. We omit this argument for the sake of brevity.

\begin{proof}
	[Proof of Theorem \ref{theorem:E-Wc}]
	We remark that $\Hw_{k+2}^{\log} \subset \Hw_{k}$ for all $k > 0$, hence it suffices to assume {\asSEL} holds under the perfect lattice and point defects condition, as the proof under the dislocations condition is identical.

	(i) Let $u\in \mathscr{H}^{\rm c}$ and consider the sum
	\begin{align}
	&\E(u) - \< F,u\> \nonumber \\ & \qquad = \sum_{\ell\in\L}\Big( V_{\ell}\big(Du_0(\ell)+Du(\ell)\big)-V_{\ell}\big(Du_0(\ell)\big) -\big\<\delta V_{\ell}(Du_0(\ell)),Du(\ell)\big\> \Big). \label{eq: E(u) - T eq}
	\end{align}
	Note that \asSER~and \asSEL~imply that there exist $\wf_1\in\Hw_1$,
	$\wf_2\in\Hw_2$ and a constant $C$ depending on $\wf_1$, $\wf_2$
	such that
	\begin{align}
	&\Big| V_{\ell}\big(Du_0(\ell)+Du(\ell)\big)-V_{\ell}\big(Du_0(\ell)\big) -\big\<\delta V_{\ell}(Du_0(\ell)),Du(\ell)\big\> \Big| \nonumber \\
	& \qquad \leq \frac{1}{2} \sum_{\rho,\sigma\in\L-\ell} \int_{0}^{1} (1-t) |V_{\ell,\rho\sigma}(Du_0(\ell) + t Du(\ell))||D_{\rho}u(\ell)||D_{\sigma}u(\ell)| \text{ d}t  \nonumber \\
	& \qquad \leq C\bigg( \sum_{\rho,\sigma\in\L-\ell}\wf_1(|\rho|)\wf_1(|\sigma|)|D_{\rho}u(\ell)||D_{\sigma}u(\ell)| + \sum_{\rho\in\L-\ell}\wf_2(|\rho|)|D_{\rho}u(\ell)|^2  \bigg) \nonumber \\
	& \qquad \leq C\left(\big|Du(\ell)\big|_{\wf_1,1}^2 + \big|Du(\ell)\big|_{\wf_2,2}^2 \right) . \label{eq: quad est}
	\end{align}
	Applying the estimate \eqref{eq: quad est} and Lemma \ref{lemma:normEquiv} to \eqref{eq: E(u) - T eq} yields
	\begin{align}
	&\bigg| \sum_{\ell\in\L}\Big( V_{\ell}\big(Du_0(\ell)+Du(\ell)\big)-V_{\ell}\big(Du_0(\ell)\big) -\big\<\delta V_{\ell}(Du_0(\ell)),Du(\ell)\big\> \Big) \bigg| \nonumber \\
    & \!\!\!\!\!\!\!\!\!\!\! \leq C\sum_{\ell\in\L}\left(\big|Du(\ell)\big|_{\wf_1,1}^2 \!\! + \big|Du(\ell)\big|_{\wf_2,2}^2 \right) \leq C\big(\|Du\|^2_{\hc{l}^2_{\wf_1,1}}\!\!+\|Du\|^2_{\hc{l}^2_{\wf_2,2}}\big)
	\leq C\|Du\|^2_{\hc{l}^2_{\mathcal{N}}}. \label{eq: E(u) - T est 2}
	\end{align}
	This implies that $\E(u) - \<F,u\>$ is well-defined for $u \in \mathscr{H}^{\rm c}$, 
	hence as $F$ is a bounded linear functional on $(\Usz,\|D\cdot\|_{\hc{l}^2_{\mathcal{N}}})$, 
	it follows that $\E(u)$ is well-defined for all $u \in \mathscr{H}^{\rm c}$ and that $\delta \E(0) = F$.

	(ii) Moreover, as $\E(u) - \<F,u\> = \E(u) - \E(0) - \< \delta \E(0),u \>,$
	the estimate \eqref{eq: E(u) - T est 2} implies that  $\E(u) - \E(0) - \< \delta \E(0),u \>$ is continuous on $(\mathscr{H}^{\rm c},\|D\cdot\|_{\hc{l}^2_{\mathcal{N}}})$. 
	As $\E(0)$ is a bounded functional, it also follows that $\E(u)$ is also continuous on $(\mathscr{H}^{\rm c},\|D\cdot\|_{\hc{l}^2_{\mathcal{N}}})$.

	(iii)
	For $u\in\mathscr{H}^{\rm c}$ and $j=1$, we have from \asSEL~and Lemma \ref{lemma:normEquiv} that for any $v\in\Usz$,  there exists $\theta\in(0,1)$ such that
	\begin{align*}
	& \hskip -0.2cm \left| \b\<\delta\E(u),v\b\> - \b\<\delta\E(0),v\b\> \right|
	~\leq~\sum_{\ell\in\L}\bigg| \sum_{\rho,\sigma\in\L-\ell} V_{\ell,\rho\sigma}\big(Du_0(\ell)+\theta Du(\ell)\big) D_{\rho}u(\ell)D_{\sigma}v(\ell) \bigg|
	\\
	& \leq~\sum_{\ell\in\L} \bigg| \sum_{\rho,\sigma\in\L-\ell} \wf_1(|\rho|)\wf_1(|\sigma|)D_{\rho}u(\ell)D_{\sigma}v(\ell)
	+\sum_{\rho\in\L-\ell}\wf_2(|\rho|)D_{\rho}u(\ell)D_{\rho}v(\ell) \bigg|
	\\
	&\leq~C\sum_{\ell\in\L} \left(\big|Du(\ell)\big|_{\wf_1,1}\big|Dv(\ell)\big|_{\wf_1,1} + \big|Du(\ell)\big|_{\wf_2,2}\big|Dv(\ell)\big|_{\wf_2,2} \right)
	\\
	&\leq~C\left( \|Du\|_{\hc{l}^2_{\wf_1,1}}\|Dv\|_{\hc{l}^2_{\wf_1,1}} + \|Du\|_{\hc{l}^2_{\wf_2,2}}\|Dv\|_{\hc{l}^2_{\wf_2,2}} \right)
	~\leq~C\|Du\|_{\hc{l}^2_{\mathcal{N}}}\|Dv\|_{\hc{l}^2_{\mathcal{N}}} .
	\end{align*}
	Since $\delta\E(u)$ is a bounded linear functional on $(\mathscr{H}^{\rm c},\|D\cdot\|_{\hc{l}^2_{\mathcal{N}}})$, 
	there exists a constant $C$ such that $\left| \b\<\delta\E(u),v\b\> \right| \leq C\|Dv\|_{\hc{l}^2_{\mathcal{N}}}$
	for all $v\in\Usz$, which implies that $\E$ is differentiable.
	The case $j=2$ in the following implies $\E$ is continuously differentiable.

    For $u\in\mathscr{H}^{\rm c}$, $2 \leq j \leq \n$, ${\pmb v}=(v_1,\fn{\ldots},v_j)\in\big(\Usz\big)^j$
    and ${\pmb \rho}=(\rho_1,\fn{\ldots},\rho_j)\in(\L-\ell)^j$, we have
	from \asSEL~that there exist $\wf_k\in\Hw_{k}$ for $k=1,\fn{\ldots},j$ such that
	\begin{align*}
	\left| \<\delta^j\E(u),{\pmb v}\> \right| &\leq
	\sum_{\ell\in\L} \sum_{{\pmb \rho}\in(\L-\ell)^j}
	\left| \left\<V_{\ell,{\pmb \rho}}\b(Du(\ell)\b), D_{\pmb \rho}\otimes {\pmb v}(\ell) \right\> \right|
	\\
	&\leq \sum_{\ell\in\L}\sum_{\rho_1,\fn{\ldots},\rho_j} |V_{\ell,\rho_1,\fn{\ldots},\rho_j}\b(Du(\ell)\b)|
	\prod_{1 \leq m \leq j} |D_{\rho_m}v_m(\ell)|.
	\end{align*}
	Applying \eqref{eq:Vloc} gives
	\begin{align}
	&\left| \<\delta^j\E(u),{\pmb v}\> \right| \nonumber \\
	&\leq C \sum_{\ell\in\L}\sum_{\rho_1,\fn{\ldots},\rho_j}
	\sum_{\{A_{1},\ldots,A_{k}\} \in \mathcal{P}(j)}
	\bigg( \prod_{1\leq i\leq k} \wf_{|A_i|}(\rho_{i'}) \prod_{m \in A_{i}} \delta_{\rho_{i'}\rho_{m}} \bigg)
	\prod_{1 \leq m \leq j} |D_{\rho_m}v_m(\ell)|. \label{eq:Ejregest1}
	\end{align}
	First fix $\{ A_{1},\ldots, A_{k} \} \in \mathcal{P}(j)$ and consider
	\begin{eqnarray}
	&&\sum_{\ell\in\L}\sum_{\rho_1,\fn{\ldots},\rho_j} \bigg( \prod_{1\leq i\leq k} \wf_{|A_i|}(\rho_{i'}) \prod_{m \in A_{i}} \delta_{\rho_{i'}\rho_{m}} \bigg) \nonumber
	\prod_{1 \leq m \leq j} |D_{\rho_m}v_m(\ell)| \\
	&=& \sum_{\ell\in\L}\sum_{\rho_{1'},\fn{\ldots},\rho_{k'}} \bigg( \prod_{1\leq i\leq k} \wf_{|A_i|}(\rho_{i'}) \prod_{m \in A_{i}} |D_{\rho_{i'}}v_m(\ell)| \bigg) \nonumber \\
	&=& \sum_{\ell\in\L} \prod_{1\leq i\leq k} \bigg( \sum_{\rho_{i'}} \wf_{|A_i|}(\rho_{i'}) \prod_{m \in A_{i}} |D_{\rho_{i'}}v_m(\ell)| \bigg) \nonumber \\
	&=& \sum_{\ell\in\L} \prod_{1\leq i\leq k} \bigg( \sum_{\rho \in \L - \ell} \prod_{m \in A_{i}} \wf_{|A_i|}(\rho)^{1/|A_{i}|} |D_{\rho}v_m(\ell)| \bigg). \label{eq:Ejregest2}
	\end{eqnarray}
	Then applying the generalised H\"{o}lder's inequality and \eqref{eq:wnormdef} yields
	\begin{align}
	&\sum_{\ell\in\L} \prod_{1\leq i\leq k} \bigg( \sum_{\rho \in \L - \ell} \prod_{m \in A_{i}} \wf_{|A_i|}(\rho)^{1/|A_{i}|} |D_{\rho}v_m(\ell)| \bigg) \nonumber \\
	&\leq \sum_{\ell\in\L} \prod_{1\leq i\leq k} \prod_{m \in A_{i}} \bigg( \sum_{\rho \in \L - \ell} \wf_{|A_i|}(\rho) |D_{\rho}v_m(\ell)|^{|A_{i}|} \bigg)^{1/|A_{i}|} \nonumber \\
	&= \sum_{\ell\in\L} \prod_{1\leq i\leq k} \prod_{m \in A_{i}} |Dv_{m}(\ell)|_{\wf_{|A_{i}|},|A_{i}|}. \label{eq:Ejregest3}
	\end{align}
	We will now show that for each $1 \leq i \leq k$ and $m \in A_{i}$
	\begin{align}\label{eq:Ejregest4}
	\bigg( \sum_{\ell\in\L} |Dv_{m}(\ell)|_{\wf_{|A_i|},|A_i|}^{j} \bigg)^{1/j} \leq C \| Du \|_{\hc{l}^2_{\mathcal{N}}}.
	\end{align}
	To prove \eqref{eq:Ejregest4}, first consider the case $|A_{i}| = 1$. Recall that $j \geq 2$, hence \fn{$\hc{l}^2 \subseteq \hc{l}^j$} and applying Lemma \ref{lemma:normEquiv} yields
	\begin{align*}
	\bigg( \sum_{\ell\in\L} |Dv_{m}(\ell)|_{\wf_{|A_i|},|A_i|}^{j} \bigg)^{1/j}
	&\leq \bigg( \sum_{\ell\in\L} |Dv_{m}(\ell)|_{\wf_{|A_i|},|A_i|}^{2} \bigg)^{1/2} \\
	&\leq \|Dv_{m} \|_{\hc{l}^2_{\wf_{|A_i|},|A_i|}} \leq C \|Dv_{m} \|_{\hc{l}^2_{\mathcal{N}}}.
	\end{align*}
	For the remaining case, observe that $2 \leq |A_{i}| \leq j$, the argument above follows using the embedding \fn{$\hc{l}^{|A_{i}|} \subseteq \hc{l}^j$}
	\begin{align*}
	\left( \sum_{\ell\in\L} |Dv_{m}(\ell)|_{\wf_{|A_i|},|A_i|}^{j} \right)^{1/j}
	&\leq \left( \sum_{\ell\in\L} |Dv_{m}(\ell)|_{\wf_{|A_i|},|A_i|}^{|A_{i}|} \right)^{1/|A_{i}|}  
	\leq C \|Dv_{m} \|_{\hc{l}^2_{\mathcal{N}}}.
	\end{align*}
	By \eqref{eq:Ejregest4}, the summand in \eqref{eq:Ejregest3} consists of the product of $j$ functions, 
	each belonging to $\hc{l}^j$, hence applying the generalised H\"{o}lder's inequality, we deduce
	\begin{align}
	&\sum_{\ell\in\L} \prod_{1\leq i\leq k} \prod_{m \in A_{i}} \bigg( \sum_{\rho \in \L - \ell} \wf_{|A_i|}(\rho) |D_{\rho}v_m(\ell)|^{|A_{i}|} \bigg)^{1/|A_{i}|} \nonumber \\
	&\leq \prod_{1\leq i\leq k} \prod_{m \in A_{i}} \Bigg( \sum_{\ell\in\L} \bigg( \sum_{\rho \in \L - \ell} \wf_{|A_i|}(\rho) |D_{\rho}v_m(\ell)|^{|A_{i}|} \bigg)^{j/|A_{i}|} \Bigg)^{1/j} \nonumber \\
	&\leq C \prod_{1\leq i\leq k} \prod_{m \in A_{i}} \|Dv_{m} \|_{\hc{l}^2_{\mathcal{N}}} = C \prod_{1 \leq m \leq j} \|Dv_{m} \|_{\hc{l}^2_{\mathcal{N}}}. \label{eq:Egregest5}
	\end{align}
	Combining the estimates \eqref{eq:Ejregest2}, \eqref{eq:Ejregest3} and \eqref{eq:Egregest5} yields
	\begin{align}
	\sum_{\ell\in\L}\sum_{\rho_1,\fn{\ldots},\rho_j} \bigg( \prod_{1\leq i\leq k} \wf_{|A_i|}(\rho_{i'}) \prod_{m \in A_{i}} 
	\hc{\delta_{\rho_{i'}\rho_m}}\bigg) \prod_{1 \leq m \leq j} |D_{\rho_m}v_m(\ell)| 
	\leq C \prod_{1 \leq m \leq j} \|Dv_{m} \|_{\hc{l}^2_{\mathcal{N}}}. 
	\label{eq:Egregest6}
	\end{align}
	As the set of partitions $\mathcal{P}(j)$ is finite (in fact $|\mathcal{P}(j)| = B_{j}$, the $j$-th Bell number \cite{Bell34}), 
	collecting the estimates \eqref{eq:Ejregest1}--\eqref{eq:Egregest6} implies
	\begin{align}\label{eq:useful-eq}
	&\left| \<\delta^j\E(u),{\pmb v}\> \right| \nonumber \\
	&\leq C \sum_{\{A_{1},\ldots,A_{k}\} \in \mathcal{P}(j)} \sum_{\ell\in\L}\sum_{\rho_1,\fn{\ldots},\rho_j}
	\bigg( \prod_{1\leq i\leq k} \wf_{|A_i|}(\rho_{i'}) \prod_{m \in A_{i}} \hc{\delta_{\rho_{i'}\rho_m}} \bigg)
	\prod_{1 \leq m \leq j} |D_{\rho_m}v_m(\ell)| \nonumber \\
	&= C \sum_{\{A_{1},\ldots,A_{k}\} \in \mathcal{P}(j)} \sum_{\ell\in\L}\sum_{\rho_1,\fn{\ldots},\rho_j}
	\prod_{m \in A_{i}} |Dv_{m}(\ell)|_{\wf_{|A_{i}|},|A_{i}|} \nonumber  \\
	&\leq C \sum_{\{A_{1},\ldots,A_{k}\} \in \mathcal{P}(j)} \prod_{1 \leq m \leq j} \|Dv_{m} \|_{\hc{l}^2_{\mathcal{N}}} 
	= C |\mathcal{P}(j)| \prod_{1 \leq m \leq j} \|Dv_{m} \|_{\hc{l}^2_{\mathcal{N}}} = C \prod_{1 \leq m \leq j} \|Dv_{m} \|_{\hc{l}^2_{\mathcal{N}}}.
	\end{align}
	It follows that $\E$ is $\n$-times differentiable on $\mathscr{H}$, hence $\E \in C^{\n-1}(\mathscr{H},\| D \cdot \|_{\hc{l}^2_{\mathcal{N}}})$.
\end{proof}

\section{The lattice Green's function}
\label{sec:lattice_green}
\setcounter{equation}{0}

\fn{Recall that in Section \ref{sec:siteE}, we introduced the homogeneous site strain potential $V^{\hom}$. 
	For the sake of convenience, we simply denote $V^{\hom}$ by $V$ in all subsequent proofs.}

\fn{In the following lemma, we give an alternate formulation of the homogeneous difference equation \eqref{eq:def-Huv}.}
%
%First, we have a more general form of the homogeneous difference equation
%\eqref{eq:def-Huv} as in the following lemma.

\begin{lemma}\label{lemma:bilinear-H}
	There exists $h:\fn{\Lhom}\rightarrow\R^{\ds\times \ds}$, defined by
	\begin{eqnarray}\label{eq:h-rho}
	h(\rho) := - \frac{1}{2}\sum_{\xi,\tau\in\Lhom_{*},~\xi-\tau=\rho} V_{,\xi\tau}({\pmb 0})
	\end{eqnarray}
	such that
	\begin{eqnarray}\label{eq:Huv-hrho}
	\b\< Hu,v \b\> = \sum_{\ell\in\Lhom}\sum_{\rho\in\Lhom_{*}} D_{\rho}v(\ell)^{\rm T} \cdot h(\rho) \cdot D_{\rho}u(\ell) .
	\end{eqnarray}
	Moreover,  $h$ satisfies $h(-\rho)=h(\rho)$ and $h(0)=-\sum_{\rho\in\Lhom_{*}}h(\rho)$.
\end{lemma}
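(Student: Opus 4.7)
The plan is to prove the three assertions in turn: the representation \eqref{eq:Huv-hrho}, the inversion symmetry $h(-\rho)=h(\rho)$, and the balance relation $h(0)=-\sum_{\rho\in\Lhom_{*}}h(\rho)$.

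For the representation, the strategy is a symmetrization-plus-index-translation argument. First symmetrize the double sum defining $\b\< Hu,v\b\>$ by swapping $\rho\leftrightarrow\zeta$ and invoking the symmetry of mixed partial derivatives ($V_{,\rho\zeta}(\pmb 0) = V_{,\zeta\rho}(\pmb 0)^{\top}$), then apply the elementary polarization identity
\[
D_{\rho}u(\ell)\,D_{\zeta}v(\ell)+D_{\zeta}u(\ell)\,D_{\rho}v(\ell) = D_{\rho}u(\ell)\,D_{\rho}v(\ell) + D_{\zeta}u(\ell)\,D_{\zeta}v(\ell) - (D_{\rho}u-D_{\zeta}u)(\ell)(D_{\rho}v-D_{\zeta}v)(\ell).
\]
The crucial algebraic observation is the telescoping relation $D_{\rho}u(\ell)-D_{\zeta}u(\ell) = D_{\rho-\zeta}u(\ell+\zeta)$. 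After translating the summation index $\ell\to\ell-\zeta$ in each relevant term and then introducing the change of variables $\eta:=\rho-\zeta$, every contribution regroups into a sum of the form $\sum_{\ell}D_{\eta}v(\ell)^{\top} M(\eta)D_{\eta}u(\ell)$. Matching $M(\eta)$ against \eqref{eq:h-rho} identifies it with $h(\eta)$; the term $\eta=0$ drops out because $D_{0}u\equiv0$, while any residual ``diagonal'' contribution $\sum_{\rho}\b(\sum_{\zeta}V_{,\rho\zeta}(\pmb 0)\b)D_{\rho}u(\ell)\,D_{\rho}v(\ell)$ is handled by the second-order analogue of the telescoping used in Lemma~\ref{lemma_lattice_symm}, which in turn encodes translation invariance of the homogeneous site potential.

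For the inversion symmetry, I would differentiate the point-symmetry identity $V^{\hom}((-g_{-\rho})_{\rho}) = V^{\hom}(g)$ of \asSEPS~twice at $g=\pmb 0$ with the chain rule to obtain $V_{,\xi\tau}(\pmb 0) = V_{,-\xi,-\tau}(\pmb 0)$. Substituting into the definition of $h(-\rho)$ and performing the change of variables $(\xi,\tau)\to(-\xi,-\tau)$, which turns the constraint $\xi-\tau=-\rho$ into $\xi-\tau=\rho$, then recovers $h(\rho)$. For the balance relation, observe that
\[
h(0)+\sum_{\rho\in\Lhom_{*}}h(\rho) = -\tfrac{1}{2}\sum_{\xi,\tau\in\Lhom_{*}}V_{,\xi\tau}(\pmb 0),
\]
so the claim reduces to showing that this double sum vanishes; this is the natural second-order extension of the translation-invariance telescoping proved in Lemma~\ref{lemma_lattice_symm} and can be established by the same argument applied at second order.

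The main obstacle will be justifying every sum rearrangement and index translation rigorously, since the interactions have infinite range. I would rely on the decay estimates in \asSEL~combined with the norm equivalences in Appendix~\ref{sec:proof_equiv_norms} (Lemma~\ref{lemma:normEquiv}) to guarantee absolute convergence throughout. These same decay estimates also ensure that $h(\rho)$ defined by \eqref{eq:h-rho} exists as an absolutely convergent series and decays with $|\rho|$.
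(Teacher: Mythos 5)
Your overall architecture for \eqref{eq:Huv-hrho} --- symmetrise the double sum, polarise, telescope via $D_\rho u(\ell)-D_\zeta u(\ell)=D_{\rho-\zeta}u(\ell+\zeta)$, then shift $\ell\mapsto\ell-\zeta$ and set $\eta=\rho-\zeta$ --- is the right way to extract a bond-form representation, and your derivation of $V_{,\xi\tau}(\pmb 0)=V_{,-\xi,-\tau}(\pmb 0)$ from {\asSEPS} and hence $h(-\rho)=h(\rho)$ is correct. The genuine gap is your treatment of the diagonal terms. After polarisation these are
\[
\sum_{\ell\in\Lhom}\sum_{\rho\in\Lhom_{*}} D_\rho v(\ell)^{\top}\,\tfrac12\Big(\textstyle\sum_{\zeta}V_{,\rho\zeta}(\pmb 0)+\sum_{\zeta}V_{,\rho\zeta}(\pmb 0)^{\top}\Big)\,D_\rho u(\ell),
\]
and they cannot be ``handled by the second-order analogue of the telescoping used in Lemma~\ref{lemma_lattice_symm}'': that lemma exploits $\sum_{\ell}D_\rho u(\ell)=0$ for compactly supported $u$, i.e.\ linearity in $Du$, whereas the terms above are quadratic in the differences and do not telescope. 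Nor do the matrices $\sum_\zeta V_{,\rho\zeta}(\pmb 0)$ vanish: for a pair interaction $V(\pmb g)=\tfrac12\sum_\rho\phi(|\rho+g_\rho|)$ one has $V_{,\rho\zeta}(\pmb 0)=0$ for $\rho\neq\zeta$, so your cross term (the only part matching \eqref{eq:h-rho}) is identically zero and the diagonal terms carry the entire quadratic form. The same example shows that your reduction of the balance relation to $\sum_{\xi,\tau}V_{,\xi\tau}(\pmb 0)=0$ fails --- that sum is generically nonzero. The correct conclusion of your (otherwise sound) computation is that the diagonal contributions must be absorbed into $h$ itself, and the balance relation $\sum_{\rho\in\Lhom}h(\rho)=0$ should instead be obtained from the acoustic sum rule: once $Hu$ is in convolution form, $H$ annihilates constants because $D[\mathrm{const}]=0$, which forces the convolution coefficients to sum to zero. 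As written, your argument proves a representation with a different kernel than \eqref{eq:h-rho} and then appeals to a false identity to close the gap.

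A secondary point of comparison: the paper's own proof devotes essentially all of its effort to what you dispatch in one closing sentence, namely that $h(\rho)$ is well defined and summable over $\Lhom$. Citing {\asSEL} and Lemma~\ref{lemma:normEquiv} is not enough; one must split the $\tau$-sum in \eqref{eq:h-rho} according to whether $|\tau|$ or $|\tau+\rho|$ exceeds $|\rho|/2$ to obtain $|h(\rho)|\leq C\|\wf_1\|_{\Hw_1}\wf_1(|\rho|/2)$, and then invoke Lemma~\ref{lemma:omega-sum-int-est} for summability. You should supply this estimate explicitly, since it is also what licenses every rearrangement in your algebraic manipulations.
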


\fn{ \begin{proof}
We justify that $h(\rho)$ is defined for every $\rho \in \Lhom$ and is also summable. First consider the case $\rho = 0$. By {\asSEL}, we have
\begin{align*}
|h(0)| &\leq C \sum_{\tau\in\Lhom_{*}} \left( \wf_{1}^{2}(|\tau|) + \wf_{2}(|\tau|) \right) \leq C ( 1 + \| \wf_{1} \|_{L^{\infty}[0,\infty)}) \sum_{\tau\in\Lhom_{*}} \left( \wf_{1}(|\tau|) + \wf_{2}(|\tau|) \right) \\
& \leq C ( 1 + \| \wf_{1} \|_{\Hw_{1}}) ( \| \wf_{1} \|_{\Hw_{1}} + \| \wf_{2} \|_{\Hw_{2}}),
\end{align*}
where we have applied Lemma \ref{lemma:omega-sum-int-est}. For $\rho \in \Lhom_{*}$, by arguing similarly we obtain
\begin{align*}
|h(\rho)| &\leq C \sum_{\tau\in\Lhom_{*}} \wf_{1}(|\tau|) \wf_{1}(|\tau + \rho|).
\end{align*}
First suppose that $|\tau| \geq \frac{3|\rho|}{2}$, then $|\tau + \rho| \geq |\tau| - |\rho| \geq \frac{|\rho|}{2}$, by the triangle inequality. Similarly, when $|\tau| \leq \frac{|\rho|}{2}$, we obtain that $|\tau+\rho| \geq \frac{|\rho|}{2}$. Consequently, by applying Lemma \ref{lemma:omega-sum-int-est}, we deduce
\begin{align*}
|h(\rho)| &\leq C \sum_{\tau\in\Lhom_{*}} \wf_{1}(|\tau|) \wf_{1}(|\tau + \rho|) \\
&\leq C \!\!\!\!\!\!\!\! \sum_{\tau\in\Lhom_{*} \cap (B_{3|\rho|/2}^{\rm c} \cup B_{|\rho|/2}) } \!\!\!\!\!\!\!\! \wf_{1}(|\tau|) \wf_{1}(|\tau + \rho|) + C \!\!\!\!\!\!\!\! \sum_{\tau\in\Lhom_{*} \cap B_{3|\rho|/2} \setminus B_{|\rho|/2} } \!\!\!\!\!\!\!\! \wf_{1}(|\tau|) \wf_{1}(|\tau + \rho|) \\
&\leq C \wf_{1}(|\rho|/2) \Big( \!\!\!\! \sum_{\tau\in\Lhom_{*} \cap (B_{3|\rho|/2}^{\rm c} \cup B_{|\rho|/2}) } \!\!\!\!\!\!\!\! \wf_{1}(|\tau|) + C \!\!\!\!\!\!\!\! \sum_{\tau\in\Lhom_{*} \cap B_{3|\rho|/2} \setminus B_{|\rho|/2} } \!\!\!\!\!\!\!\! \wf_{1}(|\tau + \rho|) \Big) \\
&\leq C \wf_{1}(|\rho|/2) \sum_{\tau\in\Lhom} \wf_{1}(|\tau|) \leq C \| \wf_{1} \|_{\Hw_{1}} \wf_{1}(|\rho|/2).
\end{align*}
This proves that $h$ is defined and a second application of Lemma \ref{lemma:omega-sum-int-est} implies that $h$ is also summable over $\Lhom$. The remaining properties of $h$ follow directly from~{\asSEPS}, while \eqref{eq:Huv-hrho} can be shown by expanding the finite-difference terms. 
\end{proof} 
}

Using \eqref{eq:Huv-hrho} and a direct calculation,
we have
\begin{eqnarray}\label{eq:Huv_conv}
\nonumber
\b\<Hu,v\b\>  = -2\sum_{\ell\in\Lhom}\sum_{\rho\in\Lhom} h(\rho) u(\ell-\rho)v(\ell),
\end{eqnarray}
and hence
\begin{eqnarray}\label{Hu_conv}
Hu=-2h\ast_{\rm d} u := -2\sum_{\rho\in\Lhom}h(\rho)u(\ell-\rho),
\end{eqnarray}
where $\ast_{\rm d}$ denotes the discrete convolution.
From \eqref{eq:Huv_conv}, one can alternatively write $h(\rho) := -\frac{1}{2}\frac{\partial^2 \<Hu,u\>}{\partial u(0) \partial u(\rho)}$
(see also \cite[Lemma 3.4]{Hudson:stab}).

The following lemma defines a lattice Green's function of
the homogeneous difference equation and gives a decay estimate.

\begin{lemma}\label{lemma:decay-Green}
	If \asLS~is satisfied, then
	\begin{itemize}
		\item[(i)]
		there exists $\Gr:\Lhom\rightarrow\R^{d\times\ds}$ such that for any $f:\Lhom\rightarrow\R^{\ds}$ which is compactly supported,
		\begin{eqnarray*}
			H(\Gr \ast_{\rm d} f) = f;
		\end{eqnarray*}
		\item[(ii)]
		for all $j \in \mathbb{N}$, there exist constants $C_j$ such that
		\begin{eqnarray*}
			\left|D_{\pmb \rho}\Gr(\ell)\right| \leq C_j \b( 1+|\ell| \b)^{2-d-j} \prod_{i=1}^j |\rho_i|
			\qquad\forall~{\pmb\rho}=(\rho_1,\fn{\ldots},\rho_j)\in(\Lhom_{*})^j ;
		\end{eqnarray*}
		\item[(iii)]
		$\Gr$ can be chosen in such a way that there exists $C_0>0$ such that
		\begin{eqnarray*}
			\left|\Gr(\ell)\right| \leq \left\{ \begin{array}{ll}
				C_0\b(1+|\ell|\b)^{2-d} & {\rm if}~d=3 , \\[1ex]
				C_0\log\b(2+|\ell|\b) & {\rm if}~d=2 .
			\end{array}\right.
		\end{eqnarray*}
	\end{itemize}
\end{lemma}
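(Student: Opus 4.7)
The plan is to construct $\Gr$ via Fourier analysis on the Bravais lattice $\Lhom = A\Z^d$. Let $B^*$ denote the Brillouin zone dual to $\Lhom$ and introduce the semi-discrete Fourier transform $\hat u(k) := \sum_{\ell \in \Lhom} u(\ell) e^{-ik\cdot\ell}$, which is an isometry between $\hc{l}^2(\Lhom)$ and $L^2(B^*)$. Using the convolution representation $Hu = -2h \ast_{\rm d} u$ from~\eqref{Hu_conv} together with the symmetries in Lemma~\ref{lemma:bilinear-H}, the Fourier multiplier of $H$ becomes the $\ds\times\ds$ Hermitian matrix
\begin{align*}
\hat H(k) = 2\sum_{\rho \in \Lhom_*} h(\rho)\bigl(1 - \cos(k\cdot\rho)\bigr),
\end{align*}
where absolute convergence for every $k \in B^*$ follows from the decay of $h(\rho)$ inherited from~\asSEL~through~\eqref{eq:h-rho}.

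The next step is to prove the two-sided bound $c|k|^2|\xi|^2 \le \xi^{\rm T}\hat H(k)\xi \le C|k|^2|\xi|^2$ uniformly on $B^*$ for all $\xi\in\R^{\ds}$. The upper bound follows from $|1-\cos(k\cdot\rho)| \le C\min(1,|k|^2|\rho|^2)$ combined with summability of $h$. The lower bound is the Fourier-side counterpart of~\asLS: via Plancherel, $\langle Hu,u\rangle$ is the $L^2(B^*)$ pairing with $\hat u(k)^* \hat H(k) \hat u(k)$, and testing against smooth approximate plane waves $e^{ik_0\cdot\ell}\varphi(\ell/R)$ yields the stated lower bound as $R\to\infty$. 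With invertibility of $\hat H(k)$ on $B^*\setminus\{0\}$ in hand, I would define
\begin{align*}
\Gr(\ell) := \frac{1}{|B^*|}\int_{B^*} \hat H(k)^{-1}\bigl(e^{ik\cdot\ell}-1\bigr)\,dk,
\end{align*}
where subtracting $1$ absorbs the $|k|^{-2}$ singularity at the origin: the integrand is $O(|\ell|/|k|)$ near $k=0$, locally integrable in dimensions $d\ge 2$. A direct Plancherel computation verifies that $H(\Gr\ast_{\rm d} f)=f$ for compactly supported $f$, settling (i); the subtraction only contributes the constant mode, which lies in the kernel of $H$ since $\sum_{\rho\in\Lhom} h(\rho) = 0$.

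For (ii), each finite-difference operator $D_{\rho_i}$ acts on the Fourier side by multiplication with $(e^{ik\cdot\rho_i}-1)$, bounded pointwise by $\min(|k||\rho_i|,2)$. Splitting the integral representation of $D_{\pmb\rho}\Gr(\ell)$ into the regions $|k|\le|\ell|^{-1}$ (where the symbol bound $|\hat H(k)^{-1}|\le c^{-1}|k|^{-2}$ controls the integrand directly) and $|k|>|\ell|^{-1}$ (where repeated integration by parts in $k$ contributes a factor $|\ell|^{-1}$ per step, exploiting the oscillation of $e^{ik\cdot\ell}$) produces the claimed rate $(1+|\ell|)^{2-d-j}\prod_i|\rho_i|$. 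For (iii), I would expand $\hat H(k) = k^{\rm T}\mC k + O(|k|^4)$ near $k=0$, with $\mC$ the effective elastic tensor, and compare $\Gr$ to the continuum fundamental solution of $-\D\cdot(\mC\D)$, which is explicitly $\sim|\ell|^{2-d}$ for $d=3$ and $\sim\log|\ell|$ for $d=2$; the remainder is more regular and handled by the arguments from (ii).

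The principal obstacle will be reconciling the infinite-range interaction with the regularity of $\hat H$ required above. The Taylor expansion $\hat H(k) = k^{\rm T}\mC k + O(|k|^4)$ demands summability of $h(\rho)|\rho|^4$, which must be extracted from~\asSEL~via~\eqref{eq:h-rho} together with the weight $\wf_2\in\Hw_2$; more generally, the iterated integration by parts needed for (ii) requires $\hat H$ to be several times continuously differentiable on $B^*\setminus\{0\}$, which in turn dictates decay of $h(\rho)|\rho|^n$ and carefully leverages the full strength of the locality assumption. The logarithmic case $d=2$ in (iii) is the most delicate, since $\Gr$ is determined by $H\Gr=\delta_0$ only up to an additive constant, and the specific representative produced by the Fourier formula must be aligned with the continuum Green's function by a careful mean-value normalisation.
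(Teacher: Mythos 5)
Your construction of $\Gr$ and the proof of (i) coincide with the paper's: the same semi-discrete Fourier transform, the same two-sided bound $\underline{c}|k|^2 \le \hat H(k) \le \bar c |k|^2$ on the Brillouin zone, and the same regularised representation $\Gr(\ell) = {\rm const} + \int_{\Br}\hat H(k)^{-1}(e^{ik\cdot\ell}-1)\,{\rm d}k$ with constants absorbed into the kernel of $H$. Where you diverge is part (ii): you estimate the oscillatory integral for $D_{\pmb\rho}\Gr(\ell)$ directly, splitting at $|k|\sim|\ell|^{-1}$ and integrating by parts, whereas the paper introduces the continuum operator $L=\sum_{\rho}\nabla_\rho\cdot h(\rho)\nabla_\rho$, takes its Green's function $G$ (with decay $|\nabla^j G|\lesssim |x|^{2-d-j}$ from Morrey), smears it to $\tilde G=\eta\ast G$, and shows that $D_{\pmb\rho}(\tilde G-\Gr)$ decays one order \emph{faster}, namely like $(1+|\ell|)^{1-d-j}$, by isolating the quartic remainder $\hat h_4=\hat\Gr^{-1}-\hat G^{-1}$. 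Both routes are legitimate; the paper's comparison buys the extra order of decay on the discrete-to-continuum error, which in turn makes (iii) immediate (the $\log$ for $d=2$ comes straight from the continuum kernel, and for $d=3$ one sums $D_\rho\Gr$ along a ray and fixes the additive constant so that $\Gr\to 0$), while your approach is more self-contained but must recover (iii) by a separate comparison anyway — which is essentially the paper's step 3. Two cautions if you carry your version through: a sharp cutoff at $|k|=|\ell|^{-1}$ generates boundary terms under integration by parts, so use a smooth dyadic partition; and both proofs require $\hat H^{-1}\in C^{j+d-1}(\Br\setminus 0)$ together with control of the Taylor remainder of $\hat H$ at $k=0$, which presses hard against the merely algebraic decay $h(\rho)\lesssim \wf_1(|\rho|/2)$ available from \asSEL~— you are right to flag this as the delicate point, and it is a difficulty the paper's power-series step at $k=0$ shares rather than resolves more cheaply.
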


\begin{proof}
	The proof is similar to that of \cite[Lemma 6.2]{2013-defects-v3}.
	
	{\it 1. Semi-discrete Fourier transform.}
	Let $\Br$ be the first Brillouin zone associated with $\Lhom$.
	For $u\in\ell^1(\Lhom;\R^{\ds})$ and $k\in\Br$, we define the semi-discrete Fourier transform and its inverse
	\begin{eqnarray*}
		\hat{u}:=\Ftd[u](k):=\sum_{\ell\in\Lhom}e^{ik\cdot\ell}u(\ell)
		\qquad{\rm and}\qquad
		\Ftd^{-1}[\hat{u}](\ell):=\int_{\Br}e^{-ik\cdot\ell}\hat{u}(k)\dd k.
	\end{eqnarray*}
	We can transform \eqref{eq:def-Huv} to Fourier space
	\begin{eqnarray*}
		\b\<Hu,v\b\>=\int_{\Br}\hat{u}(k)^*\hat{H}(k)\hat{v}(k)\dd k
	\end{eqnarray*}
	with
	\begin{eqnarray*}
		\hat{H}(k) = \sum_{\rho,\xi\in\Lhom_{*}} V_{,\rho\xi}(\pmb{0}) \b(e^{ik\cdot\rho}-1\b) \b(e^{-ik\cdot\xi}-1\b) .
	\end{eqnarray*}
	Moreover, \eqref{Hu_conv} implies that $\hat{H}(k)=-2\hat{h}(k)$.
	We have from \asSEL~ and \asLS~ that
	there exist constants $\underline{c}$ and $\bar{c}$ such that
	\begin{eqnarray}\label{equiv_Hk_k2}
	\underline{c}|k|^2{\sf Id} \leq\hat{H}(k) \leq \bar{c}|k|^2{\sf Id}
	\qquad\forall~k\in\Br.
	\end{eqnarray}

	{\it 2. Definition of the lattice Green's function.}
	Since $\hat{H}(k)$ is invertible for $k\in\Br\backslash 0$,
	we could define $\Gr$ by
	$\hat{\Gr}(k)=\hat{H}(k)^{-1}$ so that $H(\Gr\ast_{\rm d} f)=\Ftd^{-1}\b[\hat{H}\hat{\Gr}\hat{f}\b]=f$.
	However, a subtle technical point is that $\hat{\Gr}$ is not  integrable if $d=2$ and $\hat{f}(0)\neq 0$,
	hence it is not obvious how $\Gr\ast_{\rm d}f=\Ftd^{-1}\b[\hat{\Gr}\hat{f}\b]$ is to be understood.
	To account for this, we note that shifting $\Gr$ by a constant does not modify statements (i) and (ii).
	Therefore, we define
	\begin{eqnarray}\label{def_Green}
	\Gr(\ell):= c+\int_{\Br}\hat{H}(k)^{-1}\b(e^{ik\cdot\ell}-1\b) \dd k,
	\end{eqnarray}
	where $c$ is a constant that we will determine in the proof of (iii).
	Note that \eqref{equiv_Hk_k2} implies that $\b| \hat{H}(k) (e^{ik\cdot\ell}-1)\b|\leq C|k|^{-1}\in L^1(\Br)$, and hence $\Gr$ is well-defined.
	
	To prove (i), it is now sufficient to show that $H\Gr={\sf Id}\delta$.
	Since $|h(\rho)|\leq C\wf_1(|\rho|/2)$, $H\Gr=-2h\ast_{\rm d}\Gr$ is well-defined and we can write
	\begin{align*}
	\b(H\Gr\b)(\ell) &= -2\sum_{\ell\in\Lhom}h(\rho)\Gr(\ell-\rho)
	= -2\sum_{\ell\in\Lhom}h(\rho) \int_{\Br}\hat{H}(k)^{-1}\b(e^{ik\cdot(\ell-\rho)}-1\b) \dd k
	\\
	&= \int_{\Br}\left( \sum_{\ell\in\Lhom}h(\rho) \b(e^{ik\cdot(\ell-\rho)}-1\b) \right) \hat{h}(k)^{-1} \dd k.
	\qquad
	\end{align*}
	Using $h(-\rho)=h(\rho)$ and $h(0)=-\sum_{\rho\in\Lhom_{*}}h(\rho)$, we obtain
	\begin{eqnarray*}
		\b(H\Gr\b)(\ell) &=& \int_{\Br}\left( \sum_{\ell\in\Lhom}h(\rho) e^{ik\cdot(\ell-\rho)} \right) \hat{h}(k)^{-1} \dd k
		\\
		&=& \int_{\Br}\left( \sum_{\ell\in\Lhom}h(\rho) e^{-ik\cdot\rho} \right) \hat{h}(k)^{-1} e^{ik\cdot \ell} \dd k
		\\
		&=&  \int_{\Br} \hat{h}(k)\hat{h}(k)^{-1}e^{ik\cdot\ell}\dd k
		~=~ {\sf Id} \int_{\Br}e^{ik\cdot\ell} \dd k
		~=~ {\sf Id}\delta(\ell),
	\end{eqnarray*}
	which completes the proof of (i).
	
	{\it 3. Modified continuum Green's function.}
	Define the partial differential operator
	\begin{eqnarray}\label{def_cpd}
	L:=\sum_{\rho\in\Lhom_{*}}\nabla_{\rho}\cdot h(\rho)\nabla_{\rho}.
	\end{eqnarray}
	% where $\nabla_{\rho}:=\rho\cdot\nabla$ denotes the directional derivative.
	%
	Let $G$ denote the Green's function of $L$,
	$\Fs$ and $\Fs^{-1}$ denote the standard Fourier transform and its inverse.
	Using the assumption \asLS, we have from \cite[Theorem 6.2.1]{Morrey} that
	\begin{eqnarray}\label{def_G_con_k}
	\Fs[G](k) := \bigg(\sum_{\rho\in\Lhom_{*}}h(\rho)(k\cdot\rho)^2\bigg)^{-1}
	\in C^{\infty}(\R^d\backslash 0)^{\ds\times\ds}
	\end{eqnarray}
	and
	\begin{eqnarray}\label{decay_G_con}
	\b|\nabla^j G(x)\b|\leq C|x|^{2-d-j} \qquad{\rm for}~j=1,2,\cdots .
	\end{eqnarray}
	
	Let $\hat{\eta}(k)\in C_{\rm c}^{\infty}(\Br)$ with $\eta(k)=1$ in a neighbourhood of the origin,
	and extended by zero to $\R^d\backslash\Br$.
	Let $\eta:=\Fs^{-1}[\hat{\eta}]$ and $\tilde{G}:=\eta\ast G$ be the smeared Green's function.
	Note that $\hat{\eta}\in C^{\infty}$ implies that $\eta$ decays super-algebraically.
	Then we have from \eqref{decay_G_con} that
	\begin{eqnarray}\label{decay_G_con_eta}
	\b|\nabla^j \tilde{G}(x)\b|\leq C|x|^{2-d-j} \qquad{\rm for}~j=1,2,\cdots ~~{\rm and}~~|x|\geq 1.
	\end{eqnarray}

	{\it 4. Decay estimates.}
	It is then only necessary for us to compare the Green's functions $\Gr$ and $\tilde{G}$.
	Using the fact $(e^{ik\cdot\rho}-1)\Gr\in L^1(\Br)$, we have
	\begin{eqnarray*}
		D_{\rho}\Gr(\ell)=\int_{\Br}\hat{\Gr}(k)\b(e^{ik\cdot(\ell+\rho)}-e^{ik\cdot\ell}\b)\dd k
		=\Ftd^{-1}\b[(e^{ik\cdot\rho}-1)\hat{\Gr}\b]
	\end{eqnarray*}
	for $\rho\in\Lhom_{*}$.
	Analogously, if $\pmb{\rho}=(\rho_1,\cdots,\rho_j)\in(\Lhom_{*})^j$, then
	\begin{eqnarray*}
		D_{\pmb{\rho}}\Gr(\ell) = \Ftd^{-1}\b[\hat{p}_{\pmb{\rho}}\hat{\Gr}\b]
		\qquad{\rm with}\qquad
		\hat{p}_{\pmb{\rho}} = \prod_{s=1}^j\b(e^{ik\cdot\rho_s}-1\b) .
	\end{eqnarray*}
	Then we have
	\begin{eqnarray*}
		\Ftd\b[D_{\pmb{\rho}}(\tilde{G}-\Gr)\b] = \hat{p}_{\pmb{\rho}}\b(\hat{\eta}\hat{G}-\hat{\Gr}\b)
		= \hat{p}_{\pmb{\rho}}\hat{\eta}\b(\hat{G}-\hat{\Gr}\b) + \hat{p}_{\pmb{\rho}}(\hat{\eta}-1)\hat{\Gr}
		=: \hat{g}_1+\hat{g}_2,
	\end{eqnarray*}
	where we slightly abuse the notation by writing $\hat{G}:=\Ft[G]$.
	Since $\hat{g}_2\in C^{\infty}(\Br)$, we obtain that $g_2$ decays super-algebraically.
	
	The first term $\hat{g}_1\in C^{\infty}(\Br\backslash 0)$, and it is only necessary for us to estimate its regularity at 0.
	To that end, we first note that $\hat{G}^{-1}$ is the second-order Taylor polynomial of $\hat{\Gr}^{-1}$ at $k=0$,
	and the odd terms vanish due to symmetry of the lattice.
	Therefore, $\hat{h}_4(k):=\hat{\Gr}(k)^{-1}-\hat{G}(k)^{-1}$ has a power series which starts with quartic terms
	and converge in a ball $B_{\epsilon}\subset\Br$.
	We can now rewrite
	\begin{eqnarray*}
		\hat{p}_{\pmb{\rho}}\b(\hat{G}-\hat{\Gr}\b) = \hat{p}_{\pmb{\rho}}\b(\hat{G}-(\hat{G}^{-1}+\hat{h}_4)^{-1}\b)
		=\hat{p}_{\pmb{\rho}}\hat{G}\b(I-(I+\hat{G}\hat{h}_4)^{-1}\b),
	\end{eqnarray*}
	and $(I+\hat{G}\hat{h}_4)^{-1}$ has a power series that is convergent in $B_{\epsilon}$ for $\epsilon$ sufficiently small.
	Then we have
	\begin{eqnarray*}
		\b|\nabla^t\b(\hat{p}_{\pmb{\rho}}(\hat{G}-\hat{\Gr})\b)\b| \leq C|k|^{j-t}\prod_{s=1}^j|\rho_s|,
	\end{eqnarray*}
	where we have used the fact that $\hat{p}_{\pmb{\rho}}(k)\sim\prod_{s=1}^j(k\cdot\rho_s)$ as $k\to 0$.
	In particular, it follows that $\nabla^t\hat{g}_1\in L^1(\Br)$ provided $t\leq j+d-1$, and hence
	\begin{eqnarray*}
		\b|g_1(\ell)\b| = \b|\Ftd^{-1}[\hat{g}_1](\ell)\b|
		\leq C|\ell|^{1-d-j}\prod_{s=1}^j|\rho_s|.
	\end{eqnarray*}
	Then we have from $D_{\pmb{\rho}}(\tilde{G}-\Gr)=g_1+g_2$ and the super-algebraic decay of $g_1$ that
	\begin{eqnarray}\label{decay_G-Gr}
	\b|D_{\pmb{\rho}}(\tilde{G}-\Gr)(\ell)\b| \leq C(1+|\ell|)^{1-d-j}\prod_{s=1}^j|\rho_s| ,
	\end{eqnarray}
	which together with \eqref{decay_G_con_eta} completes the proof of (ii).
	
	It remains to establish the case $j=0$ in (iii).
	Note that (ii) implies $|D_{\rho}\Gr(\ell)|\leq C|\ell|^{1-d}$ for all $\rho$, hence there exists a constant $c_0$
	such that $\Gr(\ell)\to c_0$ uniformly as $|\ell|\to\infty$.
	We obtain by summing $D_{\rho}\Gr$ along a suitable path that
	\begin{eqnarray*}
		\Gr(\ell)-c_0 = \sum_{s=0}^{\infty} \Big( \Gr\b(\ell+s\rho\b) - \Gr\b(\ell+(s+1)\rho\b) \Big)
		= -\sum_{s=0}^{\infty} D_{\rho}\Gr(\ell+s\rho) ,
	\end{eqnarray*}
	which together with (ii) completes the proof of (iii) by choosing the constant $c$ in \eqref{def_Green}
	such that $c_0=0$.
\end{proof}

With the definition and decay estimates of the lattice Green's function,
we are now able to prove decay estimates for the linearised lattice elasticity problem
\begin{eqnarray}\label{eq:lin-lattice}
\b\<Hu,v\b\>=\b\<g,Dv\b\> \qquad\forall~v\in\UsH(\Lhom) ,
\end{eqnarray}
where $g \in \UsHd(\Lhom)$. The canonical form of $\b\<g,Dv\b\>$ is
\begin{eqnarray}\label{eq:gDv}
\b\<g,Dv\b\> := \sum_{\ell\in\Lhom}\sum_{\rho\in\Lhom_{*}}g_{\rho}(\ell)\cdot D_{\rho}v(\ell),
\end{eqnarray}
with $g=\b\{g(\ell)\b\}_{\ell\in\Lhom}$ and $g(\ell) \in \b(\R^{\ds}\b)^{\Lhom_{*}}$.

\begin{lemma}
	\label{lemma:decay-hom-eq}
	Let \asLS~be satisfied and $u\in\UsH(\Lhom)$ be the solution of \eqref{eq:lin-lattice}.
	If there exist $s > d/2$ and $\wf_k\in\Hw_{k}$ for $k=1,2,3$, such that
	\begin{eqnarray}\label{eq:decay_g}
	\left| \b\<g,Dv\b\> \right| \leq \fn{C \sum_{k=1}^{3} \sum_{\ell \in \Lhom} \tilde{g}(\ell) |Dv(\ell)|_{\wf_{k},k}}
	\qquad\forall~v\in\UsH(\Lhom),
	\end{eqnarray}
	with
	\fn{$ \tilde{g}(\ell) = (1+|\ell|)^{-s} + \sum_{j=1}^{3} |Du(\ell)|_{\wf_j,j}^{2}$},
	then there exists $C > 0$ such that for any $\rho\in\Lhom_{*}$ and $\ell \in \Lhom$,
	\begin{eqnarray}\label{decay_Du}
	\b|D_{\rho}u(\ell)\b| \leq C 	|\rho| \left\{ \begin{array}{ll}
	\b(1+|\ell|\b)^{-d} & {\rm if}~s> d , \\[1ex]
	\b(1+|\ell|\b)^{-s} \log\b(2+|\ell|\b)  & {\rm if}~\frac{d}{2}<s \leq d.
	\end{array}\right.
	\end{eqnarray}
\end{lemma}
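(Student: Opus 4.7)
The plan is to invert the linearised equation \eqref{eq:lin-lattice} using the lattice Green's function $\Gr$ from Lemma~\ref{lemma:decay-Green}, then combine the resulting convolution representation with the hypothesis \eqref{eq:decay_g} to derive the pointwise decay \eqref{decay_Du}.

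First, fix $\ell\in\Lhom$ and $\rho\in\Lhom_{*}$ and introduce the ``dipole'' test function
\[
v_{\ell,\rho}(m) \;:=\; \Gr(\ell+\rho-m)-\Gr(\ell-m).
\]
Although $v_{\ell,\rho}\notin\Usz(\Lhom)$, the estimates of Lemma~\ref{lemma:decay-Green}(ii)--(iii) (with $j=1$, giving $|Dv_{\ell,\rho}(m)|\leq C|\rho|(1+|\ell-m|)^{1-d}$) ensure that $v_{\ell,\rho}$ can be approximated by compactly supported displacements and used as a valid test function in \eqref{eq:lin-lattice} after a standard cutoff/limit procedure based on Lemma~\ref{lemma-WcW12dense}. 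Exploiting $H\Gr = \mathsf{Id}\,\delta$ and the symmetry $h(-\rho)=h(\rho)$ of Lemma~\ref{lemma:bilinear-H}, testing \eqref{eq:lin-lattice} against $v_{\ell,\rho}$ isolates the sought quantity:
\[
D_\rho u(\ell) \;=\; \b\<Hu,v_{\ell,\rho}\b\> \;=\; \b\<g,Dv_{\ell,\rho}\b\>.
\]

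Next, for any $\sigma\in\Lhom_{*}$, the expression $D_\sigma v_{\ell,\rho}(m)$ is a second finite-difference of $\Gr$, so Lemma~\ref{lemma:decay-Green}(ii) with $j=2$ gives
\[
\b|D_\sigma v_{\ell,\rho}(m)\b| \;\leq\; C\,|\rho|\,|\sigma|\,(1+|\ell-m|)^{-d}.
\]
Inserting this into the definition \eqref{eq:wnormdef} and invoking Lemma~\ref{lemma:omega-sum-int-est} to bound $\sum_{\sigma\in\Lhom_{*}}\wf_k(|\sigma|)|\sigma|^k<\infty$ yields a uniform stencil estimate
\[
\b|Dv_{\ell,\rho}(m)\b|_{\wf_k,k} \;\leq\; C_k\,|\rho|\,(1+|\ell-m|)^{-d}, \qquad k=1,2,3.
\]
Combining this with \eqref{eq:decay_g} produces the master inequality
\[
\b|D_\rho u(\ell)\b| \;\leq\; C\,|\rho|\,\sum_{m\in\Lhom}\tilde g(m)\,(1+|\ell-m|)^{-d}.
\]

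For the linear part $(1+|m|)^{-s}$ of $\tilde g$, splitting the discrete convolution into the near-field regions $|m|\lesssim |\ell|/2$ and $|\ell-m|\lesssim|\ell|/2$ plus a far field yields decay of order $(1+|\ell|)^{-d}$ when $s>d$ and $(1+|\ell|)^{-s}\log(2+|\ell|)$ when $d/2<s\leq d$, in agreement with \eqref{decay_Du}. The quadratic self-referential contribution $\sum_j |Du(m)|_{\wf_j,j}^2$ is handled by a bootstrap: from the a priori information $|Du(\cdot)|_{\wf_j,j}\in l^2(\Lhom)$ provided by $u\in\UsH(\Lhom)$ together with Lemma~\ref{lemma:normEquiv}, one derives an initial mild decay of $D_\rho u(\ell)$; this decay is then reinserted into $\tilde g$ (and squared), and the master inequality is applied again, each round strictly improving the exponent via the same convolution against $(1+|\ell-m|)^{-d}$, until after finitely many iterations the rate saturates at that dictated by the linear term. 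The principal obstacle will be organising this bootstrap so that each step yields a strict gain and so that the iteration terminates precisely at the sharp exponent in \eqref{decay_Du}, without spurious logarithms polluting the $s>d$ regime. A secondary technical point is the rigorous justification of using the non-compactly-supported $v_{\ell,\rho}$ as a test function — especially in $d=2$ where $\Gr$ itself grows logarithmically — which requires a careful truncation-and-limit passage leveraging the stencil decay of $D\Gr$ rather than $\Gr$ directly.
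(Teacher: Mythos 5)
Your proposal is correct and follows essentially the same route as the paper: test \eqref{eq:lin-lattice} with $D_\rho\Gr(\ell-\cdot)$, use Lemma~\ref{lemma:decay-Green}(ii) and the summability of $\wf_k(|\sigma|)|\sigma|^k$ to obtain the master inequality $|D_\rho u(\ell)|\leq C|\rho|\sum_m\tilde g(m)(1+|\ell-m|)^{-d}$, treat the linear source by the near/far convolution split (with the $\delta=1/\log(2+|\ell|)$ trick in the regime $d/2<s\leq d$), and absorb the quadratic self-referential term by a bootstrap on $w(r)=\sup_{|\ell|\geq r}\sum_k|Du(\ell)|_{\wf_k,k}$ starting from the $\hc{l}^2$ information. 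The paper organises the bootstrap as a single recursion $w(2r)\leq C(1+r)^{-t}+\eta(r)w(r)$ with $t=\min\{s,2d/3\}$ followed by one final re-substitution, rather than an open-ended iteration, but this is a presentational rather than substantive difference.
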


\begin{proof}
	We adapt the argument used to show \cite[Lemma 6.3]{2013-defects-v3}.
	For $\ell\in\Lhom$, testing \eqref{eq:lin-lattice} with $v(m):=D_{\rho}\Gr(\ell-m)$
	and $\rho\in\Lhom_{*}$ yields
	\begin{eqnarray*}
		D_{\rho}u(\ell) = - \b\< Hu,D_{\rho}\Gr(\ell - \cdot) \b\> = - \b\< g,DD_{\rho}\Gr(\ell - \cdot) \b\>,
	\end{eqnarray*}
	and together with Lemma \ref{lemma:decay-Green} (ii), \eqref{eq:decay_g} and \fn{that Lemma \ref{lemma:omega-sum-int-est} implies}
	${\sum_{\sigma\in\Lhom_{*}}\wf_k(|\sigma|)|\sigma|^k<\infty}$ for $k=1,2,3$, \fn{we deduce }
	\begin{align}
	\b|D_{\rho}u(\ell)\b|
	&\leq C \sum_{\ell \in \Lhom} \fn{ \tilde{g}(m)
	\bigg( \sum_{k=1}^{3} \sum_{\sigma\in\Lhom_{*}}\wf_k(|\sigma|) |D_{\sigma}D_{\rho}\Gr(m-\ell)|^k \bigg)^{1/k} }
	\nonumber \\
	&\leq C |\rho| \sum_{m \in \Lhom} \bigg( (1+|m|)^{-s} +  \sum_{k=1}^{3}
	|Du(m)|_{\wf_{k},k}^{2} \bigg) \b(1+|m-\ell|\b)^{-d}. \label{proof:res_a}
	\end{align}
	Then, using the definition \eqref{eq:wnormdef} and \eqref{proof:res_a},
	it follows that
	\begin{align} \label{proof:res_a1}
	&\sum_{j=1}^{3} |Du(\ell)|_{\wf_{j},j} = \sum_{j=1}^{3} \bigg( \sum_{\rho\in\Lhom_{*}}
	\wf_{j}(|\rho|) \big|D_\rho u(\ell)\big|^{j} \bigg)^{1/j} \nonumber
	\\
	& \qquad \quad \leq C \sum_{m \in \Lhom} \bigg( (1+|m|)^{-s} +  \sum_{k=1}^{3}
	|Du(m)|_{\wf_{k},k}^{2} \bigg) \b(1+|m-\ell|\b)^{-d}.
	\qquad
	\end{align}
	We first estimate the linear part in \eqref{proof:res_a1}. For $s> d$, we have
	\begin{eqnarray}\label{proof:res_b}
	\nonumber
	&& \sum_{m \in \Lhom} \b(1+|m|\b)^{-s} \b(1+|m-\ell|\b)^{-d} \\
	\nonumber
	&\leq& C(1+|\ell|)^{-d} \sum_{|m - \ell|\geq|\ell|/2} (1+|m|)^{-s} + C(1+|\ell|)^{-s}\sum_{|m - \ell|\leq|\ell|/2}(1+|m-\ell|)^{-d} \\
	%\nonumber
	&\leq& C\Big( (1+|\ell|)^{-d} + (1+|\ell|)^{-s}\log(2+|\ell|) \Big)
	~\leq~C(1+|\ell|)^{-d} .
	\end{eqnarray}
	For $0 < s \leq d$, we introduce an exponent $\delta>0$ and estimate
	\begin{align}\label{proof:resb1}
	& \quad
	\sum_{|m-\ell| \geq |\ell|/2} \b(1+|m|\b)^{-s} \b(1+|m-\ell|\b)^{-d} \nonumber
	\\
	&\leq C(1+|\ell|)^{-s+\delta}  \sum_{|m-\ell| \geq |\ell|/2} \b(1+|m|\b)^{-s} \b(1+|m-\ell|\b)^{-d + s-\delta} \nonumber
	\\
	&\leq C(1+|\ell|)^{-s+\delta} \bigg(  \sum_{|m-\ell| \geq |\ell|/2} \b(1+|m|\b)^{-(d+\delta)} \bigg)^{\frac{s}{d+\delta}}
	\bigg(  \sum_{|m-\ell| \geq |\ell|/2} \b(1+|m-\ell|\b)^{-(d+\delta)} \bigg)^{\frac{d+ \delta -s}{d+\delta}} \nonumber
	\\
	&\leq C(1+|\ell|)^{-s+\delta} \sum_{m\in \Lhom}\b(1+|m|\b)^{-(d+\delta)}.
	\end{align}
	Applying the bound $\sum_{m\in\Lhom}(1+|m|)^{-(d+\delta)}\leq C\delta^{-1}$ \fn{gives}
	\begin{align}
	\sum_{|m-\ell| \geq |\ell|/2} \b(1+|m|\b)^{-s} \b(1+|m-\ell|\b)^{-d} &\leq C\b(1+|\ell|\b)^{-s}\frac{(2+|\ell|)^{\delta}}{\delta}  \nonumber
	\\ &\leq C\b(1+|\ell|\b)^{-s}\log(2+|\ell|), \label{proof:resb2}
	\end{align}
	where $\delta=1/\log(2+|\ell|)$ is chosen for the second inequality.
	Therefore, combining \eqref{proof:res_b}--\eqref{proof:resb2} gives
	\begin{eqnarray}\label{proof:res_c}
	\sum_{m\in \Lhom} \b(1+|m|)^{-s} \b(1+|m-\ell|\b)^{-d} \leq C(1+|\ell|)^{-s}\log(2+|\ell|) ,
	\end{eqnarray}
	Collecting the estimates \eqref{proof:res_a1} and \eqref{proof:res_c},
	we obtain
	\begin{align}\label{proof:res_e}
	\sum_{k=1}^{3} |Du(\ell)|_{\wf_{k},k} \leq C \Bigg( z(|\ell|)
	+ \sum_{m\in\Lhom} \b(1+|m-\ell|\b)^{-d} \bigg( \sum_{k=1}^{3} |Du(m)|_{\wf_{k},k}^{2} \bigg) \Bigg),
	\end{align}
	where $z(r)=(1+r)^{-d}$ if $s>d$ and $z(r)=(1+r)^{-s}\log(2+r)$ if $0 < s \leq d$.
	It remains to estimate the nonlinear residual term appearing in \eqref{proof:res_a1}. For $r > 0$, define 
	$w(r):=\sup_{\ell\in\Lhom,|\ell|\geq r}\left(\sum_{k=1}^{3}|Du(\ell)|_{\wf_k,k}\right).$
	Our aim is to show that there exists constant $C > 0$ such that for all $r > 0$
	\begin{align}\label{eq: w decay result}
	w(r) \leq C (1 + r)^{-t},
	\end{align}
	where $t = \min\{ s, \frac{2d}{3} \} > 0$. For any $|m|\geq 2r$, by applying Lemma \ref{lemma:normEquiv}, we deduce
	\begin{align}\label{eq: r w est}
	&\sum_{m\in\Lhom}\b(1+|m-\ell|\b)^{-d} \bigg( \sum_{k=1}^{3} |Du(m)|_{\wf_{k},k}^{2} \bigg)
	\nonumber \\
	&= \sum_{|m|< r}\b(1+|m|\b)^{-d}  \bigg( \sum_{k=1}^{3} |Du(m+\ell)|_{\wf_{k},k}^{2} \bigg)
	+ \sum_{|m|\geq r}\b(1+|m|\b)^{-d} \bigg( \sum_{k=1}^{3} |Du(m+\ell)|_{\wf_{k},k}^{2} \bigg)
	\nonumber \\
	&\leq C w(r)^{3/2}
	\Bigg( \left \|(1+|m|)^{-d}\right\|_{\hc{l}^{4/3}} \bigg( \sum_{k = 1}^{2} \|Du \|_{m^{2}_{\wf_{k},k}}^{1/2} \bigg)
	+ \left\|(1+|m|)^{-d}\right\|_{\hc{l}^{6/5}} \| Du \|_{\hc{l}^3_{\wf_{3},3}}^{1/2} \Bigg)
	\nonumber \\
	& \quad + C(1+r)^{-d} \bigg( \sum_{k=1}^{2}\|Du\|_{m^2_{\wf_k,k}}^{2} \bigg)
	+ C (1+r)^{-2d/3} \|Du\|_{\hc{l}^3_{\wf_3,3}}^{2}
	\nonumber \\
	&\leq C\left( (1+r)^{-2d/3} + w(r)^{3/2} \right).
	\end{align}
	Collecting the estimates \eqref{proof:res_e} and \eqref{eq: r w est} yields
	\begin{eqnarray}\label{proof:res_f}
	w(2r) \leq C(1+r)^{-t} + \eta(r)w(r),
	\end{eqnarray}
	where $\eta(r)=w(r)^{1/2}$ and $\eta(r)\to 0$ as $r\to\infty$, as
	$Du(\ell) \in \hc{l}^2(\Lhom)$.
	From here on, we can follow \cite[Lemma 6.3]{2013-defects-v3} verbatim, but
	show the argument for the sake of completeness.
	For $r > 0$, let $v(r):=\frac{w(r)}{z(r)}$.
	Multiplying \eqref{proof:res_f} with $\frac{2^d}{z(2r)}$, we obtain
	\begin{eqnarray*}
		v(2r)\leq C\b(1+\eta(r)v(r)\b).
	\end{eqnarray*}
	Since $\eta(r)\to 0$ as $r \to \infty$, there exists $r_0>0$ such that for all $r>r_0$
	\begin{eqnarray*}
		v(2r)\leq C+\frac{1}{2}v(r) \qquad\forall~r>r_0.
	\end{eqnarray*}
	Arguing by induction, (c.f. \cite[Lemma 6.3]{2013-defects-v3}),
	we infer that $v$ is bounded on $\R_+$, which implies \eqref{eq: w decay result} holds.
	\begin{eqnarray*}
		w(r)\leq C(1+r)^{-t}  \qquad\forall~r>0,
	\end{eqnarray*}
	and thus implies
	\begin{eqnarray}\label{proof:Du_2d3}
	|Du(\ell)|_{\wf_k,k} \leq C(1+|\ell|)^{-t}, \qquad \text{for } k=1,2,3.
	\end{eqnarray}
	Substituting \eqref{proof:Du_2d3} into \eqref{proof:res_a} gives
	\begin{align}\label{eq: D sigma u est 2}
	\b|D_{\rho}u(\ell)\b|
	&\leq C |\rho|\sum_{m\in \Lhom} \bigg( (1+|m|)^{-s} +  \sum_{k=1}^{3}
	|Du(m)|_{\wf_{k},k}^{2} \bigg) \b(1+|m-\ell|\b)^{-d} \nonumber \\
	&\leq C|\rho| \sum_{m \in \Lhom} \bigg( (1+|m|)^{-s} + (1+|m|)^{-2t} \bigg)
	\b(1+|m-\ell|\b)^{-d},
	\end{align}
	then repeating the argument \eqref{proof:res_b}--\eqref{proof:res_c} with \eqref{eq: D sigma u est 2} and $s' = \min\{ s, \frac{4d}{3} \} > d/2$, we obtain the desired estimate \eqref{decay_Du}.
	This completes the proof.
\end{proof}

\begin{remark}
	From \eqref{eq:gDv}, we can write
	\begin{eqnarray*}
		\b\<g,Dv\b\> = \sum_{\ell\in\Lhom}f(\ell) \cdot v(\ell)
		\qquad{\rm with}\qquad
		f(\ell) = \sum_{\rho\in\Lhom_{*}} D_{-\rho} g_{\rho}(\ell)  .
	\end{eqnarray*}
	If there is an extra decay assumption on $f$ as $|f(\ell)| \leq (1+|\ell|)^{-(s+1)}$,
	then we can drop the log term in \eqref{decay_Du} for the case $d/2<s<d$.
	However, it is not clear whether this assumption can be true (for e.g. point defects case in Lemma \ref{lemma-g-ubar-decay})
	from our existing assumptions in \asSE.
\end{remark}

\section{Proofs: Point Defects}
\label{sec:proof-pd}
\setcounter{equation}{0}

The purpose of this section is to prove Theorems \ref{corr:deltaE0-pd-est} and \ref{theorem:point-defect-decay}. First, we establish results for the homogeneous lattice that will be used in our proofs.

\fn{Recall that in Section \ref{sec:siteE}, we introduced the homogeneous site strain potential $V^{\hom}$. For the sake of convenience, we simply denote $V^{\hom}$ by $V$ in all subsequent proofs.}

\subsection{Proof of Lemma \ref{lemma_lattice_symm}}
\label{sec:proof_lattice_symm}

\begin{proof}
	[Proof of Lemma \ref{lemma_lattice_symm}]
	As we have remarked in Appendix B.1 on Page \pageref{sec:proof_theo_EW}, it is sufficient to consider that {\asSEL} holds under the homogeneous lattice and point defects condition.

	The result follows directly from \cite[Lemma 2.12]{OrtnerTheil2012}, once we establish that for any $u \in \Usz(\Lhom)$, the sum
	\begin{align}
	\sum_{\ell \in \Lhom} \sum_{\rho \in \Lhom_{*}} \hc{V}_{,\rho}(0) \cdot D_{\rho} u(\ell) \label{eq:sumcheck}
	\end{align}
	converges absolutely. As $u \in \Usz(\Lhom)$, $u$ is constant outside $\hc{B_{R}}$, for some $R > 0$. Then, for fixed $\rho \in \Lhom_{*}$, applying Lemma \ref{lemma:pathcount} yields
	\begin{align}
	\sum_{\ell \in \Lhom} |D_{\rho} u(\ell)| &\leq C |\rho| \sum_{\ell \in \Lhom} \sum_{\rho' \in \mathcal{N}(\ell) - \ell} |D_{\rho'} u(\ell)|. \nonumber
	\end{align}
	Moreover, there exists $R' > 0$ such that $\mathcal{N}(\ell) - \ell \subset \hc{B_{R'}}$ for all $\ell \in \L^{\hom}$, hence for $\ell \in \Lhom \cap \hc{B_{R + R'}}^{\rm c}$ and $\rho' \in \mathcal{N}(\ell) - \ell$, $D_{\rho'}u(\ell) = 0$. Consequently, applying Cauchy--Schwarz twice, we deduce
	\begin{align}
	\sum_{\ell \in \Lhom} |D_{\rho} u(\ell)| 
	&\leq C |\rho| \sum_{\ell \in \Lhom \cap \hc{B_{R+R'}}} \sum_{\rho' \in \mathcal{N}(\ell) - \ell} |D_{\rho'} u(\ell)| \nonumber \\ 
	&\leq C |\rho| (R+R')^{{d}/2} \Bigg( \sum_{\ell \in \Lhom \cap \hc{B_{R+R'}}} \bigg( \sum_{\rho' \in \mathcal{N}(\ell) - \ell} |D_{\rho'} u(\ell)| \bigg)^{2} \Bigg)^{1/2} \nonumber \\
	&\leq C |\rho| (R+R')^{{d}/2} \bigg( \sum_{\ell \in \Lhom \cap \hc{B_{R+R'}}} \sum_{\rho' \in \mathcal{N}(\ell) - \ell} |D_{\rho'} u(\ell)|^{2} \bigg)^{1/2} \nonumber \\
	&= C |\rho| (R+R')^{{d}/2} \| Du \|_{\hc{l}^2_{\mathcal{N}}(\Lhom)}, \label{eq:uWc-ell1-est}
	\end{align}
	where the constant $C$ is independent of $\rho$. Applying \eqref{eq:Vloc}, \eqref{eq:uWc-ell1-est} and using that $\wf_{1} \in \Hw_{1}$, we deduce
	\begin{align}
	\sum_{\rho \in \Lhom_{*}} \sum_{\ell \in \Lhom} | \hc{V}_{,\rho}(0) \cdot D_{\rho} u(\ell) |
	&\leq C \sum_{\rho \in \Lhom_{*}} \wf_{1}(|\rho|) \sum_{\ell \in \Lhom} |D_{\rho} u(\ell)| \nonumber \\
	&\leq C (R+R')^{{d}/2} \| Du \|_{\hc{l}^2_{\mathcal{N}}(\Lhom)} \sum_{\rho \in \Lhom_{*}} \wf_{1}(|\rho|) |\rho| < \infty. \label{eq:sumest}
	\end{align}
	As the sum \eqref{eq:sumest} converges, changing the order of summation of \eqref{eq:sumcheck} is allowed and the sum is well-defined. Then as $u \in \Usz(\Lhom)$, it follows that for all $\rho \in \Lhom_{*}$, $\sum_{\ell \in \Lhom} D_{\rho}u(\ell) = 0$, hence the desired result holds
	\begin{align*}
	\sum_{\ell \in \Lhom} \sum_{\rho \in \Lhom_{*}} \hc{V}_{,\rho}(0) \cdot D_{\rho} u(\ell) = \bigg( \sum_{\rho \in \Lhom_{*}} \hc{V}_{,\rho}(0) \bigg) \cdot \bigg( \sum_{\ell \in \Lhom} D_{\rho}u(\ell) \bigg) = 0.
	\end{align*}
	This completes the proof.
\end{proof}

\subsection{Interpolations between defective and homogeneous lattice}
\label{sec:proof_interpolation}

We now introduce three interpolation operators that map displacements from a defective reference
configuration to the corresponding homogeneous lattice and vice versa.
These operators will be used to prove Theorem \ref{corr:deltaE0-pd-est}.

\begin{lemma} \label{Lemma-Int-hom}
	There exists an operator $\Ihom_{1} : \UsH(\L) \to \UsH(\Lhom)$ and $C_{*} > 0$ such that for all $u \in \UsH(\L)$
	\begin{align*}
	%\label{eq:Int-hom-est}
	\| D\Ihom_{1} u \|_{\hc{l}^2_{\mathcal{N}}(\Lhom)} \leq C_{*} \| Du \|_{\hc{l}^2_{\mathcal{N}}(\L)},
	\end{align*}
	and there exists $R_{0} = R_{0}(u) > 0$ such that $\Ihom u(\ell) = u(\ell)$ for all $|\ell| > R_{0}$.
	Moreover, suppose $u \in \mathscr{H}_{\frak{m},\lambda}(\L)$, where $0 < \frak{m} < 1$, then \fn{there exists $\lambda' = \lambda'(u) > 0$ such that $\Ihom_{1} u \in \mathscr{H}_{\frak{m},\lambda'}(\Lhom)$}.
\end{lemma}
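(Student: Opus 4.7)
The plan is to define $\Ihom_{1}u(\ell) := Iu(\ell)$ for $\ell \in \Lhom$, where $Iu : \R^{d} \to \R^{\ds}$ is the standard piecewise-affine nodal interpolant of $u$ constructed from a simplicial triangulation $\mathcal{T}$ of $\R^{d}$ whose vertex set is $\L$. Following \cite{ortner_shapeev12}, $\mathcal{T}$ can be chosen so that, outside $B_{\Rcore}$, it coincides with a fixed canonical triangulation of $\Lhom$; this is possible because \asRC~forces $\L \setminus B_{\Rcore} = \Lhom \setminus B_{\Rcore}$. The resulting $Iu \in W^{1,2}_{\rm loc}(\R^{d})$ satisfies $Iu(\ell) = u(\ell)$ for every $\ell \in \L$ and the well-known quasi-optimal bound $\|\nabla Iu\|_{L^{2}(\R^{d})} \leq C\|Du\|_{\hc{l}^{2}_{\mathcal{N}}(\L)}$ holds.

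The far-field statement and the norm bound then follow easily. For $\ell \in \Lhom$ with $|\ell| > \Rcore$ we have $\ell \in \L$ by \asRC, and hence $\Ihom_{1}u(\ell) = u(\ell)$, which gives the first claim. For the norm bound, for any $\ell \in \Lhom$ and $\rho \in \mathcal{N}(\ell) - \ell$,
\[
|D_{\rho}\Ihom_{1}u(\ell)|^{2} = \left|\int_{0}^{1}\rho \cdot \nabla Iu(\ell + t\rho)\dt\right|^{2} \leq |\rho|^{2}\int_{0}^{1}|\nabla Iu(\ell + t\rho)|^{2}\dt.
\]
Since $|\rho|$ is uniformly bounded on $\Lhom$ (the nearest-neighbour offsets on a Bravais lattice form a fixed finite set) and since each segment $\{\ell + t\rho : t \in [0,1]\}$ can be fattened into a tube of fixed width, producing a family of tubes with bounded overlap, summing over $(\ell, \rho)$ and applying Fubini yields $\|D\Ihom_{1}u\|_{\hc{l}^{2}_{\mathcal{N}}(\Lhom)} \leq C_{*}\|\nabla Iu\|_{L^{2}(\R^{d})} \leq C_{*}\|Du\|_{\hc{l}^{2}_{\mathcal{N}}(\L)}$.

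The admissibility claim is the main technical hurdle and is where I expect the principal obstacle. Setting $\tilde{y}(x) := x + Iu(x)$, the bound $|\tilde{y}(\ell) - \tilde{y}(m)| \geq \frak{m}|\ell - m|$ is automatic for $\ell, m \in \L \setminus B_{\Rcore} = \Lhom \setminus B_{\Rcore}$ because $\tilde{y}$ agrees with $y_{0} + u$ there; only pairs involving sites in the finite set $\Lhom \cap B_{\Rcore}$ remain, together with the uniform density condition. To handle these I plan to enlarge the modification region by choosing $R' = R'(u) > \Rcore$ so large that $\|Du\|_{\hc{l}^{2}_{\mathcal{N}}(\L \setminus B_{R'})}$ is as small as required, and then to replace $Iu$ on $B_{R'}$ by a bilipschitz homeomorphism of lower Lipschitz constant $\frak{m}$ that matches $Iu$ on $\partial B_{R'}$ and places the finitely many extra vertices of $\Lhom \cap B_{R'}$ in the gaps of the already-admissible image $(y_{0} + u)(\L \cap B_{R'})$. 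Redefining $\Ihom_{1}u$ on $\Lhom \cap B_{R'}$ accordingly sets $R_{0} = R'$ and furnishes the claimed $\lambda' = \lambda'(u)$; it does not worsen the earlier estimates since the modification changes only finitely many values. The crux is producing such a bilipschitz extension subject to (i) prescribed boundary data, (ii) exactly the lower constant $\frak{m}$, and (iii) finitely many interior nodal constraints---enlarging $R'$ so that the boundary data on $\partial B_{R'}$ is nearly a rigid translation in $L^{\infty}$, a consequence of $Du \in \hc{l}^{2}$, is what makes such a construction feasible and allows the cost to be absorbed into $\lambda'$.
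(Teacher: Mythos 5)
Your far-field identification and the norm bound for the unmodified nodal interpolant are fine and match the standard estimates the paper also imports from \cite{ortner_shapeev12}. The genuine gap is exactly where you locate it: the admissibility step is not a proof but a plan, and the plan as stated is both harder than necessary and leaves its feasibility open. You ask for a bilipschitz extension into $B_{R'}$ with prescribed boundary data, \emph{exactly} the lower constant $\frak{m}$, and finitely many interior nodal constraints (``placing the extra vertices of $\Lhom$ in the gaps''). Nothing in the proposal establishes that such an extension exists: if the deformed core $(y_0+u)(\L\cap B_{R'})$ is packed near the minimal separation that $\frak{m}$ permits, there is no a priori room to insert the vertices of $\Lhom\setminus\L$ at distance $\geq \frak{m}|\ell-m|$ from \emph{all} other sites, and a gap-filling assignment generically costs $O(1)$ in the resulting finite differences regardless of how small $\|Du\|_{\hc{l}^2_{\mathcal{N}}(\L)}$ is, which would destroy the uniformity of $C_*$. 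Your remark that the modification ``changes only finitely many values'' controls nothing quantitatively here.

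The paper resolves this by \emph{not} interpolating $u$ in the core at all. It truncates $u$ to $u_1$ (zero on $B_{R_1}$, equal to $u$ outside), interpolates to $\wt u_1$, and sets $\wt u_R = \eta_R\,(\wt u_1 - \langle \wt u_1\rangle_{A_R}) + \langle\wt u_1\rangle_{A_R}$ with a cut-off $\eta_R$ vanishing on $B_R$ and equal to one outside $B_{2R}$, where $A_R$ is an annulus around the transition region. The displacement is then a \emph{constant} (the annular average) on all of $B_R$, so admissibility there is trivial for any $\frak{m}<1$, and only the transition annulus needs checking; that follows from the Poincar\'e inequality on $A_R$ together with the smallness of $\|Du\|_{\hc{l}^2_{\mathcal{N}}(\L\setminus B_{R/4})}$ for $R$ large --- the same ``boundary data is nearly a rigid translation'' observation you make, but exploited so that no extension problem with interior nodal constraints ever arises, and so that the uniform bound $\|D\Ihom_1 u\|_{\hc{l}^2_{\mathcal{N}}(\Lhom)}\leq C_*\|Du\|_{\hc{l}^2_{\mathcal{N}}(\L)}$ survives. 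The price is only that $\Ihom_1 u$ agrees with $u$ outside a $u$-dependent radius $R_0(u)=2R$, which is all the lemma claims. If you want to salvage your route, replace the gap-filling step by exactly this: extend the near-translation boundary data by the translation itself on $B_{R}$; the extra vertices of $\Lhom$ then land at translated lattice positions automatically.
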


The key property of the interpolation $\Ihom_{1}$ is that it maps $\mathscr{H}(\L)$ to $\mathscr{H}(\Lhom)$, hence for any $u \in \mathscr{H}(\L)$, the locality and homogeneity estimates \eqref{eq:Vloc}, \eqref{eq:Vhomogeneity} continue to hold for $\Ihom_{1} u$.

We also define the following pair of linear interpolations.

\begin{lemma} \label{lemma-Ihom-norm-est}
	There exists a bounded linear operator $\Ihom_{2}:\UsH(\L)\rightarrow\UsH(\Lhom)$ and $C > 0$ such that for all $u \in \UsH(\L)$
	\begin{align*}
	%\label{eq:Int-hom-est}
	\| D\Ihom_{2} u \|_{\hc{l}^2_{\mathcal{N}}(\Lhom)} \leq C \| Du \|_{\hc{l}^2_{\mathcal{N}}(\L)},
	\end{align*}
	and $\Ihom_{2} u(\ell) = u(\ell)$ for all $|\ell| > \Rcore$, $u \in \UsH(\L)$.

Moreover, there exists $c_{0} \in (0,1]$ such that for all $r > 0$, there exists $R \geq r$ satisfying: for all $\wf \in \Hw_{1}$
	\begin{align*}
	\sum_{\ell \in \Lhom \cap B_{r}} |D \Ihom_{2} u(\ell)|_{\wf,1} &\leq C \sum_{\ell \in \L \cap B_{R}} |D u(\ell)|_{\wt \wf,1},
	% \label{eq:Iback-loc-est}
	\end{align*}
	where $\wt \wf(r) := \wf(c_{0}r) \in \Hw_{1}$.
\end{lemma}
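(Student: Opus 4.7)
The plan is to construct $\Ihom_{2}$ as a simple one-point extension of $u$ across the defect core. Fix a base point $\ell^{*} \in \L$ (concretely, an element of $\L$ nearest to the origin) and define
\begin{equation*}
\Ihom_{2} u(\ell) := \begin{cases} u(\ell), & \ell \in \Lhom \setminus B_{\Rcore}, \\ u(\ell^{*}), & \ell \in \Lhom \cap B_{\Rcore}. \end{cases}
\end{equation*}
Well-definedness on $\Lhom \setminus B_{\Rcore}$ follows from \asRC, which identifies this set with $\L \setminus B_{\Rcore}$; linearity in $u$ is immediate; and $\Ihom_{2}u(\ell) = u(\ell)$ for $|\ell| > \Rcore$ holds by construction.

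For the $\hc{l}^{2}_{\mathcal{N}}(\Lhom)$ bound, since nearest-neighbour stencil vectors in $\Lhom$ have uniformly bounded length, the collection of pairs $(\ell, \rho)$ with $\rho \in \mathcal{N}(\ell)-\ell$ and $D_{\rho}\Ihom_{2}u(\ell) \neq D_{\rho}u(\ell)$ is finite (contained in a bounded annulus around $\partial B_{\Rcore}$). Each such modified difference is either zero or of the form $u(a) - u(\ell^{*})$ with $a \in \L$ at a bounded distance from $\ell^{*}$; rewriting this as a nearest-neighbour telescoping sum in $\L$ via Lemma \ref{lemma:pathcount} and applying Cauchy--Schwarz yields $|u(a) - u(\ell^{*})|^{2} \leq C\|Du\|_{\hc{l}^{2}_{\mathcal{N}}(\L)}^{2}$. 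Summing over the finite collection of modified bonds and combining with the unmodified ones gives the desired estimate with a constant $C_{*}$.

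For the weighted local estimate, I split $|D\Ihom_{2}u(\ell)|_{\wf,1}$ according to the positions of $\ell$ and $\ell + \rho$ relative to $B_{\Rcore}$. When both lie outside $B_{\Rcore}$, $D_{\rho}\Ihom_{2}u(\ell) = D_{\rho}u(\ell)$ and monotonicity of $\wf$ gives $\wf(|\rho|) \leq \wt\wf(|\rho|)$ for any $c_{0} \leq 1$, so the contribution is absorbed into $|Du(\ell)|_{\wt\wf,1}$. When both lie inside, the difference vanishes. The essential case is when exactly one endpoint lies in $\Lhom \cap B_{\Rcore}$: here $D_{\rho}\Ihom_{2}u(\ell)$ equals $\pm D_{\rho'}u(\ell'')$ for $\ell'' \in \{\ell, \ell^{*}\} \subset \L$ and a single stencil vector $\rho' \in \L - \ell''$ satisfying $\bigl||\rho'| - |\rho|\bigr| \leq 2\Rcore$. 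Since $\rho$ is a nonzero vector in $\Lhom$, $|\rho| \geq r_{0} := \min_{m \in \Lhom_{*}} |m| > 0$. Setting
\begin{equation*}
c_{0} := \min\bigl\{ 2/3, \ r_{0}/(6\Rcore) \bigr\} \in (0,1]
\end{equation*}
ensures $c_{0}|\rho'| \leq |\rho|$ in every subcase: for $|\rho| \geq 4\Rcore$ one has $|\rho'| \leq 3|\rho|/2$ and $c_{0} \leq 2/3$ suffices; for $|\rho| \in [r_{0}, 4\Rcore]$, the bound $|\rho'| \leq 6\Rcore$ combined with $c_{0} \leq r_{0}/(6\Rcore)$ yields $c_{0}|\rho'| \leq r_{0} \leq |\rho|$. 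Monotonicity of $\wf$ then gives $\wf(|\rho|) \leq \wt\wf(|\rho'|)$.

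The main obstacle is preserving uniformity in $\wf \in \Hw_{1}$: any bound on the problematic ``boundary'' bonds involving $\|\wf\|_{L^{\infty}}$ would introduce $\wf$-dependent constants, so every term must be reduced to $\wt\wf(|\rho'|)|D_{\rho'}u(\ell'')|$ via monotonicity alone, which the choice of $c_{0}$ above achieves. Summing the per-$\ell$ bounds over $\ell \in \Lhom \cap B_{r}$, and observing that only finitely many values of $\rho$ (namely at most $|\Lhom \cap B_{\Rcore}|$) contribute in each modified case, the total is dominated by $C\sum_{\ell \in \L \cap B_{R}} |Du(\ell)|_{\wt\wf,1}$, with $R := r + 2\Rcore$ sufficient to contain $\ell^{*}$ and all stencil endpoints $\ell''$ appearing in the argument.
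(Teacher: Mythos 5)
Your proposal is correct and uses essentially the same mechanism as the paper: reassign the values at the finitely many sites of $\Lhom \cap B_{\Rcore}$ to nearby points of $\L$, bound the resulting distortion of bond lengths by a uniform factor, and absorb that distortion into $\wt\wf(r)=\wf(c_0 r)$ purely via the monotonicity of $\wf$, which is exactly what keeps the constants independent of $\wf$. The only difference is that you collapse every core site onto a single anchor $\ell^{*}$ whereas the paper assigns each $\ell \in \Lhom \cap B_{\Rcore}$ to the point of $\L$ whose Voronoi cell contains it; both assignments satisfy all the stated properties, and your explicit choice of $c_{0}$ and the case analysis for $|\rho|\geq 4\Rcore$ versus $|\rho|\in[r_{0},4\Rcore]$ are sound.
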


The main differences between $\Ihom_{1}, \Ihom_{2}$ is that $\Ihom_{2}$ is linear and in general $\Ihom_{2}$ does not map $\mathscr{H}(\L)$ to $\mathscr{H}(\Lhom)$ and that $\Ihom_{2}u(\ell) = u(\ell)$ for all $|\ell| > \Rcore$, where the constant $\Rcore$ is independent of $u \in \UsH(\L)$ whereas the constant $R_{0}$ appearing in Lemma \ref{Lemma-Int-hom} is dependent on $u \in \UsH(\L)$.

One can also define interpolations from $\UsH(\Lhom)$ to $\UsH(\L)$, analogously to Lemmas \ref{Lemma-Int-hom} and \ref{lemma-Ihom-norm-est}. However, for the purpose of our analysis, we only require the following interpolation.

\begin{lemma} \label{lemma-Iback-norm-est}
	There exists a bounded linear operator $\Iback:\UsH(\Lhom)\rightarrow\UsH(\L)$ and $C > 0$ such that for all $v^{\hom} \in \UsH(\Lhom)$
	\begin{align*}
	%\label{eq:Int-hom-est}
	\| D\Iback v^{\hom} \|_{\hc{l}^2_{\mathcal{N}}(\L)} \leq C \| Dv^{\hom} \|_{\hc{l}^2_{\mathcal{N}}(\Lhom)},
	\end{align*}
	and $\Iback v^{\hom}(\ell) = v^{\hom}(\ell)$ for all $|\ell| > \Rcore$, $v^{\hom} \in \UsH(\Lhom)$.

	Moreover, there exists $\fn{c_{0}'} \in (0,1]$ such that for all $r > 0$, there exists $R \geq r$ satisfying: for all $\wf \in \Hw_{1}$
	\begin{align*}
	\sum_{\ell \in \L \cap B_{r}} |D \Iback v^{\hom}(\ell)|_{\wf,1} &\leq C \sum_{\ell \in \Lhom \cap B_{R}} |D v^{\hom}(\ell)|_{\wt \wf,1},
	% \label{eq:Iback-loc-est}
	\end{align*}
	\fn{	where $\wt \wf(r) := \wf(\fn{c_{0}'}r) \in \Hw_{1}$.}
\end{lemma}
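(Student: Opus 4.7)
The plan is to construct $\Iback$ by fixing $\Iback v^{\hom}(\ell) = v^{\hom}(\ell)$ outside the defect core (where $\L$ and $\Lhom$ coincide by {\asRC}) and by defining $\Iback v^{\hom}$ on the finite set $\L \cap B_{\Rcore}$ via a continuous piecewise-affine interpolant of $v^{\hom}$. Concretely, fix once and for all a triangulation $\mathcal{T}$ of $\R^{d}$ whose vertices are $\Lhom$ and whose simplex diameters are uniformly comparable to the nearest-neighbour distance of $\Lhom$, and let $\bar v^{\hom}$ denote the continuous piecewise-affine nodal interpolant of $v^{\hom}$ on $\mathcal{T}$. Define
\begin{align*}
\Iback v^{\hom}(\ell) :=
\begin{cases}
\bar v^{\hom}(\ell), & \ell \in \L \cap B_{\Rcore}, \\
v^{\hom}(\ell), & \ell \in \L \setminus B_{\Rcore}.
\end{cases}
\end{align*}
This map is manifestly linear and satisfies the stipulated boundary condition.

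For the $\|D\cdot\|_{l^{2}_{\mathcal{N}}}$ bound I would choose $R_{1} > 0$ large enough that $\mathcal{N}(\ell) \subset B_{\Rcore + R_{1}}$ for every $\ell \in \L \cap B_{\Rcore + R_{1}}$ and split
\begin{align*}
\|D\Iback v^{\hom}\|_{l^{2}_{\mathcal{N}}(\L)}^{2} = S_{1} + S_{2},
\end{align*}
where $S_{1}$ collects bonds with at least one endpoint in $B_{\Rcore + R_{1}}$ and $S_{2}$ collects the rest. On the support of $S_{2}$ the two lattices agree and $\Iback v^{\hom} = v^{\hom}$, giving $S_{2} \leq \|Dv^{\hom}\|_{l^{2}_{\mathcal{N}}(\Lhom)}^{2}$. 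The sum $S_{1}$ ranges over a fixed finite collection of bonds, and standard $P_{1}$-interpolation estimates control each $|D_{\rho}\Iback v^{\hom}(\ell)|$ appearing in $S_{1}$ by a finite-depth sum of nearest-neighbour differences of $v^{\hom}$ around the core, yielding $S_{1} \leq C \|Dv^{\hom}\|_{l^{2}_{\mathcal{N}}(\Lhom)}^{2}$.

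For the weighted bound, let $\ell \in \L \cap B_{r}$ and $\rho \in \L - \ell$. If $\ell, \ell + \rho \in \L \setminus B_{\Rcore}$ then $D_{\rho}\Iback v^{\hom}(\ell) = D_{\rho}v^{\hom}(\ell)$, which is already on $\Lhom$. Otherwise I would fix, once and for all, a nearby lattice point $m_{\ell} \in \Lhom$ for each $\ell \in \L \cap B_{\Rcore}$ and write
\begin{align*}
D_{\rho}\Iback v^{\hom}(\ell) = \bigl[\bar v^{\hom}(\ell) - v^{\hom}(m_{\ell})\bigr] + \bigl[v^{\hom}(\ell + \rho') - v^{\hom}(m_{\ell})\bigr],
\end{align*}
where $\ell + \rho' \in \Lhom$ is whichever of $\ell + \rho$ or its interpolation decomposition we need. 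The first bracket is bounded by a finite-depth sum of nearest-neighbour differences of $v^{\hom}$ near the core (independent of $\rho$), and the second bracket I would expand via the nearest-neighbour path $\mathscr{P}(m_{\ell}, \ell + \rho')$ from Lemma~\ref{lemma:pathcount} as $\sum_{i} D_{\rho_{i}}v^{\hom}(\ell_{i})$ with $N \leq C|\rho|$ terms.

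After summing in $\rho$, the weight $\wf(|\rho|)$ must be transferred onto the corresponding nearest-neighbour increments on $\Lhom$. This is the main obstacle, and I would handle it by the following two observations: (i) every bond $(\ell_{i}, \ell_{i+1})$ that appears lies within distance $C|\rho|$ of the origin, so the associated endpoint satisfies $|\ell_{i}| \leq C(r + |\rho|)$; (ii) by monotonicity of $\wf$, one has $\wf(|\rho|) \leq \wf(c_{0}'|\ell_{i} - \ell_{j}|)$ on each bond used in the path, for a geometric constant $c_{0}' \in (0,1]$ depending only on the ratio of lattice spacing to path length, exactly as in the proof of Lemma~\ref{lemma:normEquiv} (Case 3 of Lemma~\ref{lemma:pathcount} and the multiplicity bound of Lemma~\ref{lemma:ballcount}). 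Swapping the order of summation then yields
\begin{align*}
\sum_{\ell \in \L \cap B_{r}} |D\Iback v^{\hom}(\ell)|_{\wf,1} \leq C \sum_{\ell \in \Lhom \cap B_{R}} |Dv^{\hom}(\ell)|_{\wt\wf,1}
\end{align*}
with $R = Cr + C\Rcore$ (so we can take any fixed $R \geq r$ that absorbs the core correction) and $\wt\wf(t) = \wf(c_{0}'t)$, which is still decreasing and in $\Hw_{1}$.
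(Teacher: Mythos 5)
Your construction of $\Iback$ via a piecewise-affine nodal interpolant is a genuinely different route from the paper's: the paper proves this lemma by following the proof of Lemma \ref{lemma-Ihom-norm-est} verbatim, i.e.\ it assigns to each core site $\ell \in \L \cap B_{\Rcore}$ the value $v^{\hom}(\ell')$ at a \emph{single} nearby site $\ell' \in \Lhom$ selected by a Voronoi-cell argument, so that every difference of $\Iback v^{\hom}$ is exactly a difference of $v^{\hom}$ on a bond of comparable length. Your construction is also admissible, and your treatment of the unweighted $\|D\cdot\|_{l^2_{\mathcal N}}$ bound (splitting off the finitely many core bonds and using $P_1$ estimates) is sound. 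The genuine gap is in the weighted estimate: expanding $v^{\hom}(\ell+\rho')-v^{\hom}(m_\ell)$ along the nearest-neighbour path of Lemma \ref{lemma:pathcount} and then transferring $\wf(|\rho|)$ onto the short bonds cannot produce a constant independent of $\wf$. After swapping the order of summation, the coefficient accumulated on a fixed nearest-neighbour bond is, by Lemma \ref{lemma:ballcount}, of order $\sum_n \wf(n) n^{d} \sim \|\wf\|_{\Hw_1}$, while the right-hand side offers only $\wt\wf(|\rho'|)=\wf(c_0'|\rho'|)$ with $|\rho'|=O(1)$ on that bond; the ratio $\|\wf\|_{\Hw_1}/\wf(c_0'|\rho'|)$ is unbounded over $\wf\in\Hw_1$ (take $\wf(t)=e^{-Mt}$, $M\to\infty$). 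The appeal to Lemma \ref{lemma:normEquiv} does not carry over: there the \emph{target} norm is the unweighted nearest-neighbour norm, so concentrating weight on short bonds is exactly what is wanted, whereas here the target $|Dv^{\hom}(\cdot)|_{\wt\wf,1}$ still contains the long bonds and the correct move is to map long bonds to long bonds.

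The repair is short and recovers the paper's mechanism. Write $\bar v^{\hom}(\ell)=\sum_j \lambda_j v^{\hom}(m_j)$ with $m_j\in\Lhom$, $|m_j-\ell|\leq R_0$, $\lambda_j\geq 0$, $\sum_j\lambda_j=1$. Then for $\ell\in\L\cap B_{\Rcore}$ and $\ell+\rho\in\L\setminus B_{\Rcore}$,
\begin{align*}
\wf(|\rho|)\,\big|D_\rho \Iback v^{\hom}(\ell)\big| \;\leq\; \sum_j \lambda_j\, \wt\wf\big(|\ell+\rho-m_j|\big)\,\big|D_{\ell+\rho-m_j}v^{\hom}(m_j)\big|,
\end{align*}
using $|\ell+\rho-m_j|\leq |\rho|+R_0\leq (1+R_0/r_0)|\rho|$ and the monotonicity of $\wf$, with $c_0'=(1+R_0/r_0)^{-1}$. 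Summing over $\rho$ (the map $\rho\mapsto \ell+\rho-m_j$ is injective into $\Lhom-m_j$; the finitely many $\rho$ with $\ell+\rho$ in the core are handled by iterating the convex combination once more) and over the finitely many core sites gives the claim with a purely geometric constant. This bond-to-bond weight transfer is exactly what the paper's single-point reassignment achieves automatically, and it is the step your path decomposition loses.
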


% \fn{We proved a sketch proof of Lemma \ref{Lemma-Int-hom} below and refer to the preprint \cite{chen17a} containing the full proof. }

\begin{proof}[Proof of Lemma \ref{Lemma-Int-hom}]
	Let $u \in \mathscr{H}(\L)$, hence there exist $\lambda > 0$ and $0 < \frak{m} < 1$ such that $y_{0} + u \in \Adm_{\frak{m},\lambda}(\L)$. Alternatively, when $u \in \UsH(\L) \setminus \mathscr{H}(\L)$, we simply choose $\frak{m} = 1/2$. As $\| Du \|_{\ell^{2}_{\mathcal{N}}(\L)} < \infty$, for all $\varepsilon > 0$, there exists ${R_{1} = R_{1}(\varepsilon) > \Rcore}$ such that
	%$R_{1}> \Rcore > 0$ and
	for all $R \geq R_{1}$
	\begin{align}
	\| Du \|_{\ell^{2}_{\mathcal{N}}(\L \setminus B_{R})} = \bigg( \sum_{|\ell| > R} \sum_{\rho\in \mathcal{N}(\ell) - \ell}
	\big|D_\rho u(\ell)\big|^2 \bigg)^{1/2} < \varepsilon. \label{eq:Du-eps-est}
	\end{align}
	Using that $\L \setminus B_{\Rcore} = \Lhom \setminus B_{\Rcore}$ and $R_{1} > \Rcore$, we define $u_{1} \in \UsH(\Lhom)$ by
	\begin{align*}
	u_{1}(\ell) := \left\{
	\begin{array}{ll}
	u(\ell) & \text{if } |\ell| > R_{1}, \\
	0 & \text{otherwise.}
	\end{array}
	\right.
	\end{align*}
	Due to the boundary terms introduced by the truncation, it is not in general possible to estimate the norm $\| Du_{1} \|_{\ell^{2}_{\mathcal{N}}(\Lhom)}$ in terms of $\| Du\|_{\ell^{2}_{\mathcal{N}}(\L)}$, however, there exists ${R_{2} = R_{2}(\varepsilon) > R_{1}}$ such that choosing $|\ell| > R_{2}$ ensures that $D_{\rho}u_{1}(\ell) = D_{\rho}u(\ell)$ for all ${\rho \in \mathcal{N}(\ell) - \ell}$, hence for any subset $A \subset B_{R_{2}^{\rm c}}$, we have ${\| Du_{1} \|_{\ell^{2}_{\mathcal{N}}(\Lhom  \cap A)} = \| Du \|_{\ell^{2}_{\mathcal{N}}(\L \cap A)}}$.
	
	For any $v \in \UsH(\Lhom)$, it is possible to define a triangulation of $\Lhom$ and construct an interpolant \cite{2013-defects-v3,ortner_shapeev12}
	\begin{align*}
	\wt v \in \dot{W}^{1,2} := \big\{ w: \mathbb{R}^{d} \to \mathbb{R}^{d}\, | \, w \in W^{1,2}_{\rm loc}, \nabla w \in L^{2} \big\},
	\end{align*}
	satisfying $\wt v(\ell) = v(\ell)$ for all $\ell \in \Lhom$ and there exist constants $C, R_{3} > 0$ such that for all $v \in \UsH(\Lhom)$ and $R \geq R_{3}$,
	\begin{align}
	\| \nabla \wt v \|_{L^{2}(B_{R/2}^{\rm c})} \leq C \| Dv \|_{\ell^{2}_{\mathcal{N}}(\Lhom \setminus B_{R/4})}. 
	\label{eq:int-est-1}
	\end{align}
	Additionally, by \cite[Lemma 3]{ortner_shapeev12} there exists $C', R_{4} > 0$ such that for all $v \in \UsH(\Lhom)$ and $\ell \in \Lhom$
	\begin{align*}
	|v(\ell)| = |\wt v(\ell)| \leq C' \| \wt v \|_{L^{2}(B_{R_{4}}(\ell))}.
	\end{align*}
	Suppose that $R \geq 2R_{4}$ and define $A_{R} = B_{5R/2} \setminus B_{R/2}$, then we deduce that for all $v \in \UsH(\Lhom)$
	\begin{align}
	\sum_{\substack{\ell \in \Lhom \\ R < |\ell| < 2R }} |v(\ell)|^{2} \leq C C_{0} \| \wt v \|_{L^{2}(A_{R})}^{2} = C \| \wt v \|_{L^{2}(A_{R})}^{2}, \label{eq:v-L2-est}
	\end{align}
	where the final constant is independent of $R > 0$. We remark that the constants $R_{3}, R_{4}$ are independent of $\varepsilon > 0$.
	
	Choose $\eta \in C^{\infty}(\mathbb{R}^{d})$ satisfying $0 \leq \eta \leq 1$, $\eta = 0$ on $B_{1}, \eta = 1$ over $B_{2}^{\rm c}$ and $\| \eta \|_{W^{1,\infty}(\mathbb{R}^{d})} \leq C_{\eta}$, then for $R \geq R_{5} = R_{5}(\varepsilon) := \max\{4R_{2}, R_{3}, 2R_{4} \}$, define $\eta_{R}(x) := \eta(x/R)$. Observe that $\eta_{R}$ satisfies $|\nabla \eta_{R}(x)| \leq C_{\eta} R^{-1}$ for all $x \in \mathbb{R}^{d}$. Now define
	%$A_{R} = B_{9R/4} \setminus B_{3R/4}$ and
	$\wt u_{R} \in \dot{W}^{1,2}$ by
	\begin{align*}
	\wt u_{R}(x) = \eta_{R}(x) \, ( \, \wt u_{1}(x) - \langle \, \wt u_{1} \rangle_{A_{R}} ) + \langle \, \wt u_{1} \rangle_{A_{R}},
	\end{align*}
	where $\langle \, \wt u_{1} \rangle_{A_{R}} = |A_{R}|^{-1} \int_{A_{R}} \wt u_{1}$ denotes the average integral of $\wt u_{1}$ over $A_{R}$.
	We now define $u_{R} \in \UsH(\Lhom)$ by $u_{R}(\ell) = \wt u_{R}(\ell)$ for all $\ell \in \Lhom.$ We now estimate the norm
	\begin{align} \label{eq:mark}
	\| Du_{R} \|_{\ell^{2}(\Lhom)}^{2} = \sum_{\ell \in \Lhom} \sum_{\rho \in \mathcal{N}(\ell) - \ell} |D_{\rho}u_{R}(\ell)|^{2}.
	\end{align}
	
	We remark that while similar estimates have been performed in \cite{2013-defects-v3}, we additionally require that our construction satisfies $\Ihom_{1} u \in \mathscr{H}(\Lhom)$. In particular, this prevents the accumulation of atoms, which is necessary to treat the models given in Section \ref{sec:examples}. Hence, we need to establish more careful estimates.
	
	We can decompose the above sum into the following terms
	\begin{align}
	\| Du_{R} \|_{\ell^{2}(\Lhom)}^{2} &\leq \sum_{|\ell| < R} \sum_{ \substack{\rho \in \mathcal{N}(\ell) - \ell \\ |\ell + \rho| < R}} |D_{\rho}u_{R}(\ell)|^{2} + \sum_{|\ell| > 2R} \sum_{ \substack{\rho \in \mathcal{N}(\ell) - \ell \\ |\ell + \rho| > 2R}} |D_{\rho}u_{R}(\ell)|^{2} \label{eq:DuR-decomp1} \\ & \qquad + 3 \sum_{R < |\ell| < 2R} \sum_{ \rho \in \mathcal{N}(\ell) - \ell} |D_{\rho}u_{R}(\ell)|^{2}. \label{eq:DuR-decomp2}
	\end{align}
	As $\eta_{R} = 0$ over $B_{R}$, it follows that $D_{\rho}u_{R}(\ell) = 0$ whenever $|\ell|, |\ell + \rho| < R$, therefore the first term in \eqref{eq:DuR-decomp1} vanishes
	\begin{align}
	\sum_{|\ell| < R} \sum_{ \substack{\rho \in \mathcal{N}(\ell) - \ell \\ |\ell + \rho| < R}} |D_{\rho}u_{R}(\ell)|^{2} = 0. \label{est:part1-final}
	\end{align}
	Similarly, using that $\eta_{R} = 1$ on $B_{2R}^{\rm c}$, it follows that ${D_{\rho}u_{R}(\ell) = D_{\rho} \wt u_{1}(\ell) = D_{\rho}u_{1}(\ell)}$ for all $|\ell|, |\ell + \rho| > 2R$, hence the second term of \eqref{eq:DuR-decomp1} can be estimated by
	\begin{align}
	\sum_{|\ell| > 2R} \sum_{ \substack{\rho \in \mathcal{N}(\ell) - \ell \\ |\ell + \rho| > 2R}} |D_{\rho}u_{R}(\ell)|^{2} &= \sum_{|\ell| > 2R} \sum_{ \substack{\rho \in \mathcal{N}(\ell) - \ell \\ |\ell + \rho| > 2R}} |D_{\rho}u_{1}(\ell)|^{2} \leq \sum_{|\ell| > 2R} \sum_{ \rho \in \mathcal{N}(\ell) - \ell} |D_{\rho}u_{1}(\ell)|^{2} \nonumber \\ &= \| Du_{1} \|_{\ell^{2}_{\mathcal{N}}(\Lhom \setminus B_{2R})}^{2}. \label{est:part2-final}
	\end{align}
	It remains to estimate the term \eqref{eq:DuR-decomp2}, we first express
	\begin{align*}
	D_{\rho}u_{R}(\ell) = ( \eta_{R}(\ell+\rho) - \eta_{R}(\ell) ) \, ( \, \wt u_{1}(\ell) - \langle \, \wt u_{1} \rangle_{A_{R}} ) + \eta_{R}(\ell + \rho) D_{\rho} \wt u_{1}(\ell),
	\end{align*}
	which can be estimated by
	\begin{align}
	|D_{\rho}u_{R}(\ell)| &= | \eta_{R}(\ell+\rho) - \eta_{R}(\ell) | | \, \wt u_{1}(\ell) - \langle \, \wt u_{1} \rangle_{A_{R}} | + | D_{\rho} \wt u_{1}(\ell)| \nonumber \\
	&\leq \bigg| \int_{0}^{1} \nabla \eta_{R}(\ell+t\rho) \text{ d}t \bigg| | \, \wt u_{1}(\ell) - \langle \, \wt u_{1} \rangle_{A_{R}} | + | D_{\rho} u_{1}(\ell)| \nonumber \\
	&\leq CR^{-1} |\rho| | \, \wt u_{1}(\ell) - \langle \, \wt u_{1} \rangle_{A_{R}} | + | D_{\rho} u_{1}(\ell)| \nonumber \\
	&\leq CR^{-1} | \, \wt u_{1}(\ell) - \langle \, \wt u_{1} \rangle_{A_{R}} | + | D_{\rho} u_{1}(\ell)|. \label{eq:DuR-int-est-1}
	\end{align}
	Applying \eqref{eq:DuR-int-est-1} to \eqref{eq:DuR-decomp2} yields
	\begin{align}
	&\sum_{R < |\ell| < 2R} \sum_{ \rho \in \mathcal{N}(\ell) - \ell} |D_{\rho}u_{R}(\ell)|^{2} \nonumber \\ & \quad \leq C R^{-2} \sum_{R < |\ell| < 2R} \sum_{ \rho \in \mathcal{N}(\ell) - \ell} | \, \wt u_{1}(\ell) - \langle \, \wt u_{1} \rangle_{A_{R}} |^{2} + C\sum_{R < |\ell| < 2R} \sum_{ \rho \in \mathcal{N}(\ell) - \ell} |D_{\rho}u_{1}(\ell)|^{2} \nonumber \\
	& \quad \leq C R^{-2} \sum_{R < |\ell| < 2R} | \, \wt u_{1}(\ell) - \langle \, \wt u_{1} \rangle_{A_{R}} |^{2} + C \| Du_{1} \|_{\ell^{2}_{\mathcal{N}}(\Lhom \setminus B_{R})}^{2}. \label{eq:part3-est-1}
	\end{align}
	We observe that applying \eqref{eq:v-L2-est} to $v = u_{1} - \langle \, \wt u_{1} \rangle_{A_{R}}$, which satisfies $\wt v = \wt u_{1} - \langle \, \wt u_{1} \rangle_{A_{R}}$, gives
	\begin{align}
	C R^{-2} \sum_{R < |\ell| < 2R} | \, \wt u_{1}(\ell) - \langle \, \wt u_{1} \rangle_{A_{R}} |^{2} \leq C R^{-2} \| \, \wt u_{1} - \langle \, \wt u_{1} \rangle_{A_{R}} \|_{L^{2}(A_{R})}^{2}. \label{eq:part3-est-2}
	\end{align}
	By applying Poincar\'{e}'s inequality on $A_{1}$ and using a standard scaling argument, we deduce that
	\begin{align}
	C R^{-2} \| \, \wt u_{1} - \langle \, \wt u_{1} \rangle_{A_{R}} \|_{L^{2}(A_{R})}^{2} &\leq (C R^{-2})( C_{P} R^{2}) \| \nabla \wt u_{1} \|_{L^{2}(A_{R})}^{2} \nonumber \\ &\leq C \| \nabla \wt u_{1} \|_{L^{2}(B_{R/2}^{\rm c})}^{2}, \label{eq:part3-est-3}
	\end{align}
	hence by inserting \eqref{eq:part3-est-2}--\eqref{eq:part3-est-3} into \eqref{eq:part3-est-1} and applying \eqref{eq:int-est-1}, we infer that
	\begin{align}
	\sum_{R < |\ell| < 2R} \sum_{ \rho \in \mathcal{N}(\ell) - \ell} |D_{\rho}u_{R}(\ell)|^{2} &\leq C \| \nabla \wt u_{1} \|_{L^{2}(B_{R/2}^{\rm c})}^{2} \leq C \| D u_{1} \|_{\ell^{2}_{\mathcal{N}}(\Lhom \setminus B_{R/4})}^{2}, \label{eq:part3-est-final}
	\end{align}
	where the final constant is independent of $R > 0$.
	
	Combining the estimates \eqref{est:part1-final},\eqref{est:part2-final} and \eqref{eq:part3-est-final} with \eqref{eq:DuR-decomp1}--\eqref{eq:DuR-decomp2}, we deduce that
	\begin{align}
	\| Du_{R} \|_{\ell^{2}(\Lhom)} &\leq C \| D u_{1} \|_{\ell^{2}_{\mathcal{N}}(\Lhom \setminus B_{R/4})} = C \| D u \|_{\ell^{2}_{\mathcal{N}}(\Lhom \setminus B_{R/4})} \leq C_{*} \| Du \|_{\ell^{2}_{\mathcal{N}}(\Lhom)}. \label{eq:IR-bound}
	\end{align}
	
	% \cco{why was the step via $u_1$ needed to get here? $u_R$ never sees $u_1$} \cfn{$u_{R}$ is defined in terms of $\wt u_{1}$, the interpolant of $u_{1}$.}
	
	We remark that collecting the estimates \eqref{eq:part3-est-2}--\eqref{eq:part3-est-final} and applying \eqref{eq:Du-eps-est} with \newline ${R/4 \geq R_{2} > R_{1}}$, we infer that
	\begin{align}
	\| \, u_{1} - \langle \, \wt u_{1} \rangle_{A_{R}} \|_{\ell^{2}(\Lhom \cap ( B_{2R} \setminus B_{R} ) ) } \leq CR \| Du \|_{\ell^{2}_{\mathcal{N}}(\Lhom \setminus B_{R/4})} \leq CR \, \varepsilon. \label{eq:u1-ell2-est}
	\end{align}
	It remains to show that we can choose $R > 0$ sufficiently large to ensure that for all $\ell, m \in \Lhom$,
	\begin{align}
	|\ell - m + u_{R}(\ell) - u_{R}(m)| \geq \frak{m} |\ell - m|. \label{eq:uR-m-est}
	\end{align}
	Once \eqref{eq:uR-m-est} has been shown, it follows that $y_{0}^{\hom} + u_{R} \in \Adm_{\frak{m},C_{*}\lambda}(\Lhom)$ for sufficiently large $R > 0$.
	
	We prove \eqref{eq:uR-m-est} by consider four distinct cases.
	
	\emph{Case 1} Suppose that $\ell, m \in B_{R}$, then as $\eta_{R} = 0$ on $B_{R}$, it follows that $u_{R}(\ell) = u_{R}(m)$, hence
	\begin{align}
	|\ell - m + u_{R}(\ell) - u_{R}(m)| = |\ell - m| \geq \frak{m} |\ell - m|. \label{eq:case1}
	\end{align}
	\emph{Case 2} Suppose that $|\ell| > 2R$ and $|m| < R$, which implies that $|\ell - m| > R$, then by Lemma \ref{lemma:pathcount}, there exists a path $\mathscr{P}(\ell,m) = \{\ell_{i} \in \L |1\leq i \leq N_{\ell,m}+1\}$ of neighbouring lattice points, such that $N_{\ell,m} \leq C |\ell - m|$ and $\rho_{i} := \ell_{i+1} - \ell_{i} \in \mathcal{N}(\ell_{i}) - \ell_{i}$ for all $1 \leq i \leq N_{\ell,m}$, satisfying
	$|u_{R}(\ell) - u_{R}(m)| \leq \sum_{i = 1}^{N_{\ell,m}} |D_{\rho_{i}} u_{R}(\ell_{i})|.$
	Applying Cauchy-Schwarz
	%and using that $c_{1} := \inf_{\substack{\ell , m \in \L \\ \ell \neq m}} |\ell - m| > 0$, we deduce
	gives
	\begin{align}
	|u_{R}(\ell) - u_{R}(m)| &\leq N_{\ell,m}^{1/2} \bigg( \sum_{i = 1}^{N_{\ell,m}} |D_{\rho_{i}} u_{R}(\ell_{i})|^{2} \bigg)^{1/2} \leq C|\ell - m|^{1/2} \| Du_{R} \|_{\ell^{2}_{\mathcal{N}}(\L)} \nonumber \\ &\leq C C_{*} \lambda |\ell - m|^{1/2} = C_{1} \lambda |\ell - m|^{1/2}. \label{eq:case1-path}
	\end{align}
	Choosing $R \geq R_{6} = (\frac{C_{1} \lambda}{(1 - \frak{m})})^{2}$ ensures that $(1 - \frak{m})|\ell - m|^{1/2} \geq (1 - \frak{m})R^{1/2} \geq C_{1} \lambda$, hence
	\begin{align}
	|\ell + m + u_{R}(\ell) - u_{R}(m)| &\geq |\ell - m| -|u_{R}(\ell) - u_{R}(m)| \geq |\ell - m| - C_{1} \lambda |\ell - m|^{1/2} \nonumber \\ &= |\ell - m|^{1/2} \,( |\ell - m|^{1/2} - C_{1} \lambda ) \geq \frak{m} |\ell - m|. \label{eq:case2}
	\end{align}
	\emph{Case 3} Now suppose $\ell, m \in B_{R}^{\rm c}$, then applying the estimate \eqref{eq:DuR-int-est-1} gives
	\begin{align}
	|u_{R}(\ell) - u_{R}(m)| &\leq |\eta_{R}(\ell) - \eta_{R}(m)|| \, u_{1}(\ell) - \langle \, \wt u_{1} \rangle_{A_{R}} | + |\eta_{R}(m)||u_{1}(\ell) - u_{1}(m)| \nonumber \\
	&\leq CR^{-1} |\ell - m| | \, u_{1}(\ell) - \langle \, \wt u_{1} \rangle_{A_{R}} | + |u_{1}(\ell) - u_{1}(m)|. \label{eq:case3-est-part1}
	\end{align}
	In order to estimate the first term of \eqref{eq:case3-est-part1}, we apply \eqref{eq:u1-ell2-est} and the embedding $\ell^{2} \subset \ell^{\infty}$ to deduce
	\begin{align}
	CR^{-1} | \, u_{1}(\ell) - \langle \, \wt u_{1} \rangle_{A_{R}} | &\leq CR^{-1} \| \, u_{1} - \langle \, \wt u_{1} \rangle_{A_{R}} \|_{\ell^{\infty}(\Lhom \cap ( B_{2R} \setminus B_{R} ) ) } \nonumber \\
	&\leq CR^{-1} \| \, u_{1} - \langle \, \wt u_{1} \rangle_{A_{R}} \|_{\ell^{2}(\Lhom \cap ( B_{2R} \setminus B_{R} ) ) } \nonumber \\
	&\leq (CR^{-1})(CR\varepsilon) = C_{1} \varepsilon, \label{eq:case3-est-part2}
	\end{align}
	where the constant $C_{1}$ is independent of $R > 0$.
	
	We estimate the second term appearing in \eqref{eq:case3-est-part1} using the argument of \eqref{eq:case1-path}. By Lemma \ref{lemma:pathcount}, there exists a path $\mathscr{P}(\ell,m) = \{\ell_{i} \in \L |1\leq i \leq N_{\ell,m}+1\}$ of neighbouring lattice points, such that $N_{\ell,m} \leq C |\ell - m|$ and $\rho_{i} := \ell_{i+1} - \ell_{i} \in \mathcal{N}(\ell_{i}) - \ell_{i}$ for all $1 \leq i \leq N_{\ell,m}$, satisfying
	$|u_{1}(\ell) - u_{1}(m)| \leq \sum_{i = 1}^{N_{\ell,m}} |D_{\rho_{i}} u_{1}(\ell_{i})|.$ Moreover, the path $\mathscr{P}(\ell,m)$ can be chosen to ensure that $|\ell_{i}| > R$ for all $1 \leq i \leq N_{\ell,m} + 1$.
	Applying Cauchy-Schwarz gives
	\begin{align*}
	|u_{1}(\ell) - u_{1}(m)| &\leq N_{\ell,m}^{1/2} \bigg( \sum_{i = 1}^{N_{\ell,m}} |D_{\rho_{i}} u_{1}(\ell_{i})|^{2} \bigg)^{1/2} \\ &\leq C|\ell - m|^{1/2} \bigg( \sum_{\ell \in \Lhom \setminus B_{R} } \sum_{\rho \in \mathcal{N}(\ell) - \ell} |D_{\rho} u_{1}(\ell)|^{2} \bigg)^{1/2} \nonumber \\ &\leq C |\ell - m|^{1/2} \| Du_{1} \|_{\ell^{2}_{\mathcal{N}}(\Lhom \setminus B_{R})}.
	\end{align*}
	Using \eqref{eq:Du-eps-est} and that $c_{1} := \inf_{\substack{\ell , m \in \L \\ \ell \neq m}} |\ell - m| > 0$, we deduce that
	\begin{align}
	|u_{1}(\ell) - u_{1}(m)| &\leq C|\ell - m|^{1/2} \| Du_{1} \|_{\ell^{2}_{\mathcal{N}}(\Lhom \setminus B_{R})} \nonumber \\
	&\leq C c_{1}^{-1/2} \varepsilon |\ell - m| = C_{2} \varepsilon |\ell - m|. \label{eq:case3-est-part3}
	\end{align}
	Now let $\varepsilon_{0} = (C_{1} + C_{2})^{-1}(1 - \frak{m})$, then for $R \geq \max\{ R_{5}(\varepsilon_{0}), R_{6} \}$, collecting the estimates \eqref{eq:case3-est-part1}--\eqref{eq:case3-est-part3} gives
	\begin{align*}
	|u_{R}(\ell) - u_{R}(m)| \leq (C_{1} + C_{2}) \varepsilon |\ell - m| \leq (1 - \frak{m}) |\ell - m|,
	\end{align*}
	hence we obtain the desired estimate
	\begin{align}
	|\ell - m + u_{R}(\ell) - u_{R}(m)| \geq |\ell - m| - |u_{R}(\ell) - u_{R}(m)| \geq \frak{m}|\ell - m|. \label{eq:case3}
	\end{align}
	\emph{Case 4} In our final case, we consider $\ell \in B_{2R} \setminus B_{R}$ and $m \in B_{R}$. We follow the argument of Case 3, while observing that as $\eta_{R}(m) = 0$, we obtain
	\begin{align*}
	|u_{R}(\ell) - u_{R}(m)| &\leq |\eta_{R}(\ell) - \eta_{R}(m)|| \, u_{1}(\ell) - \langle \, \wt u_{1} \rangle_{A_{R}} | + |\eta_{R}(m)||u_{1}(\ell) - u_{1}(m)| \nonumber \\
	&\leq |\eta_{R}(\ell) - \eta_{R}(m)|| \, u_{1}(\ell) - \langle \, \wt u_{1} \rangle_{A_{R}} | \\
	&\leq CR^{-1} |\ell - m| | \, u_{1}(\ell) - \langle \, \wt u_{1} \rangle_{A_{R}} | \\ &\leq C_{1} \varepsilon_{0} |\ell - m| \leq (1 - \frak{m})|\ell - m|.
	\end{align*}
	hence we obtain the desired estimate
	\begin{align}
	|\ell - m + u_{R}(\ell) - u_{R}(m)| \geq |\ell - m| - |u_{R}(\ell) - u_{R}(m)| \geq \frak{m}|\ell - m|. \label{eq:case4}
	\end{align}
	Collecting the estimates \eqref{eq:case1}--\eqref{eq:case4} implies \eqref{eq:uR-m-est}.
	
	Finally, we define ${I^{\hom}_{1}u := u_{R} \in \UsH(\Lhom)}$, where $R = \max\{ R_{5}(\varepsilon_{0}), R_{6} \}$, which satisfies $y_{0}^{\hom} + I^{\hom}_{1}u \in \Adm_{\frak{m}, C_{*}\lambda}$ and $I^{\hom}_{1}u(\ell) = u(\ell)$ for all $|\ell| > R_{0} := 2R$.
	%	Additionally for all $|\ell| \leq R_{0}/4 = R/2,$ have $|D\Ihom u(\ell)|_{\mathcal{N}} = 0$.
	This completes the proof.
\end{proof}

\fn{
\begin{proof}[Proof of Lemma \ref{lemma-Ihom-norm-est}]
	Fix $\wf \in {\mathscr{L}_{1}}$,
	$u \in \UsH(\L)$ and for $\ell \in \Lhom \setminus B_{\Rcore}$, define ${\Ihom_{2} u(\ell) = u(\ell)}$. Due to the periodicity of $\Lhom$, for each $\ell \in \Lhom$ there exists a bounded Voronnoi cell $\mathcal{V}^{\hom}(\ell) \subset B_{\widetilde{R}}(\ell)$, where $\widetilde{R} = \smfrac{1}{2} \sum_{i = 1}^{3} |A e_{i}| > 0$, satisfying
	\begin{align}
	\mathcal{V}^{\hom}(\ell) &= \left\{ \, x \in \mathbb{R}^{3} \, \Big | \, |x - \ell| \leq |x - k| \quad \forall \, k \in \Lhom \,  \right\}
	\end{align}
	and $\mathcal{V}^{\hom}(\ell) = \mathcal{V}^{\hom}(0) + \ell$, due to the translation-invariance of $\Lhom$. Using the definition \eqref{eq:Voronoi-def}, one may also define the Voronoi cell $\mathcal{V}(\ell)$ for $\ell \in \L$. As $\L \setminus B_{\Rcore} = \Lhom \setminus B_{\Rcore}$, there exists $R' > 0$ such that $\ell \in \L \setminus B_{R'}$ guarantees $\mathcal{V}(\ell) = \mathcal{V}^{\hom}(\ell) \subset B_{\widetilde{R}}(\ell)$, which is bounded. As $\{ \mathcal{V}(\ell) | \ell \in \L \}$ cover $\mathbb{R}^{3}$, it follows from the definition \eqref{eq:Voronoi-def} that for $\ell \in \L \cap B_{R's}$
	\begin{align}
	\mathcal{V}(\ell) \subset \bigcup_{\ell' \in \L \cap B_{R'}} \mathcal{V}(\ell') \subset \left( \bigg( \bigcup_{\ell' \in \L \setminus B_{R'}} \mathcal{V}(\ell') \bigg)^{\mathrm{o}} \right)^{\rm c}, \label{Vell Int bound}
	\end{align}
	where $X^{\mathrm{o}}$ denotes the interior of the set $X$. As the right-hand side of \eqref{Vell Int bound} is a bounded set, there exists $R_{0} > 0$ such that $\mathcal{V}(\ell) \subset B_{R_{0}}(\ell)$ for all $\ell \in \L$. In addition, there exists $R_{1} > 0$ such that
	\begin{align}
	\Lhom \cap B_{\Rcore} \subset \bigcup_{\ell' \in \L \cap B_{R_{1}}} \mathcal{V}(\ell'),
	\end{align}
	hence for each $\ell \in \Lhom \cap B_{\Rcore}$, there exists $\ell' \in \L \cap B_{R_{1}}$ such that ${\ell \in \mathcal{V}(\ell') \subset B_{R_{0}}(\ell')}$. Note that the choice of $\ell'$ is in general not unique. Then define $\Ihom_{2} u(\ell) = u(\ell')$. It follows from the construction that $\Ihom_{2}$ is linear.
	
	Consider distinct $\ell_{1}, \ell_{2} \in \Lhom \cap B_{\Rcore}$, then for $i = 1,2$ there exist $\ell_{i}' \in \L \cap B_{R_{1}}$ such that $\Ihom_{2} u(\ell_{i}) = u(\ell'_{i})$ and $|\ell_{i} - \ell_{i}'| \leq R_{0}$. It holds that \newline ${|\ell_{1} - \ell_{2}| \geq \min_{\ell,\ell' \in \L, \ell \neq \ell'} |\ell - \ell'| =: c_{1} > 0}$ and similarly that \newline ${|\ell_{1}' - \ell_{2}'| \leq \max_{\ell,\ell' \in \Lhom, \ell \neq \ell'} |\ell - \ell'| =: c_{2} > 0}$, hence
	$|\ell_{1} - \ell_{2}| \geq \smfrac{c_{1}}{c_{2}}|\ell_{1}' - \ell_{2}'| =: c_{0}|\ell_{1}' - \ell_{2}'|$, where $c_{0} \in (0,1]$. Using that $\wf \in \mathscr{H}^{1}$, it follows that $\wf$ is decreasing, hence
	\begin{align}
	\wf(|\ell_{1} - \ell_{2}|) \,|\Ihom_{2} u(\ell_{1}) - \Ihom_{2} u(\ell_{2})| &\leq \wf(c_{0}|\ell_{1}' - \ell_{2}'|) \,|u(\ell'_{1}) - u(\ell'_{2})| \nonumber \\ & \leq \sum_{\ell' \in \L \cap B_{R_{1}}} \wf(c_{0}|\ell_{1}' - \ell'|) \,|u(\ell'_{1}) - u(\ell')|. \label{eq:Ihom-12-exp-est1}
	\end{align}
	We then define $\wt \wf(r) := \wf(c_{0}r)$, and observe that $\wt \wf \in \mathscr{H}^{1}$. In the case $\ell_{1} \in \Lhom \cap B_{\Rcore}$ and $\ell_{2} \in \Lhom \setminus B_{\Rcore}$, then $\Ihom_{2} u(\ell_{2}) = u(\ell_{2})$, then a similar argument shows
	\begin{align}
	\wf(|\ell_{1} - \ell_{2}|) \,|\Ihom_{2} u(\ell_{1}) - \Ihom_{2} u(\ell_{2})| &\leq \wf(c_{0}|\ell_{1}' - \ell_{2}'|) \,|u(\ell'_{1}) - u(\ell'_{2})|. \label{eq:Ihom-12-exp-est2}
	\end{align}
	Now, we decompose
	\begin{align*}
	&|D \Ihom_{2} u(\ell_{1})|_{\wf,1} = \sum_{\ell_{2} \in \Lhom} \wf(|\ell_{1} - \ell_{2}|) \, | \Ihom_{2} u(\ell_{1}) - \Ihom_{2} u(\ell_{2})| \nonumber \\
	& = \sum_{\ell_{2} \in \Lhom \cap B_{\Rcore}} \hspace{-15pt} \wf(|\ell_{1} - \ell_{2}|) \, | \Ihom_{2} u(\ell_{1}) - \Ihom_{2} u(\ell_{2})| + \hspace{-9pt} \sum_{\ell_{2} \in \Lhom \setminus B_{\Rcore}} \hspace{-5pt} \wf(|\ell_{1} - \ell_{2}|) \,| \Ihom_{2} u(\ell_{1}) - \Ihom_{2} u(\ell_{2})|,
	\end{align*}
	then apply \eqref{eq:Ihom-12-exp-est1}--\eqref{eq:Ihom-12-exp-est2} to deduce
	\begin{align}
	|D \Ihom_{2} u(\ell_{1})|_{\wf,1} &\leq C |\Lhom \cap B_{\Rcore}| \sum_{\ell' \in \L \cap B_{R_{1}}} \wf(c_{0}|\ell_{1} - \ell_{2}|) \, | u(\ell_{1}') - u(\ell')| \nonumber \\ & \qquad + C \sum_{ \ell' \in \L \setminus B_{\Rcore}} \wf(c_{0}|\ell_{1} - \ell_{2}|) \, | u(\ell_{1}') - u(\ell_{2}')| \nonumber \\
	&\leq C \sum_{\ell_{2}' \in \L \setminus \ell_{1}'} \wt \wf(|\ell_{1} - \ell_{2}|) \, | u(\ell_{1}') - u(\ell_{2}')| = C \sum_{\rho' \in \L - \ell_{1}'} \wt \wf(|\rho|) \, | D_{\rho'} u(\ell_{1}')| \nonumber \\ &= C |D u(\ell_{1}')|_{\wt \wf,1} \leq C \sum_{\ell' \in \L \cap B_{R_{1}}} |D u(\ell')|_{\wt \wf,1}. \label{eq:Ihom-12-exp-est3}
	\end{align}
	An identical argument shows that for $\ell_{1} \in \Lhom \setminus B_{\Rcore}$
	\begin{align}
	&|D \Ihom_{2} u(\ell_{1})|_{\wf,1} \leq C |D u(\ell_{1})|_{\wt \wf,1}. \label{eq:Ihom-12-exp-est4}
	\end{align}
	Let $r > 0$ and choose $R = \max\{ R_{1}, r \}$, then combining \eqref{eq:Ihom-12-exp-est3}--\eqref{eq:Ihom-12-exp-est4} yields the desired estimate
	\begin{align*}
	\sum_{\ell \in \Lhom \cap B_{r}} |D \Ihom_{2} u(\ell)|_{\wf,1} &\leq C \sum_{\ell \in \L \cap B_{R}} |D u(\ell)|_{\wt \wf,1}.
	\end{align*}
	We now estimate $\| D \Ihom_{2} u \|_{\hc{l}^2(\Lhom)}$ using \eqref{eq:Ihom-12-exp-est3}--\eqref{eq:Ihom-12-exp-est4} and Cauchy--Schwarz
	\begin{align*}
	\| D \Ihom_{2} u \|_{\hc{l}^2_{\wf,1}(\Lhom)}^{2} &= \sum_{\ell \in \Lhom} |D \Ihom_{2} u(\ell)|^{2}_{\wf, 1} \\
	& = \sum_{\ell \in \Lhom \cap B_{\Rcore}} |D \Ihom_{2} u(\ell)|^{2}_{\wf, 1} + \sum_{\ell \in \Lhom \setminus B_{\Rcore}} |D \Ihom_{2} u(\ell)|^{2}_{\wf, 1} \\
	& \leq C \sum_{\ell \in \Lhom \cap B_{\Rcore}} \bigg( \sum_{\ell' \in \L \cap B_{R_{1}}} |D u(\ell')|_{\wt \wf, 1} \bigg)^{2} + C \sum_{\ell \in \L \setminus B_{\Rcore}} |D u(\ell)|^{2}_{\wt \wf, 1} \\
	& \leq C |B_{R_{1}}| \sum_{\ell \in \Lhom \cap B_{\Rcore}} \sum_{\ell' \in \L \cap B_{R_{1}}} |D u(\ell')|^{2}_{\wt \wf, 1} + C \sum_{\ell \in \L \setminus B_{\Rcore}} |D u(\ell)|^{2}_{\wt \wf, 1} \\
	& \leq C |\Lhom \cap B_{\Rcore}| \sum_{\ell' \in \L \cap B_{R_{1}}} |D u(\ell')|^{2}_{\wt \wf, 1} + C \sum_{\ell' \in \L \setminus B_{\Rcore}} |D u(\ell')|^{2}_{\wt \wf, 1} \\
	& \leq C \sum_{\ell' \in \L} |D u(\ell')|^{2}_{\wt \wf, 1} = C \| D u \|_{\hc{l}^2_{\wt \wf, 1}(\L)}^{2}.
	\end{align*}
	The desired estimate then follows by an application of Lemma \ref{lemma:normEquiv}
	\begin{displaymath}
	\| D \Ihom_{2} u \|_{\hc{l}^2_{\mathcal{N}}(\Lhom)} \leq C \| D \Ihom_{2} u \|_{\hc{l}^2_{\wf,1}(\Lhom)} \leq C \| D u \|_{\hc{l}^2_{\wt \wf,1}(\L)} \leq C \| D u \|_{\hc{l}^2_{\mathcal{N}}(\L)}.
%	\qedhere
	\end{displaymath}
\end{proof}

\begin{proof}[Proof of Lemma \ref{lemma-Iback-norm-est}]
	This holds from following the proof of Lemma \ref{lemma-Ihom-norm-est} verbatim.
\end{proof} }

\subsection{Proof of Theorem \ref{corr:deltaE0-pd-est}}
\label{sec:proof_deltaE0_pd-est}

To show the boundedness of the functional $F$, defined in the statement of
Theorem~\ref{corr:deltaE0-pd-est}, we shall use the interpolation functions introduced
in Section \ref{sec:homogeneous} to compare displacements on $\L$ to those on $\L^{\hom}$
in the following result.

\begin{lemma} \label{lemmaforceinterpest}
	Suppose that the assumptions of Theorem \ref{corr:deltaE0-pd-est} hold.
	Let \newline ${\Ihom_{1}, \Ihom_{2}: \UsH(\L) \to \UsH(\L^{\hom})}$ and ${\Iback: \UsH(\L^{\hom}) \to \UsH(\L)}$
	be the interpolation operators defined in Lemmas \ref{Lemma-Int-hom}, \ref{lemma-Ihom-norm-est} and \ref{lemma-Iback-norm-est}.
	Then, for $u \in \mathscr{H}(\L)$, define the functionals
	${T_{u}: \Usz(\L) \to \mathbb{R}}$ and $T^{\hom}_{u}: \Usz(\Lhom) \to \mathbb{R}$ by
	\begin{align}
	\b\<T_{u},v\b\> &= \sum_{\ell_{1} \in \L} \sum_{\rho_{1} \in \L - \ell_{1}} V_{\ell_{1},\rho_{1}}(Du(\ell_{1})) \cdot D_{\rho_{1}} v(\ell_{1}) \nonumber \\ & \qquad - \sum_{\ell_{2} \in \Lhom} \sum_{\rho_{2} \in \Lhom_{*}} V_{,\rho_{2}}(D\Ihom_{1}u(\ell_{2})) \cdot D_{\rho_{2}} \Ihom_{2}v(\ell_{2})
	\label{eq:Tudef},
	\\
	\b\<T^{\hom}_{u},v^{\hom}\b\> &= \sum_{\ell_{1} \in \L} \sum_{\rho_{1} \in \L - \ell_{1}} V_{\ell_{1},\rho_{1}}(Du(\ell_{1})) \cdot D_{\rho_{1}} \Iback v^{\hom}(\ell_{1}) \nonumber \\ & \qquad - \sum_{\ell_{2} \in \Lhom} \sum_{\rho_{2} \in \Lhom_{*}} V_{,\rho_{2}}(D\Ihom_{1}u(\ell_{2})) \cdot D_{\rho_{2}} v^{\hom}(\ell_{2})
	\label{eq:Thomudef}.
	\end{align}
	There exists a constant $C >0$, depending on $s$ and $u \in \mathscr{H}(\L)$, such that for all
	${v \in \Usz(\L)}$ and $v^{\hom}\in\Usz(\Lhom)$
	\begin{align}
	\label{eq:T-v}
	\left| \b\<T_{u},v\b\> \right| &\leq C \| Dv \|_{\hc{l}^2_{\mathcal{N}}(\L)}, \\
	\label{eq:Thom-vhom}
	\left| \b\<T^{\hom}_{u},v^{\hom}\b\> \right|
	&\leq C \| Dv^{\hom} \|_{\hc{l}^2_{\mathcal{N}}(\L^{\hom})}.
	\end{align}
\end{lemma}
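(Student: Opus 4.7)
The plan is to split $\b\<T_u,v\b\>$ into a near-field contribution (over a bounded region around the defect) and a far-field contribution (where \asSEH\ applies). Fix $\mathfrak{m},\lambda > 0$ such that $y_0 + u \in \Adm_{\mathfrak{m},\lambda}(\L)$, and choose a cutoff $R_* > 0$ large enough that for $|\ell| > R_*$ we have $\ell \in \L \cap \Lhom$, $\Ihom_1 u(\ell) = u(\ell)$ from Lemma \ref{Lemma-Int-hom}, and $\Ihom_2 v(\ell) = v(\ell)$ from Lemma \ref{lemma-Ihom-norm-est}.

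In the near-field, the two sums in \eqref{eq:Tudef} over $\ell \in B_{R_*}$ are treated separately. The locality bound \asSEL\ with $j = 1$ gives $|V_{\ell,\rho}(Du(\ell))| + |V_{,\rho}(D\Ihom_1 u(\ell))| \leq C\wf_1(|\rho|)$, so each inner $\rho$-sum is controlled by the weighted stencil norm $|Dv(\ell)|_{\wf_1,1}$ or $|D\Ihom_2 v(\ell)|_{\wf_1,1}$. A Cauchy--Schwarz in $\ell$ over the finite set $\L \cap B_{R_*}$, together with Lemma \ref{lemma:normEquiv} and the operator bound on $\Ihom_2$ from Lemma \ref{lemma-Ihom-norm-est}, produces a bound of the form $C\|Dv\|_{\hc{l}^2_{\mathcal{N}}(\L)}$ on the near-field piece.

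For the far field the contribution simplifies to $\sum_{|\ell| > R_*} \sum_{\rho \in \L - \ell} [V_{\ell,\rho}(Du(\ell)) - V^{\hom}_{,\rho}(D\Ihom_1 u(\ell))] D_\rho v(\ell)$. I split the inner sum on $|\rho| \leq c(|\ell|-R_*)$ versus $|\rho| > c(|\ell|-R_*)$, with $c = c(u) > 0$ chosen small enough that $|\rho| \leq c(|\ell|-R_*)$ forces $r_{\ell,\ell+\rho}(y_0+u) \leq \mathfrak{m}(|\ell|-R_*)$; this uses the path estimate $|D_\rho u(\ell)| \lesssim \|Du\|_{\hc{l}^2_{\mathcal{N}}}|\rho|^{1/2}$ from \eqref{eq:Dv-CS-est}. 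In this small-$|\rho|$ regime the stencil sets of $y_0+u$ and $y_0^{\hom}+\Ihom_1 u$ around $\ell$ agree within the relevant radius, since the two deformations coincide outside $B_{R_*}$ and admissibility gives $|n-\ell| \leq r_{\ell n}/\mathfrak{m}$. Hence \asSEH\ applies with matching radius $\sim |\ell|$, yielding $|V_{\ell,\rho} - V^{\hom}_{,\rho}| \leq C(1+|\ell|)^{-s}\wf_1(|\rho|)$. Summing in $\rho$ and applying Cauchy--Schwarz in $\ell$, the assumption $s > d/2$ combined with Lemma \ref{lemma:normEquiv} controls this piece by $C\|Dv\|_{\hc{l}^2_{\mathcal{N}}(\L)}$.

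The main obstacle is the large-$|\rho|$ regime, where \asSEH\ no longer applies and only $|V_{\ell,\rho} - V^{\hom}_{,\rho}| \leq 2C\wf_1(|\rho|)$ is available from \asSEL, with no $|\ell|$-decay. To extract the needed decay I apply Cauchy--Schwarz on the product $(\ell,\rho)$,
\begin{align*}
\sum_{|\ell| > R_*} \sum_{|\rho| > c(|\ell|-R_*)} \wf_1(|\rho|)|D_\rho v(\ell)|
\leq \bigg(\sum_\ell \sum_{|\rho| > c(|\ell|-R_*)} \wf_1(|\rho|)\bigg)^{\!1/2}\, \|Dv\|_{\hc{l}^2_{\wf_1,2}(\L)} ,
\end{align*}
and swap the order of summation to collapse the first factor to $\sum_\rho \wf_1(|\rho|)\,|\{\ell : |\ell| < R_* + c^{-1}|\rho|\}| \leq C\sum_\rho \wf_1(|\rho|)|\rho|^d$, which is finite by Lemma \ref{lemma:omega-sum-int-est} since $\wf_1 \in \Hw_1$. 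The second factor is bounded by $C\|Dv\|_{\hc{l}^2_{\mathcal{N}}(\L)}$ via Lemma \ref{lemma:normEquiv}. This establishes \eqref{eq:T-v}; the estimate \eqref{eq:Thom-vhom} follows by the same argument with the roles of $v, \Ihom_2 v$ interchanged with $\Iback v^{\hom}, v^{\hom}$, using Lemma \ref{lemma-Iback-norm-est} in place of Lemma \ref{lemma-Ihom-norm-est}.
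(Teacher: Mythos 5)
Your near-field estimate and your small-$|\rho|$ far-field estimate are sound and correspond to the paper's terms \eqref{eq:Vdiffp4} and \eqref{eq:Vdiffp1}, but the large-$|\rho|$ regime, which you handle with locality alone, contains a genuine gap. After swapping the order of summation you arrive at $\sum_{\rho}\wf_1(|\rho|)\,\big|\{\ell : |\ell|< R_*+c^{-1}|\rho|\}\big| \leq C\sum_\rho \wf_1(|\rho|)\,|\rho|^d$ and assert that this is finite by Lemma \ref{lemma:omega-sum-int-est} because $\wf_1\in\Hw_1$. That lemma controls $\sum_\rho\wf_1(|\rho|)\,|\rho|^{k}$ only for $\wf_1\in\Hw_k$: the lattice sum $\sum_\rho\wf_1(|\rho|)\,|\rho|^d$ is comparable to $\int_0^\infty r^{2d-1}\wf_1(r)\,{\rm d}r$, i.e.\ it requires $\wf_1\in\Hw_{d}$, which is strictly stronger than the hypothesis $\wf_1\in\Hw_1$ for $d\in\{2,3\}$. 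Concretely, $\wf_1(r)=(1+r)^{-1-d}\log^{-2}(2+r)$ belongs to $\Hw_1$ but makes your first Cauchy--Schwarz factor infinite. (A secondary instance of the same problem: bounding $\|Dv\|_{\hc{l}^2_{\wf_1,2}}$ by $\|Dv\|_{\hc{l}^2_{\mathcal{N}}}$ via Lemma \ref{lemma:normEquiv} also needs $\wf_1\in\Hw_2$, not $\Hw_1$.) The root cause is structural: in this regime you discard the cancellation $V_{\ell,\rho}-V^{\hom}_{,\rho}$ entirely, so no decay in $\ell$ survives, and the number of sites $\ell$ paired with a given $\rho$ grows like $|\rho|^d$ while $\wf_1$ only integrates one extra power of $|\rho|$.

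The paper's proof avoids this by splitting the far field along a different line. The homogeneity cancellation of {\asSEH}, and hence the weight $(1+|\ell|)^{-s}$ which is square-summable since $s>d/2$, is retained for \emph{all} bonds whose endpoints both lie outside the defect core, irrespective of $|\rho|$ (terms \eqref{eq:Vdiffp1}--\eqref{eq:Vdiffp2}); cancellation is abandoned only for bonds with $\ell+\rho\in B_{R_0}$, and for those the substitution $\ell'=\ell+\rho$, $\rho'=-\rho$ converts the sum into one over the \emph{finite} set $\L\cap B_{R_0}$, each core site contributing $|D\Iback v^{\hom}(\ell')|_{\wf_1,1}\leq C\|Dv^{\hom}\|_{\hc{l}^2_{\mathcal{N}}}$ (term \eqref{eq:Vdiffp3}, estimate \eqref{eq:Vdiffp3estp1}). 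The lesson is that the region where only locality is available must be parametrized by finitely many lattice sites, not by an unbounded family of pairs $(\ell,\rho)$. Replacing your large-$|\rho|$ step by this re-indexing argument, and extending your homogeneity estimate to cover all $\rho$ with $\ell+\rho\notin B_{R_0}$ as in \eqref{eq:Vdiffp2est}, repairs the proof; the remainder of your argument then goes through.
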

Using Lemma \ref{lemmaforceinterpest}, we define unique continuous extensions of $T_{u}$, $T^{\hom}_{u}$ to $\UsH(\L), \UsH(\Lhom)$, respectively. 
For convenience, we choose to denote these extensions by $T_{u}$, $T^{\hom}_{u}$ and remark that they satisfy \eqref{eq:Tudef}--\eqref{eq:Thom-vhom} 
for all ${v \in \UsH(\L)}$ and $v^{\hom}\in\UsH(\Lhom)$, respectively.

\begin{proof}
	We first observe from \asRC~and Lemmas \ref{Lemma-Int-hom} and \ref{lemma-Iback-norm-est} that for each ${u \in \mathscr{H}(\L)}$ and $v^{\hom} \in \Usz(\Lhom)$, there exists $R_{0}> \Rcore > 0$ \fn{satisfying $\Ihom_{1} u(\ell) = u(\ell)$ for all $\ell \in \L \setminus B_{R_{0}}$. It follows that}
%	\begin{align}
%	D_{\rho} \Ihom_{1} u(\ell) = D_{\rho} u(\ell),
%	&\quad \forall~\ell \in \L\setminus B_{2{R_{0}}},~
%	\rho \in \L^{\hom} \cap B_{{R_{0}}} ~{\rm or}~
%	\rho \in \left( \L^{\hom} \setminus B_{{R_{0}}}\right) - \ell, \label{eq:Du-DIu-comp} \\
%	D_{\rho} \Iback v^{\hom}(\ell) = D_{\rho} v^{\hom}(\ell),
%	&\quad \forall~\ell \in \L\setminus B_{2{\Rcore}},~
%	\rho \in \L^{\hom} \cap B_{{\Rcore}} ~{\rm or}~
%	\rho \in \left( \L^{\hom} \setminus B_{{\Rcore}}\right) - \ell. \label{eq:Dv-DIv-comp}
%	\end{align}
	\begin{align} \label{eq:Dv-DIv-comp}
	D_{\rho} \Ihom_{1} u(\ell) = D_{\rho} u(\ell) \quad \text{ and } \quad D_{\rho} \Iback v^{\hom}(\ell) = D_{\rho} v^{\hom}(\ell),
	\end{align}
for all \fn{$\ell \in \L\setminus B_{2{R_{0}}}$, and either
	$\rho \in \L^{\hom} \cap B_{R_{0}}$ or
	$\rho \in \left( \L^{\hom} \setminus B_{R_{0}}\right) - \ell$.}
	
	We then decompose the right-hand side of \eqref{eq:Thomudef} into four terms:
	\begin{align}
	&\quad
	\b\<T^{\hom}_{u},v^{\hom}\b\>
	\nonumber \\
	&= \sum_{\ell_{1} \in \L} \sum_{\rho_{1} \in \L - \ell_{1}} V_{\ell_{1},\rho_{1}}(Du(\ell_{1}))\cdot D_{\rho_{1}}\Iback v^{\hom}(\ell_{1}) - \sum_{\ell_{2} \in \L^{\hom}} \sum_{\rho_{2} \in \L^{\hom}_{*}} \hc{V}_{,\rho_{2}}(D \Ihom_{1} u(\ell_{2}))\cdot D_{\rho_{2}}v^{\hom}(\ell_{2})
	\nonumber \\
	&= \sum_{\substack{\ell \in \L^{\hom} \\ |\ell| \geq 2R_{0}}} \sum_{\substack{\rho \in \L^{\hom}_{*} \\ |\rho| < R_{0}}} \left( V_{\ell,\rho}(Du(\ell)) - \hc{V}_{,\rho}(D \Ihom_{1} u(\ell)) \right) \cdot D_{\rho}v^{\hom}(\ell)
	\label{eq:Vdiffp1}
	%\end{align}
	\\
	%\begin{align}
	& + \sum_{\substack{\ell \in \Lhom \\ |\ell| \geq 2R_{0}}} \sum_{\substack{\rho \in (\L^{\hom} \setminus B_{R_{0}}) - \ell \\ |\rho| \geq R_{0}}} \left( V_{\ell,\rho}(Du(\ell)) - \hc{V}_{,\rho}(D \Ihom_{1} u(\ell)) \right) \cdot D_{\rho}v^{\hom}(\ell) 	\label{eq:Vdiffp2} \\
	& + \sum_{\substack{\ell \in \L^{\hom} \\ |\ell| \geq 2R_{0}}} \bigg( \sum_{\substack{\rho_{1} \in (\L \cap B_{R_{0}}) - \ell \\
			|\rho_{1}| \geq R_{0}}} V_{\ell,\rho_{1}}(Du(\ell)) \cdot D_{\rho_{1}}\Iback v^{\hom}(\ell) \bigg.
	\nonumber \\
	& \bigg. \qquad \qquad \qquad \qquad \qquad - \sum_{\substack{\rho_{2} \in (\L^{\hom} \cap B_{R_{0}}) - \ell
			\\ |\rho_{2}| \geq R_{0}}} \hc{V}_{,\rho_{2}}(D \Ihom_{1} u(\ell)) \cdot D_{\rho_{2}}v^{\hom}(\ell) \bigg) \qquad \,\,
	\label{eq:Vdiffp3} \\
	& + \sum_{\substack{\ell_{1} \in \L \\ |\ell_{1}| < 2{R_{0}}}} \sum_{\rho_{1} \in \L - \ell_{1}} V_{\ell_{1},\rho_{1}}(Du(\ell_{1}))\cdot D_{\rho_{1}} \Iback v^{\hom}(\ell_{1}) \nonumber \\
	& \qquad \qquad \qquad \qquad \qquad - \sum_{\substack{\ell_{2} \in \L^{\hom} \\ |\ell_{2}| < 2R_{0}}} \sum_{\rho_{2} \in \L^{\hom}_{*}} \hc{V}_{,\rho_{2}}(D \Ihom_{1} u(\ell_{2}))\cdot D_{\rho_{2}}v^{\hom}(\ell_{2}). \quad
	\label{eq:Vdiffp4}
	\end{align}

	Let $\wf_{\rm h}(r):=(1+r)^{-s}$, where $s$ is given in {\asSEH} then $s>d/2$ implies that $\wf_{\rm h}\in L^2(\R^d)$.
	Note that for $|\ell| \geq 2{R_{0}}$ and $|\rho| < {R_{0}}$, we have
	$|\rho|\leq |\ell| - {R_{0}}$,
	then~\eqref{eq:Vdiffp1} can be estimated using \eqref{eq:Dv-DIv-comp}, \eqref{eq:Vhomogeneity} and
	Lemma \ref{lemma:normEquiv}
	\begin{align}\label{eq:Vdiffp1est}
	&\quad
	\bigg|\sum_{\substack{\ell \in \L^{\hom} \\ |\ell| \geq 2{R_{0}}}} \sum_{\substack{\rho \in \L^{\hom}_{*} \\ |\rho| < {R_{0}}}}
	\left( V_{\ell,\rho}(Du(\ell)) - \hc{V}_{,\rho}(D \Ihom_{1} u(\ell)) \right) \cdot D_{\rho}v^{\hom}(\ell) \bigg|
	\nonumber \\
	&\leq  C\sum_{\substack{\ell \in \L^{\hom} \\ |\ell| \geq 2{R_{0}}}} \sum_{\substack{\rho \in \Lhom_{*} \\ |\rho| < {R_{0}}}} \wf_{\rm h}(|\ell| - {R_{0}}) \wf_{1}(|\rho|) |D_{\rho}v^{\hom}(\ell)|
	\leq C\sum_{\substack{\ell \in \L^{\hom} \\ |\ell| \geq 2{R_{0}}}} \wf_{\rm h}(|\ell|/2) |Dv^{\hom}(\ell)|_{\wf_{1},1}
	\nonumber \\
	&\leq C \| \wf_{\rm h}(\ell/2) \|_{\hc{l}^2(\Lhom)} \| Dv^{\hom} \|_{\hc{l}^2_{\wf_{1},1}(\Lhom)} \leq C \| \wf_{\rm h}
	\|_{L^{2}(\mathbb{R}^{d})} \| Dv^{\hom} \|_{\hc{l}^2_{\mathcal{N}}(\Lhom)}.
	\end{align}
	An identical argument gives the following estimate for \eqref{eq:Vdiffp2}
	\begin{align}\label{eq:Vdiffp2est}
	&\quad
	\bigg|\sum_{\substack{\ell \in \L^{\hom} \\ |\ell| \geq 2{R_{0}}}} \sum_{\substack{\rho \in (\L^{\hom}_{*} \setminus B_{{R_{0}}}) - \ell \\ |\rho| \geq {R_{0}}}}
	\left( V_{\ell,\rho}(Du(\ell)) - \hc{V}_{,\rho}(D \Ihom_{1} u(\ell)) \right) \cdot D_{\rho}v^{\hom}(\ell) \bigg|
	\nonumber \\
	&\leq  C\sum_{\substack{\ell \in \L^{\hom} \\ |\ell| \geq 2{R_{0}}}} \sum_{\substack{\rho \in \Lhom_{*} \\ |\rho| \geq {R_{0}}}} \wf_{\rm h}(|\ell|/2) \wf_{1}(|\rho|)
	|D_{\rho}v^{\hom}(\ell)| \leq C\sum_{\substack{\ell \in \L^{\hom} \\ |\ell| \geq 2{R_{0}}}} \wf_{\rm h}(|\ell|/2) |Dv^{\hom}(\ell)|_{\wf_{1},1} \nonumber \\
	& \leq C \| \wf_{\rm h} \|_{L^{2}(\mathbb{R}^{d})} \| Dv^{\hom} \|_{\hc{l}^2_{\wf_{1},1}(\Lhom)} \leq C \| \wf_{\rm h}
	\|_{L^{2}(\mathbb{R}^{d})} \| Dv^{\hom} \|_{\hc{l}^2_{\mathcal{N}}(\Lhom)}.
	\end{align}
	For \eqref{eq:Vdiffp3}, we first consider the following part by using \eqref{eq:Vloc}, Lemmas \ref{lemma:normEquiv} and~\ref{lemma-Iback-norm-est}, together with the Cauchy--Schwarz inequality
	\begin{align}\label{eq:Vdiffp3estp1}
	&\quad
	\bigg| \sum_{\substack{\ell \in \L^{\hom} \\ |\ell| \geq 2{R_{0}}}} \sum_{\substack{\rho \in (\L \cap B_{{R_{0}}}) - \ell \\ |\rho| \geq {R_{0}}}} V_{\ell,\rho}(Du(\ell)) \cdot D_{\rho} \Iback v^{\hom}(\ell) \bigg|
	\nonumber \\
	& =\bigg| \sum_{\ell' \in \L \cap B_{{R_{0}}}} \sum_{\substack{\rho' \in (\Lhom \setminus B_{2{R_{0}}}) - \ell' \\ |\rho'| \geq {R_{0}}}} V_{\ell',\rho'}(Du(\ell')) \cdot D_{\rho'} \Iback v^{\hom}(\ell') \bigg|
	\nonumber \\
	&\leq C\sum_{\ell' \in \L \cap B_{{R_{0}}}} \sum_{\substack{\rho' \in (\Lhom \setminus B_{2{R_{0}}}) - \ell' \\ |\rho'| \geq {R_{0}}}} \wf_{1}(|\rho|) |D_{\rho'}\Iback v^{\hom}(\ell')|
	\leq C\sum_{\ell' \in \L \cap B_{{R_{0}}}} |D\Iback v^{\hom}(\ell')|_{\wf_{1},1} \nonumber
	\\ &\leq C |\L \cap B_{{R_{0}}}|^{1/2} \| D\Iback v^{\hom} \|_{\hc{l}^2_{\wf_{1},1}(\L)} \leq C \| D \Iback v^{\hom} \|_{\hc{l}^2_{\mathcal{N}}(\L)} \leq C \| D v^{\hom} \|_{\hc{l}^2_{\mathcal{N}}(\Lhom)} ,
	\end{align}
	where we have used the substitutions $\ell' = \ell + \rho, \rho' = - \rho$ to obtain the estimate above.
	The remaining term in \eqref{eq:Vdiffp3} can be estimated by an identical argument
	\begin{align}
	\bigg| \sum_{\substack{\ell \in \L^{\hom} \\ |\ell| \geq 2{R_{0}}}} \sum_{\substack{\rho \in (\Lhom \cap B_{{R_{0}}}) - \ell \\ |\rho| \geq {R_{0}}}} \hc{V}_{,\rho}(D\Ihom_{1} u(\ell)) \cdot D_{\rho}v^{\hom}(\ell) \bigg| \leq C \| Dv^{\hom} \|_{\hc{l}^2_{\mathcal{N}}(\Lhom)}. \label{eq:Vdiffp3estp2}
	\end{align}
	For \eqref{eq:Vdiffp4}, we can estimate the following part
	\begin{align} \label{eq:Vdiffp4estp1}
	&\quad
	\bigg|\sum_{\ell \in \L \cap B_{2{R_{0}}}} \sum_{\rho \in \L- \ell} V_{\ell,\rho}(Du(\ell))\cdot D_{\rho}\Iback v^{\hom}(\ell) \bigg|
	\nonumber \\
	&\leq C\sum_{\ell \in \L \cap B_{2{R_{0}}}} \sum_{\rho \in \L - \ell} \wf_{1}(|\rho|)  |D_{\rho} \Iback v^{\hom}(\ell)|
	\leq C \sum_{\ell \in \L \cap B_{2{R_{0}}}} |D \Iback v^{\hom}(\ell)|_{\wf_{1},1}
	\nonumber \\
	&\leq C |\L \cap B_{2{R_{0}}}|^{1/2} \| D\Iback v^{\hom} \|_{\hc{l}^2_{\wf_{1},1}(\L)}
	\leq C \| D v^{\hom} \|_{\hc{l}^2_{\mathcal{N}}(\Lhom)},
	\end{align}
	and similarly for the remaining term
	\begin{eqnarray}\label{eq:Vdiffp4estp2}
	\bigg|\sum_{\ell \in \L \cap B_{2{R_{0}}}} \sum_{\rho \in \L - \ell} \hc{V}_{,\rho}(D\Ihom_{1} u(\ell))\cdot D_{\rho} v^{\hom}(\ell) \bigg|
	\leq C \| D v^{\hom} \|_{\hc{l}^2_{\mathcal{N}}(\Lhom)}.
	\end{eqnarray}
	We obtain the desired estimate \eqref{eq:Thom-vhom}
	by combining the estimates \eqref{eq:Vdiffp1est}--\eqref{eq:Vdiffp4estp2} and remark that the final constant depends on $R_{0}$ 
	and hence on $u \in \UsH(\L)$, but is independent of $v^{\hom} \in \Usz(\Lhom)$.

	The proof of \eqref{eq:T-v} follows immediately by the same arguments.
\end{proof}
\fn{ \begin{remark} \label{Remark-homogeneity-est}
	In the proof of \eqref{eq:Thom-vhom} in Lemma \ref{lemmaforceinterpest}, we obtain the following estimate by collecting \eqref{eq:Vdiffp1est}--\eqref{eq:Vdiffp4estp2} and using that $\wf_{\rm h}(r) = (1 + r)^{-s}$:
	for all $u \in \UsH(\L)$, there exist constants $C, R_{1} > 0$ such that for all $v^{\hom} \in \UsH(\Lhom)$
	\begin{align}\label{eq:hom-vhom-est}
	& \fn{\left| \b\<T^{\hom}_{u},v^{\hom}\b\> \right|}
	\nonumber \\
	& \leq C \bigg(\sum_{\ell \in \L^{\hom}} (1+|\ell|/2)^{-s}|Dv^{\hom}(\ell)|_{\wf_{1},1}
	+ \sum_{\ell \in \L^{\hom} \cap B_{2R_{0}}} |D v^{\hom}(\ell)|_{\wf_{1},1}
	+ \sum_{\ell \in \L \cap B_{2R_{0}}} |D \Iback v^{\hom}(\ell)|_{\wf_{1},1} \bigg)
	\nonumber \\
	&\leq C \bigg( \sum_{\ell \in \L^{\hom}} (1+|\ell|/2)^{-s}
	|Dv^{\hom}(\ell)|_{\wt \wf_{1},1} + \sum_{\ell \in \L^{\hom} \cap B_{2R_{1}}} |D v^{\hom}(\ell)|_{\wt \wf_{1},1} \bigg) ,
	\end{align}
	where $\wt \wf(r) := \wf(c_{0}r) \in \mathscr{H}^{1}$, for a fixed constant $c_{0} \in (0,1]$. We remark that we have applied Lemma \ref{lemma-Ihom-norm-est} to obtain the final estimate. We also note that $R_{1} = R_{1}(u) > 0$ is independent of $v^{h} \in \UsH(\Lhom)$.
	This estimate will be used in the proof of Lemma \ref{lemma-g-ubar-decay}.
\end{remark} }

\begin{proof}
	[Proof of Theorem \ref{corr:deltaE0-pd-est}]
	Recall the definition of the functional ${F: \UsH(\L) \to \mathbb{R}}$ given in \eqref{eq:T-def-ass}, then we similarly define $F_{\hom}: \UsH(\Lhom) \to \mathbb{R}$ using \eqref{eq:Thom-ass}, so for $v \in \UsH(\L)$ and $v^{\hom} \in \UsH(\Lhom)$
	\begin{align*}
	\b\<F,v\b\> &= \sum_{\ell_{1} \in \L} \sum_{\rho_{1} \in \L - \ell_{1}} V_{\ell_{1}, \rho_{1}}(0) \cdot D_{\rho_{1}}v(\ell_{1}), \\
	\b\<F^{\hom},v^{\hom}\b\> &:= \sum_{\ell_{2} \in \Lhom} \sum_{\rho_{2} \in \Lhom_{*}} \hc{V}_{, \rho_{2}}(0) \cdot D_{\rho_{2}}v^{\hom}(\ell_{2}) = 0.
	\end{align*}
	Recall that we have shown in Lemma \ref{lemma_lattice_symm} that $F^{\hom} \equiv 0$. Lemma \ref{Lemma-Int-hom} implies that for $0 \in \UsH(\L)$, we have $\Ihom_{1} 0 = 0^{\hom} \in \UsH(\Lhom)$, which together with \eqref{eq:Tudef} implies that for all $v \in \UsH(\L)$
	\begin{align*}
	\langle F, v  \rangle = \langle F, v  \rangle - \langle F^{\hom}, \Ihom_{2} v  \rangle = \langle T_{0}, v  \rangle,
	\end{align*}
	where $T_{0}$ denotes the operator $T_{u}$ defined in \eqref{eq:Tudef} with $u = 0$. The desired estimate then follows from the estimate \eqref{eq:T-v} appearing in Lemma \ref{lemmaforceinterpest}
	\begin{displaymath}
	\big|\langle F, v  \rangle \big| = \big|\langle T_{0}, v  \rangle \big|
	\leq C \| Dv \|_{\hc{l}^2_{\mathcal{N}}(\L)}.
	\end{displaymath}
	% This completes the proof.
\end{proof}

Now that we have proved that Lemma \ref{lemma_lattice_symm}, Theorems \ref{theorem:E-Wc} and \ref{corr:deltaE0-pd-est} all hold, the following result holds immediately.

\begin{corollary} \label{Corollary-T-deltaE-equiv}
	For each $u \in \mathscr{H}(\L)$, the functionals $T_{u}, T^{\hom}_{u}$ defined in {\eqref{eq:Tudef}--\eqref{eq:Thomudef}} can be expressed as
	\begin{align*}
	\b\<T_{u},v\b\> &= \b\<\delta \E(u),v\b\> - \b\<\delta \E^{h}(\Ihom_{1}u),\Ihom_{2}v\b\>, \nonumber \\
	\b\<T^{\hom}_{u},v^{\hom}\b\> &= \b\<\delta \E(u),\Iback v^{\hom}\b\> - \b\<\delta \E^{h}(\Ihom_{1}u),v^{\hom}\b\>.
	\end{align*}
	Moreover, there exists a constant $C > 0$, depending on $u \in \mathscr{H}(\L)$, such that for all ${v \in \UsH(\L)}$ and ${v^{\hom}\in\UsH(\Lhom)}$
	\begin{align}
	\left| \b\<\delta \E(u),v\b\> - \b\<\delta \E^{h}(\Ihom_{1}u),\Ihom_{2}v\b\> \right| &\leq C \| Dv \|_{\hc{l}^2_{\mathcal{N}}(\L)}, \label{eq:dEcomp-v} \\
	\left| \b\<\delta \E(u),\Iback v^{\hom}\b\> - \b\<\delta \E^{h}(\Ihom_{1}u),v^{\hom}\b\> \right| &\leq C \| Dv^{\hom} \|_{\hc{l}^2_{\mathcal{N}}(\L^{\hom})}.
	\label{eq:dEcomp-vhom}
	\end{align}
\end{corollary}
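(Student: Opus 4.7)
The plan is to observe that the identities claimed for $T_u$ and $T^{\hom}_u$ are essentially unpacking of definitions, combined with a density/continuity extension argument; the norm estimates then follow immediately from Lemma~\ref{lemmaforceinterpest}. More precisely, I would first note that under the point defects setting {\asP}, we have $u_0 \equiv 0$ on $\L$ and the analogue is trivially true on $\L^{\hom}$, so for any $u \in \mathscr{H}(\L)$ the explicit formula for $\<\delta\E(u), v\>$ provided by Theorem~\ref{theorem:E-Wc} reads
\begin{equation*}
\<\delta\E(u), v\> = \sum_{\ell \in \L} \sum_{\rho \in \L - \ell} V_{\ell,\rho}(Du(\ell))^{\top} D_\rho v(\ell) \qquad \forall\, v \in \Usz(\L),
\end{equation*}
and analogously for $\<\delta\E^{\hom}(\Ihom_{1}u), v^{\hom}\>$ on the homogeneous lattice. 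By Lemma~\ref{Lemma-Int-hom}, $u \in \mathscr{H}(\L)$ implies $\Ihom_1 u \in \mathscr{H}(\Lhom)$, so $\delta\E^{\hom}(\Ihom_1 u)$ is well-defined.

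Comparing these explicit expansions with the definitions~\eqref{eq:Tudef}--\eqref{eq:Thomudef} of $T_u$ and $T^{\hom}_u$ gives, for $v \in \Usz(\L)$ and $v^{\hom} \in \Usz(\Lhom)$,
\begin{align*}
\<T_u, v\> &= \<\delta\E(u), v\> - \<\delta\E^{\hom}(\Ihom_1 u), \Ihom_2 v\>, \\
\<T^{\hom}_u, v^{\hom}\> &= \<\delta\E(u), \Iback v^{\hom}\> - \<\delta\E^{\hom}(\Ihom_1 u), v^{\hom}\>,
\end{align*}
by direct term-by-term identification. To promote these identities from compactly supported test functions to general $v \in \UsH(\L)$ and $v^{\hom} \in \UsH(\Lhom)$, I would invoke density: by Lemma~\ref{lemma-WcW12dense}, $\Usz$ is dense in $\UsH$ with respect to $\|D\cdot\|_{l^2_{\mathcal{N}}}$; both sides of each identity are continuous in $v$ (resp. $v^{\hom}$) with respect to this norm, since $\delta\E(u), \delta\E^{\hom}(\Ihom_1 u)$ are bounded linear functionals by Theorem~\ref{theorem:E-Wc}, the linear maps $\Ihom_2, \Iback$ are bounded by Lemmas~\ref{lemma-Ihom-norm-est} and~\ref{lemma-Iback-norm-est}, and the extensions of $T_u, T^{\hom}_u$ to $\UsH$ are continuous by construction (via Lemma~\ref{lemmaforceinterpest}). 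Thus the two identities hold on all of $\UsH(\L)$ and $\UsH(\Lhom)$ respectively.

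The estimates~\eqref{eq:dEcomp-v}--\eqref{eq:dEcomp-vhom} are then a direct consequence of Lemma~\ref{lemmaforceinterpest}: the quantity in the left-hand side of \eqref{eq:dEcomp-v} equals $|\<T_u, v\>|$, which is bounded by $C\|Dv\|_{l^2_{\mathcal{N}}(\L)}$; likewise for the homogeneous version. I do not anticipate any significant obstacle in this proof, since all the hard work (controlling the mismatch between the defective and homogeneous site potentials via the homogeneity assumption~{\asSEH}, and handling the boundary effects of the interpolation operators) has already been carried out in Lemma~\ref{lemmaforceinterpest}; the corollary just re-expresses $T_u, T^{\hom}_u$ in the equivalent form convenient for the decay analysis of Theorem~\ref{theorem:point-defect-decay}.
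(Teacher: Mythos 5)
Your proposal is correct and matches the paper's (implicit) argument: the paper gives no separate proof, stating only that the corollary ``holds immediately'' once Lemma~\ref{lemma_lattice_symm} and Theorems~\ref{theorem:E-Wc} and~\ref{corr:deltaE0-pd-est} are in place, which is precisely the term-by-term identification of \eqref{eq:Tudef}--\eqref{eq:Thomudef} with the first variations, followed by the density/continuity extension and the bounds from Lemma~\ref{lemmaforceinterpest} that you spell out.
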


\subsection{Proof of Theorem \ref{theorem:point-defect-decay}}
\label{sec:proof_decay_pd}

We shall use the following result in order to prove Theorem \ref{theorem:point-defect-decay}.

\begin{lemma} \label{lemma-g-ubar-decay}
	If the conditions of Theorem \ref{theorem:point-defect-decay} are satisfied,
	then for any $\bar{u}$ solving~\eqref{eq:1st_order_pd},
	there exists $g \in \big(\UsH\big(\Lhom))^{*}$ such that
	\begin{align*}
	\b\<H \Ihom_{1} \bar{u},v^{\hom}\b\>=\b\<g,Dv^{\hom}\b\> \qquad\forall~v^{\hom}\in\UsH(\Lhom) ,
	\end{align*}
	where $\Ihom_{1}:\UsH(\L)\rightarrow\UsH(\Lhom)$ is defined in Lemma \ref{Lemma-Int-hom}
	and $g$ satisfies
	\begin{align*}
	\left| \b\<g,Dv^{\hom}\b\> \right| \leq C \sum_{\ell \in \Lhom} \sum_{j=1}^{3} \tilde{g_{j}}(\ell) |Dv^{\hom}(\ell)|_{\wf_{j},j},
	\end{align*}
	with some constant $C>0$, $\wf_j\in\Hw_j$ for $1\leq j\leq 3$ and
	\begin{align*}
	\tilde{g_{1}}(\ell) &= \tilde{g_{2}}(\ell) = (1+|\ell|)^{-s} + \bigg( \sum_{k=1}^{2} |D\Ihom_{1} \bar{u}(\ell)|_{\wf_{k},k} \bigg)^{2}, \quad
	\tilde{g_{3}}(\ell) = |D\Ihom_{1} \bar{u}(\ell)|_{\wf_{3},3} ^{2}.
	\end{align*}
\end{lemma}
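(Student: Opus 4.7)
The plan is to reduce the problem to an identity for $\delta\E^{\hom}$ evaluated at $\Ihom_{1}\bar u$, and then Taylor-expand around the undistorted homogeneous reference state. First, for any $v^{\hom}\in\Usz(\Lhom)$, Lemma~\ref{lemma-Iback-norm-est} ensures $\Iback v^{\hom}\in\Usz(\L)$, so the first-order criticality \eqref{eq:1st_order_pd} yields $\langle\delta\E(\bar u),\Iback v^{\hom}\rangle=0$. Combined with Corollary~\ref{Corollary-T-deltaE-equiv}, this gives
\begin{equation*}
\langle\delta\E^{\hom}(\Ihom_{1}\bar u),v^{\hom}\rangle = -\langle T^{\hom}_{\bar u},v^{\hom}\rangle.
\end{equation*}
Lemma~\ref{lemma_lattice_symm} gives $\delta\E^{\hom}(0)=0$, so by Taylor's theorem applied along $t\mapsto t\Ihom_{1}\bar u$,
\begin{equation*}
\langle H \Ihom_{1}\bar u, v^{\hom}\rangle = -\langle T^{\hom}_{\bar u},v^{\hom}\rangle - R(\Ihom_{1}\bar u, v^{\hom}),
\end{equation*}
where $R(u,v^{\hom}) = \int_{0}^{1}(1-t)\langle\delta^{3}\E^{\hom}(tu)(u,u),v^{\hom}\rangle\,dt$. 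Setting $\langle g, Dv^{\hom}\rangle$ equal to the right-hand side defines $g$; the estimates on $T^{\hom}_{\bar u}$ and $R$ below show that $g\in(\UsH(\Lhom))^{*}$ and hence the identity extends from $\Usz(\Lhom)$ to $\UsH(\Lhom)$ by density (Lemma~\ref{lemma-WcW12dense}).

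Next I would harvest the $(1+|\ell|)^{-s}$ contribution from $T^{\hom}_{\bar u}$. Precisely, Remark~\ref{Remark-homogeneity-est} furnishes a constant $R_{1}>0$ and $\wt\wf_{1}\in\Hw_{1}$ such that
\begin{equation*}
|\langle T^{\hom}_{\bar u},v^{\hom}\rangle| \leq C\sum_{\ell\in\Lhom}\b((1+|\ell|/2)^{-s} + \chi_{B_{2R_{1}}}(\ell)\b)|Dv^{\hom}(\ell)|_{\wt\wf_{1},1}.
\end{equation*}
The localised term $\chi_{B_{2R_{1}}}(\ell)$ may be absorbed into $(1+|\ell|)^{-s}$ up to a constant by increasing $C$, so this contribution fits into $\tilde g_{1}$ paired with the $|Dv^{\hom}|_{\wf_{1},1}$ slot.

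The main work is in estimating the cubic remainder $R$. Expanding $\delta^{3}\E^{\hom}(tu)$ using the site-potential representation and applying the locality bound \eqref{eq:Vloc} with $j=3$, the integrand is controlled by
\begin{equation*}
C\sum_{\ell\in\Lhom}\sum_{\{A_{1},\ldots,A_{k}\}\in\mathcal{P}(3)}\sum_{\rho,\sigma,\tau\in\Lhom_{*}}\bigg(\prod_{1\leq i\leq k}\wf_{|A_{i}|}(|\rho_{i'}|)\prod_{m\in A_{i}}\delta_{\rho_{i'}\rho_{m}}\bigg)|D_{\rho}u(\ell)||D_{\sigma}u(\ell)||D_{\tau}v^{\hom}(\ell)|,
\end{equation*}
with $u=\Ihom_{1}\bar u$. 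The five partitions of $\{1,2,3\}$ give five distinct structures. For the finest partition $\{\{1\},\{2\},\{3\}\}$ I would obtain $|Du(\ell)|_{\wf_{1},1}^{2}|Dv^{\hom}(\ell)|_{\wf_{1},1}$; for the three two-block partitions, using Cauchy--Schwarz on the repeated index, I would obtain combinations like $|Du(\ell)|_{\wf_{2},2}^{2}|Dv^{\hom}(\ell)|_{\wf_{1},1}$ and $|Du(\ell)|_{\wf_{1},1}|Du(\ell)|_{\wf_{2},2}|Dv^{\hom}(\ell)|_{\wf_{2},2}$; and for the coarsest partition $\{\{1,2,3\}\}$, H\"older's inequality with exponents $(3,3,3)$ gives $|Du(\ell)|_{\wf_{3},3}^{2}|Dv^{\hom}(\ell)|_{\wf_{3},3}$. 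Grouping by the slot $|Dv^{\hom}(\ell)|_{\wf_{j},j}$ that appears and using $ab\leq \tfrac12(a^{2}+b^{2})$ where necessary to combine mixed $\wf_{1},\wf_{2}$ factors, the first two slots $j=1,2$ absorb all quadratic expressions in $|Du|_{\wf_{1},1}$ and $|Du|_{\wf_{2},2}$, matching the form of $\tilde g_{1}=\tilde g_{2}$, while the slot $j=3$ collects the pure $|Du|_{\wf_{3},3}^{2}$ term, matching $\tilde g_{3}$.

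The main obstacle is the combinatorial bookkeeping in the cubic remainder: one must track all partition-type contractions from \eqref{eq:Vloc} and verify that every resulting mixed term is dominated by the three allowed patterns, while keeping the weight functions $\wf_{j}\in\Hw_{j}$ independent of $v^{\hom}$ (the $T^{\hom}_{\bar u}$ piece uses $\wt\wf_{1}$ from Remark~\ref{Remark-homogeneity-est}, but this can be reconciled by replacing $\wf_{1}$ with a common dominating weight in $\Hw_{1}$). Beyond that, the only remaining point is a uniform bound on $|D\Ihom_{1}\bar u|_{\wf_{j},j}$ inside the Taylor integral: since the integrand involves $\delta^{3}\E^{\hom}$ at $t\Ihom_{1}\bar u$ with $t\in[0,1]$ and $\Ihom_{1}\bar u\in\mathscr{H}(\Lhom)$ by Lemma~\ref{Lemma-Int-hom}, the locality constants in \eqref{eq:Vloc} can be chosen uniformly in $t$, which closes the argument.
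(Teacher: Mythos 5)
Your proposal is correct and follows essentially the same route as the paper: the identity $\langle H\Ihom_{1}\bar u,v^{\hom}\rangle = -\langle T^{\hom}_{\bar u},v^{\hom}\rangle$ minus the integral-form cubic Taylor remainder is exactly the paper's decomposition \eqref{eq:res1}--\eqref{eq:res2}, the $T^{\hom}_{\bar u}$ term is handled via Remark~\ref{Remark-homogeneity-est} in both cases, and the remainder is estimated by the same $\mathcal{P}(3)$ partition bookkeeping leading to \eqref{eq:g-res-quad-est}. The two points you flag at the end (reconciling $\wt\wf_k$ with $\wf_k$ via a dominating weight, and uniformity of the locality constants along $t\mapsto t\Ihom_{1}\bar u$) are precisely how the paper closes the argument.
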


\begin{proof}
	For $\bar{u}$ solving \eqref{eq:1st_order_pd} and $v^{\hom} \in \UsH(\Lhom)$,
	we denote by $\tilde{\pmb v} = (\tilde{v}_{1},\tilde{v}_{2},\tilde{v}_{3}):=(\Ihom_{1} \bar{u},\Ihom_{1} \bar{u}, v^{\hom})\in\big(\UsH(\Lhom)\big)^3$.
	Using that $\delta \E^{\hom}(0) = \delta \E(\bar{u}) = 0$,
	we can rewrite the residual $\b\<H \Ihom_{1} \bar{u},v^{\hom}\b\>$ by
	\begin{align}
	\b\<H \Ihom_{1} \bar{u},v^{\hom}\b\> &= \b\<\delta^{2} \E^{\hom}(0) \Ihom_{1} \bar{u},v^{\hom}\b\> \nonumber \\
	&= \b\<\delta \E^{\hom}(0) + \delta^{2} \E^{\hom}(0) \Ihom_{1} \bar{u} - \delta \E^{\hom}(\Ihom_{1} \bar{u}),v^{\hom}\b\> \label{eq:res1} \\
	& \qquad + \b\< \delta \E^{\hom}(\Ihom_{1} \bar{u}),v^{\hom}\b\> - \b\< \delta \E(\bar{u}), \Iback v^{\hom}\b\>. \label{eq:res2}
	\end{align}
	For \eqref{eq:res1}, by following a similar argument to \eqref{eq:Ejregest1}--\eqref{eq:useful-eq}, we deduce by Taylor's theorem that
	\begin{align}
	&\left| \b\<\delta \E^{\hom}(0) + \delta^{2} \E^{\hom}(0) \Ihom \bar{u} - \delta \E^{\hom}(\Ihom \bar{u}),v^{\hom}\b\> \right| \leq \int_{0}^{1} \left| \b\<\delta^{3} \E^{\hom}( t \Ihom \bar{u}),\tilde{\pmb v}\b\> \right| \text{d}t \nonumber \\
	& \quad \leq C \sum_{\{A_{1},\ldots,A_{k}\} \in \mathcal{P}(3)} \sum_{\ell\in\L}\sum_{\rho_1,\fn{\ldots},\rho_j}
	\prod_{m \in A_{i}} |Dv_{m}(\ell)|_{\wf_{|A_{i}|},|A_{i}|}. \label{eq:useful2}
	\end{align}
	Recall the definition \eqref{eq:partition-def}, $\mathcal{P}(3)$ is the set of partitions of $\{1,2,3\}$ as
	\begin{align}\label{eq:useful3}
	\mathcal{P}(3) = \bigg\{ \Big\{ \{1\},\{2\},\{3\} \Big\}, \Big\{\{1,2\},\{3\} \Big\}, \Big\{\{1,3\},\{2\} \Big\}, \Big\{\{1\},\{2,3\} \Big\}, \Big\{1,2,3\Big\} \bigg \},
	\end{align}
	which together with \eqref{eq:useful2} implies
	\begin{align}\label{eq:g-res-quad-est}
	&\quad
	\left| \b\<\delta \E^{\hom}(0) + \delta^{2} \E^{\hom}(0) \Ihom_{1s} \bar{u} - \delta \E^{\hom}(\Ihom_{1} \bar{u}),v^{\hom}\b\> \right|
	\nonumber \\
	&\leq C \sum_{\ell\in\L} \bigg( |D\Ihom_{1} \bar{u}(\ell)|_{\wf_{1},1}^{2} |D v^{\hom}(\ell)|_{\wf_{1},1} + |D\Ihom_{1} \bar{u}(\ell)|_{\wf_{2},2}^{2} |D v^{\hom}(\ell)|_{\wf_{1},1} \bigg.
	\nonumber \\
	& \qquad \bigg. \qquad \quad + |D\Ihom_{1} \bar{u}(\ell)|_{\wf_{1},1} |D\Ihom_{1s} \bar{u}(\ell)|_{\wf_{2},2} |D v^{\hom}(\ell)|_{\wf_{2},2} + |D\Ihom_{1} \bar{u}(\ell)|_{\wf_{3},3}^{2} |D v^{\hom}(\ell)|_{\wf_{3},3} \bigg)
	\nonumber \\
	&\leq C \sum_{\ell\in\L} \left(
	\bigg( \sum_{k=1}^{2} |D\Ihom_{1} \bar{u}(\ell)|_{\wf_{k},k} \bigg)^{2}
	\bigg( \sum_{k'=1}^{2} |D v^{\hom}(\ell)|_{\wf_{k'},k'} \bigg)
	+ |D\Ihom_{1} \bar{u}(\ell)|_{\wf_{3},3}^{2} |Dv^{\hom}(\ell)|_{\wf_{3},3}
	\right).
	\end{align}

	To estimate \eqref{eq:res2},
	we obtain from {\asSEH} and \eqref{eq:hom-vhom-est} that
	\begin{align}\label{eq:g-res-hom-est}
	 & \fn{\left| \b\<T^{\hom}_{u},v^{\hom}\b\> \right|} =
	\left| \b\< \delta \E^{\hom}(\Ihom_{1} \bar{u}),v^{\hom}\b\> - \b\< \delta \E(\bar{u}), \Iback v^{\hom}\b\> \right|
	\nonumber \\
	& \leq C \bigg( \sum_{\ell \in \L^{\hom}} (1+|\ell|/2)^{-s}
	|Dv^{\hom}(\ell)|_{\wt \wf_{1},1} + \sum_{\ell \in \L^{\hom} \cap B_{2\Rcore}} |D v^{\hom}(\ell)|_{\wt \wf_{1},1} \bigg)
	\nonumber \\
	&\leq C \sum_{\ell \in \L^{\hom}} \left( 1 + |\ell| \right)^{-s} |Dv^{\hom}(\ell)|_{\wt \wf_{1},1}
	\leq C \sum_{\ell \in \L^{\hom}} \left( 1 + |\ell| \right)^{-s} \bigg( \sum_{k' = 1}^{2} |Dv^{\hom}(\ell)|_{\wt \wf_{k'},k'} \bigg),
	\end{align}
	where we define $\wt \wf_{k}(r) := \wf_{k}(c_{0}r) \in \Hw_{k},$ where $c_{0} \in (0,1]$ is a fixed constant given in the proof of Lemma \ref{lemma-Ihom-norm-est}.
	Combining the estimates \eqref{eq:g-res-quad-est} and \eqref{eq:g-res-hom-est} and using that $\wf_{k}(r) \leq \wt \wf_{k}(r)$ for all $r > 0$, we obtain the desired result
	\begin{align*}
	\left| \b\<g,Dv\b\> \right|
	&\leq C \sum_{\ell \in \Lhom} \Bigg( (1+|\ell|)^{-s} + \bigg( \sum_{k=1}^{2} |D\Ihom_{1} \bar{u}(\ell)|_{\wt \wf_{k},k} \bigg)^{2} \Bigg) \bigg( \sum_{k'=1}^{2}  |Dv^{\hom}(\ell)|_{\wt \wf_{k'},k'} \bigg) \\
	& \qquad \qquad \quad + C \sum_{\ell\in\L} |D\Ihom_{1} \bar{u}(\ell)|_{\wt \wf_{3},3}^{2} |Dv^{\hom}(\ell)|_{\wt \wf_{3},3},
	\qquad
	\end{align*}
	which completes the proof.
\end{proof}

\begin{proof}
	[Proof of Theorem \ref{theorem:point-defect-decay}]
	By Lemma \ref{lemma-g-ubar-decay}, we have \fn{
	\begin{align*}
	\tilde{g}_{k}(\ell) \leq C \bigg( (1+|\ell|)^{-s} + \sum_{j=1}^{3} |Du(\ell)|_{\wf_j,j}^{2} \bigg),
	\end{align*}
	for all $k = 1,2,3,$ hence}
	$\Ihom_{1} \bar{u}$ satisfies the conditions of Lemma \ref{lemma:decay-hom-eq}.
	\fn{Therefore} applying Lemma \ref{lemma:decay-hom-eq} gives the desired decay estimate \eqref{eq:pd-ubar-decay}.
\end{proof}

\section{Proofs: Dislocations}
\label{sec:proof-d}
\setcounter{equation}{0}

The purpose of this section is to prove Theorems \ref{corr:deltaE0-dislocation} and \ref{theorem:dislocation-decay}.
First, we introduce the elastic strain and describe its decay properties.

\fn{Recall that in Section \ref{sec:siteE}, we introduced the homogeneous site strain potential $V^{\hom}$. 
	For the sake of convenience, we simply denote $V^{\hom}$ by $V$ in all subsequent proofs.}

\subsection{Elastic strain}
\label{sec:appendix:dislocation:general}
\fn{The following setting for dislocations can be found in \cite{2013-defects-v3}. 
	Throughout this section, given $x = (x_{1},x_{2},x_{3}) \in \mathbb{R}^{3}$, we will denote $x_{12} := (x_{1},x_{2}) \in \mathbb{R}^{2}$.
We first introduce the slip operator $S_{0}$ acting on displacements $w: \L \to \R^3$ or $w: \R^{2} \to \R^3$ by
	\begin{displaymath}
	S_{0}w(x) := \left\{
	\begin{array}{ll}
	w(x), & \quad x_2 > \hat{x}_2, \\
	w(x - \burg_{12}) - \burg, & \quad x_2 < \hat{x}_2.
	\end{array}
	\right.
	\end{displaymath}
	For $y = x + w \in \Adm(\L)$, the corresponding map $\wt y \in \Adm^{0}(\L_{0})$ defined in \eqref{eq:wt y from y def} is given by $\wt y(\ell') = \ell' + w(\ell'_{12})$. We now define $\wt y^{S} \in \Adm^{0}(\L_{0})$ by $\wt y^{S}(\ell') = \ell' + S_{0}w(\ell'_{12})$. Given $\ell' = (\ell_{1}',\ell_{2}',\ell_{3}') \in \L_{0}$, if $\ell_{2}' < \hat{x}_{2}$, then $\wt{y}^{S}(\ell') = \wt y (\ell' - \burg)$ and otherwise $\wt y^{S}(\ell') = \wt y(\ell')$, therefore $\wt y \mapsto \wt y^{S}$ represents a relabelling of the atomic indices. Consequently, there exists a bijection $\pi: \L_{0} \to \L_{0}$ such that $\wt y = \wt y^{S} \circ \pi$ and {\asSEP} implies that for all $\ell' \in \L_{0}$
	\begin{equation}\label{eq:perm-invariance-site-E-dislocations}
	\Phi_{\ell'}(\wt{y}) = \Phi_{\pi(\ell')}(\wt{y}^S).
	\end{equation}
	It also follows that $y$ and $y^{S}:= x+S_{0}w \in \Adm(\L)$ are equivalent up to a relabelling of indices in $\L$.
}

\fn{We now} recall the far-field predictor for dislocations defined in Section \ref{sec:dislocation}:
\begin{eqnarray*}
	u_0(\ell):=\ulin(\xi^{-1}(\ell)) + u^{\rm c}(\ell), \quad \text{for all } \ell \in \L.
\end{eqnarray*}
The role of $\xi$ in the definition of $u_0$ \fn{ensures that $y^{S}_{0}$ defines an equivalent configuration to $y$ that additionally satisfies}
%
%is that applying a plastic slip across the
%plane $\{ x_2 = \hat{x}_2 \}$ via the definition
%\begin{displaymath}
%y^S(\ell) := \left\{
%\begin{array}{ll}
%y(\ell), & \ell_2 > \hat{x}_2, \\
%y(\ell-\burg_{12}), & \ell_2 < \hat{x}_2
%\end{array}
%\right.
%\end{displaymath}
%achieves exactly this transfer: it leaves the configuration invariant,
%while generating a new predictor 
$y_0^S \in C^\infty(\Omega_\Gamma)$,
where $\Omega_\Gamma = \{ x_1 > \hat{x}_1 + r + \burg_1 \}$
with a sufficiently large $r$. 
%Since the
%map $y \mapsto y^S$ represents a relabelling of the \fn{atomic} indices and an
%integer shift in the out-of-plane direction, we can apply
%\eqref{eq:projected-site-E-invariance} and {\asSEP} to obtain
%\begin{equation}\label{eq:perm-invariance-site-E-dislocations}
%\Phi_\ell(y) = \Phi_{S^* \ell}(y^S),
%\end{equation}

We can translate \eqref{eq:perm-invariance-site-E-dislocations} \fn{into} a statement
about $u_0$ and the corresponding site strain potential $\fn{V}$. \fn{We define the $\hc{l}^2$-orthogonal operator $S$, together with its inverse $S^* = S^{-1}$ by}
%Let $S_0 w(x) = w(x)$, for $x_2 > \hat{x}_2$ and
%$S_0 w(x) = w(x-\burg_{12}) - \burg$, when $x_2 < \hat{x}_2$.
%%
%We then obtain that $y_0^S(x) = x + S_0 u_0(x)$, $S_0 u_0 \in C^\infty(\Omega_\Gamma)$
%and 
%
%where $S$ is the $\hc{l}^2$-orthogonal operator with inverse $S^* = S^{-1}$
%defined by
\label{S:op}
\begin{align*}
Su(\ell)
:= \left\{ \begin{array}{ll} u(\ell), & \ell_2 > \hat{x}_2, \\
u(\ell-\burg_{12}), & \ell_2 < \hat{x}_2
\end{array} \right.
\quad {\rm and} \quad
S^* u(\ell)
:= \left\{ \begin{array}{ll} u(\ell), & \ell_2 > \hat{x}_2, \\
u(\ell+\burg_{12}), & \ell_2 < \hat{x}_2.
\end{array}\right.
\end{align*}
\fn{Using that $S_0 (u_0 + u) = S_0 u_0 + S u$,}
the permutation invariance \eqref{eq:perm-invariance-site-E-dislocations} can
now be rewritten as an invariance of \hc{$V$} under the slip $S_0$:
\begin{eqnarray}\label{slip-invariance}
\hc{V}\big(D(u_0+u)(\ell)\big) = \hc{V}\big(e(\ell)+\widetilde{D}u(\ell)\big),
% \qquad \forall u \in \Adm(\L),~\ell\in\Lambda
\end{eqnarray}
\fn{for all $\ell \in \L$ and $u$ satisfying $x + u_{0} + u \in \Adm(\L)$,} where
\begin{equation}\label{eq:defn_elastic_strain}
e(\ell) := (e_\rho(\ell))_{\rho\in\L-\ell} \quad \text{with} \quad
e_\rho(\ell)
:= \left\{ \begin{array}{ll}
S^* D_\rho S_0u_0(\ell), & \ell\in\Omega_{\Gamma}, \\
D_\rho u_0(\ell), & \text{otherwise,}
\end{array} \right.
\end{equation}
and
\begin{equation}\label{eq:defn_Dprime}
\widetilde{D} u(\ell) :=
(\widetilde{D}_\rho u(\ell))_{\rho\in\L-\ell} \quad \text{with} \quad
\widetilde{D}_\rho u(\ell) :=
\left\{ \begin{array}{ll}
S^* D_\rho Su(\ell), & \ell \in\Omega_{\Gamma}, \\
D_\rho u(\ell), & \text{otherwise.}
\end{array} \right.
\end{equation}

\fn{We remark that applying \eqref{slip-invariance} with $u \equiv 0$ requires that the predictor satisfies ${y_{0} \in \Adm(\L)}$, which we now verify.}
%We remark that the expression \eqref{slip-invariance} implicitly assumes that the predictor ${y_{0} \in \Adm(\L)}$, which we now verify.

\begin{lemma}\label{Lemma - y0 Adm}
\fn{There exists ${u^{\rm c} \in C^{\infty}_{\rm c}(\R^2; \R^{3})}$ such that the dislocation predictor configuration satisfies $y_{0} = x + u_{0} \in \Adm(\L)$, where $u_{0}(\ell') := \ulin(\xi^{-1}(\ell')) + u^{\rm c}(\ell')$ for $\ell' \in \L$.}
\end{lemma}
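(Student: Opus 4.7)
\smallskip

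The lemma asserts that the dislocation predictor is admissible after a compactly supported smooth correction. My plan is to work with the lifted configuration $\wt y_0(\ell) = \ell + u_0(\ell_{12})$ on $\L_0$, where $\ell_{12} := (\ell_1,\ell_2)$, and verify the two conditions defining $\Adm^0_{\frak{m},\lambda}(\L_0)$: the bi-Lipschitz lower bound and the no-hole property. I will fix the support of $u^{\rm c}$ in a ball $B_{R_1} \subset \R^2$ (with $R_1 > \hat r$ to be chosen), so that $u^{\rm c}$ affects only finitely many lattice sites modulo $e_3$-periodicity and does not perturb the far-field of $u_0$.

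For the far-field estimate I apply \eqref{ac_def_Lemma3.1} to bound $|\nabla u_0(x)| \leq C|x|^{-1}$, but must first reduce to the slip-corrected $S_0 u_0$, which is $C^\infty$ on $\Omega_\Gamma$ and circumvents the jump across $\Gamma$. Given $\ell, m \in \L_0$ with $|\ell_{12}|, |m_{12}| > R_1$, I connect $\ell_{12}, m_{12}$ by a path of length $\lesssim |\ell_{12} - m_{12}|$ in the admissible region and integrate $\nabla(S_0 u_0)$ along it; by relabeling atoms on the slip side via the permutation appearing in \eqref{eq:perm-invariance-site-E-dislocations}, this yields $|u_0(\ell_{12}) - u_0(m_{12})| \leq CR_1^{-1}|\ell - m|$. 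Choosing $R_1$ large, one obtains $|\wt y_0(\ell) - \wt y_0(m)| \geq \tfrac{1}{2}|\ell - m|$ independently of $u^{\rm c}$. The mixed case ($|\ell_{12}|$ large, $|m_{12}| \leq R_1$) is handled for some $R_2 > R_1$ using the $L^\infty$-boundedness of $u_0 + u^{\rm c}$ on $B_{R_1}$ and the lower bound on $|\ell - m|$.

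For near-core pairs with $|\ell_{12}|, |m_{12}| \leq R_1$, $e_3$-periodicity reduces the problem to finitely many equivalence classes of pairs. Pairs with $\ell_{12} = m_{12}$ and $\ell_3 \neq m_3$ satisfy $\wt y_0(\ell) - \wt y_0(m) = (0,0,\ell_3 - m_3)$ automatically, so only pairs with $\ell_{12} \neq m_{12}$ need attention. For each such pair, the condition $\wt y_0(\ell;u^{\rm c}) \neq \wt y_0(m;u^{\rm c})$ is an open condition on $u^{\rm c} \in C^\infty_{\rm c}(B_{R_1};\R^3)$ whose failure locus is of codimension three; a Baire category argument (or, more constructively, a finite sum of localized smooth bumps displacing the problematic atoms) produces a single $u^{\rm c}$ simultaneously satisfying all finitely many non-collision conditions. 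By continuity and compactness one obtains a uniform bound $\frak{m}' > 0$, and combined with the far-field estimate this yields the global $\frak{m} = \min(\tfrac{1}{2},\frak{m}') > 0$.

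Finally, the no-hole property follows from the far-field estimate: outside a large ball, $\wt y_0$ is a bi-Lipschitz perturbation of the identity on a Bravais sublattice of $\L_0$, so its image is $\lambda_1$-dense for some finite $\lambda_1$ controlled by the covering radius of $\L_0$ and $\|u_0\|_{L^\infty(B_{\hat r}^{\rm c})}$, the latter being finite by \eqref{ac_def_Lemma3.1}; the residual bounded region is covered by finitely many near-core atoms at bounded distance, yielding some $\lambda_2$. Setting $\lambda = \max(\lambda_1,\lambda_2)$ concludes the verification. The main obstacle in the argument is the careful bookkeeping required to convert the continuum estimate on $S_0 u_0$ into a discrete bi-Lipschitz bound on $\wt y_0$ across the branch cut $\Gamma$, where the slip operator and the associated atomic relabeling must be tracked consistently; by comparison, the near-core genericity step and the no-hole verification are routine once the far-field estimate is in hand.
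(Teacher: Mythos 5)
Your strategy is essentially the paper's: a far-field separation estimate obtained from the gradient decay of the slip-corrected predictor, removal of the finitely many (modulo $e_3$-periodicity) near-core collisions by a compactly supported finite sum of smooth bumps --- which is exactly how the paper constructs $u^{\rm c}$ --- and a density argument for the $\lambda$-condition. The branch-cut bookkeeping you identify as the main obstacle is packaged by the paper into Lemma~\ref{Lemma - y0 no holes}: a single smooth interpolant $I\wt y_{1}$ is built by reparametrising a strip $\Gamma_\varepsilon$ around the cut with the shear $x \mapsto x + \burg\,\eta(x_{2}-\hat{x}_{2})$ before applying $\wt y_{1}^{S}$; since $\L\cap\Gamma_\varepsilon=\emptyset$ this interpolant agrees with $\wt y_{1}$ at every lattice point, so its bi-Lipschitz property yields the discrete separation directly, without tracking an atomic relabelling (whose only effect on the lower bound $|\ell-m|$ is an $O(|\burg|)$ shift that must still be absorbed).

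Two of your justifications do not survive as literally stated. First, $\|u_0\|_{L^\infty(B_{\hat r}^{\rm c})}$ is \emph{not} finite: by \eqref{eq_ulin} the field $\ulin$ grows logarithmically, and \eqref{ac_def_Lemma3.1} controls only $\nabla^n u_0$ for $n\geq 1$; the correct input is $|u_0(x)|\leq C\log(2+|x|)$, which is still sublinear and hence harmless in the mixed far/near case, but your appeal to global boundedness there needs to be replaced (the paper routes the estimate through an intermediate lattice point on the core boundary and uses boundedness only on the compact set). Second, ``bi-Lipschitz $\Rightarrow$ $\lambda_1$-dense image'' is not automatic: one must show that the image of the exterior region is the complement of a \emph{bounded} set, which requires the homeomorphism-onto-image argument of Lemma~\ref{Lemma - y0 no holes} (the image of $\partial D_{R_0}$ is a closed curve enclosing a bounded region, and the exterior maps onto its complement); this is then consumed by Lemma~\ref{lemma-HcHdense}. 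Both defects are repairable with exactly the paper's lemmas, so your proposal is a correct outline of the same proof rather than an independent or complete argument.
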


\fn{We remark that the compactly supported displacement $u^{\rm c}$ is introduced to ensure that the corresponding deformation $\wt y_{0}$, given by \eqref{eq:wt y from y def}, satisfies $\wt y_{0}(\ell) \neq \wt y_{0}(m)$ for all $\ell \in \L_{0}, m \in \L_{0} \setminus \ell$. This is a necessary condition for $y_{0} \in \Adm(\L)$.}

%We consider ${\wt y_{0} : \left( \mathbb{R}^{2} \setminus \Gamma \right) \times \mathbb{R} \to \mathbb{R}^{3}}$ defined by ${\wt y_{0}(x) = x +  u_{0}(x_{12})}$, where $u_{0}$ is given above. Due to the definition of the admissible space $\Adm(\L)$ in \eqref{eq:AL def}, \fn{proving $\wt y_{0} \in \Adm^{0}(\L_{0})$ is equivalent to showing that $y_{0} \in \Adm(\L)$. This requires \fn{finding} $\frak{m}, \lambda > 0$ such that}
%\begin{align}
%& B_{\lambda}(x) \cap \wt y_{0}(\L_{0}) \neq \emptyset \quad \forall \, x \in \R^{3}, \label{eq:wt y0 est 1} \\
%& |\wt y_{0}(\ell) - \wt y_{0}(m)| \geq \frak{m} |\ell- m|
%\quad\forall~\ell, m \in \L_{0}. \label{eq:wt y0 est 2}
%\end{align}
We now state an intermediate result which will be used to prove Lemma \ref{Lemma - y0 Adm}. \fn{First}, for $R > 0$, define the two-dimensional disc
\begin{align*}
D_{R} := \{ (x_{1},x_{2}) \in \mathbb{R}^{2}\, | \, (x_{1}^{2} + x_{2}^{2})^{1/2} < R \, \}.
\end{align*}
Additionally, let $u_{1}:= \ulin \circ \xi^{-1}$ and define ${\wt y_{1} : \left( \mathbb{R}^{2} \setminus \Gamma \right) \times \mathbb{R} \to \mathbb{R}^{3}}$ by ${\wt y_{1}(x) = x + u_{1}(x_{12})}$. 
Also, let $\Gamma_{S} := \{ (x_1, \hat{x}_2) ~|~x_1 \leq \hat{x}_1 \}$ denotes the reflected branch cut, then define $\wt y_{1}^{S} : (\R^{2} \setminus \Gamma_{S}) \times \R \to \R^{3}$ by $\fn{\wt y_1^S}(x) = x + S_0 u_{1}(x_{12})$, where $S_0 u_{1} \in C^\infty(\Omega_\Gamma)$.

As $\L \cap \Gamma = \emptyset$, so there exists $\varepsilon > 0$ such that $\L \cap \Gamma_{\varepsilon} = \emptyset$, where
\begin{align*}
\Gamma_{\varepsilon} := \left\{ \, (x_{1}, x_{2})  \, | \, x_{1} \geq \hat{x}_{1}, |x_{2} - \hat{x}_{2}| \leq \varepsilon  \,  \right\}.
\end{align*}
Now choose $\eta \in C^{\infty}(\R,\R)$ satisfying $0 \leq \eta \leq 1$, $\eta(t) = 0$ for $t \geq \varepsilon$, $\eta(t) = 1$ for $t \leq -\varepsilon$ and $|\eta'(t)| \leq C_{\eta} \varepsilon^{-1}$ for all $t \in [-\varepsilon,\varepsilon]$. Then define $I\wt y_{1}: \R^{3} \to \mathbb{R}^{3}$ by
\begin{displaymath}
I\wt y_{1}(x) := \left\{
\begin{array}{ll}
\wt y_{1}(x), & x_{12} \not \in \Gamma_{\varepsilon}, \vspace{5pt} \\
\wt y_{1}^{S}(x + \burg \eta(x_{2} - \hat{x}_{2}) ), & x_{12} \in \Gamma_{\varepsilon}.
\end{array}
\right.
\end{displaymath}
It follows from the definition that $I\wt y_{1}(\ell) = \wt y_{1}(\ell)$ for all $\ell \in \L_{0}$.
%
% We refer to the preprint \cite{chen17a} for the proof of the following lemma.

\begin{lemma} \label{Lemma - y0 no holes}
	There exist $\fn{\frak{m}' \in (0,1)}, M, R_{0} > 0$ such that for all $x_{1}, x_{2} \in \fn{D_{R_{0}}^{\rm c} } \times \R$
	\begin{align} \label{eq:Iy0-est}
	\frak{m}'|x_{1} - x_{2}| \leq |I\wt y_{1}(x_{1}) - I\wt y_{1}(x_{2})| \leq M|x_{1} - x_{2}|,
	\end{align}
	hence $I\wt y_{1}: D_{R_{0}}^{\rm c}  \times \R \to \R^{3}$ is smooth, injective and is a homeomorphism onto its image. Moreover there exists open, bounded $\Omega \subset \R^{2}$ such that $I \wt y_{1}( D_{R_{0}}^{\rm c} \times \R ) = \Omega^{\rm c} \times \R$.
\end{lemma}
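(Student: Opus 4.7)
The key observation is that the definition of $I\wt y_1$ is designed so that the factorisation
\[
I\wt y_1 = \wt y_1^S \circ \phi, \qquad \phi(x) := x + \burg\, \eta(x_2 - \hat{x}_2),
\]
holds globally on $\R^3$. I will first verify this identity: outside $\Gamma_\varepsilon$ it reduces to $\wt y_1^S(x) = \wt y_1(x)$ for $x_2 > \hat{x}_2$ (using $\eta = 0$) and $\wt y_1^S(x + \burg) = \wt y_1(x)$ for $x_2 < \hat{x}_2$ (using $\eta = 1$), both of which follow from the definition of $S_0 u_1$ together with $\burg_2 = 0$, and inside $\Gamma_\varepsilon$ the factorisation is the definition. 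Smoothness of $I\wt y_1$ on $D_{R_0}^{\rm c} \times \R$ for $R_0$ sufficiently large then follows from smoothness of $\phi \in C^\infty(\R^3)$ and $\wt y_1^S \in C^\infty(\Omega_\Gamma \times \R)$, after checking that $\phi$ sends the portion of $D_{R_0}^{\rm c} \times \R$ inside $\Gamma_\varepsilon \times \R$ into $\Omega_\Gamma \times \R$ (which requires only $R_0$ large relative to $\hat{x}_1, r, |\burg_1|, \varepsilon$).

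The bi-Lipschitz estimate will follow by composing bi-Lipschitz bounds on each factor. For $\phi$, the explicit inverse $\phi^{-1}(y) = y - \burg\, \eta(y_2 - \hat{x}_2)$ yields the global bound
\[
\bigl(1 + |\burg| C_\eta \varepsilon^{-1}\bigr)^{-1}\, |x - y| \leq |\phi(x) - \phi(y)| \leq \bigl(1 + |\burg| C_\eta \varepsilon^{-1}\bigr)\, |x - y|
\]
with constants independent of $R_0$. For $\wt y_1^S(x) = x + S_0 u_1(x_{12})$, the far-field decay estimate $|\nabla^n u_0(x)| \leq C|x|^{-n}$ from \eqref{ac_def_Lemma3.1} transfers to $\nabla S_0 u_1$ on $\Omega_\Gamma$, so $\|\nabla S_0 u_1\|_{L^\infty(D_{R'}^{\rm c})} \to 0$ as $R' \to \infty$. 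Integrating along segments in the smooth region yields bi-Lipschitz constants $(1 \pm \delta)$ for $\wt y_1^S$ on $D_{R'}^{\rm c} \times \R$, with $\delta = \delta(R') \to 0$. Choosing $R_0 > R' + |\burg_{12}|$ guarantees $\phi(D_{R_0}^{\rm c} \times \R) \subset D_{R'}^{\rm c} \times \R$, and composition gives \eqref{eq:Iy0-est} with
\[
\frak{m}' := \frac{1 - \delta}{1 + |\burg| C_\eta \varepsilon^{-1}} \in (0,1), \qquad M := (1+\delta)\bigl(1 + |\burg| C_\eta \varepsilon^{-1}\bigr).
\]
Injectivity and the homeomorphism property onto the image follow immediately.

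For the image, the $x_3$-independence of both $u_1$ and $\phi$ yields the equivariance $I\wt y_1(x + t e_3) = I\wt y_1(x) + t e_3$, so the image has product form $A \times \R$ for some $A \subset \R^2$; set $\Omega := \R^2 \setminus A$. Properness of $I\wt y_1$ on $D_{R_0}^{\rm c} \times \R$ (a consequence of the lower bi-Lipschitz bound, which forces $|I\wt y_1(x)| \to \infty$ as $|x| \to \infty$) shows that $I\wt y_1$ maps the closed set $D_{R_0}^{\rm c} \times \R$ homeomorphically onto a closed set $A \times \R$, whence $A$ is closed in $\R^2$ and $\Omega$ is open. For boundedness, I consider the planar restriction $F(x_{12}) := \pi \circ I\wt y_1(x_{12},0)$ and show $F(D_{R_0}^{\rm c}) \supset D_R^{\rm c}$ for some $R$: since $F(x_{12}) - x_{12}$ is uniformly bounded (the $\burg\eta$-term is bounded and $|S_0 u_1(x_{12})| \to 0$), for each $|y|$ large enough the fixed-point equation $x_{12} = y - (F(x_{12}) - x_{12})$ is a contraction in a ball around $y$, producing the required pre-image; equivalently, a degree argument on large circles $\partial D_R$ gives the same conclusion.

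The main obstacle is the bookkeeping of bi-Lipschitz constants when composing: the Lipschitz constants of $\phi$ do not tend to $1$ as $R_0 \to \infty$ (they are fixed by $|\burg|, C_\eta, \varepsilon$), so it is essential that the lemma only demands $\frak{m}' \in (0,1)$ rather than $\frak{m}'$ arbitrarily close to $1$. The whole point of the factorisation $I\wt y_1 = \wt y_1^S \circ \phi$ is to confine all asymptotic behaviour to the $\wt y_1^S$ factor while letting $\phi$ absorb the discontinuity of $u_1$ across $\Gamma$ in a controlled, explicitly invertible way.
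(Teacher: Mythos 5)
Your central claim---that the factorisation $I\wt y_{1} = \wt y_{1}^{S}\circ\phi$ with $\phi(x)=x+\burg\,\eta(x_{2}-\hat x_{2})$ holds globally on $\R^{3}$---is false, and the whole argument rests on it. The set $\Gamma_{\varepsilon}$ contains only points with $x_{1}\geq\hat x_{1}$, so on the left-hand strip $L=\{x_{1}<\hat x_{1},\ |x_{2}-\hat x_{2}|<\varepsilon\}$ the interpolant is given by the \emph{first} branch, $I\wt y_{1}=\wt y_{1}$, while $s:=\eta(x_{2}-\hat x_{2})$ generically lies in $(0,1)$ there. For such $x$ one has $\wt y_{1}^{S}(\phi(x))=x+s\burg+S_{0}u_{1}(x_{12}+s\burg_{12})$, which differs from $\wt y_{1}(x)=x+u_{1}(x_{12})$ by essentially $s\burg$ (the difference of the $u_{1}$-terms is only $O(|x|^{-1})$, so it cannot cancel $s\burg$); worse, $\phi(x)$ then lies within $\varepsilon$ of the reflected cut $\Gamma_{S}$ and well outside $\Omega_{\Gamma}$, where $\wt y_{1}^{S}$ is not smooth. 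Your verification covers only $|x_{2}-\hat x_{2}|\geq\varepsilon$ (where $\eta\in\{0,1\}$) and $x_{12}\in\Gamma_{\varepsilon}$; the unbounded region $L\cap D_{R_{0}}^{\rm c}$ is missed. Consequently the composition of bi-Lipschitz bounds does not yield \eqref{eq:Iy0-est} for pairs with one point in $L$ and the other in $\Gamma_{\varepsilon}$ (or on opposite sides of $L$), and the smoothness of $I\wt y_{1}$ on $L$ must anyway be argued from $\wt y_{1}$ directly rather than from the factorisation. This is exactly why the paper's proof proceeds by a case analysis on the straight segment joining $x_{12}$ and $x_{12}'$: when the segment avoids $\Gamma_{\varepsilon}$ it estimates $\wt y_{1}$ directly (rerouting around $D_{R_{0}}$ if necessary), and only when the segment meets $\Gamma_{\varepsilon}$ does it pass to the sheared variables $\wt x=x+\burg\,\eta(x_{2}-\hat x_{2})$, controlling the shear through the explicit invertible matrix $A_{\theta}$.

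A secondary, more easily repaired error: in the boundedness argument for $\Omega$ you assert that $F(x_{12})-x_{12}$ is uniformly bounded because ``$|S_{0}u_{1}(x_{12})|\to 0$''. This is not true: by \eqref{eq_ulin}, $\ulin$ (hence $u_{1}$ and $S_{0}u_{1}$) grows like $\log|x|$, so only $\nabla u_{1}$ decays. The contraction/degree argument survives if you work on balls of radius $C\log|y|$ around the target $y$, using the gradient decay, but not as stated. The paper instead deduces boundedness of $\Omega$ from the fact that the planar restriction $Iy_{1}$ is a homeomorphism onto its image, so $Iy_{1}(\partial D_{R_{0}})$ is a closed curve enclosing a bounded open region whose complement is the image.
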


\begin{proof}[Proof of Lemma \ref{Lemma - y0 no holes}]
	By \cite[Lemma 3.1]{2013-defects-v3}
	% \cite[Lemma 3.1]{2013-defects-v3},
	there exists $C > 0$ such that
	%$|\nabla y_{12}(z) - I_{2}| =$
	$|\nabla u_{1}(z)| \leq C|z|^{-1}$ for all ${z \in \mathbb{R}^{2} \setminus (D_{r} \cup \Gamma )}$, where $r > 0$ has been introduced in the definition of $\Omega_{\Gamma}$. Consequently, there exists $R_{0} \geq r > 0$ such that $D_{R_{0}} \cap \Gamma \supset \Omega_{\Gamma}^{\rm c} \cap \Gamma$ and additionally satisfying ${|\nabla u_{1}(z)| \leq 1/4}$ for all $z \in \mathbb{R}^{2} \setminus (D_{R_{0}} \cup \Gamma)$ and $|\nabla S_{0} u_{1}(z)| \leq 1/4$ for all $z \in \mathbb{R}^{2} \setminus (D_{R_{0} - |\burg|} \cup \Gamma_{S})$.
	
	We now justify that $I\wt y_{1}$ is smooth over $(\R^{2} \setminus D_{R_{0}} ) \times \R$. As $\wt y_{1} \in C^{\infty}((\R^{2} \setminus \Gamma ) \times \R)$, $\wt y_{1}^{S} \in C^{\infty}(\Omega_{\Gamma} \times \R)$, it only remains to show that $I\wt y_{1}$ is smooth over $(\R^{2} \setminus D_{R_{0}} \cap \partial \Gamma_{\varepsilon}) \times \R$. Consider $x \in (\R^{2} \setminus D_{R_{0}}) \times \R$ such that $x_{1} \geq \hat{x}_{1}$ and $x_{2} = \hat{x}_{2} - \varepsilon$, then taking the derivative from below in the $x_{2}$-direction gives $\nabla \wt y_{1}(x^{-}) = \nabla \wt y_{1}(x)$. Then, using that $\wt y_{1}^{S}(x + \burg) = \wt y_{1}(x)$, we deduce that
	\begin{align*}
	\nabla I \wt y_{1}(x^{+}) = \nabla \wt y_{1}^{S}(x + \burg \eta(- \varepsilon)) (I + \burg \eta'(-\varepsilon)) = \nabla \wt y_{1}^{S}(x + \burg) = \nabla \wt y_{1}(x),
	\end{align*}
	hence $I\wt y_{1}$ is differentiable at $x$. Repeating this argument also shows that $I\wt y_{1}$ is smooth at $x$. An analogous argument shows that $I\wt y_{1}$ is smooth at $x \in (\R^{2} \setminus D_{R_{0}}) \times \R$ satisfying $x_{1} \geq \hat{x}_{1}$ and $x_{2} = \hat{x}_{2} + \varepsilon$.
	
	We now show \eqref{eq:Iy0-est}. Let $x, x' \in (\R^{2} \setminus D_{R_{0}}) \times \R$, so that $x_{12}, x'_{12} \in \R^{2} \setminus D_{R_{0}}$, then define the path \\ $P_{x_{12},x'_{12}} := \{ \, x_{12} + t(x'_{12} - x_{12}) \, | \, t \in [0,1] \, \}.$ Now suppose $P_{x_{12},x'_{12}} \cap ( D_{R_{0}} \cup \Gamma_{\varepsilon} ) = \emptyset$, then
	\begin{align}
	|u_{1}(x_{12}) - u_{1}(x'_{12})| \leq \int_{0}^{1} |\nabla u_{1}(x_{12} +t(x'_{12} - x_{12}))| |x_{12} - x'_{12}| \dt \leq \frac{1}{4} |x_{12} - x'_{12}|. \label{eq:u0 x x' case 1}
	\end{align}
	Alternatively, suppose that $P_{x_{12},x'_{12}} \cap D_{R_{0}} \neq \emptyset$, then it is straightforward to construct a piecewise smooth path ${P_{x_{12},x'_{12}}' \subset \mathbb{R}^{2} \setminus D_{R_{0}}}$ from $x_{12}$ to $x'_{12}$, by following $\partial D_{R_{0}}$ instead of intersecting $D_{R_{0}}$, satisfying $|P_{x_{12},x'_{12}}'| \leq 2|x_{12} - x'_{12}|$. We additionally suppose that $P_{x_{12},x'_{12}}' \subset \mathbb{R}^{2} \setminus ( D_{R_{0}} \cup \Gamma_{\varepsilon} )$, hence it follows that
	\begin{align}
	| u_{1}(x_{12}) - u_{1}(x'_{12})| \leq |P_{x_{12},x'_{12}}'| \sup_{z \in P_{x_{12},x'_{12}}'} |\nabla u_{1}(z)| \leq \frac{1}{2}|x_{12} - x'_{12}|. \label{eq:u0 x x' case 2}
	\end{align}
	%We first consider the case $\ell_{12}, m_{12} \in A\Z^{2} \setminus D_{R_{0}}$, where $R_{0} > 0$ is specified in the proof of Lemma \ref{Lemma - y0 no holes}. Moreover, there exists a path ${P_{\ell_{12},m_{12}} \subset \mathbb{R}^{2} \setminus D_{R_{0}}}$ connecting $\ell_{12}, m_{12}$ that satisfies ${|P_{\ell_{12},m_{12}}| \leq 2|\ell_{12} - m_{12}|}$. Now further suppose that ${P_{\ell_{12},m_{12}} \subset \mathbb{R}^{2} \setminus ( D_{R_{0}} \cup \Gamma )}$, hence by \eqref{eq:u0 ell m case 1}--\eqref{eq:u0 ell m case 2}
	%\begin{align*}
	%|u_{0}(\ell_{12}) - u_{0}(m_{12})| \leq \frac{1}{2} |\ell_{12} - m_{12}| \leq \frac{1}{2} |\ell - m|.
	%\end{align*}
	As $x_{12}, x'_{12} \not \in \Gamma_{\varepsilon}$, we have $I \wt y_{1}(x) = \wt y_{1}(x), I \wt y_{1}(x') = \wt y_{1}(x')$, hence \eqref{eq:u0 x x' case 1}--\eqref{eq:u0 x x' case 2} imply
	\begin{align} \label{eq:Iwty0-nocollest-1}
	|I\wt y_{1}(x) - I\wt y_{1}(x')| = |\wt y_{1}(x) - \wt y_{1}(x')| \geq |x - x'| - |u_{1}(x_{12}) - u_{1}(x'_{12})| \geq \frac{1}{2} |x - x'|.
	\end{align}
	Similarly, we also obtain
	\begin{align} \label{eq:Iwty0-bound-1}
	|I\wt y_{1}(x) - I\wt y_{1}(x')| \leq |x - x'| - |u_{1}(x_{12}) - u_{1}(x'_{12})| \leq \frac{3}{2} |x - x'|.
	\end{align}
	%We now suppose that $P_{\ell,m} \cap \Gamma \neq \emptyset$. Recall the predictor configuration $y_{0}^{S}$ introduced on Page \pageref{sec:appendix:dislocation:general}, then define the corresponding predictor $\wt y_{0}^{S}$ defined on $B\Z^{3}$ such that $\wt y_{0} \mapsto \wt y_{0}^S$ represents a relabelling of indices on $B\Z^{3}$. Consequently, there exist $S^{*}\ell, S^{*}m \in B\Z^{3}$ such that $ \wt y_{0}(\ell) = \wt y_{0}^{S}(S^{*}\ell)$, $\wt y_{0}(m) =  \wt y_{0}^{S}(S^{*}m)$ satisfying $c_{0} |\ell - m| \leq |S^{*}\ell - S^{*}m| \leq c_{1} |\ell - m|$, where $c_{0}, c_{1} > 0$ are independent of $\ell, m$. Then there exists a piecewise smooth path $P_{S^{*}\ell_{12},S^{*}m_{12}} \subset \mathbb{R}^{2} \setminus D_{R_{0}}$ with length $|P_{S^{*}\ell_{12},S^{*}m_{12}}| \leq 2|S^{*}\ell_{12} - S^{*}m_{12}|$ intersecting $\Gamma$ at a single point belonging to $\Omega_{\Gamma} \cap \Gamma$.
	Now suppose that $P_{x_{12},x'_{12}} \cap \Gamma_{\varepsilon} \neq \emptyset$, then define $\wt x = x + \burg \eta(x_{2} - \hat{x}_{2})$, ${\wt x' = x' + \burg \eta(x'_{2} - \hat{x}_{2})}$, then there exists a path $P_{\wt x_{12}, \wt x'_{12}}$ joining $\wt x_{12}, \wt x'_{12}$ such that $P_{\wt x_{12}, \wt x'_{12}} \cap D_{R_{0}-|\burg|} = \emptyset$, \\ ${P_{\wt x_{12}, \wt x'_{12}} \cap \Gamma_{\varepsilon} \neq \emptyset}$ and $|P_{\wt x_{12}, \wt x'_{12}}| \leq 2|\wt x_{12}- \wt x'_{12}|$. Following the  argument \eqref{eq:u0 x x' case 2}--\eqref{eq:Iwty0-nocollest-1} gives
	\begin{align} \label{eq:u0 x x' case 3}
	|I \wt y_{1}(x) - I \wt y_{1}(x')| &= |\wt y_{1}^{S}(x + \burg \eta(x_{2} - \hat{x}_{2})) - \wt y_{1}^{S}(x' + \burg \eta(x'_{2} - \hat{x}_{2}))| = |\wt y_{1}^{S}(\wt x) - \wt y_{1}^{S}(\wt x')| \nonumber \\
	&\geq |\wt x - \wt x'| - |S_{0} u_{1}(\wt x_{12}) - S_{0} u_{1}(\wt x'_{12})| \geq \frac{1}{2}|\wt x - \wt x'|.
	\end{align}
	By the Mean Value Theorem, there exists $\theta = \theta(x_{2},x'_{2}) \in \R$ such that
	\begin{align*}
	\wt x - \wt x' = x - x' + \burg ( \eta(x_{2} - \hat{x}_{2}) - \eta(x'_{2} - \hat{x}_{2}) ) =  x - x' + \burg \eta'(\theta) (x_{2} - x'_{2}) =: A_{\theta}(x - x'),
	\end{align*}
	where the matrix $A_{\theta} \in \R^{3 \times 3}$ is invertible with inverse $A_{\theta}^{-1}$,
	\begin{align*}
	A_{\theta} =
	\begin{pmatrix}
	1 & \burg_{1} \eta'(\theta) & 0 \\
	0 & 1 & 0 \\
	0  & \burg_{3} \eta'(\theta) & 1 \\
	\end{pmatrix}, \quad
	A_{\theta}^{-1} =
	\begin{pmatrix}
	1 & -\burg_{1} \eta'(\theta) & 0 \\
	0 & 1 & 0 \\
	0  & -\burg_{3} \eta'(\theta) & 1 \\
	\end{pmatrix}.
	\end{align*}
	Consequently,
	\begin{align} \label{eq:A-est}
	|x - x'| = |A_{\theta}^{-1}(\wt x - \wt x')| \leq |A_{\theta}^{-1}||\wt x - \wt x'| \leq C_{0}|\wt x - \wt x'|,
	\end{align}
	where the constant $C_{0} \geq 1$ is independent of $\theta,x,x'$ and depends only on $C_{\eta}$. 
	We also obtain $|\wt x - \wt x'| \leq C_{0}|x - x'|$. Combining \eqref{eq:u0 x x' case 3}--\eqref{eq:A-est} yields
	\begin{align} \label{eq:Iwty0-nocollest-2}
	|I \wt y_{1}(x) - I \wt y_{1}(x')| &\geq \frac{1}{2}|\wt x - \wt x'| \geq \frac{1}{2C_{0}}| x - x'|,
	\end{align}
	and also
	\begin{align} \label{eq:Iwty0-bound-2}
	|I \wt y_{1}(x) - I \wt y_{1}(x')| &\leq \frac{3}{2}|\wt x - \wt x'| \leq \frac{3C_{0}}{2}| x - x'|,
	\end{align}
	Collecting the estimates \eqref{eq:Iwty0-nocollest-1}--\eqref{eq:Iwty0-bound-2}, we obtain the desired estimate \eqref{eq:Iy0-est}
	\begin{align*}
	\frak{m}'|x - x'| \leq |I \wt y_{1}(x) - I \wt y_{1}(x')| \leq M|x - x'|,
	\end{align*}
	for all ${x, x' \in (\R^{2} \setminus D_{R_{0}}) \times \R}$, where $\frak{m}' = (2C_{0})^{-1}\in(0,1),~M = \smfrac{3C_{0}}{2} > 0$. 
	It follows immediately from~\eqref{eq:Iy0-est} that $I \wt y_{1}: (\R^{2} \setminus D_{R_{0}}) \times \R \to \R^{3}$ is both injective and a closed map, thus is a homeomorphism onto its image.
	
	It follows from the definition of $I\wt y_{1}$ that $x = (x_{1},x_{2},x_{3}) \in I \wt y_{1}((\R^{2} \setminus D_{R_{0}}) \times \R)$ if and only if $(x_{1},x_{2},x_{3}') \in I \wt y_{1}((\R^{2} \setminus D_{R_{0}}) \times \R)$ for all $x_{3}' \in \R$, hence $I \wt y_{1}((\R^{2} \setminus D_{R_{0}}) \times \R) = A \times \R$ for some $A \subset \R^{2}$. Define $Iy_{1}: \R^{2} \setminus D_{R_{0}} \to \R^{2}$ by $Iy_{1}(z_{1},z_{2}) = (I\wt y_{1,1},I\wt y_{1,2})(z_{1},z_{2},0)$, so $A = Iy_{1}(\R^{2} \setminus D_{R_{0}})$. The arguments \eqref{eq:u0 x x' case 1}--\eqref{eq:Iwty0-nocollest-2} can be applied verbatim to also show: there exists $\frak{m}_{12} > 0$ such that for all $z,z' \in \R^{2} \setminus D_{R_{0}}$
	\begin{align} \label{eq:Iy0-alt-est}
	|I y_{1}(z) - I y_{1}(z')| \geq \frak{m}_{12}|z - z'|,
	\end{align}
	so we also deduce that $I y_{1}$ is a homeomorphism onto its image. In particular, this implies $Iy_{1}(\partial D_{R_{0}}) = \partial Iy_{1}( \R^{2} \setminus D_{R_{0}} )$, hence $Iy_{1}(\partial D_{R_{0}})$ is a closed loop that encloses an open, bounded region $\Omega \subset \R^{2}$. The estimate \eqref{eq:Iy0-alt-est} together with the fact that $Iy_{1}$ is a homeomorphism onto its image implies $Iy_{1}(\R^{2} \setminus D_{R_{0}}) = \R^{2} \setminus \Omega$, which concludes the proof.
\end{proof}

\fn{Using Lemma~\ref{Lemma - y0 no holes}, we now prove Lemma~\ref{Lemma - y0 Adm}. As part of our argument, we will show that the assumption of Lemma~\ref{lemma-HcHdense} is satisfied in the dislocation setting. 
	
	\begin{proof}
		[Proof of Lemma \ref{Lemma - y0 Adm}]
		Our proof closely follows the arguments \eqref{eq:Iy0-ass-rep}--\eqref{eq:u-v-open-calc-dis-5} used in the proof of Lemma~\ref{lemma-HcHdense}. We begin by applying Lemma~\ref{Lemma - y0 no holes}, so there exist $\frak{m'} \in (0,1), R_{0} > 0$ such that for all ${\ell, m \in D_{R_{0}}^{\rm c} \times \R}$
		\begin{align} \label{eq:nh-est-1}
		|\wt y_{1}(\ell) - \wt y_{1}(m)| = |I\wt y_{1}(\ell) - I\wt y_{1}(m)| \geq \frak{m}' |\ell - m|.
		\end{align}
		We recall the argument \eqref{eq:y0-3m4-est} in order to treat the case ${\ell \in D_{R_{0} + R_{1}}^{\rm c} \times \R}$, $m \in D_{R_{0}} \times \R$, where $R_{1} > 0$ will be specified shortly. Let $C_{1} := \min \{ \, |n_{12}| \, | \, n \in \L_{0} \cap ( D_{R_{0}}^{\rm c} \times \R ) \} > R_{0}$, then we can find a $\ell' \in \L_{0}$ a minimiser satisfying $|(\wt y_{1}(\ell') - \wt y_{1}(m))_{3}| \leq 1$ and $|(\ell'_{k} - m)_{3}| \leq C_{2}$, where the constant $C_{2}$ is independent of $m$. We define $C_{3} := \max \{ \, |\wt y_{1}(n)_{12}| + 1 \, | \, n \in \L_{0} \cap ( \bar{D}_{C_{1}} \times \R) \}$ and ${R_{1} := \max\{ C_{1} - R_{0} + \smfrac{8(2C_{3} +1)}{\frak{m}'}, 2(6R_{0} + 7C_{1}), 14C_{2} \}}$, then deduce that
		\begin{align}
		|\wt y_{1}(\ell) - \wt y_{1}(m)| &\geq |\wt y_{1}(\ell) - \wt y_{1}(\ell'_{k})| - | \wt y_{1}(\ell'_{k}) - \wt y_{1}(m) | \nonumber \\ &\geq \frak{m'}|\ell - \ell'_{k}| - (2C_{3} + 1) \geq \frac{7\frak{m'}}{8} |\ell - \ell'_{k}| \geq \frac{3\frak{m}'}{4} |\ell - m|. \label{eq:nh-est-2}
		\end{align}
		Next, we follow \eqref{eq:y0-3m4-est-2} by considering the case ${\ell \in D_{R_{0} + R_{1}} \times \R}$, $m \in D_{R_{0}} \times \R$ and defining $y_{1}: \L \to \R^{3}$ by \eqref{eq:y def}, ${C_{4} := \max \{ \, |y_{1}(n_{12})| + 1 \, | \, n_{12} \in \L \cap  D_{R_{0} + R_{1}} \}}$ and ${R_{2} := \max\{16C_{4},2R_{0} + R_{1}\}}$. Then,
		\begin{align} \label{eq:nh-est-3}
		|\wt y_{1}(\ell) - \wt y_{1}(m)| &= |y_{1}(\ell_{12}) - y_{1}(m_{12}) + (\ell - m)_{3}| \nonumber \\ &\geq |(\ell - m)_{3}| - 2C_{4}  \nonumber \geq \frac{7\frak{m}'}{8}|(\ell - m)_{3}| \\ &\geq \frac{3\frak{m}'}{4}(|(\ell - m)_{12}| +  |(\ell - m)_{3}|) \geq \frac{3\frak{m}'}{4}|\ell - m|.
		\end{align}
		The remaining case to consider is ${\ell \in D_{R_{0} + R_{1}} \times \R}$, $m \in D_{R_{0}} \times \R$ satisfying ${|(\ell - m)_{3}| \leq R_{2}}$. Due to the periodicity of $\wt y_{1}$ in the $e_{3}\text{-direction}$, it suffices to consider $m \in D_{R_{0}} \times [0,1)$ and ${\ell \in D_{R_{0} + R_{1}} \times [-R_{2},R_{2}+1)}$. Define 
		\begin{align*}
		c_{1} := \min \{ \, |\wt y_{1}(\ell') - \wt y_{1}(m')| \, | \, {m' \in D_{R_{0}} \times [0,1), \ell' \in D_{R_{0} + R_{1}} \times [-R_{2},R_{2}+1)} \},
		\end{align*}
		then if $c_{1} > 0$, then using that ${|\ell - m| \leq 2R_{0} + R_{1} + R_{2}}$, we deduce
		\begin{align} \label{eq:nh-est-4.1}
		|\wt y_{1}(\ell) - \wt y_{1}(m)| \geq c_{1} \geq \frac{c_{1}|\ell - m|}{2R_{0} + R_{1} + R_{2}} =: \frak{m}_{1} |\ell - m|.
		\end{align}
		In this case, we may simply define $u^{\rm c} \in C^{\infty}_{\rm c}(\R^{2};\R^{3})$ by $u^{\rm c} \equiv 0$, in which case we have $\wt y_{0} \equiv \wt y_{1}$. Collecting the estimates \eqref{eq:nh-est-1}--\eqref{eq:nh-est-4.1} gives
		\begin{align} \label{eq:y0-final-m-est}
		|\wt y_{0}(\ell) - \wt y_{0}(m)|  \geq \frak{m}|\ell - m| \quad \text{for all } \ell, m \in \L_{0},
		\end{align}
		with $\frak{m} := \min \{ \smfrac{3\frak{m}'}{4}, \frak{m}_{1} \} > 0$. 
		
		Otherwise, if $c_{1} = 0$, we add a small perturbation to $\wt y_{1}$ within $D_{R_{0}+1}$ to guarantee no collisions occur. For each $m'_{12} \in \L \cap D_{R_{0}}$, choose $a_{m'_{12}} \in D_{1}$ such that 
		\begin{align*}
		c_{2} := \min \big\{ \, |\wt y_{1}(\ell') - &\wt y_{1}(m') - a_{m'_{12}}| \, \\
		&\big| \, m' \in \L_{0} \cap (D_{R_{0}} \times \R), \ell' \in \L_{0} \cap (D_{R_{0} + R_{1}} \times \R) \big\} > 0.
		\end{align*}
		%
		%	
		%	${c_{2} := \min \{ \, |\wt y_{1}(\ell') - \wt y_{1}(m') - a_{m'_{12}}| \, \text{s.t.} \, {m' \in \L_{0} \cap (D_{R_{0}} \times \R), \ell' \in \L_{0} \cap (D_{R_{0} + R_{1}} \times \R) \} > 0} }$. 
		For $\delta > 0$, choose $\wt \eta_{\delta} \in C^{\infty}_{\rm c}(D_{\delta}; \R^{2})$ satisfying $0 \leq \wt \eta_{\delta} \leq 1$ and $ \wt \eta_{\delta}(0) = 1$, then define $\eta_{\delta} \in C^{\infty}_{\rm c}(D_{\delta};\R^{3})$ by $\eta_{\delta}(z) := (\wt \eta_{\delta}(z),0)$. We then construct $u^{\rm c} \in C^{\infty}_{\rm c}(\R^{2};\R^{3})$, given by $u^{\rm c}(z) := \sum_{m'_{12} \in \L \cap D_{R_{0}} } a_{m'_{12}} \eta_{\delta}(z - m'_{12}) $, where $\delta \in (0,1]$ is chosen to be sufficiently small to ensure that 
		%$u^{\rm c}(\ell'_{12}) = (a_{\ell'_{12}},0)$ when $\ell'_{12} \in \L \cap D_{R_{0}}$ and $u^{\rm c}(\ell'_{12}) = 0$ for all $\ell'_{12} \in \L \setminus D_{R_{0}}$.
		the support of $u^{\rm c}$ is contained in $D_{R_{0}}$.
		
		Now define $u_{0}(z) := u_{1}(z) + u^{\rm c}(z)$, $y_{0}(z) := y_{1}(z) + u^{\rm c}(z)$ for $z \in \R^{2}$ and also ${\wt y_{0}(x) := \wt y_{1}(x) + u^{\rm c}(x_{12}) }$ for $x \in \R^{3}$. From the construction of $u^{\rm c}$, we obtain the following: for all $m \in D_{R_{0}} \times [0,1), \ell \in D_{R_{0}+R_{1}} \times [-R_{2},R_{2} + 1)$,
		\begin{align} \label{eq:nh-est-4.2}
		\!\!\!\! |\wt y_{0}(\ell) - \wt y_{0}(m)| \geq |y_{0}(\ell_{12}) - y_{0}(m_{12})| \geq \frac{c_{2}|\ell - m|}{2(R_{0}+1) + R_{1} + R_{2}} =: \frak{m}_{2} |\ell - m|.
		\end{align}
		We remark that as $u^{\rm c}$ has support in $D_{R_{0}}$ and that $|u^{\rm c}(z)| \leq 1$ for all $z \in D_{R_{0}}$, we observe that the estimates \eqref{eq:nh-est-1}--\eqref{eq:nh-est-3} continue to hold after substituting $\wt y_{1}$ with $\wt y_{0}$. Combining these estimates with \eqref{eq:nh-est-4.2} implies \eqref{eq:y0-final-m-est} with ${\frak{m} := \min \{ \smfrac{3\frak{m}'}{4}, \frak{m}_{2} \} > 0}$.
		
		We now define $I \wt y_{0}(x) := I \wt y_{1}(x)$ for $x \in D_{R_{0}}^{\rm c} \times \R$ and observe that as $u^{\rm c}$ has support in $D_{R_{0}}$, then $I \wt y_{0}$ is an interpolation of $\wt y_{0}$ satisfying $I \wt y_{0}(\ell) = \wt y_{0}(\ell)$ for all $\ell \in \L_{0} \cap (D_{R_{0}}^{\rm c} \times \R)$. So Lemma~\ref{Lemma - y0 no holes} ensures that $I\wt y_{0}$ satisfies the assumption \eqref{eq:H-Lambda-equiv-def}, hence Lemma~\ref{lemma-HcHdense} holds for dislocations. It follows from \eqref{eq:y0-final-m-est} and Lemma~\ref{lemma-HcHdense} that $y_{0} \in \Adm(\L)$, completing the proof.
\end{proof} }

We shall give the following far-field decay estimate of the dislocation predictor on the left half-plane.

\begin{lemma}\label{lemma-decay-u0}
	If the setting for dislocations hold and $\ell\in\L$ with $\ell_1<\hat{x}_1$,
	then there exist constants $C_j$ for $j\in\N$,
	such that for ${\bm \rho}=(\rho_1,\fn{\ldots},\rho_j)\in\b(\Lhom_{*}\b)^j$,
	\begin{eqnarray}\label{decay-u0}
	|D_{\bm \rho} u_0(\ell)| \leq C_j |\ell|^{-j} \max_{1\leq i\leq j} \left \{|\rho_i|^{j} \log|\rho_{i}| \right \}.
	\end{eqnarray}
\end{lemma}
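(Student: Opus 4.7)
My plan is to split the analysis into two regimes according to the size of $R := \max_{1\le i\le j}|\rho_i|$ relative to $|\ell-\hat x|$. Fix a small constant $\gamma\in(0,1)$ depending only on $j$, and assume $|\ell|$ is large enough that $|\ell-\hat x|\ge|\ell|/2\ge 4\hat r$ (the finitely many remaining $\ell$ with $\ell_1<\hat x_1$ being absorbed into the constant $C_j$). Throughout I will use the decomposition $u_0=u^{\lin}\circ\xi^{-1}+u^{\rm c}$, noting that $u^{\rm c}$ is smooth and compactly supported (so contributes super-algebraically decaying finite differences) and that $u^{\lin}\circ\xi^{-1}$ jumps across $\Gamma$ by the constant Burgers vector $\burg$, but its derivatives extend smoothly to $\R^2\setminus\{\hat x\}$.

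In the \emph{small-stencil regime} $R\le\gamma|\ell-\hat x|/j$ I would use the iterated fundamental theorem of calculus to write
\begin{align*}
D_{\bm\rho} u_0(\ell) = \int_{[0,1]^j} \nabla^j u_0\Bigl(\ell+\sum_{i=1}^j t_i\rho_i\Bigr)(\rho_1,\ldots,\rho_j)\,dt_1\cdots dt_j,
\end{align*}
which is valid provided the parallelepiped $P=\{\ell+\sum_i t_i\rho_i:t_i\in[0,1]\}$ avoids both $B_{\hat r}$ and $\Gamma$. The first inclusion $P\subset B(\ell,jR)\subset B(\ell,|\ell|/4)$ puts $P$ at distance $\ge|\ell|/4$ from $\hat x$ for $\gamma$ small. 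To rule out $\Gamma$-crossings I would use the observation that a segment from a point $a$ with $a_1<\hat x_1$ to $a+\sigma$ can cross $\Gamma=\{x_2=\hat x_2,\ x_1\ge\hat x_1\}$ only if $|\sigma_1|\ge|a_1-\hat x_1|$ and $|\sigma_2|\ge|a_2-\hat x_2|$, forcing $|\sigma|\ge|a-\hat x|/\sqrt 2$; this is incompatible with the bound $jR\le\gamma|\ell-\hat x|$ for $\gamma<1/\sqrt 2$. Since $\nabla^j u_0$ is smooth on $P$ and bounded there by $C|\ell|^{-j}$ thanks to \eqref{ac_def_Lemma3.1}, I obtain $|D_{\bm\rho} u_0(\ell)|\le C\prod_i|\rho_i|\cdot|\ell|^{-j}\le CR^j|\ell|^{-j}$.

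In the \emph{large-stencil regime} $R>\gamma|\ell-\hat x|/j$ I would expand
\begin{align*}
D_{\bm\rho} u_0(\ell)=\sum_{\sigma\in\{0,1\}^j}(-1)^{j-|\sigma|}u_0\Bigl(\ell+\sum_{i=1}^j\sigma_i\rho_i\Bigr)
\end{align*}
and bound each of the $2^j$ terms by the logarithmic pointwise estimate $|u_0(x)|\le C(1+\log(2+|x-\hat x|))$, which is immediate from \eqref{eq_ulin} together with the compact support of $u^{\rm c}$. This yields $|D_{\bm\rho}u_0(\ell)|\le C\log(2+R)$. Since now $R^j|\ell|^{-j}\gtrsim 1$, a rearrangement gives $|D_{\bm\rho}u_0(\ell)|\le CR^j|\ell|^{-j}\log(2+R)$. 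Combining both regimes and invoking the monotonicity of $x\mapsto x^j\log x$ for $x\ge 1$ (so that $R^j\log R=\max_i\{|\rho_i|^j\log|\rho_i|\}$) yields \eqref{decay-u0}, with any short-range contributions from small $|\rho_i|$ absorbed into $C_j$.

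The main obstacle is the geometric argument in the small-stencil regime that $P$ avoids $\Gamma$; the explicit characterization of $\Gamma$-crossings outlined above is the key technical point and is precisely where the hypothesis $\ell_1<\hat x_1$ enters. A secondary subtlety I would need to address is that while $u_0$ jumps across $\Gamma$, its derivatives $\nabla^k u_0$ with $k\ge 1$ do not, so the iterated Taylor representation is safe once $P\cap\Gamma=\emptyset$ has been secured, and no $\burg$-correction terms appear in the small-stencil bound.
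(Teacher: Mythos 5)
Your proposal is correct and follows essentially the same route as the paper's proof: a two-regime split on $\max_i|\rho_i|$ versus $|\ell|$, with an iterated Taylor/fundamental-theorem representation and the gradient bound \eqref{ac_def_Lemma3.1} in the small-stencil case, and a full stencil expansion combined with the logarithmic pointwise bound on $u_0$ (amplified by the factor $(\max_i|\rho_i|/|\ell|)^j\gtrsim 1$) in the large-stencil case. The only difference is that you make explicit the geometric check that the integration parallelepiped avoids $\Gamma\cup B_{\hat r}$, which the paper leaves implicit; this is a welcome clarification rather than a change of method.
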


\begin{remark}
	It may be possible to sharpen the decay estimates \eqref{decay-u0}. If so, then the decay assumptions {\asSEL} for  dislocations may be weakened. However, consider the case that $|\rho_2| = O(1)$, $m \in \Lhom$ fixed, $\ell = m - \rho_1$ with $|\ell| \to \infty$. In this case $|D_{\bm \rho} u_0(\ell)|
	=\b|u(m+\rho_2)-u(m)-u(\ell+\rho_2)+u(\ell)\b|
	\approx |m|^{-1} \leq |\ell|^{-2} |\rho_{1}|^2$,
	but the factor $|\rho_{1}|^2$ cannot be improved upon in an obvious way.
\end{remark}

\begin{proof}
	Let $j \in \mathbb{N}$, ${\bm \rho}=(\rho_1,\fn{\ldots},\rho_j)\in\b(\Lhom_{*}\b)^j$ and we consider $\ell \in \L$ sufficiently large.

	For the case $\max_{1 \leq i \leq j}|\rho_{i}| \leq \smfrac{|\ell|}{2j}$, using \eqref{ac_def_Lemma3.1}
	and Taylor's theorem, we deduce that
	\begin{align*}
	|D_{\bm \rho} u_0(\ell)| &\leq \prod_{i = 1}^{j}|\rho_{i}| \int_{0}^{1} \cdots \int_{0}^{1} \bigg|\nabla^{j} u_{0} \bigg(\ell + \sum_{i = 1}^{j} t_{i} \rho_{i} \bigg) \bigg| \, \dt_{1} \cdots \dt_{j} \\
	&\leq C \prod_{i = 1}^{j}|\rho_{i}| \int_{0}^{1} \cdots \int_{0}^{1} |\ell + \sum_{i = 1}^{j} t_{i} \rho_{i} |^{-j} \, \dt_{1} \cdots \dt_{j} \\
	&\leq C 2^{j} |\ell|^{-j} \prod_{i = 1}^{j}|\rho_{i}| \leq C_j |\ell|^{-j} \max_{1\leq i\leq j} \left \{|\rho_i|^{j} \log|\rho_{i}| \right \}.
	\end{align*}
	Now suppose that $\max_{1 \leq i \leq j}|\rho_{i}| > \smfrac{|\ell|}{2j}$, then we expand the finite-difference stencil to obtain
	\begin{align*}
	D_{\bm \rho} u_0(\ell) = \sum_{k = 0}^{j} (-1)^{j - k} \!\!\!\! \sum_{1 \leq i_{1} < \ldots < i_{k} \leq j}  u_{0}\bigg( \ell + \sum_{n = 1}^{k} \rho_{i_{n}} \bigg).
	\end{align*}
	Then, as \eqref{eq_ulin} implies $|u_{0}(x)| \leq C \log(2 + |x|)$, for all $x \in \R^{2} \setminus \Gamma$, it follows that
	\begin{align*}
	& |D_{\bm \rho} u_0(\ell)| \leq C \sum_{k = 0}^{j} \sum_{1 \leq i_{1} < \ldots < i_{k} \leq j}  \log \bigg( 2 + \Big| \ell + \sum_{n = 1}^{k} \rho_{i_{n}} \Big| \bigg) \\
	&\leq C 2^{j}  \log \bigg( 2 + |\ell| + \sum_{i = 1}^{j} |\rho_{i}|  \bigg) \leq \wt C_{j} \log \bigg( 1 + \max_{1 \leq i \leq j} |\rho_{i}| \bigg) \\
	& \leq \wt C_{j} (2j)^{j} \bigg( \frac{ \max_{1 \leq i \leq j} |\rho_{i}| }{ |\ell| } \bigg)^{j} \log \bigg( 1 + \max_{1 \leq i \leq j} |\rho_{i}| \bigg)
	% \\
	\leq C_{j} |\ell|^{-j} \max_{1 \leq i \leq j} \left\{ |\rho_{i}|^{j} \log ( 1 + |\rho_{i}| ) \right\}.
	\end{align*}
	This completes the proof.
\end{proof}

\begin{remark}
	For finite-difference stencils with finite interaction range, Lemma~\ref{lemma-decay-u0}
	implies that  if $\ell\in\L$ with $\ell_2<\hat{x}_2$
	$|\rho_i|\leq \Rc~\forall~1\leq i\leq j$ with some
	cut-off radius $\Rc$, then $|D_{\bm \rho} u_0(\ell)| \leq C_j |\ell|^{-j}$.

	Lemma \ref{lemma-decay-u0} together with definition \eqref{eq:defn_elastic_strain}
	and the fact that $S_0u_0\in C^{\infty}(\Omega_{\Gamma})$
	give us the decay of the elastic strain:
	for $\ell\in\L$ and $|\ell|$ sufficiently large, there exist constants $C_j>0$ for $j\in\N_{0}$,
	such that
	\begin{align}
	|e_{\sigma}(\ell)| &\leq C_1 |\ell|^{-1} |\sigma| \log( 1 + |\sigma| ) \quad \text{and } \nonumber \\
	|D_{\bm \rho} e_{\sigma}(\ell)| &\leq C_{j+1} |\ell|^{-j-1} \max \Big\{ |\sigma|^{j} \log(1 + |\sigma|),
	\displaystyle{\max_{1\leq i\leq j}} |\rho_i|^{j} \log(1 + |\rho_{i}|) \Big\}, \quad \label{decay_elastic_strain}
	\end{align}
	with $\sigma\in\Lhom_{*}$ and ${\bm \rho}\in\b(\Lhom_{*}\b)^j$, when $j \geq 1$. The estimate \eqref{decay_elastic_strain} holds by the same argument used to prove Lemma \ref{lemma-decay-u0}.
\end{remark}

\subsection{Proof of Theorem \ref{corr:deltaE0-dislocation}}
\label{sec:proof_deltaE0_dislocation}

\hc{For the sake of convenience, we use $\wf_k$ to denote $\wf_k^{\log}\in\Hw_k^{\log}$ in the following subsections.} Recall the operator ${F: \Usz(\L) \to \mathbb{R}}$, defined in Theorem~\ref{corr:deltaE0-dislocation} by
\fn{ \begin{align} \label{eq:disl:delE0-Du}
\b\<F,u\b\> = \sum_{\ell \in \Lhom} \big\<\delta \fn{V}(Du_{0}(\ell)),Du(\ell)\big\>
= \sum_{\ell \in \Lhom} \b\< \del V\b(e(\ell)\b), \Del u(\ell) \b\>.
\end{align} }
To show that $F$ is defined, it is convenient to rewrite \fn{it in}
%\begin{eqnarray}
%\< F, u \> = \sum_{\ell \in \L} \< \del V(D\up(\ell)), Du(\ell) \>
%= \sum_{\ell \in \L} \b\< \del V\b(e(\ell)\b), \Del u(\ell) \b\>
%\end{eqnarray}
in force-displacement form \fn{as}
\begin{eqnarray}\label{eq:disl:delE0-f}
\<F, u \> =  \sum_{\ell \in \fn{\Lhom}} f(\ell) \cdot u(\ell),
\end{eqnarray}
where $f:{\Lhom}\to\R^{\ds}$ describes the residual forces.
The following lemma \fn{describes} the far-field decay of the residual force $f(\ell)$ \fn{and a} detailed expression of $f$ \fn{is given in its proof}.
\begin{lemma}\label{lemma:decay_fl_dislocation}
	Under the conditions of Theorem \ref{corr:deltaE0-dislocation},
	there exists $C>0$ such that
	\begin{eqnarray}\label{decay-fl}
	|f(\ell)|\leq C|\ell|^{-3} .
	\end{eqnarray}
\end{lemma}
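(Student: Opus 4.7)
The plan is to derive an explicit formula for the residual force $f(\ell)$ by a discrete ``integration by parts'' and then to bound it by splitting into a linear elasticity residual and a quadratic remainder. First, I rewrite $\langle \delta V(e(\ell)), \widetilde{D} u(\ell)\rangle = \sum_\rho V_{,\rho}(e(\ell)) \cdot \widetilde{D}_\rho u(\ell)$ and rearrange the double sum to isolate $u(\ell)$. Away from $\Omega_\Gamma$, where $\widetilde{D}=D$, this is the usual discrete integration by parts and produces at each site a contribution $\sum_\rho \bigl[V_{,\rho}(e(\ell-\rho)) - V_{,\rho}(e(\ell))\bigr]$. Inside $\Omega_\Gamma$, I use $\widetilde{D}_\rho = S^* D_\rho S$ together with the $\ell^2$-orthogonality of $S$ to move $S,S^*$ onto the $V_{,\rho}$ factor; the resulting expression has the same structure but indexed on the slipped configuration. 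Absolute summability of the intermediate rearrangement is justified using {\asSEL} for dislocations, Lemma~\ref{lemma-decay-u0}, and Lemma~\ref{lemma:omega-sum-int-est}.

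Next, using Lemma~\ref{lemma_lattice_symm} (which removes the $V_{,\rho}(0)$ contributions) I Taylor-expand each difference of $V_{,\rho}$ in $e$:
\begin{equation*}
V_{,\rho}(e(\ell-\rho)) - V_{,\rho}(e(\ell))
= \sum_\sigma V_{,\rho\sigma}(0) \bigl(e_\sigma(\ell-\rho) - e_\sigma(\ell)\bigr) + R_\rho(\ell),
\end{equation*}
where $R_\rho(\ell)$ is a second-order Taylor remainder involving $V_{,\rho\sigma\tau}$ evaluated at an intermediate configuration. I then treat these two contributions separately.

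For the linear-in-$e$ sum, I observe that it is a discrete linearised elasticity operator applied to $u_0$; on the half-plane $\ell_1 < \hat{x}_1$ it agrees with the analogous operator applied to $u^{\lin}$. Expanding $e_\sigma(\ell-\rho)-e_\sigma(\ell)$ by Taylor's theorem in $\rho$ to second order, the zeroth- and first-order terms collapse to the continuum linearised PDE $\sum_{\rho,\sigma} V_{,\rho\sigma}(\mathbf{0})\,\nabla_\rho\nabla_\sigma u^{\lin} = 0$ (the first-order term vanishes by the point symmetry $V_{,\rho\sigma}(\mathbf{0}) = V_{,(-\rho)(-\sigma)}(\mathbf{0})$ from {\asSEPS}). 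The remainder is pointwise bounded by $|\nabla^3 u_0(\ell^*)| \lesssim |\ell|^{-3}$ using \eqref{ac_def_Lemma3.1}, integrated against $|\rho|^3\, \wf^{\log}_3(|\rho|)$; summability over $\rho$ follows from $\wf^{\log}_3 \in \Hw^{\log}_3$ and Lemma~\ref{lemma:omega-sum-int-est}, yielding a contribution of order $|\ell|^{-3}$. For the quadratic remainder $R_\rho(\ell)$, I expand the products $e_\sigma(\ell-\rho)\,e_\tau(\ell-\rho) - e_\sigma(\ell)\,e_\tau(\ell)$ into $e_\sigma(\ell)\,D_{-\rho}e_\tau(\ell)$, $D_{-\rho}e_\sigma(\ell)\,e_\tau(\ell)$ and $D_{-\rho}e_\sigma(\ell)\,D_{-\rho}e_\tau(\ell)$; the strain decay \eqref{decay_elastic_strain} gives $|\ell|^{-3}$ pointwise (the purely quadratic-in-$D_{-\rho} e$ piece even gives $|\ell|^{-4}$), multiplied by weights controlled by $\wf^{\log}_3, \wf^{\log}_4$ and factors $|\sigma|\log(1+|\sigma|)$, $|\rho|\log(1+|\rho|)$; these are $\rho,\sigma,\tau$-summable precisely by the defining integrability in $\Hw^{\log}_k$ invoked through Lemma~\ref{lemma:omega-sum-int-est}. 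Combining both estimates yields $|f(\ell)|\le C|\ell|^{-3}$.

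The main obstacle will be the bookkeeping around the branch cut. The slip operator shifts indices by $\burg_{12}$, so after integration by parts the force at a fixed site $\ell$ with $\ell_1 < \hat{x}_1$ must still be expressible in terms of $e$ evaluated in a neighbourhood of $\ell$; one has to check that the slip corrections only redistribute contributions within $\Omega_\Gamma$ and so do not affect the decay on the left half-plane. A closely related point is that $u^{\lin}$ solves the continuum PDE only off $\Gamma$, so the continuum cancellation in the linear term must be applied to a modified predictor $S_0u_0$ on $\Omega_\Gamma$; the bookkeeping of these two versions (and the harmless finite corrections where the stencil $\{\ell-\rho\}$ straddles $\Gamma$) is where I expect the most technical care to be required.
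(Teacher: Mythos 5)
Your proposal follows essentially the same route as the paper's proof: summation by parts yielding $\sum_\rho D_{-\rho}V_{,\rho}(e(\ell))$ plus slip corrections (controlled because they force $|\rho|\gtrsim|\ell|$ and hence decay through the weight functions), a second-order Taylor expansion of $V_{,\rho}$ about $\bfO$, cancellation of the linear term against the continuum linearised PDE for $\ulin$ with $O(|\ell|^{-3})$ consistency errors from \eqref{ac_def_Lemma3.1}, the strain-decay estimates \eqref{decay_elastic_strain} for the quadratic remainder, and the reflected predictor $S_0 u_0$ for the right half-plane. The only step where the paper does noticeably more work is the quadratic remainder, which it splits into $|\rho|\le|\ell|/2$ and $|\rho|\ge|\ell|/2$, treating the latter with crude logarithmic bounds on $e(\ell-\rho)$ and the extra decay of $\wf_5^{\log}$ rather than pointwise strain decay — but this is precisely the bookkeeping you flag at the end as requiring care.
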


\begin{proof}
	{\it Case 1: left half-plane: }
	We first consider the simplified situation when $\ell_1 < \hat{x}_1$,
	and will see below that a generalisation to $\ell_1 > \hat{x}_1$ is straightforward.

	Let us assume $\ell_2>\hat{x}_2$ first.
	Using \eqref{eq:disl:delE0-Du}, \eqref{eq:disl:delE0-f} and the definition
	of $\widetilde{D}$, we obtain from a direct calculation that
	\begin{align}\label{proof-decay-fl-6}
	\nonumber
	f(\ell) &= \sum_{\rho\in\Lhom_{*}}D_{-\rho}V_{,\rho}\b(e(\ell)\b)
	+ \bigg( \sum_{\substack{\rho\in\Lhom_{*} \nonumber \\ \ell-\rho - \burg_{12} \in\Omega_{\Gamma} \\ \ell_2-\rho_2<\hat{x}_2}}
	V_{,\rho}\b(e(\ell-\rho - \burg_{12})\b) - \sum_{\substack{\rho\in\Lhom_{*} \nonumber \\ \ell-\rho\in\Omega_{\Gamma} \\ \ell_2-\rho_2<\hat{x}_2}} V_{,\rho}\b(e(\ell-\rho)\b) \bigg)
	\\
	&=\tilde{f}(\ell)+ f^{\rm c}(\ell).
	\end{align}
	Note that $\ell_1<\hat{x}_1,~\ell_2>\hat{x}_2$ and
	$\ell-\rho\in\Omega_{\Gamma},~\ell_2-\rho_2<\hat{x}_2$ implies $|\rho|\geq |\ell|$.
	Then by \asSEL~there exists $\wf_{3} \in \Hw_{3}^{\log}$ such that
	\begin{align}\label{proof-decay-fl-6-}
	|f^{\rm c}(\ell)| \leq C\sum_{\substack{\rho\in\Lhom_{*} \\ |\rho| \geq |\ell|}}	\wf_{3}(|\rho|)
	\leq C\sum_{\substack{\rho\in\Lhom_{*} \\ |\rho| \geq |\ell|}} \wf_{3}(|\rho|) \left( \frac{|\rho|}{|\ell|} \right)^{3} \leq C |\ell|^{-3} \| \wf_{3} \|_{\Hw_{3}^{\log}}
	\leq C|\ell|^{-3},
	\end{align}
	\fn{where we have applied Lemma \ref{lemma:omega-sum-int-est}.}	We now estimate $|\tilde{f}(\ell)|$ by first expanding $V_{,\rho}$ to second order,
	\begin{equation}
	\label{eq:appprf:disl_res_expVrho}
	V_{,\rho}(e(\ell)) = V_{,\rho}(\pmb{0}) + \< \delta V_{,\rho}(\pmb{0}), e(\ell) \> + \int_0^1 (1-t)
	\< \delta^2 V_{,\rho}(te(\ell)) e(\ell), e(\ell) \> \dt.
	\end{equation}
	The symmetry assumption \asSEPS~implies that
	$\sum_\rho V_{,\rho}(\bfO) = 0$. Hence, we obtain
	\begin{eqnarray}\label{eq:appprf:disl:res1-def_f}
	\nonumber
	\tilde{f}(\ell) &=& \sum_{\rho,\sigma \in \Lhom_{*}} V_{,\rho\sigma}(\bfO) \cdot \widetilde{D}_{-\rho} e_{\sigma}(\ell)
	+ \sum_{\rho \in \Lhom_{*}} \int_0^1 (1-t) \widetilde{D}_{-\rho}
	\< \ddel V_{,\rho}\b(te(\ell)\b) e(\ell), e(\ell) \> \dt
	\\
	&=:& f^{(1)}(\ell) + f^{(2)}(\ell).
	\end{eqnarray}

	To estimate $f^{(1)}(\ell)$, we observe that $\widetilde{D}_{-\rho} e_{\sigma}(\ell) = D_{-\rho} D_{\sigma} u_0(\ell)$ as $\ell_1<\hat{x}_1$, then using as $u_0(x)$ is smooth for $x<\hat{x}_1$, for $|\rho|, |\sigma| \leq |\ell|/4$,
	applying \eqref{ac_def_Lemma3.1}
	and a Taylor expansion gives
	\begin{align}
	& \b|D_{-\rho} D_{\sigma} u_0(\ell) +  \D_\rho \D_\sigma u_0(\ell)\b| = 	\b| D_{\sigma} u_0(\ell- \rho) - D_{\sigma} u_0(\ell) +  \D_\rho \D_\sigma u_0(\ell)\b| \nonumber
	\\
	& \!\!\! = \bigg| \nabla_{\sigma} u_0(\ell-\rho)-\nabla_{\sigma} u_0(\ell)
	+ \frac{1}{2} \int_{0}^{1} \nabla_\sigma^{2} (  u_0(\ell-\rho-t_{1}\sigma)
	- u_0(\ell-t_{1}\sigma) ) \dt + \nabla_\rho \nabla_\sigma u_0(\ell)  \bigg| \nonumber
	\\
	& \!\!\! \leq \b| \nabla_{\sigma} u_0(\ell-\rho)-\nabla_{\sigma} u_0(\ell) + \nabla_\rho \nabla_\sigma u_0(\ell) \b| + \frac{1}{2} \int_{0}^{1} \b| \nabla_\sigma^{2}  u_0(\ell-\rho-t_{1}\sigma) - u_0(\ell-t_{1}\sigma) \b| \dt_{1} \nonumber \\
	& \!\!\! \leq \int_{0}^{1} |\nabla_{\rho}^{2} \nabla_{\sigma} u_{0}(\ell - t\rho)| \dt + \frac{1}{2} \int_{0}^{1} \int_{0}^{1} \b| \nabla_{\rho} \nabla_\sigma^{2}  u_0(\ell-t_{1}\sigma - t_{2}\rho) \b| \dt_{1} \dt_{2} \nonumber \\
	& \!\!\! \leq C |\ell|^{-3} \b( |\rho|^{2} |\sigma| + |\rho| |\sigma|^{2} \b) \leq C |\ell|^{-3} \max \b\{ |\rho|^{3} \log(1 + |\rho|), |\sigma|^{3} \log(1 + |\sigma|) \b\}. \hspace{-5pt} \label{eq:D-rho-sigma-u0-est-1}
	\end{align}

	For the remaining case $ \max\{ |\rho|, |\sigma| \} > |\ell|/4$, we apply the triangle inequality together with Lemma \ref{lemma-decay-u0} and \eqref{ac_def_Lemma3.1}
	to obtain
	\begin{align}
	&\b|D_{-\rho} D_{\sigma} u_0(\ell) +  \D_\rho \D_\sigma u_0(\ell)\b| \leq \b|D_{-\rho} D_{\sigma} u_0(\ell) \b| + \b| \D_\rho \D_\sigma u_0(\ell)\b| \nonumber \\
	& \quad \leq C |\ell|^{-2} \max\{ |\rho|^{2} \log(1 + |\rho|), |\sigma|^{2} \log(1 + |\sigma|) \} \nonumber \\
	& \quad \leq C |\ell|^{-3} \max\{ |\rho|^{3} \log(1 + |\rho|), |\sigma|^{3} \log(1 + |\sigma|) \}. \label{eq:D-rho-sigma-u0-est-2}
	\end{align}
	Combining \eqref{eq:D-rho-sigma-u0-est-1}--\eqref{eq:D-rho-sigma-u0-est-2}	and applying \asSEL~, there exist $\wf_{3} \in \Hw^{\log}_{3}, \wf_{4} \in \Hw^{\log}_{4}$ such that
	\begin{align}\label{proof-decay-fl-1}
	&\bigg| \sum_{\rho,\sigma \in \Lhom_{*}} V_{,\rho\sigma}(\bfO)
	\b( D_{-\rho} e_{\sigma}(\ell) + \D_\rho \D_\sigma u_0(\ell) \b) \bigg| \nonumber \\
	& \quad \leq C|\ell|^{-3} \sum_{\rho,\sigma \in \Lhom_{*}} \wf_{3}(|\rho|) \wf_{3}(|\sigma|) \b( |\rho|^{3} \log(1 + |\rho|) + |\sigma|^{3} \log(1 + |\sigma|) \b) \nonumber \\ & \qquad + C|\ell|^{-3} \sum_{\rho \in \Lhom_{*}} \wf_{4}(|\rho|) |\rho|^{3} \log(1 + |\rho|) \nonumber \\ & \quad \leq C |\ell|^{-3} \b( \| \wf_{3} \|_{\Hw_{3}^{\log}}^{2} + \| \wf_{4} \|_{\Hw_{4}^{\log}} \b) \leq C |\ell|^{-3},
	\end{align}
	\fn{where we have applied Lemma \ref{lemma:omega-sum-int-est}.}
	For the remaining term, we have from \eqref{ac_def_Lemma3.1} that
	${\D^2\up = \D^2\ulin + O(|x|^{-3})}$, which together with the fact
	${\sum_{\rho,\sigma \in \Lhom_{*}} V_{,\rho\sigma}(\bfO)\D_{\rho}\D_{\sigma} \ulin(x) = 0}$
	and \eqref{proof-decay-fl-1} leads to
	\begin{eqnarray}\label{proof-decay-fl-2}
	|f^{(1)}(\ell)| \leq C|\ell|^{-3}.
	\end{eqnarray}

	To estimate $f^{(2)}(\ell)$ in \eqref{eq:appprf:disl:res1-def_f},
	we decompose $f^{(2)}$ into two parts
	\begin{multline}\label{proof:decay_f_ell_ab}
	f^{(2)}(\ell) = \sum_{\rho \in \Lhom_{*},|\rho|\leq\frac{|\ell|}{2}} \int_0^1 (1-t) \widetilde{D}_{-\rho}
	\< \ddel V_{,\rho}\b(te(\ell)\b) e(\ell), e(\ell) \> \dt
	\\
	+ \sum_{\rho \in \Lhom_{*},|\rho|\geq\frac{|\ell|}{2}} \int_0^1 (1-t) \widetilde{D}_{-\rho}
	\< \ddel V_{,\rho}\b(te(\ell)\b) e(\ell), e(\ell) \> \dt
	~=:~f^{(2)}_A(\ell) + f^{(2)}_B(\ell) .
	\end{multline}

	We first express
	\begin{align*}
	&\widetilde{D}_{-\rho} \b \< \ddel V_{,\rho}\b(t_{1}e(\ell)\b) e(\ell), e(\ell) \b \> = D_{-\rho} \b \< \ddel V_{,\rho}\b(t_{1}e(\ell)\b) e(\ell), e(\ell) \b \> \\ & \quad = \b \< \ddel V_{,\rho}\b(t_{1}e(\ell - \rho)\b) e(\ell - \rho), e(\ell - \rho) \b \> - \b \< \ddel V_{,\rho}\b(t_{1}e(\ell)\b) e(\ell), e(\ell) \b \> \\
	& \quad = \b \< \ddel V_{,\rho}\b(t_{1}e(\ell)\b) ( e(\ell-\rho) + e(\ell) ), D_{-\rho}e(\ell) \b \> + \b \< D_{-\rho} \ddel  V_{,\rho}\b(t_{1}e(\ell)\b) e(\ell), e(\ell) \b \> \\
	& \quad = \b \< \ddel V_{,\rho}\b(t_{1}e(\ell)\b) ( e(\ell-\rho) + e(\ell) ), D_{-\rho}e(\ell) \b \> \\ & \qquad \quad - t_{1} \int_{0}^{1} \b \< \delta^{3} V_{,\rho}\b(t_{1}e(\ell - t_{2}\rho)\b) \nabla_{\rho} e(\ell - t_{2}\rho), e(\ell), e(\ell) \b \> \dt_{2},
	\end{align*}
	hence using \eqref{decay_elastic_strain} and \asSEL~together with \eqref{ac_def_Lemma3.1}
	to eventually deduce
	\begin{align}\label{proof:decay_f_ell_a}
	\nonumber
	& |f^{(2)}_A(\ell)|
	\leq C  \bigg(\int_{0}^{1}   \sum_{\substack{\rho,\sigma,\xi\in\Lhom_{*} \\|\rho|\leq\frac{|\ell|}{2}}} \b|  V_{,\rho\sigma\xi}(t_{1} e(\ell - \rho)) \b| \b| e_{\sigma}(\ell-\rho) + e_{\sigma}(\ell) \b| \b| D_{-\rho}e_{\xi}(\ell) \b| \dt_1
	\\ \nonumber
	& +  \int_{0}^{1}  \int_{0}^{1} \sum_{\substack{\rho,\sigma,\xi,\zeta\in\Lhom_{*} \\|\rho|\leq\frac{|\ell|}{2}}} \b| V_{,\rho\sigma\xi\zeta}(t_{1} e(\ell - t_2\rho)) \b| \b| \nabla_{\rho} e_{\sigma}(\ell - t_2\rho)) \b| \b| e_{\xi}(\ell) \b| \b| e_{\zeta}(\ell)  \b| \bigg) \dt_1\dt_2
	\\
	& \leq C|\ell|^{-3},
	\end{align}
	by considering the various partition cases in \eqref{eq:Vloc}.

	For the remaining case, recall that
	\begin{align*}
	&\widetilde{D}_{-\rho} \b \< \ddel V_{,\rho}\b(t_{1}e(\ell)\b) e(\ell), e(\ell) \b \> \\ & \quad = \b \< \ddel V_{,\rho}\b(t_{1}e(\ell - \rho)\b) e(\ell - \rho), e(\ell - \rho) \b \> - \b \< \ddel V_{,\rho}\b(t_{1}e(\ell)\b) e(\ell), e(\ell) \b \>,
	\end{align*}
	then using \asSEL, there exist $\wf_{3} \in \Hw^{\log}_{3}, \wf_{4} \in \Hw^{\log}_{4}, \wf_{5} \in \Hw^{\log}_{5}$ such that
	\begin{align}
	&\sum_{\substack{\rho\in\Lhom_{*} \\ |\rho| \geq |\ell|/2}} \b | \b \< \ddel V_{,\rho}\b(t_{1}e(\ell)\b) e(\ell), e(\ell) \b \> \b| \leq C |\ell|^{-3} \sum_{\rho\in\Lhom_{*}} \wf_{5}(|\rho|) |\rho|^{3} \log^{2}(1 + |\rho|) \nonumber \\ & \quad + C |\ell|^{-3} \Bigg( \bigg( \sum_{\rho\in\Lhom_{*}} \wf_{3}(|\rho|) |\rho| \log(1 + |\rho|)  \bigg)^{3} + \bigg( \sum_{\rho\in\Lhom_{*}} \wf_{4}(|\rho|) |\rho|^{2} \log^{2}(1 + |\rho|)  \bigg)^{3/2} \Bigg) \nonumber \\
	& \quad \leq C |\ell|^{-3} \Big(\| \wf_{3} \|_{\Hw_{3}^{\log}}^{3} + \| \wf_{4} \|_{\Hw_{4}^{\log}}^{3/2} + \| \wf_{5} \|_{\Hw_{5}^{\log}} \Big) \leq C |\ell|^{-3},
	\end{align}
	\fn{using Lemma \ref{lemma:omega-sum-int-est}.}
	For $|\rho| \geq |\ell|/2$ and $\sigma \in \Lhom_{*}$, we have from \eqref{lemma-decay-u0} that
	\begin{align*}
	|e_{\sigma}(\ell - \rho)| &= |D_{\sigma} u_{0}(\ell - \rho)| \leq |u_{0}(\ell - \rho + \sigma)| + |u_{0}(\ell - \rho)| \\ &\leq C \b( \log(2 + |\ell - \rho|) + |\sigma| \, \b) \leq C\b( \log(1 + |\rho|) + |\sigma| \, \b),
	\end{align*}
	hence we deduce \fn{from Lemma \ref{lemma:omega-sum-int-est} that}
	\begin{align}
	&\sum_{\substack{\rho\in\Lhom_{*} \\ |\rho| \geq |\ell|/2}} \b | \b \< \ddel V_{,\rho}\b(t_{1}e(\ell - \rho)\b) e(\ell - \rho), e(\ell - \rho) \b \> \b| \leq C |\ell|^{-3} \sum_{\rho\in\Lhom_{*}} \wf_{5}(|\rho|) |\rho|^{5} \nonumber \\ & \quad + C |\ell|^{-3} \Bigg( \bigg( \sum_{\rho\in\Lhom_{*}} \wf_{3}(|\rho|) |\rho|^{3} \log^{2}(1 + |\rho|)  \bigg)^{3} + \bigg( \sum_{\rho\in\Lhom_{*}} \wf_{4}(|\rho|) |\rho|^{4} \log^{2}(1 + |\rho|)  \bigg)^{3/2} \Bigg) \nonumber \\
	& \quad \leq C |\ell|^{-3} \Big(\| \wf_{3} \|_{\Hw_{3}^{\log}}^{3} + \| \wf_{4} \|_{\Hw_{4}^{\log}}^{3/2} + \| \wf_{5} \|_{\Hw_{5}^{\log}} \Big) \leq C |\ell|^{-3}. \label{proof:decay_f_ell_b}
	\end{align}
	Collecting the estimates \eqref{proof:decay_f_ell_ab}--\eqref{proof:decay_f_ell_b} yields
	\begin{align}\label{proof:decay_f_ell_c}
	|f^{(2)}(\ell)| & \leq C|\ell|^{-3},
	\end{align}
	hence $|f(\ell)| \leq C|\ell|^{-3}$	whenever $\ell_{2} > \hat{x}_{2}$.

	Combining \eqref{proof-decay-fl-6}, \eqref{proof-decay-fl-6-}, \eqref{eq:appprf:disl:res1-def_f},
	\eqref{proof-decay-fl-2} and \eqref{proof:decay_f_ell_c} gives \eqref{decay-fl} when $\ell$ lies in the left half-plane and $\ell_2>\hat{x}_2$.
	The case $\ell_2<\hat{x}_2$ follows verbatim by rewriting \eqref{proof-decay-fl-6} as
	\begin{align*}
	f(\ell) &= \sum_{\rho\in\Lhom_{*}}D_{-\rho}V_{,\rho}\b(e(\ell)\b)
	+ \bigg( \sum_{\substack{\rho\in\Lhom_{*} \nonumber \\ \ell-\rho + \burg_{12} \in\Omega_{\Gamma} \\ \ell_2-\rho_2<\hat{x}_2}}
	V_{,\rho}\b(e(\ell-\rho + \burg_{12})\b) - \sum_{\substack{\rho\in\Lhom_{*} \nonumber \\ \ell-\rho\in\Omega_{\Gamma} \\ \ell_2-\rho_2<\hat{x}_2}} V_{,\rho}\b(e(\ell-\rho)\b) \bigg).
	\end{align*}

	{\it Case 2: right half-space: }
	To treat the case $\ell_1 > \hat{x}_1$, we first observe that
	the definition of the reference solution with branch-cut
	$\Gamma = \{ (x_1, \hat{x}_2) ~|~x_1 \geq \hat{x}_1 \}$ was somewhat arbitrary,
	in that we could have equally chosen
	$\Gamma_S := \{ (x_1, \hat{x}_2) ~|~x_1 \leq \hat{x}_1 \}$.
	In this case the predictor solution $u_0$ would be replaced with $S_0  u_0$.
	Let the resulting energy functional be denoted by
	\begin{eqnarray*}
		\E_S(u) := \sum_{\ell\in\Lhom} V\b( DS_0 u_0(\ell)+Du(\ell)\b)
		- V\b( DS_0 u_0(\ell)\b).
	\end{eqnarray*}
	It is straightforward to see that if $\delta\E(\bar{u}) = 0$, then
	$\delta\E_S(S\bar{u}) = 0$ as well.

	With this observation, we can rewrite
	\begin{displaymath}
	f = \Del_{-\rho} V_{,\rho}(\Del_0 \up)
	= [R D_{-\rho} S] V_{,\rho}\b([R D S_0]\up\b)
	= R D_{-\rho} V_{,\rho}(D S_0\up).
	\end{displaymath}
	Since $S_0 \up$ is smooth in a neighbourhood of $|\ell|$ even if that
	neighbourhood crosses the branch-cut, we can now repeat the foregoing
	argument in {Case 1} to deduce again that
	$|Sf(\ell)| \leq C |\ell|^{-3}$ as well.  But since $S$ represents an $O(1)$
	shift, this immediately implies also that $|f(\ell)| \leq C |\ell|^{-3}$.
	This completes the proof of \eqref{decay-fl}.
\end{proof}

We also need the  following result to convert pointwise forces into divergence form.
Define $D_{\mathcal{N}}u(\ell):=\{D_{\rho}u(\ell)\}_{\rho\in\mathcal{N}(\ell)-\ell}$,
we have the following result inherited from \cite[Lemma 5.1 and Corollary 5.2]{2013-defects-v3}.

\begin{lemma}\label{lemma:divergence}
	Let $p > d$, $f : \mA\Z^d \to \R^{\ds}$ such that
	$|f(\ell)| \leq C_f |\ell|^{-p}$ for all $\ell \in \mA\Z^d$,
	and $\sum_{\ell \in \mA\Z^d} f(\ell) = 0$.
	Then there exists $g : \mA\Z^d \to (\R^{\ds})^{\mathcal{N(\ell)}-\ell}$
	and a constant $C>0$ depending on $p$ such that
	\begin{eqnarray*}
		\sum_{\ell\in\mA\Z^d} f(\ell) \cdot v(\ell) =	\sum_{\ell\in\mA\Z^d} \<g(\ell), D_{\mathcal{N}}v(\ell)\>
		\quad{\rm and}\quad
		|g(\ell)|_{\mathcal{N}} \leq C|\ell|^{-p+1} \quad \forall~\ell \in \mA\Z^d.
	\end{eqnarray*}
\end{lemma}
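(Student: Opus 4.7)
The plan is to convert $\sum_\ell f(\ell)\cdot v(\ell)$ into a flux form by summing $v$ along nearest-neighbour paths that contract toward the origin, using the zero-sum hypothesis on $f$ to absorb the boundary term at $0$, and then to estimate the resulting flux directly from the pointwise decay of $f$.

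First, for each $\ell\in\mA\Z^d\setminus\{0\}$ I would fix a quasi-radial nearest-neighbour path $P_\ell=(\ell_0=\ell,\ell_1,\ldots,\ell_{N_\ell}=0)$ with $\ell_{k+1}\in\mathcal{N}(\ell_k)$ and $|\ell_{k+1}|<|\ell_k|$, chosen so that the family $\{P_\ell\}$ is angularly spread: the set of sites $\ell$ whose path traverses a given oriented bond $(m,m+\rho)$, $m+\rho$ closer to the origin than $m$, lies in an outward cone from $m$ of solid angle $\sim|m|^{-(d-1)}$, so that its cross-section at radius $r\geq|m|$ contains at most $\sim(r/|m|)^{d-1}$ lattice sites. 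Such a path family is built by a greedy construction across dyadic shells, in the same spirit as the path-counting arguments of Lemma~\ref{lemma:pathcount}.

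Next, the telescoping identity $v(\ell)=v(0)-\sum_{k=0}^{N_\ell-1}D_{\rho_k}v(\ell_k)$ with $\rho_k:=\ell_{k+1}-\ell_k\in\mathcal{N}(\ell_k)-\ell_k$ holds for any $v$. Substituting into $\sum_\ell f(\ell)\cdot v(\ell)$ and invoking $\sum_\ell f(\ell)=0$ annihilates the $v(0)$-term; after swapping the order of summation we obtain
\begin{equation*}
\sum_{\ell\in\mA\Z^d}f(\ell)\cdot v(\ell)\;=\;\sum_{m\in\mA\Z^d}\sum_{\rho\in\mathcal{N}(m)-m}g_\rho(m)\cdot D_\rho v(m),
\end{equation*}
with $g_\rho(m):=-\sum_{\ell:\,(m,m+\rho)\in P_\ell}f(\ell)$ (signs inherited from the bond orientation along $P_\ell$). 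This establishes the first assertion of the lemma.

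For the decay estimate, fix $m\in\mA\Z^d$ with $|m|\sim R$ and $\rho\in\mathcal{N}(m)-m$. The spreading property restricts the defining sum for $g_\rho(m)$ to sites $\ell$ in an outward cone of angular measure $\sim R^{-(d-1)}$, whence
\begin{equation*}
|g_\rho(m)|\;\leq\;C\sum_{r\geq R}(r/R)^{d-1}r^{-p}\;\leq\;CR^{1-d}\int_R^\infty r^{d-1-p}\,\dr\;\leq\;CR^{1-p},
\end{equation*}
where convergence uses $p>d$. Summing over the finitely many $\rho\in\mathcal{N}(m)-m$ yields $|g(m)|_{\mathcal{N}}\leq C|m|^{-p+1}$. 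The main obstacle is the combinatorial construction of the inward-path family with uniform angular spread: a naive funnelling of paths through a shared axis would bundle too many contributions onto too few bonds near the origin and degrade the bound. A greedy construction across concentric dyadic shells circumvents this, and is essentially the content of \cite[Lemma~5.1 and Corollary~5.2]{2013-defects-v3}.
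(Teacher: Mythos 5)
Your argument is correct in outline, but it is worth noting that the paper does not actually prove this lemma: its ``proof'' is a one-line deferral to \cite[Lemma 5.1 and Corollary 5.2]{2013-defects-v3}, together with the observation that $\mathcal{N}(\ell) \supseteq \{\ell\pm Ae_i\}$ so that lattice-direction nearest-neighbour steps are always available. What you have written is essentially a reconstruction of the proof of that cited result, so in substance the two routes coincide; yours is simply more self-contained. Two points deserve care. First, the interchange of the $\ell$- and $k$-sums should be justified: since $\sum_\ell |f(\ell)|<\infty$ (as $p>d$) and $\sum f(\ell)=0$, one may reduce to $v\in\Usz$ with compact support, for which each path contributes only $O(1)$ nonzero increments $D_{\rho_k}v(\ell_k)$ uniformly in $\ell$, so Fubini applies; a naive absolute-convergence bound $\sum_\ell |f(\ell)|\,N_\ell$ would fail in the borderline case $p=d+1$ actually used in the paper. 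Second, the angularly spread path family is the genuine technical content here, and you flag it but do not construct it; the standard device is to take for $P_\ell$ the digital approximation of the straight segment $[\ell,0]$ (realisable through the bonds $\pm Ae_i$), for which the set of sites at radius $r$ whose path meets a fixed bond at radius $R\le r$ lies in a cone of angular width $O(1/R)$, giving the $(r/R)^{d-1}$ count and hence, after summing $r^{-p}$ over shells, the bound $|g(m)|_{\mathcal{N}}\le C|m|^{1-p}$ exactly under the hypothesis $p>d$. With those two points made explicit your proof is complete and matches the cited lemma that the paper relies on.
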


\begin{proof}
	This is an immediate consequence of  in \cite[Lemma 5.1]{2013-defects-v3}
	\fn{\eqref{ass:A:nn}, and} $\mathcal{N}(\ell) \supseteq \{\ell\pm Ae_i \fn{| 1 \leq i \leq 3}\}$\fn{, for all $\ell \in \L$}.
\end{proof}

\begin{proof}
	[Proof of Theorem \ref{corr:deltaE0-dislocation}]
	We have from Lemma \ref{lemma:decay_fl_dislocation} that $|f(\ell)|\leq C|\ell|^{-3}$,
	which together with Lemma \ref{lemma:divergence} yields a map $g$ such that
	$|g(\ell)|\leq C|\ell|^{-2}$ and
	\begin{eqnarray*}
		\b\<{F},v\b\> \leq C \|g\|_{\hc{l}^2}\|Dv\|_{\hc{l}^2}^2	\qquad\forall~v\in\UsH(\L).
	\end{eqnarray*}
	Therefore $F$ is  bounded and this completes the proof.
\end{proof}

\subsection{Proof of Theorem \ref{theorem:dislocation-decay}}
\label{sec:proof_decay_dislocation}

We will extend the arguments of Appendix \ref{sec:proof_decay_pd} to the dislocation case.
We shall modify the homogeneous difference operator \eqref{eq:def-Huv}
in the analysis for dislocations
by defining $\widetilde{H}:\UsH(\Lhom)\rightarrow\UsHd(\Lhom)$ as
\begin{eqnarray}\label{eq:def-Huv-tilde}
\b\< \widetilde{H}u,v \b\> :=
\sum_{\ell\in\Lhom}\b\< \delta^2 V({\pmb 0}) \widetilde{D}u(\ell), \widetilde{D}v(\ell)\b\>
\qquad \forall~v\in\UsH(\Lhom).
\end{eqnarray}
The following lemma is analogous to Lemma \ref{lemma-g-ubar-decay},
which gives the first-order residual estimate for dislocations.

\begin{lemma}
	\label{th:regdisl:residuals}
	If the conditions of Theorem~\ref{theorem:dislocation-decay} are satisfied,
	then for any $\bar{u}$ solving~\eqref{eq:1st_order_pd}, there exists
	$g:\Lhom\rightarrow (\R^{\ds})^{\Lhom_{*}}$ such that
	\begin{eqnarray*}
		\< \widetilde{H} \ua, v \> = \< g, \Del v \> \qquad \forall v \in \Usz,
	\end{eqnarray*}
	where $g$ satisfies
	\begin{align*}
	\left| \b\<g,\Del v\b\> \right| \leq C \sum_{\ell \in \Lhom} \sum_{j=1}^{3}
	\tilde{g}_{j}(\ell) |\Del v(\ell)|_{\wf_{j},j},
	\end{align*}
	with some constant $C>0$, $\wf_j\in\Hw_{j+2}^{\log}$ for $1\leq j\leq 3$ and
	\begin{align*}
	\tilde{g}_{1}(\ell) = \tilde{g}_{2}(\ell) = (1+|\ell|)^{-2} + \bigg( \sum_{k=1}^{2} |\Del \bar{u}(\ell)|_{\wf_{k},k} \bigg)^{2}, \quad
	\tilde{g}_{3}(\ell) = (1+|\ell|)^{-2} + |\Del \bar{u}(\ell)|_{\wf_{3},3} ^{2}.
	\end{align*}
\end{lemma}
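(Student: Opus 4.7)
The strategy will parallel Lemma~\ref{lemma-g-ubar-decay}, with two key substitutions: since $\L = \Lhom$ for dislocations, no lattice interpolation is needed, and the role played in the point-defect case by the homogeneity estimate~\eqref{eq:Vhomogeneity} of \asSEH\ will be taken by the slip invariance~\eqref{slip-invariance} together with the pointwise force bound from Lemma~\ref{lemma:decay_fl_dislocation}.

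\textbf{Step 1 (decomposition).} Differentiating~\eqref{slip-invariance} in $u$ shows that the critical point equation $\b\<\del\E(\bar u),v\b\>=0$ is equivalent to $\sum_{\ell\in\Lhom}\b\<\del V(e(\ell)+\Del\bar u(\ell)),\Del v(\ell)\b\>=0$ for all $v\in\Usz(\L)$. Combined with the definition~\eqref{eq:def-Huv-tilde} of $\widetilde H$, with \asSEPS\ (which forces $\del V(\bfO)=0$), and with a second-order Taylor expansion of $\del V$ around $\bfO$, I will rearrange to obtain
\begin{equation*}
\b\<\widetilde H\bar u,v\b\>
=-\sum_{\ell\in\Lhom}\b\<\del V(e(\ell)),\Del v(\ell)\b\>
-\sum_{\ell\in\Lhom}\b\<\mathcal{R}_\ell,\Del v(\ell)\b\>,
\end{equation*}
where the remainder $\mathcal{R}_\ell$ is at least quadratic in the pair $(e(\ell),\Del\bar u(\ell))$ and is expressed as integrals of $\dddel V$ evaluated at convex combinations of $\bfO$, $e(\ell)$ and $\Del\bar u(\ell)$.

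\textbf{Step 2 (linear residual).} The first sum is exactly $\b\<F,v\b\>$ from Theorem~\ref{corr:deltaE0-dislocation}, and by Lemma~\ref{lemma:decay_fl_dislocation} equals $\sum_\ell f(\ell)\cdot v(\ell)$ with $|f(\ell)|\leq C(1+|\ell|)^{-3}$. Applying Lemma~\ref{lemma:divergence} with $p=3>d=2$ converts this into a nearest-neighbour stencil $g^{\rm lin}$ with $|g^{\rm lin}(\ell)|_{\mathcal{N}}\leq C(1+|\ell|)^{-2}$ and $\sum_\ell f(\ell)v(\ell)=\sum_\ell\b\<g^{\rm lin}(\ell),D_{\mathcal{N}}v(\ell)\b\>$. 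Since every $\wf_j\in\Hw^{\log}_{j+2}$ is bounded below on $\mathcal{N}(\ell)-\ell$, and $Dv$ and $\Del v$ coincide on nearest-neighbour bonds outside a bounded neighbourhood of $\Gamma$, this delivers the $(1+|\ell|)^{-2}$ contribution to each of $\tilde g_1,\tilde g_2,\tilde g_3$.

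\textbf{Step 3 (nonlinear remainder and main obstacle).} The remainder $\sum_\ell\b\<\mathcal{R}_\ell,\Del v(\ell)\b\>$ will be processed partition by partition over $\mathcal{A}\in\mathcal{P}(3)$, following the exact scheme of \eqref{eq:useful2}--\eqref{eq:g-res-quad-est}, except that the two non-$v$ slots are now filled by two distinct fields $e$ and $\Del\bar u$. The dislocation form of \asSEL\ bounds $V_{,\pmb\rho}$ by weights $\wf^{\log}_{|A_i|+2}\in\Hw^{\log}_{|A_i|+2}$, and \eqref{decay_elastic_strain} yields $|e(\ell)|_{\wf_j,j}\leq C(1+|\ell|)^{-1}$ for $\wf_j\in\Hw^{\log}_{j+2}$ (the log factors in the strain estimate being absorbed by the built-in $\log^2$ of $\Hw^{\log}$). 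Terms involving only $\Del\bar u$ directly reproduce the quadratic factors $\b(\sum_{k=1}^2|\Del\bar u(\ell)|_{\wf_k,k}\b)^2$ and $|\Del\bar u(\ell)|_{\wf_3,3}^2$ of the statement, while pure-$e$ terms contribute $(1+|\ell|)^{-2}$. The delicate point, and the main obstacle, is the treatment of the mixed $e\cdot\Del\bar u$ cross-products produced by the partition expansion: they naively decay only as $(1+|\ell|)^{-1}$ and so fit neither the pure decay nor the pure quadratic slot of $\tilde g_j$. I will redistribute each such term via Young's inequality $|e||\Del\bar u|\leq\tfrac12|e|^2+\tfrac12|\Del\bar u|^2$, so that the $|e|^2$ part joins the $(1+|\ell|)^{-2}$ slot and the $|\Del\bar u|^2$ part joins the quadratic slot. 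The central technical check is that, after this redistribution, the weight paired with $\Del v$ in each partition contribution still lies in the correct $\Hw^{\log}_{j+2}$; this will follow from the fact that for each $\mathcal{A}\in\mathcal{P}(3)$ exactly one block contains the $v$-index, whose size $|A_i|\in\{1,2,3\}$ determines the index $j$ on the right-hand side of the stated inequality.
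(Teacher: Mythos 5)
Your proposal is correct and follows essentially the same route as the paper's proof: the paper's decomposition $\< \widetilde{H}\ua, v\> = \<g^{(1)}+g^{(2)},\Del v\> - \<f,v\>$ is exactly your linear term plus remainder (your $\mathcal{R}_\ell$ equals $-(g^{(1)}+g^{(2)})$ after regrouping), the $f$-term is handled via Lemmas \ref{lemma:decay_fl_dislocation} and \ref{lemma:divergence} precisely as you propose, and the mixed $e\cdot\Del\ua$ contribution is absorbed by the same Young-inequality redistribution into the $(1+|\ell|)^{-2}$ and quadratic slots. Two harmless imprecisions worth fixing: {\asSEPS} yields $V_{,\rho}(\bfO)=-V_{,-\rho}(\bfO)$ (whence Lemma \ref{lemma_lattice_symm}), not $\delta V(\bfO)=0$; and $Dv$ and $\Del v$ differ on nearest-neighbour bonds along the entire half-line $\{x_2=\hat{x}_2\}\cap\Omega_\Gamma$, which is unbounded, so the divergence-form conversion must be run directly with the permuted stencil (using also $|Sf(\ell)|\leq C|\ell|^{-3}$), as the paper indicates in \eqref{proof-decay-dislocation-d}.
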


\begin{proof}
	By the definition of $\widetilde{H}$, we can write
	\begin{eqnarray}\label{eq:prf:disl:tilH-residual-eqn}
	\nonumber
	\< \widetilde{H} \bar{u}, v \> &=&
	\sum_{\ell \in \Lhom} \B( \b\< (\ddel V(\bfO) - \ddel V(e(\ell)))\Del \bar{u}(\ell), \Del v(\ell) \b\>
	\\ \nonumber
	&&  + \b< \del V(e(\ell)) + \ddel V(e(\ell)) \Del \bar{u}(\ell) - \del V(e(\ell)+\Del \bar{u}(\ell)), \Del v(\ell) \b\>  \B) - \< \del\E(0), v \>
	\\
	&=:&  \<g^{(1)} + g^{(2)}, \Del v \> - \< f, v \>,
	\end{eqnarray}
	where we employed the force-displacement form \eqref{eq:disl:delE0-f} in the last step.

	To estimate $\tilde{g}^{(1)}$, we have from \eqref{decay_elastic_strain} that
	% same calculations as \eqref{eq:useful2}--\eqref{eq:g-res-quad-est} that
	there exist $\wf_j\in\Hw_{j+2}^{\log}~(j=1,2,3)$ such that
	\begin{align}\label{proof-decay-dislocation-a}
	\nonumber
	&\quad
	\b|\<g^{(1)},\Del v\>\b|
	= \bigg|\sum_{ \ell\in\Lhom} \int_0^1 \Big\< \delta^3 V\big(te(\ell)\big) , \Big(e(\ell),  \Del_{\vsig}\bar{u}(\ell), \Del_{\xi} v(\ell)  \Big) \Big\> ~\dd t \bigg|
	\\
	&\leq \sum_{{\ell\in\Lhom}}\sum_{\rho,\vsig,\xi\in\Lhom_{*}} \int_0^1
	\Big| V_{,\rho\vsig\xi}\b(t e(\ell)\b) D_{\rho}e(\ell)\Del_{\vsig}\bar{u}(\ell)\Del_{\xi} v(\ell)  \Big| ~\dd t
	\\
	&\leq C \sum_{\ell\in\Lhom} (1+|\ell|)^{-1} \left(
	\bigg( \sum_{k=1}^{2} |\Del \bar{u}(\ell)|_{\wf_{k},k} \bigg)
	\bigg( \sum_{k'=1}^{2} |\Del v(\ell)|_{\wf_{k'},k'} \bigg)
	+ |\Del \bar{u}(\ell)|_{\wf_{3},3} |\Del v(\ell)|_{\wf_{3},3}
	\right)
	\end{align}
	where similar calculations as  \eqref{eq:useful2}--\eqref{eq:g-res-quad-est}
	and the decay assumptions \asSEL~for dislocations are used for the last estimate.

	Again using the same calculations as \eqref{eq:useful2}--\eqref{eq:g-res-quad-est},
	we deduce
	\begin{align}\label{proof-decay-dislocation-b}
	\b|\<g^{(2)},\Del v\>\b|
	\leq C \sum_{\ell\in\L} \left(
	\bigg( \sum_{k=1}^{2} |\Del \bar{u}(\ell)|_{\wf_{k},k} \bigg)^{2}
	\bigg( \sum_{k'=1}^{2} |\Del v(\ell)|_{\wf_{k'},k'} \bigg)
	+ |\Del \bar{u}(\ell)|_{\wf_{3},3}^{2} |\Del v(\ell)|_{\wf_{3},3}
	\right).
	\end{align}

	Combining \eqref{proof-decay-dislocation-a} and \eqref{proof-decay-dislocation-b},
	we obtain that
	\begin{align}
	|\<g^{(1)} + g^{(2)}, \Del v \>|
	&\leq C \sum_{\ell\in\Lhom} \bigg( (1+|\ell|)^{-2} + \Big(\sum_{k=1}^{2} |\Del \bar{u}(\ell)|_{\wf_{k},k} \Big)^2 \bigg)
	\bigg( \sum_{k'=1}^{2} |\Del v(\ell)|_{\wf_{k'},k'} \bigg) \nonumber
	\\
	& \qquad \quad + C \sum_{\ell\in\Lhom} \Big( (1+|\ell|)^{-2} + |\Del \bar{u}(\ell)|^2_{\wf_3,3} \Big)
	|\Del v(\ell)|_{\wf_3,3}.
	\label{proof-decay-dislocation-c}
	\end{align}

	For the $\<f, v\>$ group, we have from Lemma \ref{lemma:decay_fl_dislocation}
	that $|f(\ell)| \leq C |\ell|^{-3}$, which also implies $|Sf(\ell)| \leq C |\ell|^{-3}$.
	Then using a similar argument as Lemma \ref{lemma:divergence},
	we can derive existence of $g^{(3)}$, such that
	\begin{eqnarray}\label{proof-decay-dislocation-d}
	|\< f, v \>| = |\< g^{(3)}, \Del v \>| \leq \sum_{\ell\in\Lhom}
	\b(1+|\ell|\b)^{-2} |\Del v(\ell)|_{\wf_1,1}.
	\end{eqnarray}
	Setting $g := g^{(1)} + g^{(2)} - g^{(3)}$, 	\eqref{proof-decay-dislocation-c}--\eqref{proof-decay-dislocation-d} imply the desired result.
\end{proof}

Unlike the point defect case, we can not use Lemma \ref{lemma:decay-hom-eq}
directly, due to the ``incompatible finite-difference stencils'' $\Del u(\ell)$
in \eqref{eq:def-Huv-tilde}.  We will follow a boot-strapping argument
in \cite{2013-defects-v3} to bypass this obstacle,
starting from the following sub-optimal estimate.

\begin{lemma}
	\label{th:regdisl:subopt}
	Under the conditions of Theorem \ref{theorem:dislocation-decay},
	there exists $C > 0$ such that
	\begin{eqnarray}\label{decay-tilde-suboptimal}
	|\Del_{\rho}  \ua(\ell)| \leq C |\rho| \big(1+|\ell|\big)^{-1} .
	\end{eqnarray}
\end{lemma}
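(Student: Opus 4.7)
The plan is to follow the boot-strapping strategy of \cite{2013-defects-v3}, using the lattice Green's function representation for the sub-optimal rate and then sharpening in subsequent steps (not covered here). The starting point is Lemma~\ref{th:regdisl:residuals}, which recasts the Euler--Lagrange equation in the form $\<\widetilde H\ua,v\> = \<g,\Del v\>$ for all $v\in\Usz(\Lhom)$, where $g$ carries a decaying linear part $\lesssim(1+|m|)^{-2}$ together with nonlinear quadratic terms in $\Del\ua$. A preliminary ingredient is the global bound $\|\Del\ua\|_{\hc{l}^2_{\mathcal{N}}(\Lhom)}<\infty$, obtained from \asLS~by testing $\<\widetilde H\ua,\ua\> = \<g,\Del\ua\>$ with the Cauchy--Schwarz and Young inequalities applied to the residual form in Lemma~\ref{th:regdisl:residuals}; the nonlinear contributions are absorbed once $\ua$ is assumed to be a local minimiser so that $\|\Del\ua\|_{\hc{l}^2_\mathcal{N}}$ is small enough.

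Next, for fixed $\ell\in\Lhom$ and $\rho\in\Lhom_*$, I would test with $v_{\ell,\rho}(m) := \Gr^S(\ell-m) - \Gr^S(\ell+\rho-m)$, where $\Gr^S$ is a slipped version of the Green's function of Lemma~\ref{lemma:decay-Green}, constructed so that, away from $\Gamma$, $\widetilde H v_{\ell,\rho}$ collapses to the standard convolution $H\Gr$ while across $\Gamma$ the slip invariance~\eqref{slip-invariance} produces only bounded and well-localised correction terms. The representation formula to target is
\begin{eqnarray*}
\Del_\rho\ua(\ell) \;=\; \<g,\Del v_{\ell,\rho}\> \;+\; (\text{slip correction terms}),
\end{eqnarray*}
where, by Lemma~\ref{lemma:decay-Green}(ii) (in $d=2$, applied to the second-order stencil of $\Gr^S$), one has $|\Del v_{\ell,\rho}(m)|\leq C|\rho|(1+|\ell-m|)^{-2}$ away from $\Gamma$, and analogous bounds (with the same summability) hold on the neighbourhood of $\Gamma$ once the slip is accounted for.

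With these two ingredients in hand, the proof reduces to bounding the two sums
\begin{eqnarray*}
I_{\rm lin}(\ell) \;=\; |\rho|\sum_{m\in\Lhom}(1+|m|)^{-2}(1+|\ell-m|)^{-2},
\qquad
I_{\rm nl}(\ell) \;=\; |\rho|\sum_{m\in\Lhom}|\Del\ua(m)|^2(1+|\ell-m|)^{-2},
\end{eqnarray*}
together with their weighted counterparts corresponding to the $\wf_j$-stencils of $g$. The first is controlled by the convolution estimate used in the proof of Lemma~\ref{lemma:decay-hom-eq}, giving $I_{\rm lin}\leq C|\rho|(1+|\ell|)^{-2}\log(2+|\ell|)\leq C|\rho|(1+|\ell|)^{-1}$. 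For $I_{\rm nl}$ I would split the sum at $|m-\ell|=|\ell|/2$: on $|m-\ell|\geq |\ell|/2$ use $(1+|\ell-m|)^{-2}\leq C(1+|\ell|)^{-2}$ and the global bound on $\|\Del\ua\|_{\hc{l}^2_\mathcal{N}}$; on the complement $|m|\geq|\ell|/2$ holds, so a discrete Young/H\"older argument together with the tail bound $\|\Del\ua\|_{\hc{l}^2_\mathcal{N}(B_{|\ell|/2}^c)}\to 0$ gives a bound of the same order $|\rho|(1+|\ell|)^{-1}$.

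The main obstacle is step two: constructing $\Gr^S$ and quantifying the slip corrections so that the identity $\Del_\rho\ua(\ell)=\<g,\Del v_{\ell,\rho}\>+(\ldots)$ is actually usable. This is subtle because $\Del$ permutes stencils across $\Gamma$, so $\Del v_{\ell,\rho}$ is not simply $D^2 \Gr$, and one must verify that the slip-induced corrections do not destroy the $(1+|\ell-m|)^{-2}$ decay. A secondary technicality is that the nonlinear residual only benefits from the global $\hc{l}^2$-bound on $\Del\ua$ without any a priori pointwise decay, which is precisely why the bound obtained is sub-optimal: the best convolution with an $\hc{l}^2$ weight yields only $(1+|\ell|)^{-1}$, not the sharper $(1+|\ell|)^{-2}\log(2+|\ell|)$ to be recovered in subsequent boot-strap iterations.
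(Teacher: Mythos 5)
There are two genuine gaps. First, the pivot of your argument --- the representation $\Del_\rho\ua(\ell)=\<g,\Del v_{\ell,\rho}\>+(\text{slip corrections})$ via a ``slipped Green's function'' $\Gr^S$ for $\widetilde H$ --- is precisely the step you leave unresolved, and it is where all the difficulty sits. The paper avoids constructing any Green's function for $\widetilde H$: it tests with $v(k)=D_\rho\Gr(k-\ell)$ for the \emph{ordinary} operator $H$, so that $D_\rho\ua(\ell)=\<H\ua,v\>$ exactly, and then splits
$\<H\ua,v\>=\big(\<H\ua,\eta v\>-\<\widetilde H\ua,\eta v\>\big)+\<\widetilde H\ua,\eta v\>+\<H\ua,(1-\eta)v\>$
with a cut-off $\eta$ supported in $B_{|\ell|/2}(\ell)$, working first in the left half-plane where $B_{3|\ell|/4}(\ell)\cap\Gamma=\emptyset$. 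The commutator term is then controlled not by any property of a modified Green's function but by locality of the interaction: inside the support of $\eta v$ the stencils $D_\xi\ua(k)$ and $\Del_\xi\ua(k)$ differ only for legs $|\xi|\gtrsim|\ell|$, which the weights $\wf^{\log}_j$ suppress, yielding $O(|\rho||\ell|^{-1})$; the right half-plane is recovered by the reflection/slip argument ($\Gamma\mapsto\Gamma_S$, $u_0\mapsto S_0u_0$). If you want to keep your route you must actually produce $\Gr^S$ and prove the $(1+|\ell-m|)^{-2}$ bound on $\Del v_{\ell,\rho}$ across $\Gamma$, which is not easier than the paper's commutator estimate.

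Second, your treatment of the nonlinear residual does not deliver the stated rate. On the region $|m-\ell|<|\ell|/2$ you invoke only the qualitative tail bound $\|\Del\ua\|_{\hc{l}^2_{\mathcal{N}}(B_{|\ell|/2}^{\rm c})}\to 0$; this gives $I_{\rm nl}(\ell)=o(1)$ but no rate --- the tail could decay like $1/\log|\ell|$ for all you know at this stage, and then the conclusion $|\Del_\rho\ua(\ell)|\le C|\rho|(1+|\ell|)^{-1}$ fails. The missing idea is that the estimate is \emph{self-improving}: the quantity being bounded reappears quadratically on the right-hand side, so one must set $w(r):=\sup_{|\ell|\ge r}\sum_{k=1}^3|D\ua(\ell)|_{\wf_k,k}$, derive an inequality of the form $w(2r)\le C(1+r)^{-1}+\eta(r)\,w(r)$ with $\eta(r)\to0$, and iterate (as in \cite[Lemma 6.3]{2013-defects-v3} and the proof of Lemma \ref{lemma:decay-hom-eq}) to conclude $w(r)\le Cr^{-1}$ before substituting back to get the pointwise bound on $D_\rho\ua(\ell)$ and hence on $\Del_\rho\ua(\ell)$. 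Without this bootstrap the proof is incomplete. (Your preliminary step of re-deriving $\|\Del\ua\|_{\hc{l}^2_{\mathcal{N}}}<\infty$ by testing is unnecessary: $\ua\in\UsH(\L)$ by hypothesis and $\Del$ is an $\hc{l}^2$-bounded permutation of $D$.)
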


\begin{proof}
	Let us first consider the case when $\ell$ lies in the left half-plane $\ell_1<\hat{x}_1$.
	It is only necessary for us to consider when $|\ell|$ is sufficiently large,
	when $B_{3|\ell|/4}(\ell)$ does not intersect the branch cut $\Gamma$.

	Let $\eta$ be a cut-off function with $\eta(x) = 1$ in
	$B_{|\ell|/4}(\ell)$, $\eta(x) = 0$ in $\R^2 \setminus B_{|\ell|/2}(\ell)$
	and $|\nabla \eta| \leq C |\ell|^{-1}$.
	Further, let $v(k) := D_\rho \Gr(k - \ell)$, where $\Gr$ is the lattice Green's
	function associated with the homogeneous finite-difference operator $H$
	in \eqref{eq:Huv-hrho}, see Lemma \ref{lemma:decay-Green} for the definition of $\Gr$.
	Then,
	\begin{eqnarray}\label{eq:regdisl:prf:20}
	\nonumber
	D_\rho \bar{u}(\ell) = \< H \bar{u}, v \>
	&=& \Big( \< H\bar{u}, \eta v\> - \< \widetilde{H}\bar{u}, \eta v\> \Big)
	+  \b\< \widetilde{H}\bar{u}, \eta v\b\> + \b\< H \bar{u}, (1-\eta)v \b\>
	\\
	&:=& h^{(1)} + h^{(2)} + h^{(3)},
	\end{eqnarray}
	where $\eta v, (1-\eta) v$ are understood as pointwise function multiplications.

	We first compare $H$ and $\widetilde{H}$ to estimate $h^{(1)}$.
	Using Lemma \ref{lemma:decay-Green} (ii), we can derive from the
	definitions of $\widetilde{D}$ and $\eta$  that
	$| \Del_\sigma [\eta v](k) | \leq C|\sigma|\cdot|\rho|(1+|\ell-k|)^{-2}$.
	Moreover, we have from the definition of $\eta$ that if $|k-\ell| > 3|\ell|/4$, then $D_{\sigma}[\eta v](k)\neq 0$
	only when $|\sigma|>|\ell|/4$.
	Using the definition of $\widetilde{D}$ and the fact $B_{\frac{3|\ell|}{4}}(\ell)\cap\Gamma = \emptyset$,
	we have that if $|k-\ell|\leq 3|\ell|/4$, then $\Del_{\xi}u(k)\neq D_{\xi}u(k)$ only when $|\xi|\geq|\ell|/4$.
	Then we can obtain from the decay assumptions \asSEL~for dislocations that
	\begin{align}\label{diff-H-tildeH}
	\nonumber
	& \quad h^{(1)} =
	\b\< H\bar{u}, \eta v\b\> - \b\< \widetilde{H}\bar{u}, \eta v\b\>
	\\ \nonumber
	&=\sum_{k\in\Lhom\cap\Omega_{\Gamma}}\sum_{\sigma,\xi\in\Lhom_{*}} V_{,\sigma\xi}(0)
	\bigg( D_{\sigma}[\eta v](k) \Big( D_{\xi}\bar{u}(k) - \Del_{\xi}\bar{u}(k) \Big)
	+ \widetilde{D}_{\xi}u(k) \Big( D_{\sigma}[\eta v](k) - \Del_{\sigma}[\eta v](k) \Big) \bigg)
	\\ 
	&\leq C|\rho| |\ell|^{-2} \bigg( \sum_{\substack{k\in\Lhom\cap\Omega_{\Gamma} \\ |k-\ell|>\frac{3|\ell|}{4}}} \!\!\!\! (1+|k-\ell|)^{-2}
	+ \!\!\!\!\!\! \sum_{\substack{k\in\Lhom\cap\Omega_{\Gamma} \\ |k-\ell|\leq\frac{3|\ell|}{4}}} \!\!\!\! \b( 1+|k-\ell| \b)^{-2} \bigg)
	\leq C|\rho| |\ell|^{-1} .
	\end{align}

	Using Lemma \ref{lemma:decay-Green}, \ref{th:regdisl:residuals}
	and same calculations as \eqref{proof:res_b}--\eqref{proof:resb2},
	we have that there exist $\wf_{j}\in\Hw_{j+2}^{\log}~(j=1,2,3)$ such that
	\begin{align}\label{proof-decay-dislocation-h2}
	\nonumber
	h^{(2)} &=
	\< \widetilde{H} u, \eta v \> = \< g, \Del \eta v \>
	\\  \nonumber
	& \leq C|\rho| \sum_{k \in \Lhom} \bigg( \b(1+|k|\b)^{-2} +
	\sum_{j=1}^3 |\Del u(k)|_{\wf_{j},j}^2\bigg)
	\b( 1+|\ell-k| \b)^{-2}
	\\
	&\leq C|\rho| \bigg( |\ell|^{-2}\log|\ell|
	+ \sum_{k \in \Lhom} (1+|\ell-k|)^{-2} \bigg(\sum_{j=1}^3 |\Del u(k)|_{\wf_{j},j}^2 \bigg) \bigg) .
	\end{align}

	To estimate the last group in \eqref{eq:regdisl:prf:20}, we note that
	the definition of $\eta$ and Lemma \ref{lemma:decay-Green} implies
	$| D_\sigma [(1-\eta) v](k) | \leq C|\sigma|\cdot|\rho|(1+|\ell-k|)^{-2}$.
	Moreover, if $|k-\ell|\leq|\ell|/8$, then $D_\sigma [(1-\eta) v](k)\neq 0$ only when $|\sigma|>|\ell|/8$.
	Hence, we have from the decay assumptions \asSEL~for dislocations that
	there exist $\wf_{j}\in\Hw_{j+2}^{\log}~(j=1,2)$ such that
	\begin{align}\label{proof-decay-dislocation-h3}
	\nonumber
	&\quad h^{(3)} = \b\< H \bar{u},  (1-\eta) v \b\>
	\\ \nonumber
	&= \sum_{\substack{k\in\Lhom \\  |k-\ell|\leq \frac{|\ell|}{8}}} \sum_{\rho,\vsig\in\Lhom_{*}} V_{,\rho\vsig}(0) D_{\rho}\bar{u}(k) D_{\vsig}(1-\eta)v(k)
	+ \sum_{\substack{k\in\Lhom \\  |k-\ell|>\frac{|\ell|}{8}}} \sum_{\rho,\vsig\in\Lhom_{*}} V_{,\rho\vsig}(0) D_{\rho}\bar{u}(k) D_{\vsig}(1-\eta)v(k)
	\\ \nonumber
	&\leq C|\rho| \Bigg( |\ell|^{-2} \sum_{|k-\ell| \leq \frac{|\ell|}{8}} (1+ |\ell-k|)^{-2}
	+ \sum_{|k-\ell| >\frac{|\ell|}{8}} |\ell-k|^{-2} \bigg( \sum_{j=1}^2|D\bar{u}(k)|_{\wf_{j},j} \bigg) \Bigg)
	\\
	& \leq C|\rho| |\ell|^{-1}
	\end{align}
	% with $|D\bar{u}(k)|:=\max_{1\leq k'\leq 3}|D\bar{u}(k)|_{\wf_{k'},k'}$ here.

	Combining \eqref{eq:regdisl:prf:20}--\eqref{proof-decay-dislocation-h3}
	and the definition \eqref{eq:wnormdef},
	we have that if $\ell$ lies in the left half-plane and $|\ell|$ sufficiently large, then
	there exist $\wf_{j}\in\Hw_{j+2}^{\log}~(j=1,2,3)$ such that
	\begin{eqnarray}\label{eq:regdisl:prf:30}
	\sum_{k=1}^3\b| D u(\ell) \b|_{\wf_k,k} \leq C \Bigg( |\ell|^{-1}
	+ \sum_{k \in \Lhom} (1+|\ell-k|)^{-2}
	\bigg(\sum_{j=1}^3 |\Del u(k)|_{\wf_{j},j}^2 \bigg) \Bigg).
	\end{eqnarray}
	We can extend this estimate to the case when $\ell$ lies in the right
	half-plane by a {\it reflection argument} as used in the proof of Lemma \ref{lemma:decay_fl_dislocation}.
	We replace the branch-cut $\Gamma$ by
	$\Gamma_S := \{ (x_1, \hat{x}_2) ~|~x_1 \leq \hat{x}_1 \}$,
	the energy functional $\E(u)$ by $\E_S(u)$,
	and the predictor solution $u_0$ by $S_0  u_0$.
	The new problem is now $\delta\E(\bar{u}) = 0$, which is structurally
	identical to $\delta\E(\bar{u})=0$.
	Therefore, it follows that \eqref{eq:regdisl:prf:30} holds,
	but with $u$ replaced by $Su$ and for all $\ell_1 > \hat{x}_1$.

	We can now consider arbitrary $\ell$ and follow the proof of Lemma \ref{lemma:decay-hom-eq}.
	By taking
	\begin{eqnarray*}
		w(r) := \sup_{\ell\in\Lhom,~|\ell|\geq r}\bigg(\sum_{k=1}^{3}|Du(\ell)|_{\wf_k,k}\bigg)
	\end{eqnarray*}
	and repeating the argument used in \cite[Lemma 6.3]{2013-defects-v3} verbatim,
	% to show \eqref{eq: w decay result}--\eqref{eq: D sigma u est 2} verbatim,
	we deduce that $\omega(r)\leq Cr^{-1}$.
	Inserting this into
	% estimate into
	\eqref{diff-H-tildeH}--\eqref{proof-decay-dislocation-h3}
	gives us $|D_{\rho} u(\ell)| \leq C|\rho| |\ell|^{-1}$, which together with
	the definition of $\Del$ leads to $|\Del_{\rho} u(\ell)| \leq C|\rho||\ell|^{-1}$.
\end{proof}

Having established a preliminary pointwise decay estimate on $\Del \ua$, we now
apply a boot-strapping technique to obtain an optimal bound.

\begin{proof}
	[Proof of Theorem \ref{theorem:dislocation-decay}]
	Without loss of generality, we may assume
	$\ell$ belongs to the left half-plane, i.e. $\ell_1 < \hat{x}_1$.
	The right half-plane case can be obtained by the reflection arguments
	used in the proof of Lemma \ref{lemma:decay_fl_dislocation} and \ref{th:regdisl:subopt}.
	We again define $v$ as in the proof of Lemma \ref{th:regdisl:subopt}, and write
	\begin{align*}
	&\quad D_\rho \bar{u}(\ell) =  \< H \bar{u}, v \>
	\\
	&=\< \widetilde{H}\bar{u},v \>
	+ \sum_{k \in \Lhom\cap\Omega_{\Gamma}} \B( \< \delta^2 V(0) D\bar{u}(k), Dv(k) \>
	- \< \delta^2V(0) \Del \bar{u}(k), \Del v(k) \>  \B)
	\\
	&=: {\rm T}_1 + {\rm T}_2.
	\end{align*}

	To estimate the first group we note that ${\rm T}_1 = \< g, \Del v\>$, hence
	we can employ the residual estimates from Lemma \ref{th:regdisl:residuals}.
	Combining the estimates in Lemma \ref{lemma:decay-Green},
	\ref{th:regdisl:residuals} and \ref{th:regdisl:subopt} yields
	\begin{align}\label{proof-decay-dislacation-j}
	\b| {\rm T}_1 \b|  \leq \sum_{k \in \L} (1+|k|)^{-2} (1+|\ell-k|)^{-2}
	\leq C |\ell|^{-2} \log |\ell|.
	\end{align}

	To estimate ${\rm T}_2$, we will combine the result in Lemma \ref{th:regdisl:subopt}
	and a similar calculation as \eqref{diff-H-tildeH}:
	\begin{align}\label{diff-H-tildeH-T2}
	\nonumber
	& \quad T_2=
	\b\< H\bar{u}, v\b\> - \b\< \widetilde{H}\bar{u}, v\b\>
	\\ \nonumber
	&=\sum_{k\in\Lhom\cap\Omega_{\Gamma}}\sum_{\sigma,\xi\in\Lhom_{*}} V_{,\sigma\xi}(0)
	\bigg( D_{\sigma} v(k) \Big( D_{\xi}\bar{u}(k) - \Del_{\xi}\bar{u}(k) \Big)
	+ \widetilde{D}_{\xi}u(k) \Big( D_{\sigma} v(k) - \Del_{\sigma} v(k) \Big) \bigg)
	\\ \nonumber
	&\leq  C|\rho| \bigg( \sum_{\substack{k\in\Lhom\cap\Omega_{\Gamma} \\
			\Gamma \cap	B_{\frac{|k|}{2}}(k) =\emptyset} } (1+|k|)^{-2}(1+|k-\ell|)^{-2}
	+ \sum_{\substack{ k\in\Lhom\cap\Omega_{\Gamma} \\ \Gamma \cap B_{\frac{|k|}{2}}(k) \neq \emptyset} }
	(1+|k|)^{-1} \b( |k|+|\ell| \b)^{-2}   \bigg)
	\\
	&\leq C|\rho| |\ell|^{-2} \log |\ell| ,
	\end{align}
	where we have used the facts that if $\Gamma \cap B_{|k|/2}(k) =\emptyset$,
	then $D_{\sigma}u(k)\neq\Del_{\sigma}u(k)$ only when $|\sigma|>|k|/2$,
	and if $\Gamma \cap B_{|k|/2}(k) \neq \emptyset$, then
	$|\ell-k|\geq \frac{1}{2}(|\ell|+|k|)$.
	Collecting \eqref{proof-decay-dislacation-j}--\eqref{diff-H-tildeH-T2}
	completes the proof.
\end{proof}


\begin{thebibliography}{00}

\bibitem{Balluffi}
R. Balluffi,
{\it Introduction to Elasticity Theory for Crystal Defects}
(Cambridge University Press, 2012).

\bibitem{CaiBulChaLiYip2003}
W. Cai, V. Bulatov, J. Chang, J. Li and S. Yip,
Periodic image effects in dislocation modelling,
{\it Philosophical Magazine} {\bf 83} (2003) 539--567.

\bibitem{CancesLeBris:preprint}
E. Canc\`{e}s and C. Le Bris,
Mathematical modelling of point defects in materials science,
{\it Math. Models Methods Appl. Sci.}  {\bf 23} (2013), 1795--1895.

\bibitem{CancesDeleurenceLewin:2008}
E. Canc\`{e}s, A. Deleurence and M. Lewin,
A new approach to the modelling of local defects in crystals: The reduced Hartree-Fock case,
{\it Comm. Math. Phys.} {\bf 281} (2008) 129--177.

\bibitem{CattoLeBrisLions}
I. Catto, C. Le Bris and P. Lions,
{\it The Mathematical Theory of Thermodynamic Limits: Thomas-Fermi Type Models}
(Oxford University Press, 1998).

\bibitem{chen16b}
H. Chen, J. Kermode, F. Nazar and C. Ortner,
Locality of the reduced Hartree-Fock model with Yukawa poential,
in preparation.

\bibitem{chen16a}
H. Chen, J. Lu and C. Ortner,
Thermodynamic limits of crystal defects with finite temperature tight binding,
{\it Arch. Ration. Mech. Anal.} {\bf 230} (2018), 701--733.

%\bibitem{chen17a}
%\delhc{
%H. Chen, F. Nazar, F and C. Ortner,
%Geometry equilibration of crystalline defects in quantum and atomistic descriptions,
%arXiv:1709.02770.}

\bibitem{chen15a}
H. Chen and C. Ortner,
QM/MM methods for crystalline defects. part 1: Locality of the tight binding model,
{\it Multiscale Model. Simul.} {\bf 14} (2016) 232--264.

\bibitem{chen15b}
H. Chen and C. Ortner,
QM/MM methods for crystalline defects. part 2: Consistent energy and force-mixing,
{\it Multiscale Model. Simul.} {\bf 15} (2017) 184--214.

\bibitem{csanyi05}
G. Cs\'{a}nyi, T. Albaret, G. Moras, M. Payne and A.~D. Vita,
Multiscale hybrid simulation methods for material systems,
{\it J. Phys.: Condens. Matter} {\bf 17} (2005) R691--R703.

\bibitem{Daw:1984a}
M. Daw and M. Baskes,
Embedded-atom method: derivation and application to impurities, surfaces, and other defects in metals,
{\it Phys. Rev. B} {\bf 29} (1984) 6443--6453.

\bibitem{2013-defects-v3}
V. Ehrlacher, C. Ortner and A. Shapeev,
Analysis of boundary conditions for crystal defect atomistic simulations,
{\it Arch. Ration. Mech. Anal.} {\bf 222} (2016) 1217--1268.

\bibitem{Bell34}
E.T. Bell
Exponential polynomials,
{\it Annals of Mathematics} {\bf 35} (1934) 258--277.

\bibitem{Finnis1984}
M. Finnis,
A simple empirical $n$-body potential for transition metals,
{\it Phil. Mag. A.} {\bf 50} (1984) 45--55.

\bibitem{finnis03}
M. Finnis,
{\it Interatomic Forces in Condensed Matter}
(Oxford University Press, 2003).

\bibitem{goringe97}
C. Goringe, D. Bowler and E. Hern\'{a}ndez,
Tight-binding modelling of materials,
{\it Rep. Prog. Phys.} {\bf 60} (1997) 1447--1512.

\bibitem{Gupta1981}
R. Gupta,
Lattice relaxation at a metal surface,
{\it Phys. Rev. B} {\bf 23} (1981) 6265--6270.

\bibitem{Hine2009}
N. Hine, K. Frensch, W. Foulkes and M. Finnis,
Supercell size scaling of density functional theory formation energies of charged defects,
{\it Phys. Rev. B} {\bf 79} (2009) 024112.

\bibitem{Hudson:stab}
T. Hudson and C. Ortner,
On the stability of Bravais lattices and their Cauchy--Born approximations,
{\it ESAIM:M2AN} {\bf 46} (2012) 81--110.

\bibitem{HudsonOrtner:disloc}
T. Hudson and C. Ortner,
Existence and stability of a screw dislocation under anti-plane deformation,
{\it Arch. Ration. Mech. Anal.} {\bf 213} (2014) 887--929.

\bibitem{LennardJones:1924a}
J. Jones,
On the determination of molecular fields. III. From crystal measurements and kinetic theory data,
{\it Proc. Roy. Soc. London A.} {\bf 106} (1924) 709--718.

\bibitem{2013-PRE-bqcfcomp}
X. Li, M. Luskin, C. Ortner and A. Shapeev,
Theory-based benchmarking of the blended force-based quasicontinuum method,
{\it Comput. Methods Appl. Mech. Engrg.} {\bf 268} (2014) 763--781.

\bibitem{LiOrtnerShapeevEtAl-blended}
X. Li, C. Ortner, A. Shapeev and B. Van~Koten,
Analysis of blended atomistic/continuum hybrid methods,
{\it Numer. Math.} {\bf 134} (2016) 275--326.

\bibitem{MakovPayne1995}
G. Makov and M. Payne,
Periodic boundary conditions in ab initio calculations,
{\it Phys. Rev. B} {\bf 51} (1995) 4014--4022.

\bibitem{Morrey}
C.B. Morrey,
{\it Multiple Integrals in the Calculus of Variations}
(Springer, 1966).

\bibitem{Morse:1929a}
P. Morse,
Diatomic molecules according to the wave mechanics. II. Vibrational levels,
{\it Phys. Rev.} {\bf 34} (1929) 57--64.

\bibitem{Nazar-thesis}
F. Nazar,
{\it Electronic structure of defects in the Thomas--Fermi--von Weizs\"{a}cker model of crystals},
PhD thesis (University of Warwick, 2016).

\bibitem{nazar14}
F. Nazar and C. Ortner,
Locality of the Thomas-Fermi-von Weizs\"{a}cker equations,
{\it Arch. Ration. Mech. Anal.} {\bf 224} (2017) 817--870.

\bibitem{2016-multipt}
D. Olson and C. Ortner,
Regularity and locality of point defects in multilattices,
{\it Appl. Math. Res. Express} {\bf 2017} (2016) 297--337.

\bibitem{ortner_shapeev12}
C. Ortner and A. Shapeev,
Interpolants of lattice functions for the analysis of atomistic/continuum multiscale methods,
arXiv:1204.3705.

\bibitem{OrtnerTheil2012}
C. Ortner and F. Theil,
Justification of the Cauchy--Born approximation of elastodynamics,
{\it Arch. Ration. Mech. Anal.} {\bf 207} (2013) 1025--1073.

\bibitem{OrtnerZhang2014}
C. Ortner and L. Zhang,
Atomistic/continuum blending with ghost force correction,
{\it SIAM J. Sci. Comput.} {\bf 38}  (2016) A346--A375

\bibitem{Sinclair:1971}
J. Sinclair,
Improved atomistic model of a bcc dislocation core,
{\it J. Appl. Phys.} {\bf 42} (1971) 5321--5329.

\bibitem{solovej91}
J. Solovej,
Proof of the ionization conjecture in a reduced Hartree-Fock model,
{\it Invent. Math.} {\bf 104} (1991) 291--311.

\bibitem{HandbookMaterialsModelling}
S. Yip, Ed.
{\it Handbook of Materials Modelling}
(Springer, 2005).

\bibitem{yukawa35}
H. Yukawa,
On the interaction of elementary particles,
{\it Proc. Phys. Math. Soc. Japan.} {\bf 17} (1935) 48--57.

\end{thebibliography}
\end{document}